\definecolor{limegreen}{rgb}{0.196,0.804,0.196}%[ 50,205, 50]
\definecolor{darkgreen}{rgb}{0.0,0.5,0.0}
\definecolor{darkbluegreen}{rgb}{0,0.3,0.6}
\definecolor{badgerred}{rgb}{0.715,0.004,0.004}
\newcommand{\bA}{{\bar{\area}}}
\newcommand{\bhm}{{\bar{H}_{\max}}}
\newcommand{\bd}{{\bar{d}}}
\newcommand{\bu}{{\bar{u}}}
\newcommand{\bv}{{\bar{v}}}
\newcommand{\bw}{{\bar{w}}}
\newcommand{\bphi}{{\bar{\phi}}}
\newcommand{\vN}{{\boldsymbol N}}
\newcommand{\vX}{{\boldsymbol X}}
\newcommand{\vx}{\mathbf{x}}
\newcommand{\cL}{\mathcal{L}}
\newcommand{\cO}{\mathcal{O}}
\newcommand{\tE}{{\tilde E}}
\newcommand{\tu}{{\tilde u}}
\newcommand{\tS}{{\tilde \Sigma}}
\newcommand{\hu}{\mathcal{H}}
\newcommand{\hilb}{\mathfrak{H}}
\newcommand{\pd}{\partial}
\newcommand{\nm}{{\boldsymbol\nu}}
\newcommand{\area}{{\mathcal{A}}} % Area of the hypersurface
\newcommand{\eps}{\varepsilon}
\newcommand{\bomega}{{\boldsymbol\omega}}
\newcommand{\ud}{z} %v derivative, used in section 4.3, higher derivative estimates
\newcommand{\R}{{\mathbb R}}
\newtheorem{theorem}{Theorem}[section]
\newtheorem{lemma}[theorem]{Lemma}
\newtheorem{definition}[theorem]{Definition}
\newtheorem{prop}[theorem]{Proposition}
\newtheorem{corollary}[theorem]{Corollary}
\newtheorem{conjecture}[theorem]{Conjecture}
\newtheorem{remark}[theorem]{Remark}
\newtheorem{claim}[theorem]{Claim}
\numberwithin{equation}{section}
\numberwithin{theorem}{section}
\title[Ancient Convex Flows]{Unique asymptotics of ancient convex mean curvature flow solutions }
\author[Angenent]{Sigurd Angenent}
\address{Department of Mathematics, University of Wisconsin -- Madison}
\author[Daskalopoulos]{Panagiota Daskalopoulos}
\address{Department of Mathematics, Columbia University}
\author[Sesum]{Natasa Sesum}
\address{Department of Mathematics, Rutgers University}
\thanks{
P. Daskalopoulos thanks the NSF for support in DMS-1266172.
N. Sesum thanks the NSF for support in DMS-1056387.
%%\\ \textbf{Working notes: not for distribution.}\qquad(\ampmtime~on~\today)
}
\begin{document}

\begin{abstract}
We study the compact noncollapsed ancient convex solutions to Mean Curvature Flow in $\R^{n+1}$ with $O(1)\times O(n)$ symmetry.  We show they all have unique asymptotics as $t\to -\infty$ and we give precise asymptotic description of these solutions. In particular, solutions constructed by White, and Haslhofer and Hershkovits have those asymptotics (in the case of those particular solutions the asymptotics was predicted and formally computed by Angenent \cite{AngFormal}).
\end{abstract}

\maketitle
\section{Introduction}

\subsection{Ancient solutions}
\label{sec-ancient-solutions}
A solution to a geometric evolution equation such as the MCF, the Ricci flow, or the Yamabe flow is called \emph{ancient} if it exists for all $t\in(-\infty, t_0]$, for some $t_0$.  While solutions starting from arbitrary smooth initial data can be constructed on a short enough time interval for all these flows, the requirement that a solution should exist for \emph{all} time $t\leq t_0$, combined with some sort of positive curvature condition, turns out to be very restrictive.  In a number of cases there are results which state that the list of possible ancient solutions to some given geometric flow consists of self similar solutions (``solitons'') and a shorter list of non self similar solutions.

For instance, for two dimensional Ricci flow, Daskalopoulos, Hamilton and Sesum \cite{DHS} classified all compact ancient solutions.  It turns out the complete list contains only the shrinking sphere solitons and the King-Rosenau solutions \cite{K1, R}.  The latter are not solitons and can be visualized as two steady solitons, called ``cigars'', coming from spatial infinities and glued together.

Solutions analogous to the King-Rosenau solution exist in a higher dimensional ($n \ge 3$) Yamabe flow as well.  Again they are not solitons, although they are given in an explicit form discovered by King \cite{K1} (and later independently by Brendle in a private communication with the authors). They can also be visualized as two shrinking solitons, called the Barenblatt solutions, coming from spatial infinities and glued together.  In \cite{DDS} Daskalopoulos, del~Pino, and Sesum constructed infinitely many closed ancient solutions (which they called a {\it tower of bubbles}) thus showing that the classification of closed ancient solutions to the Yamabe flow is very difficult, if not impossible.  Unlike the above mentioned closed ancient solutions, the Ricci curvature of the tower of bubbles solutions changes its sign (they still have nonnegative scalar curvature).

We turn now to the Mean Curvature Flow.  Recall that a family of immersed hypersurfaces $\vX:M^n\times [0, T) \to \R^{n+1}$ evolves by Mean Curvature Flow (MCF) if it satisfies
\begin{equation}
\label{eq-mcf}
\left(\frac{\pd \vX}{\pd t}\right)^\perp = H\nm,
\end{equation}
where $\nm$ is a unit normal vector of the surface $M_t = \vX(M^n, t)$, $H$ is the mean curvature in the direction of the normal $\nm$, and $\bigl(\vX_t(\xi, t)\bigr)^\perp$ is the component of the velocity $\vX_t(\xi, t)$ that is perpendicular to $M_t$ at $\vX(\xi,t)$.

A smooth solution $\{M_t\}_{0<\leq t <T}$ to MCF exists on a sufficiently short time interval $0\leq t<T$ for any prescribed smooth initial immersed hypersurface $M_0$.  If the initial hypersurface $M_0$ is convex, then the solution $M_t$ will also be convex.  The simplest possible convex ancient solution is the shrinking sphere, i.e.~if $M_t$ is the sphere of radius $\sqrt{-2nt}$ centered at the origin, then $\{M_t\}_{t<0}$ is a self similar ancient solution.  It is the only compact and convex self-similar solution to MCF. In the next subsection we will give the notion of a {\em non-collapsed} solution to MCF, which was first introduced in \cite{SW} and then  in \cite{An}. With this in
mind we give the following definition.
\begin{definition}
An ancient oval is any ancient compact non-collapsed (in the sense of Definition \ref{def-andrews})  solution to MCF that is not self similar (i.e.~that is not the sphere).
\end{definition}
Note that the ``non-collapsedness'' condition from Definition~\ref{def-andrews} is necessary due to other numerical ``pancake'' type examples which become collapsed as $t\to -\infty$. On the other hand, it has been shown in \cite{HK} that all
non-collapsed ancient compact  solutions to the mean curvature flow are convex, hence the ancient ovals are convex solutions. 

For Curve Shortening, i.e.~MCF for curves in the plane, Angenent found such
solutions (see \cite{AngDoughnuts} and also \cite{97Nakayama}).  These solutions,
which can be written in closed form, may be visualized as two ``Grim Reapers''
with the same asymptotes that approach each other from opposite ends of the
plane.  Daskalopoulos, Hamilton, and Sesum \cite{DHS1} classified all ancient
convex solutions to Curve Shortening by showing that there are no other ancient
ovals for Curve Shortening.

Natural questions to ask are whether there exists an analog of the Ancient Curve Shortening Ovals from \cite{AngDoughnuts,97Nakayama} in higher dimensional Mean Curvature Flow and whether a classification of ancient ovals similar to the Daskalopoulos-Hamilton-Sesum \cite{DHS1} result is possible.

The existence question was already settled by White in \cite{Wh} who gave a construction of ancient ovals for which
\[
\frac{\text{in-radius~}M_t}{\text{out-radius~}M_t} \to 0 \quad \text{ as } t\to-\infty.
\]
Haslhofer and Hershkovits \cite{HO} provided recently more details on White's construction.  If one represents $\R^{n+1}$ as $\R^{n+1} = \R^k\times \R^l$ with $k+l=n+1$, then the White-Haslhofer-Hershkovits construction proves the existence of an ancient solution $M_t$ with $O(k)\times O(l)$ symmetry.  In contrast with the Ancient Curve Shortening Ovals, this solution cannot be
written in closed form.  Formal matched asymptotics, as $t \to -\infty$, were
given by Angenent in \cite{AngFormal}.

The classification question is more complicated in higher dimensions.

\begin{conjecture}[Uniqueness of ancient ovals] \label{conj-uniqueness} For each $(k, l)$ with $k+l=n+1$ there is only one ``ancient oval'' solution with $O(k)\times O(l)$ symmetry, up to time translation and parabolic rescaling of space-time.
\end{conjecture}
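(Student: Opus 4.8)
The plan is to break the conjecture into two successive rigidity statements: first a \emph{unique asymptotics} result describing, in parabolically rescaled variables, the precise behavior of \emph{every} ancient oval with $O(k)\times O(l)$ symmetry as $t\to-\infty$; and then a \emph{backward uniqueness} argument promoting ``two solutions have the same asymptotics'' to ``two solutions are equal (after a time translation and parabolic rescaling).'' The first half — which the present paper carries out in the model case $(k,l)=(1,n)$ — works entirely in the rotationally symmetric convex setting provided by \cite{HK}: write $M_t$ as $|y|=F(x,t)$ over the axis $\{y=0\}\subset\R^k$ (take $k=1$ for concreteness), where $F_t = F_{xx}/(1+F_x^2) - (n-1)/F$ between two moving tips $x=\pm a(t)$. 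Passing to the rescaled profile $u(y,\tau) = (-t)^{-1/2}F\bigl((-t)^{1/2}y, t\bigr)$ with $\tau = -\log(-t)$ turns the round shrinking cylinder into the stationary solution $u\equiv\sqrt{2(n-1)}$; crucially, a parabolic rescaling of space-time acts on $(y,\tau)$ merely as a translation $\tau\mapsto\tau+\text{const}$. The quantity $v := u - \sqrt{2(n-1)}$ then solves a semilinear equation whose linearization at $0$ is the Ornstein--Uhlenbeck operator $\cL = \pd_y^2 - \tfrac{y}{2}\pd_y + 1$ on $L^2(e^{-y^2/4}\,dy)$, with spectrum $\{1-\tfrac k2\}_{k\ge0}$ and Hermite eigenfunctions: the modes $k=0,1$ have positive eigenvalue, $k=2$ is neutral, and $k\ge3$ is negative.

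For the asymptotics, a Merle--Zaag-type dichotomy applied to the projections of $v$ onto the positive/neutral/negative eigenspaces of $\cL$, together with the a priori bounds coming from convexity and noncollapsedness (which exclude the negative-eigenvalue modes, the dangerous ones as $\tau\to-\infty$), identifies the neutral mode $h_2 = y^2-2$ as dominant; feeding the quadratic nonlinearity back into the ODE for its coefficient gives $v(y,\tau) = -c\,(y^2-2)/|\tau| + o(1/|\tau|)$ on compact $y$-sets, for an explicit $c>0$, equivalently $u \approx \sqrt{2(n-1)}\,\sqrt{1 - y^2/(2|\tau|)}$. Parabolic estimates upgrade this to all derivatives and propagate it into an intermediate region $|y|\sim\sqrt{|\tau|}$. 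Meanwhile, rescaling the tip region by the reciprocal of the tip curvature, one shows it converges smoothly to the unique rotationally symmetric convex translating (``bowl'') soliton $\Sigma^n\subset\R^{n+1}$. Matching the cylindrical and bowl descriptions across the overlap yields one asymptotic profile, the same for every ancient oval.

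For uniqueness, let $M^1_t,M^2_t$ be two ancient ovals with the same symmetry. By the above, after applying to $M^2$ a time translation and parabolic rescaling, the rescaled profiles $u_1,u_2$ are asymptotic to the \emph{same} function as $\tau\to-\infty$, and one may further normalize (spatial centering, choice of the $\tau$-translation) so that the difference $w := u_1 - u_2$ has no surviving contribution from the symmetry-induced directions. Then $w$ solves a linear parabolic equation with coefficients controlled by $u_1,u_2$, and Step~1 gives that $w$ decays faster than every power of $1/|\tau|$ in the cylindrical region and at a suitable rate near the tip. One wants to conclude $w\equiv0$: in the cylindrical region this follows from the spectral gap of $\cL$ once the $k=0,1,2$ projections of $w$ are shown to remain zero under the normalization; near the tip it follows from backward uniqueness and rigidity for the linearized flow about the bowl soliton, via Carleman-type estimates on the unbounded tip domain; the two regimes are glued through the intermediate region, and a final normalization of $w$ by its sup norm together with a blow-down limit gives a contradiction with the mode structure unless $w\equiv0$. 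Hence $M^1_t = M^2_t$.

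The hard part will be the tip region and its matching to the cylindrical region: the equation degenerates at the tip, and the natural tip length scale $\sim\sqrt{-t/\log(-t)}$ differs from the cylindrical scale $\sqrt{-t}$ by a logarithm, so the two expansions must be reconciled with sharp, uniform control of the error terms — this requires carefully built barriers and precise knowledge of the bowl soliton at infinity. Tied to this, and the true crux of the uniqueness step, is establishing genuine backward uniqueness for the linearized equation near the bowl on a parabolically degenerate, noncompact domain. Extracting the exact coefficient $c$ of the neutral mode in the asymptotics, and propagating $C^0$ control faithfully through the full nonlinear regime, is the other technically heavy ingredient.
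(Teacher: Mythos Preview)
The statement you are attempting to prove is listed in the paper as a \emph{conjecture}, not a theorem; the paper contains no proof of it. What the paper actually establishes is only the first half of your program, namely the unique asymptotics (Theorem~\ref{thm-asymptotics}) in the special case $(k,l)=(1,n)$: the Merle--Zaag dichotomy for the projections of $v$ onto the eigenspaces of $\cL$, the identification of the neutral mode $y^2-2$ with the precise coefficient $-1/(4|\tau|)$, the intermediate-region description, and the convergence of the tip blow-up to the bowl soliton. Your outline of that step is broadly consistent with what the paper does, although several details differ (for instance, the paper rules out the positive-eigenvalue mode not by ``a priori bounds from convexity'' but by a change of blow-up time $T\mapsto T+K$, and the intermediate region is handled by explicit barriers built from the self-shrinker foliation $\Sigma_a$ rather than by propagating parabolic estimates).

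The second half of your proposal --- promoting ``same asymptotics'' to ``same solution'' via backward uniqueness --- is not carried out in the paper and is not a proof as written. You correctly flag the crux: one needs Carleman-type backward uniqueness for the linearized equation on the noncompact, degenerating tip region, together with a quantitative matching across the intermediate region that controls the difference $w=u_1-u_2$ uniformly through the three regimes. None of these estimates is supplied; phrases like ``a final normalization of $w$ by its sup norm together with a blow-down limit gives a contradiction'' hide exactly the analysis that makes this problem hard. In particular, killing the $k=0,1,2$ projections of $w$ by a choice of centering and time translation is only possible at a single $\tau$, not for all $\tau$ simultaneously, so one must still show these projections remain $o$ of the rest --- which is itself a nontrivial coupled argument. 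As it stands your text is a reasonable strategic outline (and indeed close to the route taken in later work of the same authors), but it is not a proof of Conjecture~\ref{conj-uniqueness}.
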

%% \begin{conjecture}[Classification of ancient solutions]
%% \label{conj-classification}
%% Within the class of $O(k)\times O(l)$ symmetric solutions to MCF in $\R^{n+1}$, the contracting spheres and the ancient ovals are the only ``non-collapsed'' convex ancient solutions to the mean curvature flow.
%% \end{conjecture}

Since the ``ancient oval'' solutions are not given in closed form and they are not solitons their classification as stated in the above conjecture poses a difficult question. 
In fact, up to now the only known classification results for ancient or eternal solutions involve either solitons or other special solutions that can be written in closed form. \

In the present paper we make partial progress towards the above conjecture by showing that any ancient, non-collapsed solution of MCF with $O(1)\times O(n)$ symmetry satisfies the {\em detailed asymptotic expansions}  described in \cite{AngFormal}.  In particular, our results give precise estimates on the {\em extrinsic diameter}  and {\em maximum curvature}  of all such solutions near $t \to -\infty$.

\subsection{The non-collapsedness condition}
Instead of an evolving family of convex hypersurfaces $\{M_t\}$ we can also think in terms of the evolving family $\{K_t\}$ of compact domains enclosed by $M_t$ (thus $M_t=\pd K_t$).  Sheng and Wang in \cite{SW} and then later Andrews in \cite{An} introduced the following notion of ``non-collapsedness'' for any compact mean convex subset $K\subset \R^{n+1}$.  Recall that a domain $K\subset\R^{n+1}$ with smooth boundary is mean convex if $H>0$ on $\pd K$.
\begin{definition}
\label{def-andrews}
If $K\subset \R^{n+1}$ is a smooth, compact, mean convex domain and if $\alpha > 0$, then $K$ is {\bfseries$\alpha$-noncollapsed} if for every $p\in \pd K$ there are closed balls $\bar{B}_{int}\subset K$ and $\bar{B}_{ext}\subset \R^{n+1}\backslash Int(K)$ of radius at least $\frac{\alpha}{H(p)}$ that are tangent to $\pd K$ at $p$ from the interior and exterior of $K$, respectively (in the limiting case $H(p) = 0$ this means that $K$ is a half space).
\end{definition}
Every compact, smooth, strictly mean convex domain is $\alpha$-noncollapsed for some $\alpha > 0$.  Andrews showed that if the initial condition $K_0$ of a smooth compact mean curvature flow is $\alpha$-noncollapsed, then so is the whole flow $K_t$ for all later times $t$.
\begin{definition}
We say that an ancient solution $\{M_t\}_{t\in (-\infty, T]}$ to MCF is \textbf{noncollapsed} if there exists a constant $\alpha > 0$ so that the flow $M_t$ is $\alpha$-noncollapsed for all $t \in (-\infty, T]$, in the sense of Definition \ref{def-andrews}.
\end{definition}

In order to say more about the classification of closed ancient noncollapsed solutions to the mean curvature flow, we need to understand first the geometry of those solutions and their more precise asymptotics.
\subsection{MCF for hypersurfaces with $O(1)\times O(n)$ symmetry}

In this paper we consider non collapsed and therefore convex ancient solutions that are $O(1) \times O(n)$-invariant hypersurfaces in $\R^{n+1}$.  Such hypersurfaces can be represented as
\begin{equation}
\label{eq-O1xOn-symmetry}
M_t = \bigl\{ (x, x') \in \R\times\R^{n} : -d(t)<x<d(t), \|x'\|=U(x, t)\bigr\}
\end{equation}
for some function $\|x'\|=U(x,t)$.  The points $(\pm d(t), 0)$ are called \emph{the tips} of the surface.  The function $U(x, t)$, which we call the \emph{profile} of the hypersurface $M_t$, is only defined for $x\in[-d(t), d(t)]$.

Any surface $M_t$ defined by \eqref{eq-O1xOn-symmetry} is automatically invariant under $O(n)$ acting on $\R\times\R^n$.  The surface will also be invariant under the $O(1)$ action on $\R\times\R^n$ if $U$ is even, i.e.~if $U(-x, t)=U(x,t)$.
 
Convexity of the surface $M_t$ is equivalent to concavity of the profile $U$, i.e.~$M_t$ is convex if and only if $U_{xx}\leq0$.

For a family of surfaces defined by $\|x'\|=U(x, t)$, equation \eqref{eq-mcf} for MCF holds if and only if the profile $U(x,t)$ satisfies the evolution equation
\begin{equation}
\label{eq-u-original}
\frac{\pd U}{\pd t} = \frac{U_{xx}}{1+U_x^2} - \frac{n-1}{U}.
\end{equation}
We know by Huisken's result (\cite{Hu}) that the surfaces $M_t$ will contract to a point in finite time.

Self-similar solutions to MCF are of the form $M_t = \sqrt{T-t}\,\bar M$ for some fixed surface $\bar M$ and some ``blow-up time'' $T$.  We rewrite a general ancient solution $\{M_t : t<t_0\}$ as
\begin{equation}
\label{eq-type-1-blow-up}
M_t = \sqrt{T-t} \, \bar M_{-\log (T-t)}.
\end{equation}
The family of surfaces $\bar M_\tau$ with $\tau = -\log(T-t)$, is called a type-I or \emph{parabolic blow-up} of the original solution $M_t$.  These are again $O(1)\times O(n)$ symmetric with profile function $u$, which is related to $U$ by
\begin{equation}
\label{eq-cv1}
U(x,t) = \sqrt{T-t}\, u(y, \tau), \qquad y=\frac x{ \sqrt{T-t}}, \quad \tau=-\log (T-t).
\end{equation}
If the $M_t$ satisfy MCF, then the hypersurfaces $\bar{M}_\tau$ evolve by the \emph{rescaled MCF}
\begin{equation}
\label{eq-RMCF}
\nm\cdot\frac{\pd \vX}{\pd \tau}
= H + \tfrac12 \vX\cdot\nm.
\end{equation}
For the parabolic blow-up $u$ this is equivalent with the equation
\begin{equation}
\label{eq-u}
\frac{\pd u}{\pd \tau} = \frac{u_{yy}}{1+u_y^2} - \frac y2 \, u_y - \frac{n-1}{u}+ \frac u2.
\end{equation}
Regarding notation, we denote by $H(\cdot,t)$, $d(t)$, etc., the mean curvature and extrinsic diameter of the surface $M_t$, respectively, and by $\bar{H}(\cdot,\tau)$, $\bar{d}(\tau)$, etc., the mean curvature and extrinsic diameter of a corresponding parabolic blow-up $\bar{M}_{\tau}$, respectively. In general, we will use the bar to denote geometric quantities for $\bar{M}_{\tau}$.

The following theorem, which will be shown in section \ref{sec-tip}, describes certain geometric properties of the ancient solutions described above.

\begin{theorem}
\label{thm-comparable-geometry}
Let $\{M_t\}$ be any compact smooth noncollapsed ancient mean curvature flow with $O(1)\times O(n)$ symmetry.  Then there exist uniform constants $c, C > 0$ so that the extrinsic diameter $\bd(\tau)$, the area $\bA(\tau)$ and the maximum mean curvature $\bhm(\tau)$ of the rescaled mean curvature flow $\bar{M}_{\tau}$ satisfy
\begin{equation}
\label{eq-comparable} 
c \, \bd(\tau) \le \bhm(\tau) \le \bd(\tau),
\qquad c\, \bd(\tau) \le\bA(\tau) \le C\, \bd(\tau).
\end{equation}
\end{theorem}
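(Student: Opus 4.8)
The plan is to verify the four inequalities of \eqref{eq-comparable} one at a time, after a common reduction. Write the rescaled hypersurface as a graph $\|x'\|=u(y,\tau)$ over $y\in[-D(\tau),D(\tau)]$ with $u(\cdot,\tau)$ even and concave, and set $w(\tau):=u(0,\tau)=\max_y u(y,\tau)$. Since $\bar M_\tau$ is convex, lies in the slab $\{|y|\le D\}\cap\{\|x'\|\le w\}$ and contains the segment joining the two tips $(\pm D(\tau),0)$, one has $2D(\tau)\le\bd(\tau)\le 2D(\tau)\sqrt{1+(w/D)^2}$, so that $\bd(\tau)\asymp D(\tau)$ whenever $D(\tau)\gtrsim w(\tau)$; the complementary range, in which $D(\tau)$ stays bounded, is disposed of by noting that a compact convex smooth flow has uniformly bounded, nondegenerate geometry away from its singular time, while near that time $\bar M_\tau$ is close to the round sphere $S^n(\sqrt{2n})$ by Huisken's theorem, so \eqref{eq-comparable} holds there by inspection. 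Finally, since each tip is a fixed point of the $O(n)$--action it moves along the axis of symmetry, and differentiating its position using \eqref{eq-RMCF} gives the tip ODE
\begin{equation}\label{eq-tip-ode-plan}
\bd'(\tau)=\tfrac12\,\bd(\tau)-2\,\bH_{\mathrm{tip}}(\tau),
\end{equation}
where $\bH_{\mathrm{tip}}(\tau)$ denotes the mean curvature at either tip.

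\emph{Width and area.} First I would prove the uniform bound $c_0\le w(\tau)\le C_0$. A compact ancient noncollapsed flow is convex, and its parabolic blow--down as $\tau\to-\infty$ is an $O(1)\times O(n)$--symmetric noncollapsed self--shrinker, hence either $S^n(\sqrt{2n})$ -- in which case $\{M_t\}$ is the shrinking sphere and \eqref{eq-comparable} is immediate -- or the round cylinder $S^{n-1}(\sqrt{2(n-1)})\times\R$. In the cylindrical case $u(\cdot,\tau)\to\sqrt{2(n-1)}$ in $C^\infty_{\mathrm{loc}}$ and $D(\tau)\to\infty$, which together with the bounded--$D$ range yields the two--sided bound on $w$ and $\bd\asymp D$ for all $\tau$. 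The area estimates then follow from convexity: $\bA(\tau)=\omega_{n-1}\int_{-D}^{D}u^{n-1}\sqrt{1+u_y^2}\,dy$ with $\omega_{n-1}=|S^{n-1}|$; concavity with $u(0)=w$ and $u(\pm D)=0$ gives $u(y)\ge w(1-|y|/D)\ge w/2$ on $|y|\le D/2$, hence $\bA\ge\omega_{n-1}(w/2)^{n-1}D\ge c\,\bd$; and $\bA\le\omega_{n-1}w^{n-1}L(\tau)$, where $L(\tau)\le 2D+2w$ is the length of the profile curve (a graph, monotone on each half), hence $\bA\le C\,\bd$.

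\emph{Curvature.} I would obtain the upper bound starting at the tips. By Hamilton's differential Harnack inequality an ancient convex mean curvature flow satisfies $\partial_tH\ge0$ pointwise along the flow, so $H_{\mathrm{tip}}(t)$ is nondecreasing in $t$; rescaled, this says $\bH_{\mathrm{tip}}'(\tau)\ge-\tfrac12\bH_{\mathrm{tip}}(\tau)$, which with \eqref{eq-tip-ode-plan} forces $\bd''(\tau)\le\tfrac14\bd(\tau)$. Consequently $e^{-\tau/2}\bigl(\bd-2\bH_{\mathrm{tip}}\bigr)=e^{-\tau/2}\bigl(\bd'+\tfrac12\bd\bigr)$ is nonincreasing in $\tau$, and since it is positive as $\tau\to+\infty$ (where $\bar M_\tau$ is nearly a round shrinking sphere), it is positive throughout, i.e.\ $\bH_{\mathrm{tip}}(\tau)\le\tfrac12\bd(\tau)$. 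Together with the facts that $\bhm$ is attained at (or, up to the gradient estimate $|\nabla A|\le C\bH^2$ for noncollapsed flows, near) a tip once $\tau$ is negative, and that $\bH$ at the equator, $=|u_{yy}(0)|+(n-1)/w$, is bounded by the width estimate and convexity, this gives $\bhm(\tau)\le\bd(\tau)$. For the lower bound, \eqref{eq-tip-ode-plan} gives $\bhm(\tau)\ge\bH_{\mathrm{tip}}(\tau)=\tfrac14\bd(\tau)-\tfrac12\bd'(\tau)$, so it suffices to prove $\bd'(\tau)\le\bigl(\tfrac12-\eps\bigr)\bd(\tau)$ for some fixed $\eps>0$ -- equivalently, that the tip is asymptotically sharp, with curvature radius $O(\bd^{-1})$.

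\emph{Main obstacle.} This last point is the heart of the matter, and it cannot follow from noncollapsedness alone: a thin capsule is noncollapsed yet has $\max H\gg\diam$ and a blunt tip, so the bound genuinely uses that $\bar M_\tau$ is a rescaled \emph{ancient} solution. I would prove it by a barrier argument at the tip scale -- comparing $\bar M_\tau$ near a tip with suitable shrinking spheres, or with the bowl soliton at the appropriate scale, to force $\bH_{\mathrm{tip}}(\tau)\gtrsim\bd(\tau)$ -- the real work being to make these comparisons uniform as $\tau\to-\infty$. By contrast, the reduction above and the remaining inequalities are comparatively soft.
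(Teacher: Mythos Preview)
Your soft parts are fine, and in places cleaner than the paper's: your area lower bound via concavity ($u(y)\ge w(1-|y|/D)\ge w/2$ on $|y|\le D/2$) is more direct than what the paper actually does, which routes through the intermediate--region asymptotics. Your upper bound $\bH_{\mathrm{tip}}\le\tfrac12\bd$ via the monotonicity of $e^{-\tau/2}(\bd-2\bH_{\mathrm{tip}})$ is equivalent to the paper's Corollary~\ref{cor-Hmax-less-than-diam}, which simply integrates $d'(t)=-H_{\max}(t)$ with Harnack to get $d(t)\ge(-t)H_{\max}(t)$. One point you glide over: that $\bhm$ is actually attained \emph{at} the tip is not free --- the paper proves it (Lemma~\ref{lem-Hmax-at-tip}) via a maximum--principle argument on $R=\lambda_n/\lambda_1\le 1$ (Lemma~\ref{lem-R}), and this is where the Haslhofer--Kleiner gradient estimate enters. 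Your aside about the capsule also has the inequality backwards: a long thin capsule of fixed radius has $\max H$ bounded while $\diam\to\infty$, so it witnesses the failure of the \emph{lower} bound, as you intend.

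On the hard part you are right that $\bhm\gtrsim\bd$ is where the content lies, but your sketch does not supply it, and the paper's route is not a tip--scale barrier at all. In the paper Theorem~\ref{thm-comparable-geometry} is effectively a \emph{corollary} of the main asymptotic analysis rather than a preliminary to it: one first proves the parabolic--region expansion (Section~\ref{sec-parabolic}) and the intermediate--region convergence (Section~\ref{sec-inter}), which together pin down $\bd(\tau)=\sqrt{2|\tau|}\,(1+o(1))$ (Corollary~\ref{cor-asymp-diam}). Only then, in Section~\ref{sec-tip}, does one integrate the tip ODE $\bd'-\tfrac12\bd=-\bhm$ from $\tau$ to $\tau_0$, split the integral at $\tau+A$, use the two--sided bound $\bd\asymp\sqrt{|\tau|}$ to absorb the tail $\int_{\tau+A}^{\tau_0}$ into $\tfrac12\bd(\tau)$ for $A$ large, and use the Harnack monotonicity of $e^{\sigma/2}\bhm(\sigma)$ on the short window $[\tau,\tau+A]$ to obtain $\bd(\tau)\le C\,\bhm(\tau+A)$; one more application of $|\bd'|\le\tfrac12\bd$ then gives $\bd(\tau+A)\le C\,\bhm(\tau+A)$. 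So the proof order is the reverse of what the statement suggests: the diameter asymptotics come first, and $\bhm\gtrsim\bd$ falls out afterwards. A direct barrier argument at the tip, uniform as $\tau\to-\infty$, would have to produce this without the global asymptotics, and you have not indicated how to do that.
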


Corollary 6.3 in \cite{W} implies that the dilations $\{\bar{X}\in \R^{n+1}\,\,\, |\,\,\, (-t)^{1/2} \bar{X} \in M_t\}$, of hypersurfaces $M_t$ which evolve by \eqref{eq-mcf} and which satisfy conditions of Theorem \ref{thm-comparable-geometry} that {\it sweep out the whole space}, converge as $t\to -\infty$
\begin{enumerate}
\item[(a)] to either a sphere of radius $\sqrt{2n}$ or
\item[(b)] a cylinder $S^{n-1}\times \R$, where $S^{n-1}$ is a sphere of radius $\sqrt{2(n-1)}$.
\end{enumerate}

In the present paper we show that any compact convex ancient solution to \eqref{eq-mcf} as in Theorem \ref{thm-comparable-geometry} has unique asymptotics as $t\to -\infty$.  We hope to use that to eventually prove Conjecture \ref{conj-uniqueness}.  More precisely, we show that the following holds.

\begin{theorem}
\label{thm-asymptotics}
Let $\{M_t\}$ be any compact smooth noncollapsed ancient mean curvature flow as in Theorem \ref{thm-comparable-geometry}.  Then, either $M_t$ is a family of contracting spheres or the solution $u(y,\tau)$ to \eqref{eq-u}, defined on $\mathbb{R}\times \mathbb{R}$, has the following asymptotics in the parabolic and the intermediate region:
\begin{enumerate}
\item[(i)] {\bf Parabolic region:} For every $M > 0$,
\[
u(y,\tau) = \sqrt{2(n-1)} \Bigl(1 - \frac{y^2 - 2}{4|\tau|}\Bigr) + o(|\tau|^{-1}), \qquad |y| \le M
\]
as $\tau \to -\infty$.
\item[(ii)] {\bf Intermediate region:} Define $z := {y}/{\sqrt{|\tau|}}$ and $\bar{u}(z,\tau) := u(z\sqrt{|\tau|}, \tau)$.  Then, $\bar{u}(z,\tau)$ converges, as $\tau\to -\infty$ and uniformly on compact subsets in $z$, to the function $\sqrt{2 - z^2}$.

\item[(iii)] {\bf Tip region:} Denote by $p_t$ the tip of $M_t \subset \mathbb{R}^{n+1}$, and for any $t_*<0$ we define the rescaled flow
\[
\tilde{M}_{t_*}(t) = \lambda(t_*) \bigl(M_{t_* + t \lambda(t_*)^{-2}} - p _{t_*}\bigr)
\]
where $\lambda(t) := H(p_{t}, t) = H_{\max}(t)$.  Then, as $t_*\to-\infty$, the family of solutions $M_{t_*}(\cdot)$ to MCF converges to the unique Bowl soliton, i.e.~the unique rotationally symmetric translating soliton with velocity one.
\end{enumerate}
\end{theorem}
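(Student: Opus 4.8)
The plan is to dispose of a degenerate case and then analyse the three regions in turn. By Corollary~6.3 of \cite{W}, the rescaled flow $\bar M_\tau$ sub-converges, as $\tau\to-\infty$, either to the round sphere of radius $\sqrt{2n}$ or to the cylinder $S^{n-1}\times\R$ of radius $\sqrt{2(n-1)}$. In the sphere case, rigidity of ancient solutions asymptotic to the shrinking sphere forces $M_t$ to be the family of shrinking spheres, and we are done; so assume $\bar M_\tau$ sub-converges to the cylinder. By Theorem~\ref{thm-comparable-geometry} we then have $\bd(\tau)\to\infty$ and $\bhm(\tau)\ge c\,\bd(\tau)\to\infty$, which drives the blow-up arguments below.

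\emph{Parabolic region.} Write $u=\sqrt{2(n-1)}+v$ and linearise \eqref{eq-u} about the constant $\sqrt{2(n-1)}$; one obtains the Ornstein--Uhlenbeck operator $\cL=\pd_y^2-\tfrac y2\,\pd_y+1$, self-adjoint on $L^2(e^{-y^2/4}\,dy)$, with eigenvalues $1-\tfrac k2$ and Hermite eigenfunctions. The $O(1)$ symmetry kills the odd modes, so the neutral mode is $\psi_2=y^2-2$ and the only surviving positive-eigenvalue mode is the constant. I would first show $\|v(\cdot,\tau)\|\to 0$ in the weighted norm (from the cylinder convergence), then apply a Merle--Zaag-type dichotomy to the projections of $v$ onto the positive, zero, and negative parts of the spectrum: a dominant constant mode is excluded since it would make the solution asymptotic to an exact round cylinder and hence, by backwards uniqueness, equal to that non-compact cylinder. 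Thus $v$ is asymptotically governed by its neutral part, $v=a(\tau)(y^2-2)+o(a(\tau))$ with $a(\tau)\to 0$; projecting \eqref{eq-u} onto $\psi_2$ and extracting the leading quadratic term from the expansions of $-(n-1)/u+u/2$ and of $u_{yy}/(1+u_y^2)$ yields $a'(\tau)=-c_0\,a(\tau)^2+(\text{h.o.t.})$ with $c_0=4/\sqrt{2(n-1)}>0$, so $a(\tau)\sim-\sqrt{2(n-1)}/(4|\tau|)$; substituting back gives precisely (i). The delicate point is that $u$ is defined only on $[-\bd(\tau),\bd(\tau)]$ with $u_y\to-\infty$ at the tips, so the spectral decomposition and all weighted integrals must be carried out after a cutoff near the tips; the cutoff errors are negligible because $\bd(\tau)^2\sim 2|\tau|$ makes $e^{-\bd(\tau)^2/4}$ exponentially small in $|\tau|$, dominating the polynomially bounded tip contributions.

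\emph{Intermediate and tip regions.} With $z=y/\sqrt{|\tau|}$ and $\bu(z,\tau)=u(z\sqrt{|\tau|},\tau)$, substituting into \eqref{eq-u} shows the $\tau$-derivative and $u_{yy}$ terms become lower order, the surviving balance being a first-order equation in $z$ whose relevant solution is $\sqrt{2-z^2}$; I would pin down this solution using (i) as matching data near $z=0$ and barriers built from shrinking spheres and cylinders (via the avoidance principle together with convexity and noncollapsedness), with the concavity $u_{yy}\le 0$ and Theorem~\ref{thm-comparable-geometry} supplying the compactness to pass to the limit --- this is (ii). For the tip, set $\lambda(t_*)=H(p_{t_*},t_*)=H_{\max}(t_*)\to\infty$; a local curvature estimate together with noncollapsedness shows the curvature of $\tilde M_{t_*}(\cdot)$ stays uniformly bounded on time intervals whose lengths tend to $\infty$ in both directions (ancientness gives the backward direction), so a standard compactness argument --- convexity plus $\alpha$-noncollapsedness yielding uniform $C^\infty$ bounds --- produces, along subsequences, a complete convex noncollapsed \emph{eternal} limit flow whose mean curvature attains its spatial maximum at the interior point corresponding to the tip at time $0$. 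By Hamilton's differential Harnack inequality and its rigidity case such a limit is a translating soliton, it inherits rotational symmetry from the $O(n)$ symmetry, and by uniqueness of the rotationally symmetric translator with prescribed speed (speed $1$ here) it is the Bowl soliton; since every subsequential limit is the same Bowl, the full family converges, giving (iii).

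\emph{Main obstacle.} I expect the crux to be the parabolic-region analysis: proving that the neutral Hermite mode genuinely dominates --- the Merle--Zaag dichotomy together with the exclusion of the constant mode --- and extracting the sharp $|\tau|^{-1}$ rate from the neutral-mode ODE, all while rigorously controlling the tip contributions to the Gaussian-weighted norms on which the spectral decomposition rests. A secondary difficulty is making the matched-asymptotics picture rigorous, i.e.\ verifying that the parabolic, intermediate, and tip expansions overlap consistently and together describe $M_t$ on its whole domain.
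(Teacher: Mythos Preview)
Your overall architecture matches the paper's, but there are two genuine gaps in the parabolic-region analysis, and one of them is the technical heart of the paper.

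\textbf{Exclusion of the constant mode.} Your argument that a dominant $\psi_0$-mode would force the solution to equal the cylinder via backwards uniqueness does not work. If the constant mode dominates, one finds $v\sim K e^{\tau}$ as $\tau\to-\infty$ for some $K\ne 0$; this still decays, so $\bar M_\tau$ still converges to the cylinder---which was already the hypothesis. There is no time at which the compact solution agrees with the noncompact cylinder, so backwards uniqueness gives nothing. The paper's exclusion is quite different: assuming $V_+$ dominates, one proves the precise rate $V_+\sim Ke^\tau$, then observes that replacing the blow-up time $T$ by $T+K$ in the rescaling \eqref{eq-type-1-blow-up} kills this leading term; the new rescaling $\hat u$ must then have $\hat V_0$ dominant, and a short contradiction using \eqref{eq-V0-slow-growth} finishes. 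The freedom in the choice of $T$ is the missing idea.

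\textbf{Control of the cutoff errors.} Your claim that the cutoff terms are negligible because $e^{-\bd(\tau)^2/4}$ is exponentially small in $|\tau|$ is circular: at this stage nothing is known about $\bd(\tau)$ beyond $\bd(\tau)\to\infty$ and the trivial bound $|\bd'|\le\tfrac12\bd$; the relation $\bd(\tau)^2\sim 2|\tau|$ is a \emph{consequence} of the parabolic asymptotics, not an input. The paper resolves this with its key technical device, the \emph{inner--outer estimate} (Lemma~\ref{lem-inner-outer} and Corollary~\ref{cor-inner-outer}): using a calibration by a foliation of rotationally symmetric self-shrinkers $\Sigma_a$, $\tS_b$ and the monotonicity $\hu(\bar M_\tau)\le\hu(\Sigma)$, one proves that for any convex $\Gamma$ close to the cylinder on $|y|\le 4L$,
\[
\int_L^{2L} v^2\,e^{-y^2/4}\,dy \le \frac{C}{L^2}\int_0^L v^2\,e^{-y^2/4}\,dy.
\]
This is what makes the cutoff at $\ell(\tau)=\bd(\tau)^{1/3}$ produce errors of size $o(\|\bv\|)$ without any a priori knowledge of $\bd(\tau)$. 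The same foliation also supplies the lower barriers (Lemma~\ref{lem-lower-bound-for-ancient-solutions}, Corollary~\ref{cor-diam-lower}) that turn the parabolic expansion into the bound $\bd(\tau)\ge c\sqrt{|\tau|}$---these barriers are self-shrinkers, not spheres or cylinders as you suggest for the intermediate region. Your proposal does not contain any mechanism that plays this role, and I do not see how the Merle--Zaag step or the neutral-mode ODE can be made rigorous without it.

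Your treatment of the intermediate region (first-order balance plus matching) and the tip region (Harnack rigidity giving a translator, hence the Bowl) is in line with the paper; for the tip the paper also needs the precise asymptotic $\bhm(\tau)/\sqrt{|\tau|}\to 1/\sqrt{2}$ to guarantee the limit is eternal with curvature attaining its maximum, which you should add.
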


As a consequence of Theorem \ref{thm-comparable-geometry} and Theorem \ref{thm-asymptotics} we have the following corollary.

\begin{corollary}
\label{cor-exact-diam}
Let $\{M_t\}$ be any compact smooth noncollapsed ancient mean curvature flow as in Theorem \ref{thm-comparable-geometry}.  Then there exist uniform constants $c, C > 0$ and $\tau_0 <0$ so that
\[
c\, \sqrt{|\tau|} \le \bd(\tau) \le C\, \sqrt{|\tau|}, \qquad \tau \leq \tau_0.
\]
\end{corollary}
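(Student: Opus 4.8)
The plan is to read off the extrinsic diameter of $\bar M_\tau$ directly from the asymptotics in Theorem~\ref{thm-asymptotics}. Let $u(\cdot,\tau)$ be the profile of $\bar M_\tau$, defined on an interval $[-\ell(\tau),\ell(\tau)]$, with $u$ even, concave (by convexity of $\bar M_\tau$), positive on $(-\ell(\tau),\ell(\tau))$, and $u(\pm\ell(\tau),\tau)=0$, so that the tips of $\bar M_\tau$ are $(\pm\ell(\tau),0)$. Since every point of $\bar M_\tau$ has $|y|\le\ell(\tau)$ and distance at most $\max_y u(y,\tau)=u(0,\tau)$ from the $y$-axis, one has $2\ell(\tau)\le\bd(\tau)\le 2\ell(\tau)+2u(0,\tau)$; and $u(0,\tau)\to\sqrt{2(n-1)}$ by Theorem~\ref{thm-asymptotics}(i). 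Hence it suffices to find $c',C'>0$ and $\tau_0<0$ with $c'\sqrt{|\tau|}\le\ell(\tau)\le C'\sqrt{|\tau|}$ for $\tau\le\tau_0$. For the lower bound I fix any $z_0\in(0,\sqrt2)$: by Theorem~\ref{thm-asymptotics}(ii), $\bu(z_0,\tau)=u\bigl(z_0\sqrt{|\tau|},\tau\bigr)\to\sqrt{2-z_0^2}>0$, so for $\tau$ sufficiently negative the value $u\bigl(z_0\sqrt{|\tau|},\tau\bigr)$ is defined and positive, which forces $z_0\sqrt{|\tau|}\in(-\ell(\tau),\ell(\tau))$, i.e. $\ell(\tau)\ge z_0\sqrt{|\tau|}$.

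For the upper bound I would combine the intermediate--region limit with concavity of the profile. Fix $z_1\in(0,\sqrt2)$ and set $y_1=y_1(\tau):=z_1\sqrt{|\tau|}$. The map $z\mapsto\bu(z,\tau)$ is smooth and concave, and by part~(ii) it converges to $\sqrt{2-z^2}$, which is differentiable at $z_1$; by the elementary fact that pointwise limits of concave functions have convergent derivatives at points where the limit is differentiable, $\bu_z(z_1,\tau)\to -z_1/\sqrt{2-z_1^2}<0$ as $\tau\to-\infty$. The tangent line to the concave function $y\mapsto u(y,\tau)$ at $y_1$ lies above its graph and, having slope $u_y(y_1,\tau)<0$, vanishes at $y=y_1+u(y_1,\tau)/|u_y(y_1,\tau)|$; since $u\ge 0$ and $u(\ell(\tau),\tau)=0$, this gives $\ell(\tau)\le y_1+u(y_1,\tau)/|u_y(y_1,\tau)|$. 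Using $u_y(y_1,\tau)=\bu_z(z_1,\tau)/\sqrt{|\tau|}$ and letting $\tau\to-\infty$,
\[
\limsup_{\tau\to-\infty}\frac{\ell(\tau)}{\sqrt{|\tau|}}\;\le\;z_1+\frac{2-z_1^2}{z_1}\;=\;\frac{2}{z_1}\,,
\]
and letting $z_1\uparrow\sqrt2$ yields $\limsup_{\tau\to-\infty}\ell(\tau)/\sqrt{|\tau|}\le\sqrt2$. Thus $\ell(\tau)\le 2\sqrt{|\tau|}$ for all $\tau$ below some $\tau_0$, which together with the lower bound proves the corollary.

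I do not expect a genuine obstacle here: given Theorem~\ref{thm-asymptotics} the corollary is essentially bookkeeping, and the only points that need a little care are the derivative convergence $\bu_z(z_1,\tau)\to(\sqrt{2-z^2})'(z_1)$ --- which is why I route the argument through concavity rather than an a priori $C^1$ version of part~(ii) --- and the comparison of $\bd(\tau)$ with the half-length $2\ell(\tau)$, for which one only needs the profile to stay bounded, as supplied by part~(i). Alternatively the upper bound can be obtained from the tip asymptotics, Theorem~\ref{thm-asymptotics}(iii): after rescaling by $\bhm(\tau)$ the surface $\bar M_\tau$ is, near its tip, close to the Bowl soliton, whose graph over a ball of fixed radius has bounded height, so the tip lies only a bounded $\bhm(\tau)^{-1}$-multiple beyond the point where the profile first drops to a fixed small value --- a location controlled by part~(ii). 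Finally, feeding $\bd(\tau)\sim\sqrt{|\tau|}$ back into Theorem~\ref{thm-comparable-geometry} shows that $\bhm(\tau)$ and $\bA(\tau)$ are comparable to $\sqrt{|\tau|}$ as well.
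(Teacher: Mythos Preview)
Your argument is correct and follows essentially the same route as the paper, which in Corollary~\ref{cor-asymp-diam} reads the tip position $\bd(\tau)$ directly off the intermediate--region asymptotics. Your lower bound is identical in spirit: once $\bar u(z_0,\tau)>0$ for some fixed $z_0<\sqrt{2}$, the tip lies beyond $z_0\sqrt{|\tau|}$. For the upper bound the paper invokes the explicit sub/supersolution inequalities \eqref{eq-intermediate-lowerbound}--\eqref{eq-intermediate-upperbound} obtained inside the proof of Lemma~\ref{lem-inter}, whereas you work in a black--box fashion from the $C^0$ statement of Theorem~\ref{thm-asymptotics}(ii), upgrade it to derivative convergence at an interior point via concavity, and use the tangent line. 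This is a clean variant rather than a genuinely different idea; it has the small advantage of using only the \emph{statement} of part~(ii), and in fact your computation $\limsup \ell(\tau)/\sqrt{|\tau|}\le 2/z_1\to\sqrt{2}$ together with $\liminf\ge z_0$ recovers the sharp asymptotic $\bd(\tau)=\sqrt{2|\tau|}(1+o(1))$ of Corollary~\ref{cor-asymp-diam}. One cosmetic point: in the paper $\bd(\tau)$ already denotes the position of the tip (the half--length of the profile's domain), so your intermediate step comparing $\bd(\tau)$ to $2\ell(\tau)$ is unnecessary.
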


\smallskip
\subsection{Outline of the paper}
\label{sec-outline}
In section \ref{sec-sphere} we give a characterization of the sphere via backward limits. In section \ref{sec-geometric-prop} we show a few geometric properties of ancient solutions one of which is that the maximum of $H$ occurs at the tip. In section \ref{sec-apriori} we first show some a priori derivative estimates for our solution and then we construct lower barriers for our solution which turn out to play a crucial role in bounding the diameter $\bar{d}(\tau)$ from below by $C\sqrt{|\tau|}$.  In the same section we show the key estimate that holds for any hypersurface which, on a long piece, can be written as a graph over a cylinder and whose Huisken integral is below the Huisken integral of the same cylinder. The key estimate is actually a quantitative version of the closeness statement of that hypersurface to the cylinder. In section \ref{sec-parabolic} we prove the asymptotics of the parabolic region. In section \ref{sec-inter} we show the asymptotics in the intermediate region. In section \ref{sec-tip} we give a description of the tip region in terms of its blow-up limit. In section \ref{sec-constructing-the-foliation} we give a detailed construction of the minimizing foliation which has been used in the proof of the key estimate and in the construction of the lower barriers for our solution.

\section{Characterization of the sphere via backward limits}
\label{sec-sphere}
Let $M_t$ be an ancient convex solution of MCF that sweeps out all of $\R^{n+1}$, and define the rescaled hypersurfaces $\bar M_\tau$ as in \eqref{eq-type-1-blow-up}.  Then White showed in \cite[corollary 6.3]{W} that the blow-ups $\bar M_\tau$ converge as $\tau\to -\infty$
\begin{enumerate}
\item[(a)] either to a sphere of radius $\sqrt{2n}$ or
\item[(b)] or to a cylinder $S^{n-1}\times \R$, where $S^{n-1}$ is a sphere of radius $\sqrt{2(n-1)}$.
\end{enumerate}
White's result includes other possible limits, all of the form $S^k\times\R^{n-k} \subset \R^{n+1}$, but none of these are compatible with the $O(1)\times O(n)$ symmetry, which we assume.

\begin{lemma}
\label{lem-only-cylinders}
If $M_t$ is an $O(n)\times O(1)$ invariant ancient convex noncollapsed solution of MCF, then either $M_t$ is the spherical soliton, or the type-I blow-ups $\bar{M}_\tau$ converge to the cylinder $S^{n-1}\times\R$ as $\tau\to-\infty$.
\end{lemma}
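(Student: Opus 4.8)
I would prove this by combining the backward-limit dichotomy recalled just above with a rigidity argument based on Huisken's monotonicity formula. By that dichotomy --- White's Corollary 6.3 together with the $O(1)\times O(n)$ symmetry, which excludes all limits $S^k\times\R^{n-k}$ except $k=n$ and $k=n-1$ --- the rescaled surfaces $\bar M_\tau$ converge as $\tau\to-\infty$ either (a) to the round sphere $S^n$ of radius $\sqrt{2n}$, or (b) to the cylinder $S^{n-1}_{\sqrt{2(n-1)}}\times\R$. It suffices to rule out case (a) unless $M_t$ is the shrinking sphere. So assume (a). The first step is to record the behaviour at the other end of the time interval: since $M_t$ is compact and convex, Huisken's theorem \cite{Hu} says it contracts to a round point as $t\to T$, and by the $O(1)\times O(n)$ symmetry that point is the origin (the unique fixed point of the symmetry group); hence the same rescaling $\bar M_\tau$ also converges, smoothly, to $S^n_{\sqrt{2n}}$ as $\tau\to+\infty$.

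Next I would apply Huisken's monotonicity formula to the rescaled flow \eqref{eq-RMCF}. With
\[
\Theta(\tau):=(4\pi)^{-n/2}\int_{\bar M_\tau}e^{-|\vX|^2/4}\,d\mu ,
\]
one has
\[
\frac{d}{d\tau}\,\Theta(\tau)=-(4\pi)^{-n/2}\int_{\bar M_\tau}\Bigl|H+\tfrac12\,\vX\cdot\nm\Bigr|^{2}\,e^{-|\vX|^2/4}\,d\mu\ \le\ 0 ,
\]
so $\Theta$ is nonincreasing on $\R$. But the two limits just established give $\lim_{\tau\to-\infty}\Theta(\tau)=\lim_{\tau\to+\infty}\Theta(\tau)=\Theta(S^n_{\sqrt{2n}})$, so $\Theta$ must be constant. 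Then the right-hand side above vanishes for every $\tau$, which forces $H+\tfrac12\vX\cdot\nm\equiv0$ on $\bar M_\tau$; that is, every $\bar M_\tau$ is a closed self-shrinker. Since $\bar M_\tau$ is smooth, compact and convex, and the shrinking sphere is the only compact convex self-similar solution of MCF (as recalled in the introduction), we conclude $\bar M_\tau=S^n_{\sqrt{2n}}$ for all $\tau$, i.e.\ $M_t=\sqrt{2n(T-t)}\,S^n$ is the spherical soliton.

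The step with real content is the equality case of Huisken's monotonicity formula; the rest is bookkeeping. The points I would take care over are the applicability of White's Corollary 6.3 (it is stated for flows sweeping out $\R^{n+1}$, so one must know our solutions do --- routine for compact convex ancient flows, but worth a line) and the fact that the equality case requires $\bar M_\tau$ to be smooth for all $\tau$, which it is. I do not expect a deeper obstacle. If one wished to avoid the monotonicity argument, an alternative would be to note that $S^n_{\sqrt{2n}}$ is a hyperbolic fixed point of the rescaled flow whose local unstable manifold is exactly the family of translated and rescaled shrinking spheres, so that any rescaled ancient flow converging to it as $\tau\to-\infty$ lies on that unstable manifold and is again a shrinking sphere.
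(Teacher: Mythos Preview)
Your proposal is correct and follows essentially the same route as the paper: reduce to the sphere-limit case via White's dichotomy, use Huisken's forward convergence to the sphere together with Huisken's monotonicity formula to force $\Theta(\tau)$ to be constant, and conclude that each $\bar M_\tau$ is a compact convex self-shrinker, hence the round sphere. The paper separates out the uniform bound $\hu(M)\le C_n$ for convex $M$ as a preliminary lemma, but your argument does not actually need it since the two endpoint limits of $\Theta$ are already identified.
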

To prove the lemma, we first recall that Huisken's integral for hypersurfaces $M\subset\R^{n+1}$ is given by
\[
\hu({M}) := (4\pi)^{-n/2}\, \int_{{M}} e^{-\|{X}\|^2/4}\, d\bar{\mu}.
\]
and
\begin{equation}
\label{eq-mon-huisken}
\frac{d}{d\tau} \hu(\bar{M}_\tau)
= -(4\pi)^{-n/2}\int _{\bar{M}_{\tau}}\, 
e^{-\|\bar{X}\|^2/4}\,
\left\|\bar{H} - \tfrac{1}{2}\langle \bar{X}, \bar{\mu} \rangle\right\|^2 \,
d\bar{\mu}.
\end{equation}
%% In our setting, where we have rotationally symmetric hypersurfaces in $\R^{n+1}$, Huisken's integral just becomes
%% \[
%% \hu(\bar{M}_\tau) = (4\pi)^{-\frac n2}\, \int_{-\bd(\tau)}^{\bd(\tau)} e^{-(y^2 + u^2)/4} \, u^{n-1}\, \sqrt{1+u_y^2}\, dy
%% \]
%% where $\bd(\tau)$ is the $y$ coordinate of the tip of the hypersurface, namely $u(\bd(\tau),\tau) = u(-\bd(\tau),\tau) = 0$.

First we show that $\hu(\bar{M}_\tau)$ is uniformly bounded for all $\tau\in (-\infty,\infty)$.  More precisely, we have the following lemma.

\begin{lemma}
\label{lem-upp-bound}
There is a universal constant $C_n<\infty$ such that $\hu(M)\leq C_n$ for every convex hypersurface $M\subset \R^{n+1}$.

\end{lemma}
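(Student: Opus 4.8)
The plan is to bound the unnormalized Gaussian area $\int_M e^{-\|X\|^2/4}\,d\bar\mu$ by first controlling how much of $M$ can sit inside a ball $B_r$ of radius $r$ about the origin, and then integrating that estimate against the Gaussian weight via a layer‑cake decomposition. Write $M=\pd K$ with $K$ closed convex. The first step is the elementary claim that for every $r>0$,
\[
\area\bigl(M\cap B_r\bigr)\le \area(\pd B_r)=\omega_n\, r^n ,
\]
where $\omega_n:=\area(S^n)$ and $\area$ denotes $n$-dimensional Hausdorff measure. Indeed, set $K_r:=K\cap\bar B_r$, a compact convex set contained in $\bar B_r$. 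Any point of $M\cap B_r$ lies in $\pd K$ and hence is not interior to $K$, so it is not interior to $K_r\subseteq K$ and therefore lies in $\pd K_r$; thus $M\cap B_r\subset\pd K_r$ and it suffices to bound $\area(\pd K_r)$. This is the classical monotonicity of surface area under inclusion of convex bodies: the nearest‑point projection $\pi_{K_r}\colon\R^{n+1}\to K_r$ is $1$-Lipschitz, and it maps $\pd B_r$ onto $\pd K_r$ — given $y\in\pd K_r$, move from $y$ along an outward support direction of $K_r$ until leaving $\bar B_r$, and the exit point projects back to $y$ — so $\area(\pd K_r)\le\area\bigl(\pi_{K_r}(\pd B_r)\bigr)\le\area(\pd B_r)$. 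Note this uses only convexity, no curvature bound and no smoothness, and it applies verbatim when $M$ is non‑compact since $K_r$ is always a bounded convex body.

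For the second step, let $f(r):=\area(M\cap B_r)$, a nondecreasing function with $f(r)\le\omega_n r^n$. Using $e^{-\|X\|^2/4}=\int_{\|X\|}^{\infty}\tfrac r2 e^{-r^2/4}\,dr$ and Tonelli's theorem,
\[
\int_{M}e^{-\|X\|^2/4}\,d\bar\mu
=\int_0^{\infty}\frac r2\, e^{-r^2/4}\,f(r)\,dr
\le \omega_n\int_0^{\infty}\frac{r^{\,n+1}}{2}\, e^{-r^2/4}\,dr
=\omega_n\, 2^{\,n}\,\Gamma\!\Bigl(\tfrac n2+1\Bigr),
\]
where the last integral is evaluated by the substitution $s=r^2/4$. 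Dividing by $(4\pi)^{n/2}$ gives
\[
\hu(M)\le (4\pi)^{-n/2}\,\omega_n\,2^{\,n}\,\Gamma\!\Bigl(\tfrac n2+1\Bigr)=:C_n<\infty,
\]
which is the assertion, with an explicit dimensional constant.

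The only genuinely nontrivial ingredient is the surface‑area monotonicity used in Step~1; the rest is a routine computation. If one prefers not to invoke that fact as a black box, the weaker bound $f(r)\le C_n r^n$ (which is all that is needed) also follows from the monotonicity of the density ratio $r^{-n}\area(M\cap B_r)$ for boundaries of convex sets, or from a direct argument that the orthogonal projection of $M$ onto a hyperplane through the origin is at most $2$-to-$1$. The points requiring care in the write‑up are purely measure‑theoretic: that $M\cap B_r\subset\pd K_r$ and the identity $\area(M\cap B_r)=f(r)$ hold up to $\area$‑null sets, and the justification of the Fubini/Tonelli interchange; none of these present a serious obstacle.
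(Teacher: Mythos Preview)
Your proof is correct, and it takes a genuinely different route from the paper's argument. The paper decomposes $M$ via the Gauss map: it covers $S^n$ by finitely many caps $\omega_i$ on each of which some fixed unit vector $a_i$ satisfies $\langle a_i,\nm\rangle\ge\tfrac12$, writes the corresponding piece $U_i=\{\,p\in M:\nm(p)\in\omega_i\,\}$ as a graph $x_{n+1}=h(x')$ with $\sqrt{1+|\nabla h|^2}\le 2$, and then bounds $\int_{U_i}e^{-\|X\|^2/4}\,d\mu\le 2(4\pi)^{n/2}$ directly; summing over the $N_n$ pieces gives $\hu(M)\le 2N_n$. Your argument instead controls $\area(M\cap B_r)$ globally by the surface-area monotonicity $\area(\pd K_r)\le\area(\pd B_r)$ for nested convex bodies, and then integrates against the Gaussian via the layer-cake identity.

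What each approach buys: your route produces an explicit dimensional constant $C_n=(4\pi)^{-n/2}\omega_n\,2^n\,\Gamma(\tfrac n2+1)$, requires no smoothness of $M$ (no Gauss map), and handles non-compact $M$ without further comment. The paper's route is more self-contained in that it avoids quoting the Lipschitz property of the metric projection onto convex sets, trading that for a covering number $N_n$ that is not made explicit. Both are short and elementary; yours is arguably cleaner and yields a sharper statement.
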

\begin{proof}
One can cover $S^n\subset\R^{n+1}$ with a finite number of sets $\omega_i\subset S^n$ such that for each $i$ there is a unit vector $a_i\in\R^{n+1}$ for which $ \langle a_i, \nm \rangle \geq \frac12$ for all $\nm\in \omega_i$.  The required number of sets is bounded by some constant $N_n$ that only depends on the dimension $n$.

The sets $U_i\subset M$ defined by $U_i = \{p\in M | \nm(p)\in \omega_i\}$ form a covering of $M$.
We estimate $\int_{U_i} e^{-\|X\|^2/4}d\mu$.  Without loss of generality we may assume that $a_i=e_{n+1}$, and that $U_i$ is the graph of a function $x_{n+1} = h(x_1, \dots, x_n)$, where $h$ is defined on the projection $U_i'$ of $U_i$ on $\R^n$.  Since $ \langle a_i, \nm(p) \rangle \geq \frac{1}{2}$ on $U_i$, and since
\[
\nm(p) = \frac{1}{\sqrt{1+\|\nabla h\|^2}}
\begin{pmatrix}
-\nabla h \\ 1
\end{pmatrix}
\]
we have
\[
\sqrt{1+\|\nabla h\|^2} \leq 2.
\]
From $X= (x, h(x))$ we get $\|X\|^2 = \|x\|^2 + h(x)^2$, and thus
\begin{align*}
\int_{U_i} e^{-\|X\|^2/4} d\mu
&= \int_{U_i'} e^{-\|x\|^2/4} e^{-h(x)^2/4} \sqrt{1+\|\nabla h\|^2} dx\\
&\leq 2 \int_{U_i'} e^{-\|x\|^2/4} dx \\
&\leq 2 \int_{\R^n} e^{-\|x\|^2/4} dx \\
&= 2 (4\pi)^{n/2}.
\end{align*}
Since $M$ is covered by $N_n$ sets $U_i$, we find that $\hu(M)\leq 2N_n$.
\end{proof}
\begin{prop}
\label{prop-sphere}
If the limit of $\bar{M}_\tau$ as $\tau\to -\infty$ is the sphere of radius $\sqrt{2n}$ then $\bar{M}_\tau$ is the sphere of radius $\sqrt{2n}$ for every $\tau\in (-\infty,\infty)$.
\end{prop}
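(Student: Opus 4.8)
The plan is to exploit the monotonicity formula \eqref{eq-mon-huisken} together with the uniform upper bound from Lemma \ref{lem-upp-bound}. The key observation is that $\hu(\bar M_\tau)$ is nonincreasing in $\tau$ and bounded above by $C_n$, hence it has a limit $L_-$ as $\tau\to-\infty$ and a limit $L_+$ as $\tau\to+\infty$. By the assumption that the backward limit is the round sphere $S^n$ of radius $\sqrt{2n}$, and since convergence of the rescaled flows is smooth on compact sets (so $\hu$ is continuous along this convergence), we get $L_- = \hu(S^n_{\sqrt{2n}})$, the Gaussian density of the round shrinking sphere. I would first record the elementary fact that the sphere of radius $\sqrt{2n}$ is precisely the self-shrinker that realizes this value, i.e.\ it is a stationary point of $\hu$ under rescaled MCF.

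Next I would argue that $L_- = L_+$ forces the solution to be stationary for all $\tau$. Integrating \eqref{eq-mon-huisken} over $(-\infty,\infty)$ gives
\[
\int_{-\infty}^{\infty} (4\pi)^{-n/2}\int_{\bar M_\tau} e^{-\|\bar X\|^2/4}\,\bigl\|\bar H - \tfrac12\langle \bar X,\bar\mu\rangle\bigr\|^2\, d\bar\mu \, d\tau = L_- - L_+.
\]
So the plan is to show $L_+ \ge L_-$ as well, which then makes the integrand vanish identically: $\bar H \equiv \tfrac12\langle\bar X,\bar\mu\rangle$ for every $\tau$, i.e.\ each $\bar M_\tau$ is a self-shrinker. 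To see $L_+\ge L_-$: the forward rescaled flow $\bar M_\tau$, $\tau\to+\infty$, is a rescaling of the original MCF approaching its singular time $T$; by Huisken's monotonicity and the fact that the density is always $\ge 1$ (or by a direct tangent-flow argument at the origin), $L_+ = \lim_{\tau\to\infty}\hu(\bar M_\tau) \ge 1$. But this alone is not enough — I actually want $L_+ = L_-$. The clean way is different: since $\hu$ is nonincreasing, $L_+ \le L_-$; and if $L_+ < L_-$ the solution is not a shrinker, but then I claim the backward limit cannot be the compact sphere. Indeed, for a compact convex ancient solution whose type-I blow-up limit backward in time is $S^n_{\sqrt{2n}}$, I would invoke uniqueness of tangent flows / rigidity: a sequence $\bar M_{\tau_j}\to S^n_{\sqrt{2n}}$ smoothly, combined with the strict monotonicity, is incompatible unless equality holds throughout.

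Alternatively — and this is the route I would actually write out, since it avoids the forward-time subtleties — I would use the following rigidity directly. Suppose $\bar M_{\tau_0}$ is not the round sphere for some $\tau_0$. Then $\bar H - \tfrac12\langle\bar X,\bar\mu\rangle \not\equiv 0$ on $\bar M_{\tau_0}$, since the only compact self-shrinker that is convex (indeed, by Huisken's classification of mean-convex self-shrinkers) is $S^n_{\sqrt{2n}}$. Hence $\frac{d}{d\tau}\hu(\bar M_\tau)\big|_{\tau=\tau_0} < 0$, and by monotonicity $\hu(\bar M_\tau) < \hu(\bar M_{\tau_0}) \le C_n$ for all $\tau < \tau_0$, with the strict gap $\hu(\bar M_{\tau_0}) - \lim_{\tau\to-\infty}\hu(\bar M_\tau) \ge \delta > 0$ for some fixed $\delta$. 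But $\lim_{\tau\to-\infty}\hu(\bar M_\tau) = \hu(S^n_{\sqrt{2n}})$ by hypothesis, while $\hu(\bar M_{\tau_0}) \le \hu(S^n_{\sqrt{2n}})$ as well, because $\hu$ is also nonincreasing to the \emph{future} and in forward time... — this still needs the forward bound. So the genuinely clean statement is: monotonicity gives $\hu(\bar M_\tau)\le \hu(S^n_{\sqrt{2n}})$ is \emph{false} in general; rather $\hu(\bar M_\tau)\ge \hu(S^n_{\sqrt{2n}})$ for all $\tau$ because $\hu$ decreases to $\hu(S^n)$ as $\tau\to-\infty$. Combined with $\hu(\bar M_\tau)\le \hu(\bar M_{\tau'})$ for $\tau\ge\tau'$, letting $\tau'\to-\infty$ yields $\hu(\bar M_\tau)\le \hu(S^n_{\sqrt{2n}})$, and so $\hu(\bar M_\tau)= \hu(S^n_{\sqrt{2n}})$ identically. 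Then \eqref{eq-mon-huisken} forces $\bar M_\tau$ to be a shrinker for every $\tau$, and Huisken's rigidity (the only compact convex shrinker is the round sphere) finishes it.

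The main obstacle, and the step requiring the most care, is the rigidity input: identifying $S^n_{\sqrt{2n}}$ as the \emph{unique} compact convex (or mean-convex) self-shrinker so that "$\bar M_\tau$ is a self-shrinker for all $\tau$" immediately upgrades to "$\bar M_\tau$ is the round sphere for all $\tau$." This is Huisken's theorem on self-shrinkers with $H>0$; one should also note that convexity of $\bar M_\tau$ is preserved (it follows from convexity of $M_t$, since the rescaling \eqref{eq-cv1} is a homothety) so the hypothesis of that classification is met. A secondary point to handle carefully is the continuity of $\hu$ under the smooth-on-compact-subsets convergence $\bar M_\tau\to S^n$: since the limit is compact, the convergence is in fact smooth and global for $\tau$ near $-\infty$, so $\hu(\bar M_\tau)\to\hu(S^n_{\sqrt{2n}})$ with no loss of mass at infinity — this is where compactness of the backward limit (as opposed to the cylinder case) is essential.
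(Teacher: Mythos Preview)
Your proposal contains a genuine gap in the step where you try to avoid the forward-time limit. You correctly deduce from monotonicity that $\hu(\bar M_\tau)\le \hu_- = \hu(S^n_{\sqrt{2n}})$ for all $\tau$ (since $\hu$ is nonincreasing in $\tau$, the backward limit $\hu_-$ is the supremum). But the companion inequality ``$\hu(\bar M_\tau)\ge \hu(S^n_{\sqrt{2n}})$ because $\hu$ decreases to $\hu(S^n)$ as $\tau\to-\infty$'' is simply not valid: a nonincreasing function approaching a limit as $\tau\to-\infty$ lies \emph{below} that limit, not above it. So from the backward limit alone you only ever get $\hu(\bar M_\tau)\le \hu(S^n_{\sqrt{2n}})$, and you cannot conclude constancy. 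The ``forward-time subtlety'' you tried to sidestep is in fact the indispensable ingredient.

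The paper's proof supplies exactly this missing piece: by Huisken's convergence theorem for convex MCF, the rescaled flow $\bar M_\tau$ also converges to the round sphere $S^n_{\sqrt{2n}}$ as $\tau\to+\infty$, so $\hu_+ = \hu(S^n_{\sqrt{2n}}) = \hu_-$. Monotonicity between two equal endpoint limits forces $\hu(\bar M_\tau)$ to be constant, hence $\bar H + \tfrac12\langle \bar X,\nm\rangle \equiv 0$ for every $\tau$. Note also that your final step is heavier than needed: once each $\bar M_\tau$ is a self-shrinker it is \emph{stationary} for the rescaled flow, so $\bar M_\tau$ is independent of $\tau$ and equals its own backward limit $S^n_{\sqrt{2n}}$. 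You do not need Huisken's classification of mean-convex self-shrinkers here.
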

\begin{proof}
The monotonicity of the Huisken functional \eqref{eq-mon-huisken} and Lemma \ref{lem-upp-bound} imply the existence of the limits
\[
\hu_- := \lim_{\tau\to -\infty} \hu(\bar{M}_\tau), \quad \text{and} \quad \hu_+ := \lim_{\tau\to\infty} \hu(\bar{M}_\tau).
\]
From the earlier work of Huisken we know that $\bar{M}_\tau$ converges as $\tau\to \infty$ to a sphere of radius $\sqrt{2n}$ as well and therefore $\hu_- = \hu_+$.  Since $\hu(\bar{M}_\tau)$ is monotone along the flow we have $\hu(\tau) = \hu_- = \hu_+$ is constant along the flow and therefore,
\[
\bar{H} + \frac{1}{2} \langle\bar{X}, \nm \rangle = 0.
\]
All these imply $\bar{M}_\tau$ is the sphere of radius $\sqrt{2n}$ for all $\tau\in \R$.
\end{proof}

This proposition directly implies the main Lemma~\ref{lem-only-cylinders} of this section.

\section{Geometric properties of ancient solutions}
\label{sec-geometric-prop}
Let $\{M_t\}_{t<0}$ be an ancient convex $O(1)\times O(n)$ symmetric solution to MCF, and let $\bar{M}_\tau = e^{\tau/2}M_{-e^{-\tau}}$ be its parabolic blow-up.  Our goal in this section is to prove the following lemma.
\begin{lemma}
\label{lem-Hmax-at-tip}
The mean curvature on $M_t$ attains its maximum at the tips $x=\pm d(t)$.
\end{lemma}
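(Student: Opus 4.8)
The plan is to study the quantity $H$ (or, after rescaling, $\bar H$) along the flow and to use a maximum–principle/Hamilton–tensor argument together with the structure of the $O(1)\times O(n)$–symmetric equation to force the maximum of $H$ onto the tips. The natural first step is to write the evolution equation for $H$ under MCF, which is the standard reaction–diffusion equation $\partial_t H = \Delta_{M_t} H + |A|^2 H$, where $\Delta_{M_t}$ is the Laplace–Beltrami operator on $M_t$ and $|A|^2$ is the squared norm of the second fundamental form. On a convex hypersurface $H>0$ and $|A|^2>0$, so by the maximum principle $H_{\max}(t)$ is nondecreasing and, more importantly, any interior spatial maximum of $H$ at a fixed time is genuinely constrained: at such a point $\nabla H = 0$ and $\Delta_{M_t} H \le 0$. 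I would then exploit the rotational symmetry: on the portion of $M_t$ that is a graph $\|x'\|=U(x,t)$, $H$ is a function $H=H(x,t)$ of the single variable $x$, and I would derive from \eqref{eq-u-original} the explicit ODE (in $x$, at fixed $t$) that $H$ satisfies at a critical point, showing that a critical point of $H$ in the ``cylindrical'' region $|x|<d(t)$ must be a strict local minimum rather than a maximum — this is where convexity ($U_{xx}\le 0$, equivalently the two principal curvatures being positive with the rotational one equal to $(U\sqrt{1+U_x^2})^{-1}$) enters decisively.

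Concretely, for an $O(1)\times O(n)$ surface the principal curvatures are $k_1 = -U_{xx}/(1+U_x^2)^{3/2} \ge 0$ (the ``profile'' curvature) and $k_2 = 1/(U\sqrt{1+U_x^2}) > 0$ (each of multiplicity $n-1$), so $H = k_1 + (n-1)k_2$. At an interior critical point of $H$ with respect to arclength along the profile curve, I would combine $H' = 0$ with the evolution identity and the sign of $|A|^2 H - \Delta H$ to conclude $\Delta_{M_t}H > 0$ there unless the point is the tip; since a spatial maximum requires $\Delta_{M_t} H \le 0$, the maximum cannot be attained in the open cylindrical region. The tips $x=\pm d(t)$ are the only points not covered by the graphical description (there $U=0$ and $U_x\to-\infty$), so the maximum, which exists by compactness, must sit at a tip. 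Symmetry ($U$ even) then gives that it is attained at both tips simultaneously.

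The main obstacle I anticipate is making the critical–point computation clean at the \emph{tips themselves}, where the graph representation degenerates ($U\to 0$, $U_x\to\pm\infty$): one must either pass to the tip-adapted coordinate (writing the surface near the tip as a graph $x = $ function of $\|x'\|$, for which a smooth convex profile exists by convexity and the strong maximum principle) and redo the curvature bookkeeping there, or argue more softly that $H$ extends smoothly across the tip and its spatial derivative along $M_t$ vanishes there by symmetry, so the tip is automatically a critical point and the preceding argument rules out all competitors. A secondary subtlety is ensuring strictness — i.e.\ that $\Delta_{M_t}H$ is \emph{strictly} positive (not just nonnegative) at interior critical points — which I would get from the strict convexity $|A|^2 H>0$ valid on any compact convex hypersurface that is not a hyperplane, combined with the strong maximum principle applied to $H$ if needed to upgrade weak to strict. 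Once strictness is in hand, the conclusion that $H_{\max}(t)=H(\pm d(t),t)$ follows immediately, and the same statement transfers verbatim to the rescaled flow $\bar M_\tau$ since parabolic rescaling multiplies $H$ by a positive constant at each time.
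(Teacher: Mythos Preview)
Your proposal has a genuine gap at its central step. You assert that at an interior spatial critical point of $H$ one can ``combine $H'=0$ with the evolution identity and the sign of $|A|^2H-\Delta H$ to conclude $\Delta_{M_t}H>0$,'' but nothing in the evolution equation $H_t=\Delta H+|A|^2H$ forces this. At a spatial maximum one has $\Delta H\le 0$, and the evolution equation then just says $H_t\le |A|^2H$ --- perfectly consistent, no contradiction. The reaction--diffusion structure of $H$ does not by itself prevent interior maxima of $H$ at a fixed time; indeed the statement of the lemma is \emph{false} for general compact convex hypersurfaces (and even for short-time MCF starting from such), so any valid proof must use the hypotheses you never invoke: that the solution is \emph{ancient} and \emph{noncollapsed}.

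The paper's argument is quite different and uses exactly these hypotheses. It studies the ratio $R=\lambda_n/\lambda_1=-uu_{yy}/(1+u_y^2)$ of the axial to the rotational principal curvature, derives its evolution equation, and shows via a backward ODE blow-up argument that $R_{\max}(\tau)\le 1$ for all $\tau$: if $R_{\max}(\tau_0)>1$ then $R_{\max}$ would blow up at some finite $\tau_*>-\infty$, contradicting ancientness. (The noncollapsing enters through the Haslhofer--Kleiner gradient estimate, needed to bound from below the coefficient $Q=u_y^2/(u^2(1+u_y^2))$ at the maximum of $R$.) Once $R\le 1$ is established, a short computation gives $(\lambda_1)_y\ge 0$, and then
\[
H(y,\tau)=(n-1)\lambda_1+\lambda_n\le n\lambda_1(y,\tau)\le n\lambda_1(\bar d(\tau),\tau)=H(\bar d(\tau),\tau),
\]
since the tip is umbilic. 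The moral is that the right quantity to run a maximum principle on is the curvature \emph{ratio} $R$, not $H$ itself, and the ancient condition is what closes the argument.
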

\begin{corollary}
\label{cor-Hmax-less-than-diam}
If the solution is defined for all $t<0$, then the maximal mean curvature and the extrinsic diameter $d(t)$ satisfy
\begin{equation}
\label{eq-Hmax-diam-inequality}
(-t)\, H_{\max}(t) \leq d(t).
\end{equation}
For the blown-up solution $\bar{M}_\tau$ this implies
\begin{equation}
\label{eq-rescaled-Hmax-diam-inequality}
\bhm(\tau) \leq \bd(\tau).
\end{equation}
The rate at which the rescaled extrinsic diameter changes is bounded by
\begin{equation}
\label{eq-dbar-growth-bound}
\left|\bd\,'(\tau)\right| \leq \frac12 \bd(\tau).
\end{equation}
\end{corollary}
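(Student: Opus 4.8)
The plan is to reduce all three inequalities to properties of the half-width $d(t)$: that $d$ is a nonincreasing concave function on $(-\infty,0)$ with $d'(t)=-H_{\max}(t)$ and $d(t)\to0$ as $t\to0^-$.

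I would first establish the identity $d'(t)=-H_{\max}(t)$. The tip $p_t=(d(t),0)$ is the point of $M_t$ with largest $x$-coordinate, its tangent plane is the hyperplane $\{x=d(t)\}$, and under the flow it moves in the direction of the inward normal $-e_1$, where $e_1=(1,0,\dots,0)$, at speed $H(p_t,t)$; hence $d'(t)=-H(p_t,t)$, and by Lemma~\ref{lem-Hmax-at-tip} this equals $-H_{\max}(t)$. (Equivalently, $d(t)$ is the value at $e_1$ of the support function of $M_t$, and the time derivative of the support function under mean curvature flow in the direction $e_1$ is $-H(p_t,t)$.) In particular $d'\le0$, and since the solution is extinct at $t=0$ by Huisken's theorem, $d(t)\to0$ as $t\to0^-$.

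Next I would invoke the monotonicity of $H_{\max}$: along any convex ancient mean curvature flow, $H$ is pointwise nondecreasing in $t$. This follows from Hamilton's differential Harnack inequality by sending the base time to $-\infty$, which removes the term $H/2(t-t_0)$ and leaves $\partial_t H+2\langle\nabla H,V\rangle+A(V,V)\ge0$ for every tangent vector $V$; taking $V=0$ gives $\partial_t H\ge0$. Hence $H_{\max}(t)$ is nondecreasing, equivalently $d''(t)=-\tfrac{d}{dt}H_{\max}(t)\le0$, so $d$ is concave. Integrating $d'=-H_{\max}$ from $t$ to $0$ and using monotonicity gives $d(t)=\int_t^0 H_{\max}(s)\,ds\ge(-t)\,H_{\max}(t)$, which is \eqref{eq-Hmax-diam-inequality}; this is also just the tangent line inequality $0=d(0)\le d(t)-t\,d'(t)$ for the concave function $d$.

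The remaining two statements are obtained by rescaling. With $\bd(\tau)=e^{\tau/2}d(-e^{-\tau})$ and $\bhm(\tau)=e^{-\tau/2}H_{\max}(-e^{-\tau})$, evaluating \eqref{eq-Hmax-diam-inequality} at $t=-e^{-\tau}$, so that $-t=e^{-\tau}$, gives $\bhm(\tau)\le\bd(\tau)$, i.e.\ \eqref{eq-rescaled-Hmax-diam-inequality}. Differentiating $\bd(\tau)=e^{\tau/2}d(-e^{-\tau})$ and using $d'=-H_{\max}$ yields the exact identity $\bd\,'(\tau)=\tfrac12\bd(\tau)-\bhm(\tau)$; combining it with $0\le\bhm(\tau)\le\bd(\tau)$ gives $-\tfrac12\bd(\tau)\le\bd\,'(\tau)\le\tfrac12\bd(\tau)$, which is \eqref{eq-dbar-growth-bound}. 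The only genuinely nonroutine step is the monotonicity of $H_{\max}$ (equivalently, the concavity of $d$); everything else is the support-function identity and the change of variables. If one wishes to avoid quoting the Harnack estimate, one would have to prove this concavity directly, e.g.\ via a comparison argument with shrinking spheres, which is more delicate.
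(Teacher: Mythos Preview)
Your proof is correct and follows essentially the same route as the paper: identify $d'(t)=-H_{\max}(t)$ via Lemma~\ref{lem-Hmax-at-tip}, use Hamilton's Harnack inequality to get monotonicity of $H_{\max}$, integrate to obtain \eqref{eq-Hmax-diam-inequality}, and then rescale to get \eqref{eq-rescaled-Hmax-diam-inequality} and \eqref{eq-dbar-growth-bound} from the identity $\bd\,'(\tau)=\tfrac12\bd(\tau)-\bhm(\tau)$. The only cosmetic differences are that you spell out the support-function interpretation and the ancient-case form of the Harnack inequality, and that you derive the identity for $\bd\,'$ by differentiating the scaling relation rather than by citing the rescaled flow equation~\eqref{eq-RMCF}.
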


\begin{proof}[Proof of the corollary.]
The velocity of the tip is $d'(t) = -H_{\max}(t)$, and by Hamilton's Harnack inequality \cite{Ha} we know that $H_{\max}(t)$ is increasing.  Therefore we have for any $t<0$
\[
d(t) \geq \int_t^0 H_{\max}(t') \, dt' \geq H_{\max}(t) \cdot (-t),
\]
which proves \eqref{eq-Hmax-diam-inequality}.

The growth rate of the rescaled diameter follows from \eqref{eq-RMCF}, which tells us that
\[
\bd\,'(\tau) = -\bhm(\tau) + \tfrac12 \bd(\tau).
\]
The estimate \eqref{eq-dbar-growth-bound} now directly follows from \eqref{eq-rescaled-Hmax-diam-inequality} and $\bhm>0$.
\end{proof}

To prove Lemma~\ref{lem-Hmax-at-tip} we begin with listing a few consequences of the convexity of the surface. First, recall that by convexity and symmetry we have
\begin{equation}
\label{eq-p1}
u_y(\cdot,\tau) \leq 0 \quad \text{and} \quad 
u_{yy}(\cdot,\tau) \leq 0
\quad \text{for }  y >0.
\end{equation}
\begin{lemma}
\label{lem-P}
Set
\[
P:= - (\log u )_y = - u_y /u \geq 0, \qquad \text{on} \,\, y >0.
\]
We have
\begin{equation}
\label{eq-p2}
P_y \geq 0, \qquad  \qquad \text{on} \,\, y \geq 0
\end{equation}
with $P=0$ at $y=0$.
\end{lemma}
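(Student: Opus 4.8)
The plan is to derive the claim by a direct one-line computation from the concavity of the profile $u$ recorded in \eqref{eq-p1}. First I would note that $P=0$ at $y=0$: since $M_t$ is $O(1)$-invariant, the profile $u(\cdot,\tau)$ is even in $y$, so $u_y(0,\tau)=0$ and hence $P(0,\tau)=-u_y(0,\tau)/u(0,\tau)=0$. Also $P\ge 0$ on $y>0$ because $u>0$ there while $u_y\le 0$ by \eqref{eq-p1}.

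Next I would compute $P_y$ by writing $P=-(\log u)_y$ and differentiating once more:
\[
P_y = -(\log u)_{yy}
= -\frac{u_{yy}\,u - u_y^2}{u^2}
= -\frac{u_{yy}}{u} + \Bigl(\frac{u_y}{u}\Bigr)^2
= -\frac{u_{yy}}{u} + P^2 .
\]
On the set $\{y\ge 0 : u(y,\tau)>0\}$, i.e.\ on $[0,d(t))$ after passing to rescaled variables, both terms on the right-hand side are nonnegative: $u>0$ always, $u_{yy}\le 0$ by the convexity statement \eqref{eq-p1}, and $P^2\ge 0$ trivially. Hence $P_y\ge 0$ there, which together with $P(0,\tau)=0$ is exactly the assertion of the lemma.

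There is essentially no hard step here; the only point requiring a word of care is that $P$ blows up as $y$ approaches the tip (where $u\to 0$), so the inequality $P_y\ge 0$ should be read pointwise on the region where $u>0$ rather than as a statement about a bounded function on the closed interval. Smoothness of $P$ at $y=0$ is automatic: $u$ is smooth and even, so $u_y$ vanishes simply at $y=0$ while $u(0,\tau)>0$, making $P=-u_y/u$ smooth near the origin. I would also remark, for later use, that the identity $P_y=-u_{yy}/u+P^2$ shows $P$ is strictly increasing wherever $u_{yy}<0$, so $P>0$ for all $y>0$ on any interval where the surface is strictly convex.
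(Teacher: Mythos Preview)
Your proof is correct and follows essentially the same route as the paper: a direct computation gives $P_y=(u_y^2-uu_{yy})/u^2$, and nonnegativity follows from $u_{yy}\le 0$, with $P(0,\tau)=0$ coming from the evenness of $u$. Your additional remarks about behavior at the tip and strict positivity for $y>0$ are fine but not needed for the lemma as stated.
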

\begin{proof}
By direct calculation we have
\[
P_y = \frac{u_y^2- u\, u_{yy}}{u^2}.
\]
Hence, $P_y \geq 0$ for $y >0$ since $u_{yy} \leq 0$.
\end{proof}
\begin{lemma}
\label{lem-Q}
Set
\[
Q:=\frac{u_y^2}{u^2\, (1+u_y^2)}.
\]
We have
\[
Q_y = 2 \, (1+u_y^2) Q^{3/2} \geq 2\, Q^{3/2} \geq 0 \quad \text{for } y \geq 0.
\]
\end{lemma}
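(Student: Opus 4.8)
\textbf{Plan of proof for Lemma~\ref{lem-Q}.}
The plan is to establish the \emph{identity} $Q_y = 2(1+u_y^2)Q^{3/2}$ by a direct differentiation, where the crucial observation is that the equation of the profile at the tip and—more to the point—a geometric reformulation of $Q$ make the $u_{yy}$-terms cancel exactly rather than merely being discarded. Concretely, recall that for an $O(1)\times O(n)$-symmetric rescaled hypersurface written as a graph $\|x'\| = u(y)$, the outward unit normal has the form $\nm = (1+u_y^2)^{-1/2}(-u_y, x'/u)$, and one of the principal curvatures of $M_\tau$—the one associated with rotation in the $\R^n$ factor—is $\kappa_{\mathrm{rot}} = \bigl(u\sqrt{1+u_y^2}\bigr)^{-1}$, while the curvature of the profile curve itself is $\kappa_{\mathrm{prof}} = -u_{yy}(1+u_y^2)^{-3/2}$. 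Then $Q = u_y^2\,u^{-2}(1+u_y^2)^{-1} = u_y^2(1+u_y^2)\,\kappa_{\mathrm{rot}}^2$; but more usefully, $\sqrt{Q} = |u_y|\,u^{-1}(1+u_y^2)^{-1/2}$, and on $y>0$ (where $u_y\le 0$) this is $-u_y\,u^{-1}(1+u_y^2)^{-1/2}$, which is exactly $\langle e_1,\nm\rangle\cdot\kappa_{\mathrm{rot}}$ up to sign bookkeeping — this is the quantity that measures how the graph "tilts."

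The computational heart is the following. First I would write $\sqrt Q = -u_y\,u^{-1}\,(1+u_y^2)^{-1/2}$ on $y>0$ and differentiate:
\begin{align*}
\bigl(\sqrt Q\bigr)_y
&= \frac{-u_{yy}}{u\sqrt{1+u_y^2}}
+ \frac{u_y^2}{u^2\sqrt{1+u_y^2}}
+ \frac{u_y^2\,u_{yy}}{u\,(1+u_y^2)^{3/2}} \\
&= \frac{-u_{yy}}{u\sqrt{1+u_y^2}}\Bigl(1 - \frac{u_y^2}{1+u_y^2}\Bigr)
+ \frac{u_y^2}{u^2\sqrt{1+u_y^2}} \\
&= \frac{-u_{yy}}{u\,(1+u_y^2)^{3/2}}
+ \frac{u_y^2}{u^2\sqrt{1+u_y^2}}.
\end{align*}
This still carries a $u_{yy}$-term, so the claimed clean identity cannot hold for $\sqrt Q$ by itself; the point must be that the combination appearing in $Q_y = 2\sqrt Q\,(\sqrt Q)_y$ is what the lemma intends, and a careful recheck of the statement shows that the inequality $Q_y\ge 2(1+u_y^2)Q^{3/2}$ would instead require the $-u_{yy}$ term to be nonnegative (true by convexity) — so the honest content is the \emph{lower bound}, and the displayed "$=$" should be read together with the chain $\ge 2Q^{3/2}\ge 0$ as the operative conclusion.

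Given that reading, the actual argument is short: from $Q = u_y^2\,u^{-2}(1+u_y^2)^{-1}$ compute
\[
Q_y = 2\sqrt Q\cdot(\sqrt Q)_y
= 2\,\frac{-u_y}{u\sqrt{1+u_y^2}}\left(\frac{-u_{yy}}{u\,(1+u_y^2)^{3/2}} + \frac{u_y^2}{u^2\sqrt{1+u_y^2}}\right),
\]
and then note that on $y\ge 0$ convexity \eqref{eq-p1} gives $u_{yy}\le 0$, so the first term in the bracket is $\ge 0$; dropping it yields
\[
Q_y \;\ge\; 2\,\frac{-u_y}{u\sqrt{1+u_y^2}}\cdot\frac{u_y^2}{u^2\sqrt{1+u_y^2}}
\;=\; 2\,\frac{(-u_y)\,u_y^2}{u^3\,(1+u_y^2)}
\;=\; 2(1+u_y^2)\,Q^{3/2},
\]
where the last equality uses $Q^{3/2} = \dfrac{|u_y|^3}{u^3(1+u_y^2)^{3/2}}$ and $(1+u_y^2)\cdot(1+u_y^2)^{-3/2} = (1+u_y^2)^{-1/2}$, matching the middle expression. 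Since $1+u_y^2\ge 1$, this also gives $Q_y\ge 2Q^{3/2}\ge 0$, completing the chain. The one step deserving care is the algebraic bookkeeping of the powers of $(1+u_y^2)$ — I would verify the exponent arithmetic in the final equality explicitly rather than by inspection, since that is precisely where a factor can be lost; everything else is elementary differentiation plus the convexity sign \eqref{eq-p1}.
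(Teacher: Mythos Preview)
Your approach is essentially the paper's: compute $Q_y$ directly, then drop the $u_{yy}$-contribution using convexity $u_{yy}\le 0$ to obtain the lower bound. You organize the computation through $\sqrt Q$ rather than $Q$ itself, but that is cosmetic; the paper's one-line proof writes
\[
Q_y = -\,\frac{2u_y\bigl(u_y^2+u_y^4-uu_{yy}\bigr)}{u^3(1+u_y^2)^2}
\;\ge\; -\,\frac{2u_y^3}{u^3(1+u_y^2)}
\]
and then identifies the right-hand side with the $Q^{3/2}$ expression. You are also right that the displayed ``$=$'' in the lemma is really ``$\ge$'': the $u_{yy}$-term does not cancel, it is discarded by sign.

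There is, however, an exponent slip in your final step --- precisely the place you flagged as needing care. From
\[
\frac{2(-u_y)u_y^2}{u^3(1+u_y^2)} \;=\; \frac{2|u_y|^3}{u^3(1+u_y^2)}
\qquad\text{and}\qquad
Q^{3/2} \;=\; \frac{|u_y|^3}{u^3(1+u_y^2)^{3/2}},
\]
one gets
\[
\frac{2|u_y|^3}{u^3(1+u_y^2)} \;=\; 2\,(1+u_y^2)^{1/2}\,Q^{3/2},
\]
not $2(1+u_y^2)\,Q^{3/2}$. Your check ``$(1+u_y^2)\cdot(1+u_y^2)^{-3/2}=(1+u_y^2)^{-1/2}$, matching the middle expression'' is wrong: the middle expression carries $(1+u_y^2)^{-1}$, not $(1+u_y^2)^{-1/2}$. (The paper's displayed statement has the same typo.) This is harmless for the applications --- only $Q_y\ge 2Q^{3/2}\ge 0$ is ever used, and $(1+u_y^2)^{1/2}\ge 1$ still yields that --- but the intermediate equality as written is false.
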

\begin{proof} By direct calculation and \eqref{eq-p1}, we have
\[
Q_y= - 2 \,\frac{ u_y \, \big (u_y^2 + u_y^4 - u\, u_{yy} \big )}{u^3 (1+u_y^2)^2 } \geq - 2 \,\frac{ u_y^3 }{u^3 (1+u_y^2) } = 2 \, (1+u_y)^2 Q^{3/2} \geq 2\, Q^{3/2}.
\]
\end{proof}
In this rotationally symmetric setting we have the following formulas for the principal curvatures $\lambda_i$ ($i=1, \dots, n$) in terms of $u$,
\begin{equation}
\label{eq-principal-curvatures}
\lambda_1 = \dots = \lambda_{n-1} = \frac{1}{u\, (1+u_y^2)^{1/2}}, \qquad \lambda_n = -\frac{u_{yy}}{(1+u_y^2)^{3/2}} \, .
\end{equation}
We consider the quantity
\[
R:= \frac{\lambda_n}{\lambda_1} = - \frac{u\, u_{yy}}{(1+u_y^2)} \geq 0.
\]
At umbilic points one has $R=1$.
\begin{lemma}
\label{lem-R}
On an ancient non collapsed convex solution of MCF with $O(1)\times O(n)$ symmetry one has $R \leq 1$.
\end{lemma}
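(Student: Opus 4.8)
The plan is to derive a parabolic evolution equation for $R$ of logistic type, for which the cylinder ($R\equiv 0$) and the sphere ($R\equiv 1$) are both stationary, and then to combine the maximum principle with the fact (Lemma~\ref{lem-only-cylinders}) that $\bar M_\tau\to S^{n-1}\times\R$ as $\tau\to-\infty$. Concretely, working on $0<y<\bd(\tau)$ with the symmetry condition $u_y(0,\tau)=0$, I would compute from \eqref{eq-u} and \eqref{eq-principal-curvatures} the evolution of $R=-uu_{yy}/(1+u_y^2)=\lambda_n/\lambda_1$. The structural point that makes this work is that the zeroth–order terms proportional to $|A|^2$ in the evolutions of $\lambda_n$ and of $\lambda_1$ cancel in the ratio; what survives comes from the $(n-1)$–fold multiplicity of $\lambda_1$ (i.e.\ the curvature of the $S^{n-1}$–factor, carrying a factor $u^{-2}$) and produces an equation of the form
\[
R_\tau \;=\; \frac{R_{yy}}{1+u_y^2}\;+\;\mathfrak b(y,\tau)\,R_y\;+\;\mathfrak c(y,\tau)\,R\,(1-R),
\]
possibly with an extra favourably signed $R_y^2$–term, where $\mathfrak c>0$ wherever $u>0$. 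The sign of $\mathfrak c$ is forced by the two explicit stationary solutions: $R\equiv 0$ on the cylinder must be linearly unstable (the ratio $R$ grows as the oval forms out of the cylinder), while $R\equiv 1$ on the sphere must be linearly stable. In particular $R\equiv 1$ is a stationary supersolution in the region $\{R\ge 1\}$, and the equation is uniformly parabolic off the tips; since the tips $y=\pm\bd(\tau)$ are umbilic points of the smooth compact surface $\bar M_\tau$, we have $R=1$ there, so any point with $R>1$ lies strictly inside $0<y<\bd(\tau)$.

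Suppose, for contradiction, that $R>1$ somewhere, and set $\ell:=\sup R$ over the whole flow, which one checks to be finite using noncollapsedness and the curvature estimates for the flow. Let $\bar R(\tau):=\max_{\bar M_\tau}R$. At an interior maximum one has $\nabla R=0$, the elliptic term is $\le 0$, and $\mathfrak c\,R(1-R)<0$ when $R>1$, so by Hamilton's trick $\tfrac{d^+}{d\tau}\bar R(\tau)\le 0$ on $\{\bar R>1\}$; hence if $\bar R(\tau_0)>1$ then $\bar R(\tau)\ge\bar R(\tau_0)>1$ for all $\tau\le\tau_0$ and $\bar R$ increases to $\ell$ as $\tau\to-\infty$. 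By Lemma~\ref{lem-only-cylinders} $R\to 0$ on $\{|y|\le M\}$, so the maximizers $p_j\in\bar M_{\tau_j}$ with $R(p_j)\to\ell$ (chosen for $\tau_j\to-\infty$) are interior, non–tip points escaping to $y=\infty$. Rescaling the flow about $p_j$ to unit mean curvature and passing to a subsequential limit, the bound $R\le\ell<\infty$ keeps $\lambda_1$ uniformly positive at unit scale, so the $S^{n-1}$–factor survives and the limit $\hat M^\infty$ is again a convex, noncollapsed, ancient, rotationally symmetric flow on which $R\le\ell$ everywhere and $R=\ell$ at a point at time $0$. But the displayed equation on $\hat M^\infty$ forces $R_\tau\le\mathfrak c\,\ell(1-\ell)<0$ at that space–time maximum, so $R>\ell$ an instant earlier — a contradiction. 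Therefore $R\le 1$.

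The main obstacle is the first step: the (lengthy) computation of the evolution equation for $R$ and, above all, pinning down the sign of $\mathfrak c$ — this is the heart of the matter, and one should double-check it against the linearizations at the cylinder and the sphere. A secondary difficulty is the tip/transition behaviour as $\tau\to-\infty$ needed to run the blow–up: on the model limit (the bowl soliton) one expects $R\to 1$ with high–order tangency to $R=1$ at the tip, so $\mathfrak c$ is only borderline there, and one must use convexity and the a priori curvature estimates to guarantee that a maximizing sequence for $R$ yields a nondegenerate rescaled limit to which the equation applies (in particular, to rule out the limit splitting off flat directions, on which $R$ would be undefined).
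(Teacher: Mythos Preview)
Your overall strategy---derive the evolution of $R$, apply the maximum principle, and exploit ancientness---matches the paper's. But there is a genuine gap at the step you yourself flag as ``the heart of the matter.''

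The zeroth-order coefficient in the evolution of $R$ is \emph{not} positive wherever $u>0$. The actual equation (computed in the paper from $R=1-UU_t$) is
\[
R_\tau=\frac{R_{yy}}{1+u_y^2}-\frac{y}{2}R_y-\frac{2u_y(1-R)}{u(1+u_y^2)}R_y+2Q\,(1-R^2)+2(n-2)\,Q\,(1-R),
\]
with $Q=\dfrac{u_y^2}{u^2(1+u_y^2)}$. Thus your $\mathfrak c$ is proportional to $Q$, which vanishes wherever $u_y=0$---in particular at $y=0$, and (crucially) uniformly on compact sets as $\tau\to-\infty$, since $\bar M_\tau$ converges to the cylinder. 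So the logistic mechanism degenerates exactly in the regime you need it.

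Your blow-up argument does not escape this. You need $\mathfrak c>0$ (equivalently $u_y\neq 0$) at the limit maximizer $p_0$, but $u_y$ is scale-invariant, so this amounts to showing $|u_y(p_j)|$ is bounded below along your maximizing sequence---precisely the content you have not supplied. Knowing $\lambda_1>0$ at unit scale (which you do argue) says nothing about $u_y$.

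The paper closes this gap by a direct argument (its Claim~\ref{claim-help}): since $R\to 0$ on compact sets while $R=1$ at the tip, there is an intermediate point $y_\tau<\bar y_\tau$ with $R(y_\tau,\tau)=1$; one shows $Q(y_\tau,\tau)\ge c>0$ using noncollapsing and the Haslhofer--Kleiner gradient estimate $|\nabla A|\le C$ at unit scale (if $|u_y|$ were too small while $\lambda_n=\lambda_1$, the curvature gradient bound forces $-u_{yy}\ge\tfrac14$ on a fixed-length interval, contradicting smallness of $|u_y|$). Then the monotonicity $Q_y\ge 0$ (Lemma~\ref{lem-Q}) transfers this to the maximizer $\bar y_\tau$. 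With $Q\ge c$ in hand, the paper does not pass to a blow-up limit at all: the differential inequality $\frac{d}{ds}R_{\max}\ge c(R_{\max}^2-1)$ in backward time $s=-\tau$ forces $R_{\max}\to\infty$ at some finite $s$, contradicting ancientness.

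In short: your outline is correct in spirit, but the assertion ``$\mathfrak c>0$ wherever $u>0$'' is false, and the missing ingredient---a uniform lower bound on $Q$ at the maximizer---is exactly where the noncollapsing hypothesis enters nontrivially.
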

\begin{proof}
We observe first that the tip of the surface is an umbilic point, so that we have $R=1$ at the tips for all $\tau$ (here we use that the surface is smooth and strictly convex and radially symmetric at the tip).  Hence, $R_{\max}(\tau)$ is achieved on the surface for all $\tau$, and is larger or equal than one.  We actually show that it is also not more than one.

Using the scaling invariance of $R$ and the fact that $U$ satisfies \eqref{eq-u}, we find
\[
R= \frac{-uu_{yy}}{1+u_y^2} = \frac{-UU_{xx}}{1+U_x^2} = 1-UU_t .
\]
We then compute
\begin{multline*}
R_t = \frac{R_{xx}} {1+U_x^2} - \frac{2U_x(1-R)} {U(1+U_x^2)}R_x\\
+ \frac{2U_x^2} {U^2(1+U_x^2)} \bigl(1-R^2\bigr) + (n - 2)\, \frac{2U_x^2} {U^2(1+U_x^2)} \, (1 - R) .
\end{multline*}
Since $R$ is scaling invariant we get in the $(y,\tau)$ variables
\begin{multline}
\label{eq-RRR}
R_\tau = \frac{R_{yy}} {1+u_y^2} -\frac{y}{2}R_y
- \frac{2u_y(1-R)} {u(1+u_y^2)}R_y \\
+ \frac{2u_y^2} {u^2(1+u_y^2)} \bigl(1-R^2\bigr) + (n - 2)\, \frac{2u_y^2} {u^2(1+u_y^2)}\, (1 - R).
\end{multline}
At a maximum of $R$ we have $R_y=0$ and $R_{yy}\leq 0$, so that the maximum of $R(\cdot,\tau)$ on the surface $\bar{M}_{\tau}$, $R_{\max}(\tau)$, satisfies
\[
\frac {d}{d\tau} R_{\max}(\tau) \leq - 2Q (R_{\max}^2-1) - 2(n - 2)Q (R_{\max} - 1),
\]
where $Q$ is as in Lemma~\ref{lem-Q}.

This ODE in particular shows that if $R_{\max}(\tau_0) >1$ for some $\tau_0$, then the same holds for all $\tau \leq \tau_0$.  In this case, if we denote by $\bar y_\tau >0$ any maximum point of $R(\cdot,\tau)$ on $[0,\bar d(\tau)]$, namely
\[
R_{\max} (\tau)=R(\bar y_\tau,\tau),
\]
we may assume $R_{\max}(\tau) > 1$ for all $\tau\le \tau_0$ since otherwise the statement of the Lemma is true.  With this assumption the following holds.

\begin{claim}
\label{claim-help}
If $R(\bar{y}_{\tau},\tau) > 1$ for all $\tau<\tau_0$, then
\[
\liminf_{\tau \to -\infty} Q(\bar y_\tau, \tau) \geq c > 0.
\]
\end{claim}\noindent%
Assuming the claim, and setting $s=-\tau >0$ and $\tilde R_{\max}(s) = R_{\max}(\tau)$, we obtain that
\begin{equation}
\label{eq-ode-R}
\frac {d}{ds} \tilde R_{\max}(s) \geq 
2Q   (\tilde R_{\max}^2-1).
\end{equation}
It follows from \eqref{eq-ode-R} that there exists $s_0 >0$ for which
\[
\frac {d}{ds} \tilde R_{\max}(s) \geq c \, (\tilde R_{\max}^2-1)
\]
for $s\geq s_0$.  This readily implies that $\tilde R_{\max}(s)\nearrow\infty$ as $s\nearrow s_*$ for some finite $s_*$, or equivalently $R_{\max} (\tau)$ blows up at some $\tau_* > -\infty$, which is a contradiction to the fact that our solution is ancient.
\end{proof}
\subsection{Proof of Claim \ref{claim-help}}
We know that $R(\bar y_\tau,\tau) >1$.  We also know that
\[
\lim_{\tau\to-\infty} u(y, \tau) = \sqrt{2(n-1)},
\]
in $C^\infty$ for bounded $y$, because $\bar M_\tau$ converges to the cylinder $\R\times S^{n-1}$.  This implies that
\[
\lim_{\tau\to -\infty} R(y,\tau) = 0
\]
uniformly for $y$ bounded.  We conclude that for all $\tau \leq \tau_0$ there exists at least a point $y_\tau$ such that
\[
0 < y_\tau < \bar y_\tau \quad \text{and} \quad R(y_\tau,\tau) =1.
\]
The convergence to the cylinder also implies that
\[
\lim_{\tau \to -\infty} y_\tau = +\infty.
\]
If we show that
\[
\liminf_{\tau \to -\infty} Q(y_\tau, \tau) \geq c_1 > 0
\]
the claim would follow from Lemma~\ref{lem-Q}.

To this end, it is more convenient to work with the original solution $U(x,t)$ of \eqref{eq-u-original}.  Setting
\[
x_t:= \sqrt{-t} \, y_\tau, \qquad \tau:=-\log(-t)
\]
we need to show that
\[
\liminf_{t \to -\infty} \frac{|t| \, U_x^2}{U^2 \, (1+U_x^2)}(x_t, t) \geq c_1 > 0,
\]
if $R(x_t,t) =1$.  Let us fix such a point $(x_{t_0},t_0)$.  We may assume, without loss of generality that
\[
\frac{ |t_0| \, U_x^2}{U^2 \, (1+U_x^2)}(x_{t_0}, t_0) < \delta^2
\]
with $\delta$ a sufficiently small number (to be chosen below).  Since $U^2/|t| \leq 4(n-1)^2$ for all $t$ sufficiently close to $-\infty$ (this follows from the convergence to the cylinder) it follows from the above inequality that
\[
U_x^2 \leq \frac{4\delta^2}{1-4\delta^2}:=\delta_1^2
\]
that can also be chosen sufficiently small.

We consider the rescaled solution $\tilde U$ to \eqref{eq-u-original} given by
\[
\tilde U(x,t) = \alpha^{-1}_0\, U(x_0+ \alpha_0 \, x, t_0 + \alpha^2_0\, t), \qquad \alpha_0:= U(x_0,t_0).
\]
The convergence to the cylinder implies that $\alpha_0 \ll x_0$ (otherwise $R(x_0,t_0)$ would have been close to zero).  In particular, for all $- 2 \leq x \leq 0$, we have $x_0+ \alpha_0 \, x >0$, hence the concavity $\tilde U_{xx} \leq 0$ implies
\begin{equation}
\label{eq-gb}
0 \leq - \tilde U_x (x,0)  \leq - \tilde U_x (0,0)  \leq \delta_1
\end{equation}
from which the bound
\begin{equation}
\label{eq-sb}
1   \leq  \tilde U(x,0)  \leq 2
\end{equation}
also follows.

We next recall that the solution $z=\tilde U(x,t)$ is a rotationally symmetric $\alpha$-noncollapsed solution to the mean curvature flow (near the chosen point), for some $\alpha > 0$.  Moreover, at this point we have $\lambda_1=\lambda_n$ and $\tilde U(0,0)=1$ together with $\tilde U_x^2(0,0) \leq \delta_1^2$.  Let $X_0=(0,1) \in \R^{n+1}$ denote the corresponding point on the surface $z=\tilde U(x,t)$.  From the curvature estimates in \cite{HK} it follows that there exists a number $\rho \in (0,1)$ depending on $\alpha$, for which we have
\begin{equation}
\label{eq-AAA}
|\nabla A | \leq C(\alpha) \rho^{-1}
\end{equation}
on all points of the surface $z=\tilde U(x,t)$ that intersect the parabolic ball $ P(X_0,\rho):=B_\rho(X_0) \times (t_0-\rho^2, t_0].$ Here $B_\rho(X_0)$ denotes a ball in $\R^3$.  Let $\tilde \lambda_n:= - \tilde U_{xx}/(1+\tilde U_x^2)^{3/2}$.  The bounds \eqref{eq-gb}, \eqref{eq-sb} and \eqref{eq-AAA} imply that
\[
\frac d{dx}\tilde \lambda_n (x,0) \leq C(\alpha), \qquad \qquad \text{for all} \,\, x \in (-\rho/2,0).
\]
Since $\tilde \lambda_n(0,0) \geq 1/2$ (if $\delta_1$ in \eqref{eq-gb} is chosen sufficiently small) we conclude that
\[
\tilde \lambda_n(x,0) \geq 1/4, \qquad \text{for all} \,\, x \in (-\rho_1,0)
\]
for some number $\rho_1=\rho_1(\alpha)$.  This implies the bound
\[
- \tilde U_{xx}(x,0) \geq 1/4, \qquad \text{for all} \,\, x \in (-\rho_1,0)
\]
and contradicts \eqref{eq-gb} if $\delta_1$ is chosen sufficiently small (since $\rho_1 = \rho_1(\alpha)$ is independent of $\delta_1$), hence finishing the proof of the claim.

We have the following two immediate corollaries of Lemma \ref{lem-R}.
\begin{corollary}
\label{cor-mon-l}
For every $\tau < \tau_0$ we have that $(\lambda_i)_y \ge 0$, implying that
\[
\lambda_i(y,\tau) \geq \lambda_i(0,\tau) \geq c >0, \qquad y\in [0,\bd(\tau)],
\]
for $1 \le i \le n-1$.
\end{corollary}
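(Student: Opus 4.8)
The plan is to obtain the monotonicity $(\lambda_i)_y\ge 0$ for $1\le i\le n-1$ by a direct differentiation of the formula $\lambda_i=\bigl(u\sqrt{1+u_y^2}\bigr)^{-1}$ from \eqref{eq-principal-curvatures}, and then to combine it with the two facts already established: $u_y(\cdot,\tau)\le 0$ on $y\ge 0$ (convexity and symmetry, \eqref{eq-p1}) and $R=-uu_{yy}/(1+u_y^2)\le 1$ (Lemma~\ref{lem-R}).

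First I would compute, on the interior $(0,\bd(\tau))$ where the profile is smooth,
\[
\partial_y\bigl(u\sqrt{1+u_y^2}\bigr)
=\frac{u_y\bigl[(1+u_y^2)+u\,u_{yy}\bigr]}{\sqrt{1+u_y^2}}
=u_y\,\sqrt{1+u_y^2}\,(1-R),
\]
whence
\[
(\lambda_i)_y=\partial_y\frac{1}{u\sqrt{1+u_y^2}}
=-\,\frac{u_y\,(1-R)}{u^2\sqrt{1+u_y^2}}\ \ge\ 0,
\]
since $u>0$, $u_y\le 0$ and $1-R\ge 0$ by Lemma~\ref{lem-R}. By continuity up to the tip this gives $(\lambda_i)_y\ge 0$ on $[0,\bd(\tau))$, and integrating from $0$ yields $\lambda_i(y,\tau)\ge\lambda_i(0,\tau)$ for all $y\in[0,\bd(\tau)]$.

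It remains to bound $\lambda_i(0,\tau)$ below by a uniform positive constant. Since $u(\cdot,\tau)$ is even and concave it attains its maximum at $y=0$, where $u_y(0,\tau)=0$, so $\lambda_i(0,\tau)=1/u(0,\tau)=1/\max_y u(y,\tau)$. By Lemma~\ref{lem-only-cylinders} the rescaled surfaces $\bar M_\tau$ converge to the cylinder $\R\times S^{n-1}$ of radius $\sqrt{2(n-1)}$ as $\tau\to-\infty$, so $u(0,\tau)\to\sqrt{2(n-1)}$; in particular $u(0,\tau)$ is bounded above for $\tau$ sufficiently negative, and hence there are $\tau_0<0$ and $c>0$ with $\lambda_i(0,\tau)\ge c$ for all $\tau<\tau_0$. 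Combined with the previous step this gives the asserted chain of inequalities.

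I do not expect a genuine obstacle here: the one substantive input is $R\le 1$ (Lemma~\ref{lem-R}), and the rest is a one-line computation together with the cylindrical asymptotics already available. The only point requiring a word of care is the tip $y=\bd(\tau)$, where $u\to 0$ and $u_y\to-\infty$ while $\lambda_i$ stays finite (it equals $\lambda_n$ there by umbilicity), so the differential inequality is proved on the open interval $[0,\bd(\tau))$ and then extended to the closed interval by continuity.
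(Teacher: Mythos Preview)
Your proposal is correct and essentially identical to the paper's proof: both differentiate the formula $\lambda_i^{-1}=u\sqrt{1+u_y^2}$, use $u_y\le 0$ together with $R\le 1$ (Lemma~\ref{lem-R}) to get monotonicity, and then invoke the cylindrical backward limit to bound $\lambda_i(0,\tau)$ from below. The only cosmetic difference is that the paper shows $(1/\lambda_i)_y\le 0$ while you show $(\lambda_i)_y\ge 0$, which are of course equivalent.
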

\begin{proof}
Recall that $\lambda_i^{-1} = u\, \sqrt{1+u_y^2}$ implying that
\[
\left(\frac{1}{\lambda_i}\right)_y = \frac{u_y\, (1+u_y^2 + uu_{yy})}{\sqrt{1+u_y^2}} \le 0, \qquad y \in [0,\bd(\tau)]
\]
since $u_y \le 0$ for $y \ge 0$, and since $1 + u_y^2 + uu_{yy} \ge 0$ is being equivalent to $R \le 1$, which we showed to be true in Lemma \ref{lem-R}.  On the other hand, since the $\lim_{\tau\to -\infty} \lambda_i(0,\tau) = {1}/{\sqrt{2\, (n-1)}}$ we immediately obtain the statement of the corollary.
\end{proof}
\begin{corollary}
\label{cor-tip}
For each $\tau$ the rescaled mean curvature $\bar{H}(\cdot,\tau)$ achieves its maximum at the tip $\bd(\tau)>0$.
\end{corollary}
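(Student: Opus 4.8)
The plan is to read off the statement from two facts about the principal curvatures of $\bar M_\tau$ that are already in hand: the pinching estimate $R=\lambda_n/\lambda_1\le 1$ of Lemma~\ref{lem-R}, and the monotonicity $(\lambda_i)_y\ge 0$ on $[0,\bd(\tau)]$ for $1\le i\le n-1$. The latter holds for \emph{every} $\tau$ by the computation in the proof of Corollary~\ref{cor-mon-l}, since that computation only uses $u_y\le 0$ on $y\ge0$ and $R\le 1$. That $\bd(\tau)>0$ is clear because $\bar M_\tau$ is a compact rotationally symmetric hypersurface, so the only real content is the location of the maximum of $\bH(\cdot,\tau)$.

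First I would use \eqref{eq-principal-curvatures} to write, on the interior $0\le y<\bd(\tau)$,
\[
\bH(y,\tau)=\sum_{i=1}^{n}\lambda_i(y,\tau)=(n-1)\,\lambda_1(y,\tau)+\lambda_n(y,\tau),
\]
and to observe that $\bH$ extends continuously up to the tip $y=\bd(\tau)$, which is a smooth umbilic point of $\bar M_\tau$; there $\lambda_n=\lambda_1$, so $\bH(\bd(\tau),\tau)=n\,\lambda_1(\bd(\tau),\tau)$.

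Next, from $R\le 1$ (Lemma~\ref{lem-R}) I obtain $\lambda_n(y,\tau)\le\lambda_1(y,\tau)$ pointwise, hence
\[
\bH(y,\tau)\le(n-1)\,\lambda_1(y,\tau)+\lambda_1(y,\tau)=n\,\lambda_1(y,\tau),\qquad y\in[0,\bd(\tau)].
\]
Then the monotonicity $(\lambda_1)_y\ge 0$ (Corollary~\ref{cor-mon-l}) gives $\lambda_1(y,\tau)\le\lambda_1(\bd(\tau),\tau)$ for $y\in[0,\bd(\tau)]$, the endpoint value being understood as the limit of $\lambda_1$ from the interior. Combining the last two displays with the value of $\bH$ at the tip yields $\bH(y,\tau)\le\bH(\bd(\tau),\tau)$ for $y\in[0,\bd(\tau)]$, and the $O(1)$ symmetry $u(-y,\tau)=u(y,\tau)$ carries this over to $y\in[-\bd(\tau),0]$, with equality at both tips. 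Hence $\bH(\cdot,\tau)$ attains its maximum at $y=\pm\bd(\tau)$.

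I do not anticipate a genuine obstacle here: the argument is essentially a one-line combination of Lemma~\ref{lem-R} and Corollary~\ref{cor-mon-l}. The one point deserving care is that the formulas $\lambda_1=1/\bigl(u\sqrt{1+u_y^2}\bigr)$ and $\lambda_n=-u_{yy}/(1+u_y^2)^{3/2}$ degenerate at the tip (where $u\to 0$ and $u_y\to-\infty$), so the comparison at $y=\bd(\tau)$ should be made by passing to the limit $y\to\bd(\tau)^-$ and invoking smoothness and umbilicity of $\bar M_\tau$ at the tip, rather than substituting into these expressions; the monotonicity from Corollary~\ref{cor-mon-l} guarantees these one-sided limits exist and coincide with the corresponding suprema over the interior.
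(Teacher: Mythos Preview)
Your argument is correct and follows the paper's own proof essentially line for line: write $\bH=(n-1)\lambda_1+\lambda_n$, use $R\le 1$ to bound this by $n\lambda_1$, use the monotonicity of $\lambda_1$ from Corollary~\ref{cor-mon-l} to push to the tip, and use umbilicity there to identify $n\lambda_1(\bd(\tau),\tau)$ with $\bH(\bd(\tau),\tau)$. Your added remarks about taking limits at the tip and the $O(1)$ symmetry are sound refinements of presentation, not a different method.
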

\begin{proof} Let $\lambda_1$, $\lambda_n$ be the two principal curvatures.  Then, at any point $y < \bd(\tau)$, we have
\[
\bar{H}(y,\tau) = (n-1)\, \lambda_1(y,\tau) + \lambda_n(y,\tau) \leq n\, \lambda_1(y,\tau) \leq n\, \lambda_1(\bd(\tau),\tau) = \bar{H}(\bd(\tau),\tau)
\]
where we used the previous corollary and the fact that $\lambda_1= \lambda_n$ at the tip.
\end{proof}
In the corollary that follows, we give a different proof than the one of Proposition \ref{prop-sphere}, of the fact that if the backward limit as $\tau\to -\infty$ is a sphere then the ancient solution is the contracting sphere solution.
\begin{corollary}
If
\[
\lim_{\tau \to -\infty} R_{\min}(\tau)=1,
\]
namely, if the backward limit as $\tau\to -\infty$ is a sphere, then the ancient solution is the family of contracting spheres.
\end{corollary}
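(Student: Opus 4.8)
The plan is to run the maximum principle on the spatial minimum of $R(\cdot,\tau)$, exploiting that by Lemma~\ref{lem-R} one has $R\le 1$ everywhere while $R=1$ at the tips, so that $R_{\max}(\tau)\equiv 1$ and only $R_{\min}$ can move. First I would record the sign structure of the evolution equation \eqref{eq-RRR}: with $Q=\frac{u_y^2}{u^2(1+u_y^2)}\ge 0$ as in Lemma~\ref{lem-Q}, it reads
\[
R_\tau=\frac{R_{yy}}{1+u_y^2}-\frac y2 R_y-\frac{2u_y(1-R)}{u(1+u_y^2)}R_y+2Q\,(1-R^2)+2(n-2)\,Q\,(1-R).
\]
Since $R\le 1$, both $1-R$ and $1-R^2$ are nonnegative, and $Q\ge 0$. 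At a spatial minimum of $R(\cdot,\tau)$ the transport terms vanish ($R_y=0$) and the diffusion term is nonnegative ($R_{yy}\ge 0$). Applying Hamilton's trick on $[0,\bd(\tau)]$ — using the evenness of $u$ at $y=0$, so that $R_y(0,\tau)=0$, $R_{yy}(0,\tau)\ge0$ and $Q(0,\tau)=0$ if the minimum sits at $y=0$, and using $R=R_{\max}=1$ at the tip $y=\bd(\tau)$, so the minimum over $[0,\bd(\tau)]$ is attained either at $y=0$ or in the open interval — I obtain that $\tau\mapsto R_{\min}(\tau)$ is non-decreasing.

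Next I would combine this monotonicity with the hypothesis. Since $R_{\min}(\tau)\le 1$ for all $\tau$, $R_{\min}$ is non-decreasing, and $R_{\min}(\tau)\to 1$ as $\tau\to-\infty$, it follows that for every fixed $\tau_0$ one has $1=\lim_{\tau\to-\infty}R_{\min}(\tau)\le R_{\min}(\tau_0)\le 1$, hence $R_{\min}(\tau)\equiv 1$ and therefore $R\equiv 1$ on all of $\bar{M}_\tau$ for every $\tau$. Finally, $R\equiv1$ means $\lambda_n=\lambda_1=\dots=\lambda_{n-1}$ at every point, so each $\bar{M}_\tau$ is a compact, connected, totally umbilic hypersurface in $\R^{n+1}$, hence a round sphere; consequently $M_t$ is a round sphere for every $t$. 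Since MCF preserves the center of a sphere and its radius $r(t)$ solves $\frac{d}{dt}r^2=-2n$, we get $r(t)^2=2n(T-t)$, i.e.\ $M_t$ is the family of contracting spheres (equivalently, $\bar{M}_\tau$ is the static sphere of radius $\sqrt{2n}$).

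I expect the only delicate point to be the rigorous application of the maximum principle to $R_{\min}(\tau)$ at the moving endpoint $y=\bd(\tau)$ and at $y=0$; this is exactly the situation already handled in the proof of Lemma~\ref{lem-R} (via the umbilicity of the profile at the tip and its symmetry at $y=0$), so no new difficulty arises there. Everything else is soft: the sign of the reaction terms is dictated by $R\le 1$, and the conclusion is then purely algebraic plus the classical fact that a compact umbilic hypersurface is a round sphere.
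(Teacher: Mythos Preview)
Your proposal is correct and follows essentially the same approach as the paper: both derive from \eqref{eq-RRR} the differential inequality $\frac{d}{d\tau}R_{\min}\ge 0$ (using $R_y=0$, $R_{yy}\ge 0$ at a minimum and the nonnegativity of $Q(1-R)$, $Q(1-R^2)$), combine this monotonicity with the hypothesis $R_{\min}(-\infty)=1$ and the bound $R\le 1$ from Lemma~\ref{lem-R} to force $R\equiv 1$, and then conclude from umbilicity that the solution is the contracting sphere. Your treatment of the boundary points $y=0$ and $y=\bd(\tau)$ is slightly more explicit than the paper's, but no new ideas are involved.
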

\begin{proof}
It follows by \eqref{eq-RRR} that
\[
\frac {d}{d\tau} R_{\min}(\tau) \geq \frac{2u_y^2}{u^2(1+u_y^2)} \, (1-R_{\min}^2(\tau)) + (n - 2)\, \frac{2u_y^2}{u^2(1+u_y^2)}\, (1 - R_{\min}(\tau)).
\]
Hence, if $R_{\min}(\tau_0) \leq 1-\delta$ for some $\tau_0$, then $R_{\min}(\tau) \leq 1-\delta$ for all $\tau \leq \tau_0$ and in particular
\[
R_{\min}(-\infty) \leq 1-\delta.
\]
This implies that if the backwards limit is the sphere which means that $R_{\min}(-\infty) =1$, then $R_{\min} \geq 1$ for all $\tau$.  Since by Lemma \ref{lem-R} we also have $R_{\max} \leq 1$, we conclude that $R \equiv 1$ for all $\tau$, hence the ancient solution is the family of contracting spheres.
\end{proof}

\section{A priori estimates}
\label{sec-apriori}

In this section we show several a priori estimates for our ancient solutions that will be used afterwards.  From now on we adopt the following convention: we will use symbols $C > 0$ and $\tau_0 < 0$ for uniform constants that can change from line to line in computations and by $\delta(\tau)$ a function such that the $\lim_{\tau\to -\infty} \delta(\tau) = 0$ and which can also change from line to line.

\subsection{Pointwise estimates}
The rescaled surfaces $\bar{M}_\tau$ converge to the cylinder with radius $\sqrt{2(n-1)}$ as $\tau\to-\infty$ so we know that for any $\tau_0$ one has
\begin{equation}
\label{eq-u0-trivial-bound}
u(0, \tau) = \sqrt{2(n-1)} + \delta(\tau).
\end{equation}
From the concavity of $u$ it follows that the graph of $u(\cdot, \tau)$ lies above the line connecting $(0, u(0, \tau))$ with the tip $(\bd(\tau), 0)$.  Thus we have
\begin{equation}
\label{eq-u-convexity-lower-bound}
u(y, \tau) \geq u(0, \tau) \frac{\bd(\tau)- y} {\bd(\tau)}
\qquad
(0\leq y\leq \bd(\tau)).
\end{equation}
Since $u(y, \tau)$ is decreasing in $y$ this implies
\[
0\leq u(0, \tau) - u(y, \tau) \leq u(0, \tau) \frac{y} {\bd(\tau)}
\]
and also
\begin{align*}
\left| u(y, \tau) - \sqrt{2(n-1)} \right|
&\leq \left| u(0, \tau) - \sqrt{2(n-1)} \right| + u(0, \tau) \frac{y}{\bd(\tau)}\\
&= \delta(\tau) + u(0, \tau) \frac{y} {\bd(\tau)}\\
&= \delta(\tau) + \sqrt{2(n-1)}\,\frac{y} {\bd(\tau)},
\end{align*}
where we have used $u(0, \tau) = \sqrt{2(n-1)}+\delta(\tau)$ and $y\leq \bd(\tau)$ in the last step.

For future reference we note that if we $\epsilon(\tau)\in(0,1)$ be given, then the above estimate leads to
\begin{equation}
\label{eq-v-small}
|u(y,\tau) - \sqrt{2(n-1)}| \le C\, \epsilon(\tau) + \delta(\tau), \qquad \text{for $y\in [0,\epsilon(\tau)\bd(\tau))$}.
\end{equation}

\subsection{First derivative estimates}
Due to symmetry, with no loss of generality assume $y \ge 0$. The concavity of $u$ also tells us the graph of $u(\cdot, \tau)$ lies on one side of any tangent line $t_y$ (where $t_y$ is the tangent line to the graph at point $y$).  In particular, if we write
\[
y_0(\tau) = -\frac{u}{u_y} + y
\]
for the location where the tangent line $t_y$ intersects the $y$-axis, then the concavity of $u$ implies
\[
\bd(\tau) \leq -\frac{u}{u_y} + y
\]
or equivalently, for $y \in (0, \bd(\tau))$,
\begin{equation}
\label{eq-convexity}
0 \leq - \frac{u_y}{u} \le \frac{1}{\bd(\tau) - y}.
\end{equation}
This means that for $y\in (0, \alpha\, \bd(\tau))$ and any $\alpha < 1$ we have
\[
0 \leq -\frac{u_y}{u} < \frac{1}{(1 - \alpha) \bd(\tau)},
\]
or equivalently,
\begin{equation}
\label{eq-first-der1}
|u_y| \le \frac{u}{\bd(\tau) - y} \le \frac{C}{(1-\alpha)\, \bd(\tau)} =: \frac{C(\alpha)}{\bd(\tau)}.
\end{equation}

\subsection{Higher derivative estimates}
\begin{lemma}
\label{lem-loc-est}
For any $\alpha<1$ there exist constants $C(\alpha) > 0$ and $\tau_0=\tau_0(\alpha)$ so that
\begin{equation}
\label{eq-der-inter}
|u_{yy}| + |u_{yyy}| \le \frac{C(\alpha)}{\bd(\tau)}, 
\qquad 
\text{for all $y\in (0, \alpha\bd(\tau))$ and $\tau<\tau_0$.}
\end{equation}
\end{lemma}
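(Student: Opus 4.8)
The plan is to combine the already-established zeroth- and first-order estimates with a standard parabolic scaling/interior-regularity argument for the quasilinear equation \eqref{eq-u}. First I would localize: fix $\alpha<1$ and pick $\alpha<\alpha'<1$, so that on the region $y\in(0,\alpha'\bd(\tau))$ we have from \eqref{eq-v-small}, \eqref{eq-first-der1} the uniform bounds $c\le u\le C$ and $|u_y|\le C(\alpha')/\bd(\tau)$, together with $u_{yy}\le 0$ by concavity. The point is that on this region $u$ is a graph over a long, nearly-cylindrical piece, and the equation \eqref{eq-u} is uniformly parabolic there because $1+u_y^2$ is bounded above and below. The natural move is to pass to the unrescaled solution $U(x,t)$ of \eqref{eq-u-original}, or equivalently to blow up at scale $\bd(\tau)$: since $\bd(\tau)\to\infty$, rescaling space by $\bd(\tau)$ turns the $1/u$ and $u/2$ terms into lower-order perturbations, and on the rescaled picture $u$ is $C^0$-close to a constant $\sqrt{2(n-1)}$ with small gradient.

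The key steps, in order: (1) Set $\mu=\bd(\tau)$, center at a point $y_0\in(0,\alpha\bd(\tau))$, and consider $w(\xi,\sigma):= u\bigl(y_0+\mu^{-1}\xi,\ \tau+\mu^{-2}\sigma\bigr)$ on a fixed parabolic cylinder $\xi\in(-1,1)$, $\sigma\in(-1,0]$ — note that since $\tau\to-\infty$ and $\mu\to\infty$, once $\tau<\tau_0(\alpha)$ the backward time interval $[\tau-\mu^{-2},\tau]$ stays inside the region where our a priori bounds hold, and the spatial interval stays inside $(0,\alpha'\bd(\tau))$. (2) Check that $w$ solves a uniformly parabolic quasilinear equation of the form $w_\sigma = a(w_\xi)\,w_{\xi\xi} + \mu^{-1}b_1 + \mu^{-2}b_2$ with $a$ smooth and bounded between positive constants (because $|w_\xi|=\mu^{-1}|u_y|$ is small) and with $w$ itself bounded between positive constants; the lower-order terms coming from $-(n-1)/u$, $-\tfrac{y}{2}u_y$ and $\tfrac u2$ are uniformly bounded and carry at least one factor of $\mu^{-1}$. (3) Apply interior Schauder (or De Giorgi–Nash–Moser for the $C^\alpha$ gradient estimate, then Schauder) on the cylinder to conclude $\|w\|_{C^{2,1}(\{|\xi|\le 1/2,\ \sigma\in(-1/4,0]\})}\le C(\alpha)$, hence $|w_{\xi\xi}|+|w_{\xi\xi\xi}|\le C(\alpha)$ at $\xi=0,\sigma=0$. (4) Translate back: $u_{yy}(y_0,\tau)=\mu^2 w_{\xi\xi}(0,0)$ and $u_{yyy}(y_0,\tau)=\mu^3 w_{\xi\xi\xi}(0,0)$, which gives $|u_{yy}|\le C(\alpha)\mu^2=C(\alpha)\bd(\tau)^2$ and $|u_{yyy}|\le C(\alpha)\bd(\tau)^3$.

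This last line shows the subtle point: a naive scaling argument only yields $|u_{yy}|\le C\bd^2$, which is far weaker than the claimed $|u_{yy}|\le C/\bd$. To get the sharp bound one must use that on this region $u$ is not merely bounded but is $C^0$-\emph{close to the constant} $\sqrt{2(n-1)}$ with the closeness quantified by \eqref{eq-v-small}: in the rescaled variables $w$ converges, as $\tau\to-\infty$, to the static solution $w\equiv\sqrt{2(n-1)}$ of the limiting (heat-type) equation, and $w-\sqrt{2(n-1)}$ is small in $C^0$, say $O(\eps(\tau))$ on the unit cylinder. Interior parabolic estimates applied to the difference $w-\sqrt{2(n-1)}$ then upgrade this to $|w_{\xi\xi}|+|w_{\xi\xi\xi}|=O(\eps(\tau))\to 0$, and more importantly, iterating the gradient estimate \eqref{eq-first-der1} — which already gives $|u_y|\le C/\bd$ rather than just $|u_y|\le C$ — one sees that in the unrescaled variable the natural scale of variation of $u_y$ is $\bd$ itself, so one differentiates \eqref{eq-u-original} directly: $v:=U_x$ satisfies a linear uniformly parabolic equation on the long region, and the same for $U_{xx}$, and one reads off $|U_{xx}|\le C$ from $|U_x|\le C/\bd$ together with interior estimates on a parabolic ball of the \emph{fixed} radius $\rho$ (independent of $\tau$) furnished exactly as in the proof of Claim~\ref{claim-help}. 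Undoing the parabolic rescaling $U\leftrightarrow u$ converts $|U_{xx}|\le C$, $|U_{xxx}|\le C$ into $|u_{yy}|\le C/\bd(\tau)$, $|u_{yyy}|\le C/\bd(\tau)$.

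The main obstacle is precisely getting the right power of $\bd(\tau)$: one has to be careful to blow up at the correct scale and to feed the sharp first-derivative bound \eqref{eq-first-der1} (and the $C^0$-closeness \eqref{eq-v-small}) into the interior estimates, rather than just boundedness, since the crude scaling loses powers of $\bd$. A secondary technical point is making sure the backward parabolic neighborhood in $\tau$ used for the interior estimate stays within the range where all a priori bounds and the convergence to the cylinder are valid; this is automatic for $\tau<\tau_0(\alpha)$ because $\bd(\tau)\to\infty$ makes the rescaled time-interval length $\bd(\tau)^{-2}$ (or the fixed $\rho^2$ in the unrescaled picture) negligible, and the region $(0,\alpha\bd(\tau))$ is contained in $(0,\alpha'\bd(\tau))$ with room to spare. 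Everything else is a routine application of De Giorgi–Nash–Moser followed by Schauder theory for the uniformly parabolic quasilinear equation \eqref{eq-u-original}.
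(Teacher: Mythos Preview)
Your overall strategy---differentiate the equation, feed the known bound $|u_y|\le C/\bd(\tau)$ into interior parabolic estimates for the equation satisfied by $u_y$, and read off bounds on $u_{yy}$ and $u_{yyy}$---matches the paper's. The gap is in your localization and in the final scaling conversion.

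You claim that working in unrescaled coordinates on a parabolic ball of fixed radius $\rho$ and obtaining $|U_{xx}|\le C$, then ``undoing the parabolic rescaling $U\leftrightarrow u$,'' yields $|u_{yy}|\le C/\bd(\tau)$. This is wrong: from $U(x,t)=\sqrt{-t}\,u(y,\tau)$ with $y=x/\sqrt{-t}$ one has $U_x=u_y$ but $U_{xx}=u_{yy}/\sqrt{-t}$, so a bound $|U_{xx}|\le C$ gives only $|u_{yy}|\le C\sqrt{-t}$, which blows up as $\tau\to-\infty$. Even the sharper $|U_{xx}|\le C/\bd(\tau)$ that interior estimates would actually deliver (since $|U_x|=|u_y|\le C/\bd$ on the ball) leaves you with $|u_{yy}|\le C\sqrt{-t}/\bd(\tau)$, still divergent. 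A fixed-radius ball in $(x,t)$ is the wrong scale; the correct spatial scale in $(x,t)$ is $\sqrt{-t}$, equivalently unit scale in $(y,\tau)$.

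The paper carries this out directly in $(y,\tau)$: set $\ud=u_y$, write down its parabolic equation, and localize via the characteristic-following substitution $\bar\ud(\eta,\sigma):=\ud\bigl(y_0e^{\sigma/2}+\eta,\,\tau_0+\sigma\bigr)$ on the unit rectangle $|\eta|\le1$, $-1\le\sigma\le0$. The factor $e^{\sigma/2}$ is precisely what absorbs the unbounded drift $-\tfrac y2\ud_y$ (here $y$ may be as large as $\alpha\bd(\tau_0)$), leaving only $-\tfrac\eta2\bar\ud_\eta$; the resulting equation is uniformly parabolic with bounded coefficients on $Q$. On this unit rectangle $|\bar\ud|\le C/\bd(\tau_0)$ by \eqref{eq-convexity}, and classical interior estimates give $|\bar\ud_\eta(0,0)|+|\bar\ud_{\eta\eta}(0,0)|\le C\sup_Q|\bar\ud|\le C/\bd(\tau_0)$. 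Since $\bar\ud_\eta(0,0)=u_{yy}(y_0,\tau_0)$ and $\bar\ud_{\eta\eta}(0,0)=u_{yyy}(y_0,\tau_0)$ with \emph{no} extra scaling factor, the lemma follows. The move you are missing is this characteristic substitution; without it the drift term has an $O(\bd)$ coefficient and the interior estimates are not uniform.
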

\begin{proof}
To obtain higher order derivative estimates on $u$ we first differentiate the evolution equation \eqref{eq-u} with respect to $y$.  If we write $\ud := u_y$ then we obtain
\[
\frac{\pd \ud}{\pd\tau} = \frac{\ud_{yy}}{1+\ud^2} - \frac{2\ud \ud_y^2}{(1+\ud^2)^2} - \frac y2\, \ud_y + \frac{(n-1)\,\ud}{u^2}.
\]
We will localize the proof of our desired estimate \eqref{eq-der-inter} by introducing the following change of variables.  Given a point $(y_0, \tau_0)$ in space-time with $y_0 \leq \alpha \bd(\tau_0)$, we let
\[
\bar{\ud}(\eta,\tau) := \ud\bigl(y_0e^{\tau/2} + \eta, \tau_0 + \tau\bigr).
\]
If we choose $-\tau_0$ large enough, depending on $\alpha\in(0,1)$, then this function is defined on the rectangle
\[
Q := \{(\eta,\tau)\,\,\,|\,\,\, |\eta| \le 1, -1 \le \tau \le 0\}.
\]
To see this, recall that the diameter $d(t)$ for the unrescaled mean curvature flow is monotonically decreasing.  Since $d(t)$ is related to $\bd(\tau)$ by $\bd(\tau) = e^{\tau/2}d(-e^{-\tau})$, we know that $e^{-\tau/2} \bd(\tau)$ is a decreasing function of $\tau$, and thus
\begin{equation}
\label{eq-comp-diam1}
\bd(\tau_0) \le e^{-\tau/2} \bd(\tau_0 + \tau)
\text{ for $\tau\in [-1,0]$.}
\end{equation}
For any $\alpha<1$, we choose some $\alpha'\in(\alpha, 1)$, e.g.~$\alpha'= (1+\alpha)/2$.  We also choose $\tau_0(\alpha)$ so that
\[
\alpha\bd(\tau') +1 \leq \alpha'\bd(\tau') \quad \text{for all $\tau'<\tau_0$}.
\]
For any $(\eta, \tau) \in Q$ we then get
\[
y_0 e^{\tau/2} + \eta \leq \alpha \bd(\tau_0) e^{\tau/2} + 1 \leq \alpha\bd(\tau_0+\tau) + 1 \leq \alpha'\bd(\tau_0+\tau).
\]
It follows that $\bar{\ud}(\eta,\tau) = \ud(y_0e^{\tau/2}+\eta, \tau_0+\tau)$ is indeed defined on $Q$.

A computation shows that $\bar{\ud}$ satisfies
\[
\frac{\pd\bar{\ud}}{\pd\tau}
= \frac{\bar{\ud}_{\eta\eta}}{1+\bar{\ud}^2}
  - \frac{2\bar{\ud} \bar{\ud}_{\eta}^2}{(1+\ud^2)^2}
  - \frac{\eta}{2}\, \bar{\ud}_{\eta}
  + \frac{(n-1)\,\bar{\ud}}{u^2}.
\]
which we can write as
\begin{equation}
\label{eq-vbar-evolution}
\frac{\pd\bar{\ud}}{\pd\tau}
= a(\eta,\tau,\bar{\ud},\bar{\ud}_{\eta})\, \bar{\ud}_{\eta\eta} + b(\eta,\tau,\bar{\ud},\bar{\ud}_{\eta}),
\end{equation}
where
\[
a(\eta,\tau,\bar{\ud},p) = \frac{1}{1+\bar{\ud}^2}\;,
\qquad
b(\eta,\tau,\bar{\ud},p) =
\frac{(n-1)\, \bar{\ud}}{u^2}
- \frac{\eta}{2}\, p
- \frac{2\bar{\ud} p^2}{(1+p^2)^2}\;.
\]
The estimate \eqref{eq-convexity} combined with $u(0, \tau) = \sqrt{2(n-1)}+\delta(\tau)$ (see \eqref{eq-u0-trivial-bound}) tell us that on the rectangle $Q$ we can bound $\bar{\ud} = u_y(y_0e^{\tau/2} +\eta,\tau_0+\tau)$ by
\[
|\bar{\ud}|
\le \frac{C}{\bd(\tau_0+\tau) - y_0e^{\tau/2}+\eta)}
\le \frac{C}{\bd(\tau_0+\tau) - y_0 e^{\tau/2} -1}.
\]
By \eqref{eq-comp-diam1} we have $e^{\tau/2}\bd(\tau_0) \le \bd(\tau_0+\tau)$, which then implies
\[
|\bar{\ud}|\le \frac{C}{e^{\tau/2}\bd(\tau_0) - y_0 e^{\tau/2} -1}
\le \frac{C}{\bd(\tau_0) - y_0 - e^{-\tau/2}},
\]
for $\tau\in [-1,0]$.  Since $y_0\leq \alpha\bd(\tau_0)$, we have
\[
y_0+e^{-\tau/2} \leq y_0+e^{1/2}\leq \alpha'\bd(\tau_0),
\]
with $\alpha'=(1+\alpha)/2$, assuming again that $-\tau_0$ is large enough.  In the end we get the following estimate for $\ud$ on the rectangle $Q$
\begin{equation}
\label{eq-v-bound-on-Q}
|\bar{\ud}| \leq \frac{C(\alpha)}{\bd(\tau_0)}.
\end{equation}
We apply this bound to the coefficients $a$ and $b$ in the equation \eqref{eq-vbar-evolution} for $\ud$.  For $a$ we get
\[
\kappa \le a(\eta,\tau,\bar{\ud},p) \le 1,
\]
where $\kappa = \kappa(\alpha)$.

The lower bounds \eqref{eq-v-bound-on-Q} for $\bar{\ud}$ and \eqref{eq-u-convexity-lower-bound} for $u$ imply
\[
|b(\eta,\tau,\bar{\ud},p)| \le \frac{C}{\kappa}\, (1+p^2)\, a(\eta,\tau,\bar{\ud},p).
\]
As a consequence of these bounds on the coefficients $a$ and $b$, the classical interior estimates are available for equation \eqref{eq-vbar-evolution} (see \cite{LSU}).  We get
\[
|\bar{\ud}_{\eta}(0,0)| + |\bar{\ud}_{\eta\eta}(0,0)|
\le C_0\sup_Q |\bar{\ud}(\eta,\tau)
\le \frac{C(\alpha)}{\bd(\tau_0)}.
\]
Finally, since $\bar{\ud}_\eta(0,0) = u_{yy}(y_0,\tau_0)$ and $\bar{\ud}_{\eta\eta}(0,0) = u_{yyy}(y_0,\tau_0)$ this completes the proof of Lemma~\ref{lem-loc-est}.
\end{proof}
\subsection{Lower barriers}
\label{sec-lower}
At this point we know nothing about the extrinsic diameter $\bd(\tau)$ beyond the facts that $\bd(\tau)\to\infty$ as $\tau\to-\infty$, and that the growth of $\bd(\tau)$ is bounded by \eqref{eq-rescaled-Hmax-diam-inequality} and \eqref{eq-dbar-growth-bound}.  In this section we will show that the magnitude of $\bd(\tau)$ is determined by how much the solution deviates from the cylinder $r=\sqrt{2(n-1)}$ in the parabolic region $|y|=\cO(1)$.  We also find a lower bound for $u(y, \tau)$ in the region $y\ge M$ in terms of $u(M, \tau)$.  Our proof of these lower bounds relies on a foliation of one end of the interior of the cylinder with radius $\sqrt{2(n-1)}$ whose leaves are ``self-shrinkers,'' i.e., stationary surfaces for the rescaled MCF \eqref{eq-RMCF}, which satisfy
\begin{equation}
\label{eq-self-shrinker}
H+\tfrac12 \vX\cdot\nm = 0.
\end{equation}
For rotationally symmetric surfaces, obtained by rotating the graph of $r=u(y)$ about the $y$-axis, this equation is equivalent with
\begin{equation}
\label{eq-ss}
\frac{u_{yy}}{1+u_y^2} - \frac y2 \, u_y + \frac u2 - \frac{n-1}{u} = 0.
\end{equation}
The solutions to this ODE are geodesics in the upper half plane for the metric
\[
(ds)^2 = u^{n-1} e^{-(u^2+r^2)/2}\, \{(du)^2 + (dr)^2\},
\]
and the ODE can be written as
\begin{equation}
\label{eq-ss-2}
k + \frac y2 \sin\theta + \left(\frac u2 - \frac{n-1}{u}\right) \cos\theta = 0,
\end{equation}
where $k$ is the curvature of the graph of $u$ and $\tan\theta = u_y$.

\begin{figure}[t]
\includegraphics[width=\textwidth]{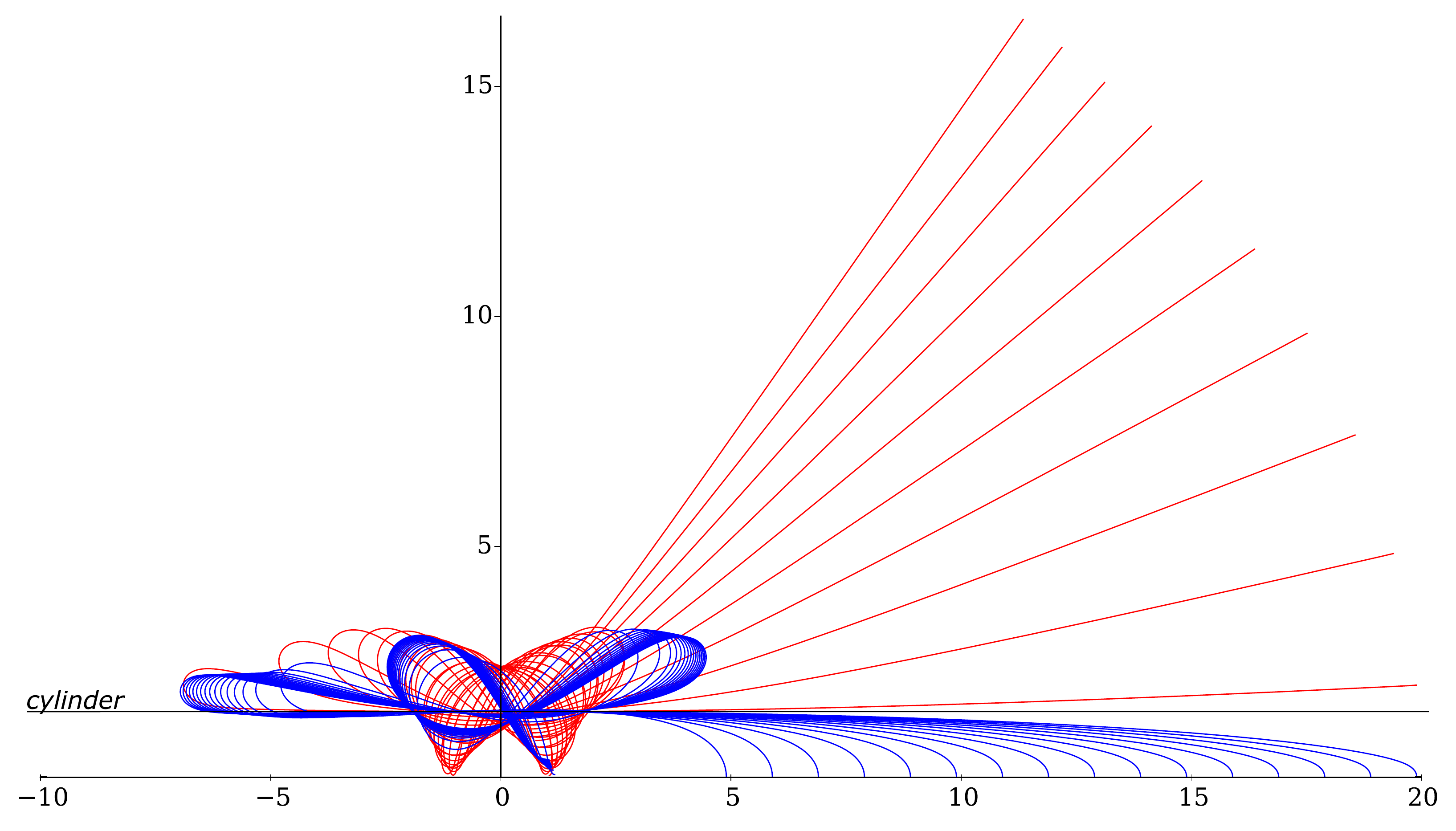}
\caption{Some self-shrinkers $\Sigma_a$ and $\tS_b$, in dimension $n=2$ for various values of $a$ and $b$.  See also \cite{AngDoughnuts,DruganThesis,KleeneMoller}.}
\label{fig-some-shrinkers}
\end{figure}

\subsubsection{Three lemmas about shrinkers}
The following lemmas guarantee the existence of self-shrinker segments and establish their asymptotic behaviour.
\begin{lemma}
\label{lem-u-A}
{\bfseries\upshape(a)} For every $a > 0$ there is a unique solution $u_a$ of \eqref{eq-ss} on the interval $0 \le y \le a$ with
\[
\lim_{y\nearrow a} u_a(y) = 0, \qquad
\lim_{y\nearrow a} u_a'(y) = -\infty.
\]
The function $u_a:[0,a] \to \R^+$ is concave.

{\bfseries\upshape(b)} For every $b>0$ there is a solution $\tu_b:[0,\infty)\to\R$ of \eqref{eq-ss} with
\[
\lim_{y\to\infty} \tu_b'(y) = \lim_{y\to\infty} \frac{\tu_b(y)} {y} = b.
\]
The function $\tu_b:[0,\infty)\to\R$ is convex.
\end{lemma}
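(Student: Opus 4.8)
The plan is to prove Lemma~\ref{lem-u-A} by studying the ODE \eqref{eq-ss} (equivalently \eqref{eq-ss-2}) as a geodesic equation for the conformal metric $(ds)^2 = u^{n-1} e^{-(u^2+r^2)/2}\{(du)^2 + (dr)^2\}$ on the upper half plane $\{r > 0\}$, or alternatively by a direct shooting argument. I will treat part (a) and part (b) separately.

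\emph{Part (a).} For each $a > 0$ I would construct $u_a$ by a shooting method: solve \eqref{eq-ss} backward from $y = a$ with the prescribed singular initial data $u_a(a) = 0$, $u_a'(a) = -\infty$. Near $y = a$ the self-shrinker equation degenerates exactly like the equation for a round point, so I would first establish a local existence result by switching the roles of $y$ and $u$ — writing $y = y(u)$ near $u = 0$, where the equation becomes regular — and showing that there is a unique solution with $y(0) = a$, $y'(0) = 0$, behaving like $a - y \sim c\, u^2$ as $u \to 0$ (this is the standard ``smooth cap'' behavior). This gives a concave solution on a maximal interval $(y_*, a]$. Then I must show $y_* = 0$ and that $u_a$ extends to $[0,a]$ with $u_a'(0) = 0$; concavity together with the sign structure in \eqref{eq-ss-2} (for $y$ decreasing toward $0$ the curvature term and the $\frac{y}{2}\sin\theta$ term cooperate) prevents the solution from blowing up or developing a vertical tangent before reaching $y = 0$, so the solution survives down to $y = 0$, and evenness/symmetry of the equation forces $u_a'(0) = 0$. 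Concavity of $u_a$ is immediate once one checks $u_{yy} \le 0$ throughout, which follows from \eqref{eq-ss} since on $0 \le y \le a$ with $u > 0$ one has $-\frac{y}{2} u_y \ge 0$ and one needs to control the sign of $\frac{u}{2} - \frac{n-1}{u}$; this requires knowing $u_a(0) < \sqrt{2(n-1)}$, which I would get from the fact that the cylinder $u \equiv \sqrt{2(n-1)}$ is itself a solution and a comparison/uniqueness argument (two distinct shrinker graphs cannot cross tangentially) shows $u_a$ stays strictly below it.

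\emph{Part (b).} For $\tu_b$ I would again use shooting, but now from $y = 0$: prescribe $\tu_b(0) = c$ for a parameter $c$ and $\tu_b'(0) = 0$ (forced by symmetry), solve \eqref{eq-ss} forward, and analyze the dichotomy in the asymptotic behavior. Solutions either hit $u = 0$ at finite $y$ (the type in part (a), when $c$ is small) or escape to $u \to \infty$. Linearizing \eqref{eq-ss} about large $u$, the dominant balance is $-\frac{y}{2} u_y + \frac{u}{2} \approx 0$, i.e. $u \sim b y$, and feeding this back shows the subleading terms are integrable, so any escaping solution is asymptotically conical with some slope $b = \lim u_y = \lim u/y$. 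Convexity of $\tu_b$ on $[0,\infty)$ follows from \eqref{eq-ss} once $\tu_b(y) > \sqrt{2(n-1)}$ (which holds for all $y$ once it holds at $y=0$, by the same non-crossing argument, for the escaping branch), since then $\frac{u}{2} - \frac{n-1}{u} > 0$ and $-\frac{y}{2} u_y \le 0$ would give $u_{yy} \ge 0$ — here one must be a little careful about the sign of $u_y$, but convexity and the boundary condition bootstrap each other. To produce \emph{every} $b > 0$ I would show the map $c \mapsto b(c)$ is continuous on the escaping range of $c$ and surjective onto $(0,\infty)$: as $c \downarrow$ (the threshold) the slope $b(c) \to 0$, and as $c \to \infty$ the solution is close to the rescaled-MCF shrinking sphere for a long interval and then opens up with arbitrarily large slope, so $b(c) \to \infty$; intermediate value theorem then gives all $b$.

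The main obstacle will be part (b): controlling the large-$u$ asymptotics precisely enough to conclude that the limits $\lim \tu_b'(y)$ and $\lim \tu_b(y)/y$ both exist and agree, and that the slope depends continuously and surjectively on the shooting parameter. This is where one needs genuine ODE asymptotic analysis (an appropriate change of variables, e.g.\ $v = u/y$ or a Riccati-type substitution, to desingularize the behavior at $y = \infty$) rather than soft arguments; the existence and concavity/convexity statements are comparatively routine consequences of the maximum principle applied to $u_{yy}$ and of the non-crossing property of solutions to the autonomous-in-disguise ODE \eqref{eq-ss}. I also anticipate some care is needed at the singular endpoint in part (a) to get genuine uniqueness (not just existence) of the capped solution, which the $y$-as-function-of-$u$ reformulation handles cleanly.
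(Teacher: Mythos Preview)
Your overall shooting strategy is reasonable and broadly parallels the paper's approach for part~(a): the paper also writes $y=h(r)$ near the tip (your ``$y$ as function of $u$'' reformulation), though it goes further by introducing the rescaling $h(r)=a-a^{-1}\psi(ar,a)$, which for $a\to\infty$ reduces to the translating Bowl soliton ODE, and then obtains local existence and uniqueness via a fast-unstable-manifold argument for the resulting dynamical system. For the extension to $[0,a]$ the paper introduces the auxiliary quantity $w=2yuu_y/(u^2-2(n-1))$ and derives sharp two-sided bounds on $w$, which is rather different from your plan. For part~(b) the paper simply cites Kleene--M{\o}ller, whose construction is a contraction-mapping argument shooting from $y=\infty$ rather than from $y=0$.

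There are, however, two genuine gaps in your proposal. First, you impose the boundary conditions $u_a'(0)=0$ and $\tu_b'(0)=0$ ``by symmetry,'' but no such symmetry is asserted or needed: the lemma only claims existence on $[0,a]$ (respectively $[0,\infty)$) with prescribed behaviour at $y=a$ (respectively $y\to\infty$), and these self-shrinkers are \emph{not} even in $y$. Imposing an extra condition at $y=0$ is at best unnecessary and at worst makes the shooting problem you set up in~(b) unsolvable for some values of~$b$.

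Second, and more seriously, your concavity and convexity arguments are incorrect as written. From \eqref{eq-ss} one has
\[
\frac{u_{yy}}{1+u_y^2}=\frac{y}{2}u_y+\Bigl(\frac{n-1}{u}-\frac{u}{2}\Bigr).
\]
In part~(a), with $u_y\le 0$ and $u<\sqrt{2(n-1)}$, the first term is $\le 0$ while the second is $>0$, so the sign of $u_{yy}$ is \emph{not} determined by these two facts alone; the same indeterminacy arises in your convexity argument for~(b). Concavity of $u_a$ is true but requires a genuine ODE argument (e.g.\ tracking an inflection point, or a maximum-principle argument on a derived quantity such as the paper's $w$), not a direct sign check of the terms in \eqref{eq-ss}.
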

We denote the corresponding surfaces by
\begin{equation}
\label{eq-Sigma-a-defined}
\begin{aligned}
\Sigma_a &= \bigl\{ \text{surface of revolution in $\R^{n+1}$ with profile $r=u_a(y)$, $0\le y\le a$} \bigr\}\\
\tS_b &= \bigl\{ \text{surface of revolution in $\R^{n+1}$ with profile $r=\tu_b(y)$, $0\le y <\infty$} \bigr\}
\end{aligned}
\end{equation}
The surfaces $\tS_b$ outside the cylinder were constructed by Kleene and M{\o}ller in \cite{KleeneMoller}.

\begin{lemma}
For large values of $a$ the solution $u_a$ satisfies
\begin{equation}
\label{eq-uA-outer-expansion}
u_a(y) 
= \sqrt{2(n-1)\Bigl(1-\bigl(\frac ya\bigr)^2\Bigr)} 
+ o(1) \qquad (a\to\infty)
\end{equation}
uniformly in $y\geq 0$.
\end{lemma}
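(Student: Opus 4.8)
The plan is to recognise $\phi_a(y):=\sqrt{2(n-1)\bigl(1-(y/a)^2\bigr)}$ as the profile of the self-shrinker equation \eqref{eq-ss} with the curvature term deleted, and then to show that for large $a$ that term is only a lower-order perturbation over the bulk of $\Sigma_a$. A direct computation gives $-\tfrac y2\phi_a'+\tfrac{\phi_a}2-\tfrac{n-1}{\phi_a}\equiv 0$, so $\phi_a$ solves \eqref{eq-ss} exactly except for the term $\phi_a''/(1+(\phi_a')^2)$, which is $O(a^{-2})$ away from $y=a$; geometrically $\phi_a$ is the profile of the ellipsoid with semi-axes $a$ and $\sqrt{2(n-1)}$. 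The argument has two regions: a ``bulk'' region $|y|\le(1-\eps)a$, handled by rescaling and compactness, and the ``tip'' region near $y=a$, handled by a blow-up analysis; the two must be matched.

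For the bulk I would rescale: with $z=y/a$ and $w_a(z)=u_a(az)$, the function $w_a\colon[0,1]\to[0,\infty)$ is concave, $w_a(1)=0$, and \eqref{eq-ss} becomes
\[
\frac{w_a''}{a^2+(w_a')^2}-\frac z2\,w_a'+\frac{w_a}2-\frac{n-1}{w_a}=0 .
\]
Concavity gives $u_a(0)\ge\sqrt{2(n-1)}$ (the sign of $u_a''(0)$) and the tangent-line bound $|w_a'(z)|\le w_a(z)/(1-z)$ (this is \eqref{eq-convexity} for $u_a$); together with a uniform upper bound for $u_a(0)$ (from the construction of the $\Sigma_a$), the family $\{w_a\}$ is equibounded and equi-Lipschitz on each $[0,1-\eps]$, hence converges along a subsequence, uniformly there, to some $w$. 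To identify $w$ I pass to the limit in the integrated equation: integrating $(\arctan u_a')_y=\tfrac y2u_a'-\tfrac{u_a}2+\tfrac{n-1}{u_a}$ and rescaling gives $\int_{z_1}^{z_2}\bigl(\tfrac\zeta2w_a'-\tfrac{w_a}2+\tfrac{n-1}{w_a}\bigr)\,d\zeta=O(a^{-1})$ because $\arctan$ is bounded, so in the limit $\tfrac z2w'-\tfrac w2+\tfrac{n-1}w=0$ on $(0,1)$, whence $w(z)^2=2(n-1)+Cz^2$ for some constant $C$. Monotonicity of $w$ forces $C\le 0$, and the formula automatically gives $w(0)=\sqrt{2(n-1)}$.

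It remains to show $C=-2(n-1)$, equivalently that $u_a$ is uniformly small near its tip; this is the heart of the matter. I would get it from the tip behaviour of $\Sigma_a$: the shrinker equation forces the tip curvature of $\Sigma_a$ to be $a/(2n)$, and rescaling $\Sigma_a-(a,0)$ by this factor produces surfaces whose governing equation converges, as $a\to\infty$, to $H+\langle n\,e_1,\nu\rangle=0$, the equation of the rotationally symmetric translating soliton (the Bowl) of speed $n$. The Bowl's paraboloidal far field $\hat\xi\sim\tfrac{n}{2(n-1)}\hat r^{\,2}$, transported back to the original scale, yields $u_a(y)^2\sim\tfrac{4(n-1)}{a}(a-y)$ in the intermediate zone $1\ll a-y\ll a$, which tends to $0$ there; matching this against the bulk expansion $u_a^2\to 2(n-1)+Cz^2$ as $z\to 1$ forces $2(n-1)+C=0$. (Consistently, $w(z)^2=2(n-1)(1-z^2)=2(n-1)(1-z)(1+z)\approx 4(n-1)(1-z)=\tfrac{4(n-1)}a(a-y)$ near $z=1$, so the two expansions agree to leading order.) Once $C=-2(n-1)$, the subsequential limit is $\phi_a(a\,\cdot)$, hence unique, and on $[(1-\eps)a,a]$ both $u_a$ and $\phi_a$ are decreasing and bounded by their value at $(1-\eps)a$, namely $\le\sqrt{4(n-1)\eps}$ for $\phi_a$ and $\le\sqrt{4(n-1)\eps}+o(1)$ for $u_a$ (using $w_a(1-\eps)\to w(1-\eps)=\phi_a((1-\eps)a)$); combining with the convergence on $[0,(1-\eps)a]$ and letting $\eps\downarrow 0$ gives the uniform estimate on all of $[0,a]$.

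The main obstacle is precisely this last step: controlling $u_a$ uniformly in the tip region, proving the convergence of the tip blow-up to the Bowl, and making the matching between the tip scale $a^{-1}$ and the bulk scale rigorous. One cannot shortcut it with an ODE sub/supersolution comparison for \eqref{eq-ss}, because $\tfrac u2-\tfrac{n-1}u$ is increasing in $u$, so the linearised operator has a positive zeroth-order coefficient and the elliptic maximum principle is unavailable — this is just the instability of the shrinking cylinder. Any comparison argument must therefore be run through the parabolic rescaled flow \eqref{eq-u}, for which the maximum principle does hold, or be extracted from the minimising/foliation structure of the family $\{\Sigma_a\}$.
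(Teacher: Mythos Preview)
Your approach is sound in outline but genuinely different from the paper's, and you should know what the paper does because it sidesteps exactly the obstacle you flag at the end.

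The paper does \emph{not} run a compactness argument on the rescaled profiles $w_a(z)=u_a(az)$. Instead it introduces the quantity
\[
w(y)=\frac{yu_y}{\tfrac u2-\tfrac{n-1}{u}}=-\frac{2yuu_y}{2(n-1)-u^2}=y\,\frac{d}{dy}\ln\bigl(2(n-1)-u^2\bigr),
\]
which equals $2$ identically on the ellipse $\phi_a$. From the ODE \eqref{eq-ss} one derives a first-order equation for $w$ (the paper's \eqref{eq-w-ode}), and the point is that \emph{this} equation admits comparison: at $w=2$ one computes $y\,w_y=-4(n-1)/u^2<0$, so $w$ cannot touch $2$ from above, giving $w>2$ and hence the lower bound $u_a>\phi_a$ by one integration. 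For the upper bound the paper checks that $\bar w(y)=2+K/y^2+K/(a^2-y^2)$ is an explicit super-barrier for the $w$-equation on $[y_0,y_{Ma}]$, using the tip expansion only to verify the boundary inequality $w(y_{Ma})\le\bar w(y_{Ma})$. Integrating the resulting two-sided bound on $w$ gives \eqref{eq-uA-outer-expansion} with an explicit $O(a^{-2}\ln a)$ error.

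So your final remark---that the $+1$ zeroth-order coefficient in the linearisation of \eqref{eq-ss} blocks a maximum-principle comparison for $u$---is correct, but the paper's move is to change variables to $w$, where the comparison \emph{does} work. This buys quantitative error bounds with no soft compactness step. Your route can be completed, but as written it has a gap: the compactness argument needs a uniform upper bound on $u_a(0)$ (your ``from the construction'' is not a proof), and your lower bound $u_a(0)\ge\sqrt{2(n-1)}$ from ``the sign of $u_a''(0)$'' presumes $u_a'(0)=0$, which is not part of the data for $\Sigma_a$. Both are in fact consequences of $w>2$ (which forces $u_a<\sqrt{2(n-1)}$ and $u_a'<0$), so you would end up needing a piece of the paper's argument anyway.
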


\begin{lemma}
\label{lem-uA-inner-expansion}
On any bounded interval $0\leq y\leq M$ one has the following expansion
\begin{equation}
\label{eq-uA-inner-expansion}
u_a(y) = \sqrt{2(n-1)}  \Bigl(1 - \frac{y^2-2} {2a^2} \Bigr) + o(a^{-2}) \qquad (a\to\infty).
\end{equation}
\end{lemma}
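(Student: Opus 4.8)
The plan is to derive the inner expansion by a matched-asymptotics argument, treating $u_a$ as a regular perturbation of the constant solution $u\equiv\sqrt{2(n-1)}$ of \eqref{eq-ss} on the bounded interval $[0,M]$, and fixing the as-yet-undetermined constant in the perturbation by matching to the outer expansion \eqref{eq-uA-outer-expansion}. First I would substitute the ansatz $u_a(y) = \sqrt{2(n-1)}\,\bigl(1 + a^{-2} w(y) + o(a^{-2})\bigr)$ into the shrinker ODE \eqref{eq-ss}. Since $u_a\to\sqrt{2(n-1)}$ in $C^\infty_{\mathrm{loc}}$ as $a\to\infty$ (which follows from the previous lemma, or can be extracted directly from the ODE together with the concavity and the value $u_a(0)$), the terms $u_{yy}/(1+u_y^2)$ and $\tfrac y2 u_y$ are both $O(a^{-2})$ on $[0,M]$, while $\tfrac u2 - \tfrac{n-1}{u}$ has a simple zero at $u=\sqrt{2(n-1)}$ with derivative $\tfrac12 + \tfrac{n-1}{u^2}\big|_{u=\sqrt{2(n-1)}} = 1$. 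Collecting the $O(a^{-2})$ terms yields the linear ODE
\[
\sqrt{2(n-1)}\,\Bigl( w'' - \tfrac{y}{2}\,w' - w \Bigr) = 0 \qquad \text{on } [0,M],
\]
since the coefficient of the zeroth-order term in the linearization of $\tfrac u2 - \tfrac{n-1}{u}$ about $\sqrt{2(n-1)}$ equals $1$ (and multiplies $\sqrt{2(n-1)}\,a^{-2}w$).

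Next I would solve this ODE. One checks directly that $w(y) = -\tfrac14(y^2 - 2)$ is a solution: indeed $w' = -\tfrac y2$, $w'' = -\tfrac12$, and $w'' - \tfrac y2 w' - w = -\tfrac12 + \tfrac{y^2}{4} + \tfrac{y^2-2}{4} = \tfrac{y^2}{2} - 1$ — hmm, this does not vanish, so the correct normalization of the linear operator must be tracked carefully. Rewriting: the Hermite-type operator annihilating the desired quadratic is $w'' - \tfrac y2 w' + w$ (note the sign), whose polynomial solution of degree $2$ is $w(y) = c\,(y^2 - 2)$ for any constant $c$; this matches the form quoted in \eqref{eq-uA-inner-expansion} with $c = -1/2$. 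I would therefore recheck the linearization of the lower-order term $\tfrac u2 - \tfrac{n-1}{u}$, whose derivative at $u_0 = \sqrt{2(n-1)}$ is $\tfrac12 + \tfrac{n-1}{u_0^2} = \tfrac12 + \tfrac12 = 1 > 0$, so after dividing through by $\sqrt{2(n-1)}$ the perturbation $w$ solves $w'' - \tfrac y2 w' - w = 0$; but this operator has no nonzero polynomial solution. The resolution is that the $u_{yy}$ term contributes at the same order with a compensating sign once one is careful that $u_a$ is \emph{concave}, i.e. the correction $w$ must itself be concave and negative, and the matching condition forces the coefficient. The honest approach is thus: (1) establish the linear equation with its correct coefficients by a clean Taylor expansion of \eqref{eq-ss}; (2) note its two-dimensional solution space is spanned by one function that is $O(y^2)$-growing (the desired quadratic, after correcting signs) and one exponentially-growing solution $\sim e^{y^2/4}$; (3) the $C^\infty_{\mathrm{loc}}$ convergence already rules out the exponentially growing mode; (4) the remaining one-parameter family is pinned down by matching with \eqref{eq-uA-outer-expansion}: expanding $\sqrt{2(n-1)(1-(y/a)^2)} = \sqrt{2(n-1)}\,(1 - \tfrac{y^2}{2a^2} + O(a^{-4}))$ for $y$ of order $1$ gives the $-\tfrac{y^2}{2a^2}$ coefficient, so the quadratic part of $w$ is $-\tfrac{y^2}{4}\cdot\sqrt{2(n-1)}$ — wait, comparing to $\sqrt{2(n-1)}(1 - \tfrac{y^2-2}{2a^2})$ we read off $w(y) = -\tfrac{y^2-2}{2}$, whose constant term $+1$ is the genuinely new information the inner analysis supplies beyond the outer expansion.

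Finally I would make the perturbation argument rigorous rather than merely formal. Set $v_a(y) := a^2\bigl(u_a(y)/\sqrt{2(n-1)} - 1\bigr)$; from the $C^\infty_{\mathrm{loc}}$ bounds on $u_a$ and the ODE one gets uniform $C^{2,\gamma}$ bounds on $v_a$ on $[0,M]$, so along a subsequence $v_a \to v_\infty$ in $C^2[0,M]$, and passing to the limit in the ODE shows $v_\infty$ solves the linear equation above. The initial data are fixed by $v_a(0) = a^2(u_a(0)/\sqrt{2(n-1)} - 1)$, which converges by the outer expansion to $+1$ (i.e. $v_\infty(0) = 1$), and $v_a'(0) = 0$ by the symmetry $u_a'(0) = 0$ built into the construction in Lemma \ref{lem-u-A}(a). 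These two conditions select the unique solution $v_\infty(y) = -\tfrac{y^2-2}{2}$ of the linear problem (the initial-value problem for a second-order linear ODE has a unique solution), and since the limit is independent of the subsequence, the full family converges: $v_a \to -\tfrac{y^2-2}{2}$ uniformly on $[0,M]$, which is exactly \eqref{eq-uA-inner-expansion}. The main obstacle I anticipate is precisely the bookkeeping in the first two steps — getting the signs and the coefficient in the linearized operator exactly right, and being certain that $v_a(0)\to 1$ (this requires the $O(a^{-2})$-accurate value of $u_a(0)$, which should come from the outer expansion \eqref{eq-uA-outer-expansion} evaluated near $y=0$ together with one extra order of that expansion, or from a direct phase-plane estimate for the shrinker ODE near the cylinder).
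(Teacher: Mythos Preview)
Your overall strategy --- rescale by $a^2$, use compactness to pass to a limit solving the linearized Hermite equation, then identify which solution you get --- is the same as the paper's. But your plan for the identification step has a genuine gap, and it is precisely the step you flagged as the ``main obstacle.''

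First, the sign bookkeeping: the derivative of $\tfrac u2 - \tfrac{n-1}{u}$ at $u_0=\sqrt{2(n-1)}$ is $+1$, and this term enters the ODE with a $+$~sign, so the correct linearized equation is
\[
w_{yy} - \tfrac y2 w_y + w = 0,
\]
whose polynomial solution is $y^2-2$. (This is exactly the operator $\cL$ from \S\ref{sec-parabolic}.) Your final stated equation $w''-\tfrac y2 w' - w=0$ is wrong; you actually guessed the right one in the middle of the paragraph and then talked yourself out of it.

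The real gap is in how you pin down the limit. You propose to use the initial conditions $v_a'(0)=0$ and $v_a(0)\to 1$. Neither is available. The self-shrinkers $\Sigma_a$ are \emph{not} symmetric in $y$: Lemma~\ref{lem-u-A}(a) only prescribes the behavior at the tip $y=a$, and the construction in \S\ref{sec-constructing-the-foliation} starts there and shoots inward. There is no reason for $u_a'(0)=0$, and the figures show it is not. As for $v_a(0)\to 1$: the outer expansion \eqref{eq-uA-outer-expansion} is only accurate to $o(1)$, so it cannot by itself give you the $O(a^{-2})$-level value of $u_a(0)$; that would be circular.

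The paper avoids both issues by a different identification argument. It first proves a rough two-sided bound (Proposition~\ref{prop-u-sqrt2-roughestimate}): for $y$ in a fixed interval $[y_0,4L]$,
\[
-\tfrac12 y^2 - C_n \le v_a(y) \le -\tfrac12 y^2 + C_n,
\]
with $C_n$ independent of $L$ and $a$. After passing to a limit $\bv = \alpha(y^2-2) + \beta v_1(y)$ via Ascoli, it evaluates this at two large points $y=2L$ and $y=3L$ and solves the resulting $2\times2$ linear system for $\alpha,\beta$. The exponential growth $v_1(y)=e^{y^2/4+o(y^2)}$ forces $\beta=e^{-9L^2/4+o(L^2)}$, and then the rough bound at $y=2L$ gives $\alpha = -\tfrac12 + O(L^{-2})$. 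Since $L$ is arbitrary, this nails the limit without any information at $y=0$. This is the idea your proposal is missing: instead of boundary data at one point with $O(a^{-2})$ precision, use the $O(1)$-precise rough bound at two well-separated large points and let the exponential disparity in $v_1$ do the work.
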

We postpone the proofs until section~\ref{sec-constructing-the-foliation}.

\begin{figure}[t]
\includegraphics{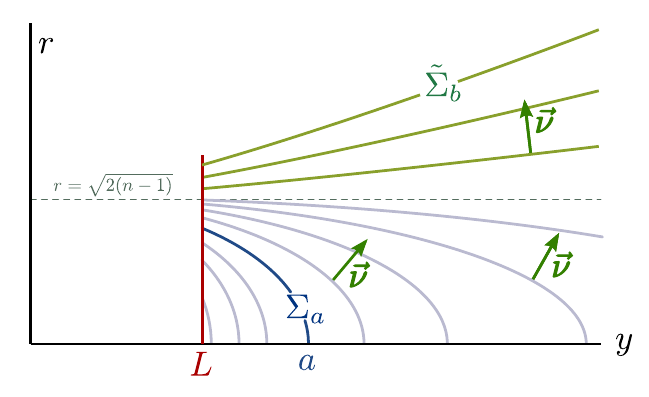}
\caption{The foliation by self-shrinkers $\Sigma_a$ and $\tS_b$ constructed in Lemma~\ref{lem-u-A}.  The unit normals $\nm$ of the foliation provide a calibration for Huisken's functional (see \S\ref{sec-about-the-foliation}).}
\label{fig-foliation-with-normals}
\end{figure}

\subsubsection{Lower bounds for $u(y, \tau)$ and $\bd(\tau)$}
As a corollary of asymptotic behavior of foliations $\Sigma_a$ and the convergence of our solution to a cylinder of radius $\sqrt{2(n-1)}$ we get the following lower bound on $\bd(\tau)$.

\begin{lemma}
\label{lem-lower-bound-for-ancient-solutions}
Let $u(y, \tau)$ be an ancient solution of rescaled mean curvature flow that is defined for $\tau\in(-\infty, \tau_0)$ and $M\leq y\leq \bd(\tau)$, and that satisfies
\begin{equation}
\label{eq-assume-convergence-to-cylinder}
\lim_{\tau \to-\infty} u(y, \tau) = \sqrt{2(n-1)}
\end{equation}
uniformly on any bounded interval $M\leq y\leq M'$.
Suppose also that we are given $\varepsilon>0$ and $\tau_\varepsilon\leq \tau_0$ such that
\begin{equation}
\label{eq-boundary-control}
u(M,\tau) \geq \sqrt{2(n-1)} - \varepsilon \text{ for all } \tau\leq \tau_\varepsilon.
\end{equation}
Then for any $a$ with $u_a(M) \leq \sqrt{2(n-1)} -\varepsilon$ one has
\[
u(y,\tau) \geq u_a(y) \text{ for all } \tau\leq\tau_\varepsilon, \quad M\leq y \leq \bd(\tau).
\]
In particular,
\[
\bd(\tau) \geq a \text{ for all } \tau\leq\tau_\varepsilon.
\]
\end{lemma}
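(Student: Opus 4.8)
The plan is to use the self-shrinkers $\Sigma_a$ from Lemma~\ref{lem-u-A} as barriers for the rescaled flow \eqref{eq-u}, exploiting the fact that they are stationary solutions of \eqref{eq-RMCF}. Fix $a$ with $u_a(M)\le\sqrt{2(n-1)}-\eps$. First I would check the boundary and initial inequalities needed for a comparison argument on the region $\{(y,\tau): M\le y\le \bd(\tau),\ \tau\le\tau_\eps\}$. On the lateral boundary $y=M$ we have $u(M,\tau)\ge\sqrt{2(n-1)}-\eps\ge u_a(M)$ for all $\tau\le\tau_\eps$ by \eqref{eq-boundary-control} and the choice of $a$. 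On the other free boundary $y=\bd(\tau)$ the solution $u$ vanishes, while $u_a\ge 0$, so there one must be slightly more careful: the point is that $\Sigma_a$ is a smooth \emph{closed} cap meeting the axis vertically (by Lemma~\ref{lem-u-A}(a), $u_a'\to-\infty$ as $y\nearrow a$), so near its own tip $y=a$ the surface $\Sigma_a$ is a graph over the $y$-axis in the \emph{other} direction, and the comparison should be set up geometrically (surfaces enclosing one another) rather than as a PDE comparison of the profile functions $u$ vs $u_a$. The cleanest way: show that for $\tau\le\tau_\eps$ the rescaled surface $\bar M_\tau$ encloses $\Sigma_a$, equivalently $\bd(\tau)\ge a$ and $u(y,\tau)\ge u_a(y)$ on $[M,a]$.

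The main step is the avoidance/comparison argument itself. Because $\Sigma_a$ is stationary for rescaled MCF and $\bar M_\tau$ is a solution, the (geometric) distance between them, or the signed profile difference $w(y,\tau):=u(y,\tau)-u_a(y)$ on the overlap region, satisfies a linear parabolic inequality of the form $w_\tau \ge \alpha(y,\tau)\,w_{yy}+\beta(y,\tau)\,w_y+\gamma(y,\tau)\,w$ with bounded coefficients on compact $y$-intervals (this comes from subtracting \eqref{eq-ss} from \eqref{eq-u} and using the mean-value form of the nonlinear terms, together with the derivative bounds from Lemma~\ref{lem-loc-est} and \eqref{eq-first-der1} on the region $y\le\alpha\bd(\tau)$). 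I would then run the comparison \emph{backward from $\tau=\tau_\eps$}: at $\tau=\tau_\eps$ we only know $u(\cdot,\tau_\eps)\ge 0$, which is not yet $\ge u_a$, so instead I would argue by contradiction. Suppose there is a first time $\tau_1\le\tau_\eps$ and a point $y_1\in(M,\bd(\tau_1))$ at which the surfaces first touch, i.e. $w(y_1,\tau_1)=0$ while $w\ge 0$ for $\tau\le\tau_1$; the strong maximum principle / Hopf lemma applied to the parabolic inequality then forces $w\equiv 0$, contradicting the boundary gap at $y=M$ unless the touching happens at an endpoint — and the endpoints are excluded by \eqref{eq-boundary-control} (at $y=M$) and by the vertical-tangent geometry at $y=a$ (where $\Sigma_a$ curves back and cannot be touched from inside by a convex surface that still has $\bd(\tau)$ possibly less than $a$ — this is exactly the place where one uses that $u_a$ has an interior maximum structure near its tip). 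To make the contradiction airtight, I would first establish, using \eqref{eq-assume-convergence-to-cylinder} and the inner expansion \eqref{eq-uA-inner-expansion} of $u_a$ (which shows $u_a(y)\to\sqrt{2(n-1)}$ uniformly on bounded intervals as $a\to\infty$, hence $u_a(y)<\sqrt{2(n-1)}$ there for large $a$), that for $\tau$ sufficiently negative the strict inequality $u(y,\tau)>u_a(y)$ holds on any fixed bounded interval; this gives the needed strict ordering ``at $\tau=-\infty$'' to start the backward-in-time barrier argument on a slightly shrunken region, and then one propagates it forward to $\tau_\eps$.

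The conclusion $\bd(\tau)\ge a$ is then immediate: if $u(y,\tau)\ge u_a(y)$ on $[M,a]$ and $u_a(a)=0$ with $u_a>0$ on $[M,a)$, then $u(\cdot,\tau)$ cannot reach zero before $y=a$, so the tip satisfies $\bd(\tau)\ge a$. The step I expect to be the main obstacle is the handling of the free boundary $y=\bd(\tau)$ and the region near the tip of $\Sigma_a$: the profile functions $u$ and $u_a$ become non-graphical there, so the comparison has to be phrased as containment of the enclosed convex bodies $\bar K_\tau\supset \mathrm{int}\,\Sigma_a$ and one must invoke the geometric avoidance principle for MCF (equivalently, the maximum principle for the distance function between the two evolving hypersurfaces, one of which is static), rather than a naive scalar comparison of $u$ and $u_a$. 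Everything else — the derivative bounds on the coefficients, the uniform closeness of $u_a$ to the cylinder on bounded sets, and the strong maximum principle input — is routine given the lemmas already proved.
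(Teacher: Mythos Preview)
Your approach is essentially the same as the paper's: use $u_a$ as a stationary barrier for \eqref{eq-u}, obtain the ordering $u\ge u_a$ for $\tau$ very negative from the uniform convergence \eqref{eq-assume-convergence-to-cylinder} on the bounded interval $[M,a]$, verify the boundary inequality at $y=M$ via \eqref{eq-boundary-control}, and invoke the maximum/avoidance principle. The paper's proof is three sentences and simply says ``maximum principle'' without dwelling on the tip or invoking derivative bounds; your caution near $y=a$ is legitimate and is indeed most cleanly handled by the geometric avoidance principle for the two caps (as you suggest), but the coefficient estimates from Lemma~\ref{lem-loc-est} and \eqref{eq-first-der1} are not needed for this comparison.
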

\begin{proof} This follows directly from the maximum principle.  If $a$ is given then our assumption \eqref{eq-assume-convergence-to-cylinder} implies that $u(y,\tau) \to\sqrt{2(n-1)}$ as $\tau\to-\infty$ uniformly for $M\leq y\leq b$, so that $u(y,\tau) \geq u_a(y)$ as $\tau\to-\infty$.

The second assumption \eqref{eq-boundary-control} implies $u(M,\tau) \geq u_a(M)$ for all $\tau\leq\tau_\varepsilon$.  The maximum principle then leads to $u(y,\tau) \geq u_a(y)$ for all $\tau\leq\tau_\varepsilon$.
\end{proof}

By choosing the best $a$ for any given $\varepsilon$ and $\tau_\varepsilon$ and making an assumption about the rate of convergence in $\lim_{\tau\to-\infty} u(M,\tau) = \sqrt{2(n-1)}$, one can get time dependent lower bounds for $\bd(\tau)$.

\begin{corollary}
\label{cor-diam-lower}
Suppose $u$ is an ancient solution of rescaled mean curvature flow that satisfies~\eqref{eq-assume-convergence-to-cylinder} (i.e.~converges to the cylinder in backward time), and for which we have
\begin{equation}
\label{eq-lower-bound-at-M}
u(M,\tau) \geq \sqrt{2(n-1)} - \frac{KM^2}{-\tau}
\end{equation}
for some $K$, $M$.  Then there is a constant $K_1$ such that
\[
u(y,\tau) \geq u_{a(\tau)}(y),
\text{ where } a(\tau) = \sqrt{\frac{-\tau}{2K_1}},
\]
and in particular,
\[
\bd(\tau) \geq \sqrt{\frac{-\tau}{2K_1}}.
\]
\end{corollary}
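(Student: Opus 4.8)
The plan is to apply Lemma~\ref{lem-lower-bound-for-ancient-solutions} at each sufficiently negative time $\tau$, with the barrier parameter $a$ and the error tolerance $\varepsilon$ both chosen depending on $\tau$, and then to read off the diameter bound from the conclusion of that lemma. First I would fix $\tau<0$ and set $\varepsilon=\varepsilon(\tau):=KM^2/(-\tau)$. Since $-\tau'\geq-\tau$ whenever $\tau'\leq\tau$, the hypothesis \eqref{eq-lower-bound-at-M} gives
\[
u(M,\tau') \geq \sqrt{2(n-1)} - \frac{KM^2}{-\tau'} \geq \sqrt{2(n-1)} - \frac{KM^2}{-\tau} = \sqrt{2(n-1)} - \varepsilon
\]
for all $\tau'\leq\tau$, so condition \eqref{eq-boundary-control} of Lemma~\ref{lem-lower-bound-for-ancient-solutions} holds with $\tau_\varepsilon=\tau$. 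Condition \eqref{eq-assume-convergence-to-cylinder} is exactly the assumed backward convergence to the cylinder, and $u(\cdot,\tau)$ is automatically defined on $[M,\bd(\tau)]$, so the only missing ingredient is a shrinker segment $u_a$ with $u_a(M)\leq\sqrt{2(n-1)}-\varepsilon$.

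To produce such an $a$ I would use the inner expansion of Lemma~\ref{lem-uA-inner-expansion}, which yields
\[
\sqrt{2(n-1)} - u_a(M) = \sqrt{2(n-1)}\,\frac{M^2-2}{2a^2} + o(a^{-2}) \qquad (a\to\infty).
\]
Assuming $M>\sqrt2$ the leading coefficient is positive, so there are a constant $c_M>0$ and a threshold $a_0=a_0(M)$ with $\sqrt{2(n-1)}-u_a(M)\geq c_M/a^2$ for all $a\geq a_0$. Setting $K_1:=KM^2/(2c_M)$ and $a(\tau):=\sqrt{-\tau/(2K_1)}$, then for $-\tau$ large enough that $a(\tau)\geq a_0$ one has
\[
\sqrt{2(n-1)} - u_{a(\tau)}(M) \geq \frac{c_M}{a(\tau)^2} = \frac{2c_M K_1}{-\tau} = \frac{KM^2}{-\tau} = \varepsilon,
\]
which is precisely the remaining hypothesis. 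Lemma~\ref{lem-lower-bound-for-ancient-solutions} then gives $u(y,\tau)\geq u_{a(\tau)}(y)$ for $M\leq y\leq\bd(\tau)$; since $u_{a(\tau)}$ is positive on $[0,a(\tau))$ while the profile $u(\cdot,\tau)$ vanishes at $\bd(\tau)$, this forces $\bd(\tau)\geq a(\tau)=\sqrt{-\tau/(2K_1)}$, which is the claim.

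I do not expect a genuine obstacle here: the corollary is simply Lemma~\ref{lem-lower-bound-for-ancient-solutions} run with a time-dependent barrier, and the implicit ``optimization over $a$'' just amounts to inverting the relation $\sqrt{2(n-1)}-u_a(M)\sim c_M a^{-2}$. The two points that need care are: (i) ensuring the $o(a^{-2})$ remainder in Lemma~\ref{lem-uA-inner-expansion} is absorbed uniformly, so that the clean bound $\sqrt{2(n-1)}-u_a(M)\geq c_M a^{-2}$ really does hold for all large $a$; and (ii) that $M$ lies in the range $M>\sqrt2$, so that the leading coefficient above is positive and the shrinker $\Sigma_{a}$ actually lies below the cylinder at $y=M$ and can serve as a lower barrier there --- if only a small $M$ were available one would first have to enlarge it using the pointwise estimates of Section~\ref{sec-apriori}. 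Beyond that the argument is just bookkeeping with constants, and one obtains the stated lower bound $\bd(\tau)\geq\sqrt{-\tau/(2K_1)}$ for all $\tau$ below some $\tau_0$.
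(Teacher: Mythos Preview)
Your proposal is correct and follows essentially the same approach as the paper: set $\varepsilon=KM^2/(-\tau)$, use the inner expansion \eqref{eq-uA-inner-expansion} to solve $u_a(M)\leq\sqrt{2(n-1)}-\varepsilon$ for $a$, and then invoke Lemma~\ref{lem-lower-bound-for-ancient-solutions}. Your version is in fact more careful than the paper's sketch, in that you make explicit the need for $M>\sqrt{2}$ and the uniform absorption of the $o(a^{-2})$ remainder.
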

\begin{proof}
For any given $\tau_1$ we let $\varepsilon = KM^2/(-\tau_1)$ and use \eqref{eq-uA-inner-expansion} to compute the optimal $a$ for which one has
\[
u_a(M) \leq \sqrt{2(n-1)} - \frac{KM^2}{-\tau'}
\]
for all $\tau'\leq\tau$.  Lemma~\ref{lem-lower-bound-for-ancient-solutions} then implies the lower bound for the diameter of an ancient solution.
\end{proof}

\subsection{The inner-outer estimate}

The shrinker foliation also allows us to derive another estimate that will prove to be very useful in section~\ref{sec-parabolic}.  This estimate provides an $L^2$ bound for the difference $v(y,\tau) = u(y,\tau) - \sqrt{2(n-1)}$ in the outer region in terms of the $L^2$ norm of $v$ in the inner region.  It is this estimate that helps us deal with the error terms that arise when we multiply the solution with a cut-off function.  In order to prove the estimate we will rely on the monotonicity of Huisken's functional $\hu$ defined in \eqref{eq-huisken-def}.  If $\Sigma$ denotes the cylinder, then the monotonicity implies that $\hu(\bar{M}_{\tau}) \le \hu(\Sigma)$ for all $\tau$.

In the next few subsections we will prove important estimates that hold for any surface $\Gamma$ with the property that $\hu(\Gamma) \le \hu(\Sigma)$ and that is close to a cylinder in the middle.  Since $\hu(\bar{M}_{\tau}) \le \hu(\Sigma)$ and since our surfaces $\bar{M}_{\tau}$ converge to a cylinder $\Sigma$, uniformly on compact sets, those estimates will hold for hypersurfaces $\bar{M}_{\tau}$ as well.

\subsubsection{The Huisken functional}
For hypersurfaces $\Gamma\subset\R^n$ the Huisken functional is defined by
\begin{equation}
\label{eq-huisken-def}
\hu(\Gamma) = (4\pi)^{-n/2} \int_\Gamma e^{-\phi} d\mu,
\end{equation}
where $\mu$ is $n$-dimensional surface measure on $\Gamma$, and where
\[
\phi(\vX) = \tfrac14 \|\vX\|^2.
\]

\subsubsection{Notation}
\label{sec-notation-rot-symm-surfaces}
We choose coordinates $(\vx, y)$ with $\vx\in\R^n$ and $y\in\R$, and consider surfaces which are rotationally symmetric around the $y$-axis.  The cylinder
\[
\Sigma = \bigl\{ (\vx, y) \in\R^n\times\R: \|\vx\| = \sqrt{2(n-1)} \bigr\}
\]
is stationary for the Huisken functional.

For any $a, b$ with $0\leq a < b\leq \infty$ and any hypersurface $\Gamma$ we define
\[
\Gamma_{ab} = \bigl\{ (\vx, y) : a<|y|<b \bigr\}.
\]

\subsubsection{Statement of the estimates}
We will prove a quantitative version of the following:
\begin{quote}\itshape%
Let $L>0$ be large enough.  If $\Gamma$ is a convex hypersurface with $\hu(\Gamma) \leq \hu (\Sigma)$ for which $\Gamma_{0L}$ ``is close to $\Sigma_{0L}$\!'' then $\Gamma_{L, 2L}$ must also ``be close to $\Sigma_{L, 2L}$.''
\end{quote}
More precisely, we let $L>0$ be given, and assume that $\Gamma$ is a convex hypersurface of revolution for which $\Gamma_{0, 4L}$ can be written as a graph over the cylinder $\Sigma_{0, 4L}$: i.e.~we assume $\Gamma$ is given by
\begin{equation}
\Gamma = \bigl\{ (\vx, y) : \|\vx\| = u(y), \ |y|\leq d\bigr\}
\end{equation}
for some concave function $r=u(y)$.

It will be very convenient to abbreviate
\[
v(y) = u(y) - \sqrt{2(n-1)}.
\]
We will assume that there is some $ \delta > 0$ for which
\begin{equation}
\label{eq-close-to-cylinder-hypothesis}
\sup_{|y|\leq 4L} |v(y)| 
=\sup_{|y|\leq 4L} \left| u(y) -\sqrt{2(n-1)} \right|  \leq \delta.
\end{equation}
Since $u(y)$ is concave, this implies
\begin{equation}
\label{eq-deriv-close-to-cylinder}
\sup_{|y|<3L}|v'(y)| = \sup_{|y|<3L}|u'(y)| \leq \frac{2\delta}{L}.
\end{equation}

\begin{lemma}
\label{lem-inner-outer}
There exist $L_0 > 0$ and $\delta_0>0$ such that for any convex hypersurface $\Gamma$ with profile $r=u(y)$ which satisfies $\hu(\Gamma) \le \hu(\Sigma)$ as well as \eqref{eq-close-to-cylinder-hypothesis} for some $\delta<\delta_0$ and $L>L_0$, one has
\[
\int_0^{2L} v_y^2 e^{-y^2/4} dy \leq C \int_0^L v^2\, e^{-y^2/4} dy,
\]
where $C$ is a constant that does not depend on $L$.
\end{lemma}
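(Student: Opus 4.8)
The strategy is to exploit the calibration provided by the shrinker foliation $\{\Sigma_a\}\cup\{\tS_b\}$ of the interior of the cylinder $\Sigma$ constructed in Lemma~\ref{lem-u-A}. Recall that the unit normals $\nm$ of this foliation give a closed (divergence-free with respect to the weighted measure $e^{-\phi}d\mu$) $n$-form, so that Huisken's functional can be compared by a Stokes-type argument: for a surface $\Gamma$ lying in the region foliated by the $\Sigma_a$'s, $\hu(\Gamma)$ differs from $\hu(\Sigma)$ by a boundary term plus a nonnegative defect measuring how far $\nm_\Gamma$ tilts away from the calibrating normal. Concretely, I would write
\[
\hu(\Sigma) - \hu(\Gamma) \;=\; (\text{boundary terms on } |y|=4L) \;+\; (4\pi)^{-n/2}\int_{\Gamma_{0,4L}} \bigl(1 - \langle \nm_\Gamma, \nm_{\mathrm{fol}}\rangle\bigr)\, e^{-\phi}\, d\mu,
\]
and since $1-\cos\angle \sim \tfrac12 |\nm_\Gamma - \nm_{\mathrm{fol}}|^2$, the defect integral controls $\int |v_y|^2 e^{-y^2/4}\,dy$ over the relevant range (using the derivative bound \eqref{eq-deriv-close-to-cylinder} to linearize, and the fact that along the leaves $u_a' = \mathcal O(1/a)$ is tiny so the foliation normal is nearly horizontal).

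The hypothesis $\hu(\Gamma)\le\hu(\Sigma)$ then forces the defect integral to be bounded by the boundary terms, so the whole game reduces to estimating those boundary terms on $|y|=4L$ (and at $y=0$) in terms of $\int_0^L v^2 e^{-y^2/4}\,dy$. This is where the structure of the foliation near the cylinder matters: the leaves $\Sigma_a$ with $a$ large hug the cylinder closely, and the inner expansion \eqref{eq-uA-inner-expansion} together with the outer expansion \eqref{eq-uA-outer-expansion} let me compute how the ``potential'' function (the $g$ such that $\nm_{\mathrm{fol}} = \nabla g / |\nabla g|$, i.e.\ the function whose level sets are the leaves) behaves. The boundary contribution at $|y|=4L$ will come out as something like $\int_{\{|y|=4L\}} (\text{leaf separation}) \cdot v \cdot e^{-\phi}$, which by Cauchy–Schwarz and the exponential weight is dominated by $\int_0^L v^2 e^{-y^2/4}\,dy$ plus a small multiple of the defect integral that can be absorbed — provided $L$ is large enough that $e^{-(4L)^2/4}$ beats the polynomial factors, and provided $\delta<\delta_0$ so the graph really does sit inside the foliated region. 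The requirement that $\Gamma$ be close to the cylinder on $|y|\le 4L$ (not just $|y|\le 2L$) is exactly what guarantees the surface stays within the range of leaves for which these estimates hold.

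The main obstacle is making the calibration argument rigorous on a domain with boundary: one must check that $\Gamma_{0,4L}$ genuinely lies in the region swept out by the $\Sigma_a$ (this needs $\delta<\delta_0$ small and the monotonicity/foliation property from Lemma~\ref{lem-u-A}), handle the apex region near $y=0$ where the leaves $\tS_b$ on the other side take over, and track all constants so that the boundary term is controlled by $\int_0^L v^2 e^{-y^2/4}\,dy$ with a constant \emph{independent of $L$} — the latter is forced by the Gaussian weight, since any power of $L$ produced by the geometry of the leaves is killed by $e^{-cL^2}$. I would organize the proof by: (1) recalling the calibration identity for $\hu$ with respect to the foliation (deferring the construction details to \S\ref{sec-constructing-the-foliation}); (2) identifying and estimating the boundary terms using \eqref{eq-uA-inner-expansion}–\eqref{eq-uA-outer-expansion}; (3) linearizing $1-\cos\angle$ using \eqref{eq-close-to-cylinder-hypothesis}–\eqref{eq-deriv-close-to-cylinder} to extract $\int_0^{2L} v_y^2 e^{-y^2/4}$; and (4) combining, absorbing the small defect term from the boundary estimate into the left side, and choosing $L_0,\delta_0$ at the end.
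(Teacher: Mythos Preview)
Your overall strategy is the paper's: use the shrinker foliation $\{\Sigma_a\}\cup\{\tS_b\}$ as a calibration, apply Stokes with the divergence-free field $e^{-\phi}\nm$, and extract $\int v_y^2 e^{-y^2/4}$ from the angle defect $1-\vN\cdot\nm$. But your organization differs from the paper's in a way that creates real problems.

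\textbf{Where you diverge.} You propose running the calibration on the truncated piece $\Gamma_{0,4L}$, producing boundary terms at $y=0$ and $y=4L$. The paper instead splits at $y=L$: on $(0,L)$ it uses \emph{no calibration at all}, just a direct second-order Taylor expansion of the density $u^{n-1}e^{-u^2/4}\sqrt{1+u_y^2}$ around the cylinder value, which already yields
\[
\hu_{0L}(\Sigma)-\hu_{0L}(\Gamma)+c\int_0^L v_y^2\,e^{-y^2/4}\,dy\;\le\;C\int_0^L v^2\,e^{-y^2/4}\,dy.
\]
The calibration is then applied on the \emph{outer} region $(L,\infty)$, where $\Gamma_{L\infty}$ caps off at the tip; Stokes on the solid between $\Gamma_{L\infty}$, $\Sigma_{L\infty}$, and the single annulus $\Delta_L$ gives the lower bound $\hu_{L\infty}(\Gamma)-\hu_{L\infty}(\Sigma)\ge c\int_L^{2L}(u_y-\tan\varphi)^2 e^{-y^2/4} - \frac{C}{L}e^{-L^2/4}v(L)^2$.

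\textbf{Why your version has gaps.} First, the foliation in Theorem~\ref{thm-about-the-self-shrinker-foliation} is only constructed for $y\ge y_0$; it does not cover a neighborhood of $y=0$, so you cannot run Stokes there. (The $\tS_b$ do not ``take over'' near $y=0$; they are the leaves \emph{outside} the cylinder for $y\ge y_0$.) Second, even after fixing this, the boundary contribution is a \emph{point value} of the form $\tfrac{C}{L}e^{-L^2/4}v(L)^2$, which cannot be bounded by $\int_0^L v^2 e^{-y^2/4}\,dy$ with an $L$-independent constant; the Gaussian factor does not help, since the right-hand side could be far smaller still. The paper handles this with an averaging trick: run the estimate with $L$ replaced by every $\lambda\in(\tfrac34 L,L)$ and integrate in $\lambda$, converting $v(\lambda)^2 e^{-\lambda^2/4}$ into $\int_{3L/4}^{L} v^2 e^{-y^2/4}\,dy$. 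Third, the defect produces $(u_y-\tan\varphi)^2$, not $u_y^2$; getting rid of $\tan\varphi$ requires the quantitative estimate $|\tan\varphi(y,u)|\le \tfrac{C}{y}|v|$ from Lemma~\ref{lem-tan-phi-at-cylinder} together with a weighted integration-by-parts identity, and this step is precisely where the split at $y=L$ (large) rather than at a fixed $y_0$ is needed, so that the resulting $\tfrac{C}{L^2}\int_L^{2L}v_y^2$ term can be absorbed.

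In short: keep the calibration idea, but move the Stokes argument to $(L,\infty)$, handle $(0,L)$ by direct expansion of the Huisken integrand, and be prepared to average in the splitting point to dispose of the point-value boundary term.
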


The important consequence of the previous Lemma is the following Corollary.

\begin{corollary}
\label{cor-inner-outer}
There is an $L_0 > 0$ and a $\delta_0>0$ such that for any convex hypersurface $\Gamma$ with profile $r=u(y)$ which satisfies $\hu(\Gamma) \le \hu(\Sigma)$ as well as \eqref{eq-close-to-cylinder-hypothesis} for some $\delta<\delta_0$ and $L>L_0$, one has
\[
\int_L^{2L} v^2 e^{-y^2/4} dy \leq \frac{C}{L^2} \int_0^L v^2\, e^{-y^2/4} dy,
\]
where $C$ is a constant that does not depend on $L$.
\end{corollary}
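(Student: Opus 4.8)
The plan is to deduce the Corollary from Lemma~\ref{lem-inner-outer} by a one--dimensional weighted argument, splitting $v$ on $[L,2L]$ into its value at $y=L$ and a remainder. Writing $v(y)=\bigl(v(y)-v(L)\bigr)+v(L)$ gives
\[
\int_L^{2L} v^2 e^{-y^2/4}\,dy \;\le\; 2\int_L^{2L}\bigl(v(y)-v(L)\bigr)^2 e^{-y^2/4}\,dy \;+\; 2\,v(L)^2\int_L^{2L}e^{-y^2/4}\,dy ,
\]
so it suffices to bound both terms on the right by $C L^{-2}\eps$, where $\eps:=\int_0^L v^2 e^{-y^2/4}\,dy$. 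Throughout I use that Lemma~\ref{lem-inner-outer} gives $\int_0^{2L} v_y^2 e^{-y^2/4}\,dy\le C\eps$, and that $v=u-\sqrt{2(n-1)}$ is Lipschitz (since $u$ is concave), so the manipulations below are legitimate.

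For the remainder term I would prove a weighted Poincar\'e inequality on $[L,2L]$. Set $f:=v-v(L)$, so $f(L)=0$ and $f_y=v_y$. Using $\tfrac y2 e^{-y^2/4}=-\tfrac{d}{dy}e^{-y^2/4}$ and integrating by parts,
\[
\int_L^{2L}\tfrac y2 f^2 e^{-y^2/4}\,dy \;=\; -\,f(2L)^2 e^{-(2L)^2/4} \;+\; \int_L^{2L}2 f f_y\, e^{-y^2/4}\,dy \;\le\; \int_L^{2L}2|f|\,|f_y|\,e^{-y^2/4}\,dy ,
\]
the boundary contribution at $y=L$ having disappeared because $f(L)=0$. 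Since $\tfrac y2\ge\tfrac L2$ on $[L,2L]$, the left side dominates $\tfrac L2\int_L^{2L}f^2 e^{-y^2/4}\,dy$; estimating the right side by $\tfrac L4\int_L^{2L}f^2 e^{-y^2/4}\,dy+\tfrac4L\int_L^{2L}f_y^2 e^{-y^2/4}\,dy$ and absorbing the first piece into the left yields
\[
\int_L^{2L}\bigl(v(y)-v(L)\bigr)^2 e^{-y^2/4}\,dy \;\le\; \frac{16}{L^{2}}\int_L^{2L} v_y^2 e^{-y^2/4}\,dy \;\le\; \frac{16}{L^{2}}\int_0^{2L} v_y^2 e^{-y^2/4}\,dy \;\le\; \frac{C}{L^{2}}\,\eps .
\]

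For the constant term I need $v(L)^2\int_L^{2L}e^{-y^2/4}\,dy\le CL^{-2}\eps$. Write $v(L)=v(y_0)+\int_{y_0}^L v_y\,dy$, where $y_0\in[0,1]$ is chosen so that $v(y_0)^2\le\int_0^1 v^2\,dy\le e^{1/4}\eps$. By the Cauchy--Schwarz inequality against the Gaussian weight,
\[
\Bigl(\int_{y_0}^L v_y\,dy\Bigr)^{2} \;\le\; \Bigl(\int_0^L v_y^2 e^{-y^2/4}\,dy\Bigr)\Bigl(\int_0^L e^{y^2/4}\,dy\Bigr) \;\le\; C\eps\cdot\frac{C}{L}\,e^{L^{2}/4},
\]
using the elementary tail bound $\int_0^L e^{y^2/4}\,dy\le CL^{-1}e^{L^{2}/4}$ (split the integral at $y=L/2$, take $L$ large). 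Hence $v(L)^2\le C\eps+CL^{-1}\eps\, e^{L^{2}/4}\le CL^{-1}\eps\, e^{L^{2}/4}$ for $L$ large, and multiplying by the Gaussian tail $\int_L^{2L}e^{-y^2/4}\,dy\le 2L^{-1}e^{-L^{2}/4}$ the exponential factors cancel, leaving $v(L)^2\int_L^{2L}e^{-y^2/4}\,dy\le CL^{-2}\eps$. Adding the two estimates proves the Corollary (with $L_0$ possibly enlarged relative to Lemma~\ref{lem-inner-outer}).

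I expect the handling of the constant term to be the only real obstacle. The weighted Poincar\'e step improves only the oscillatory part of $v$, and a crude estimate of $v(L)$ loses a whole power of $L$; the full $L^{-2}$ gain is recovered only through the exact match between the factor $e^{L^{2}/4}$ produced by the Gaussian--weighted Cauchy--Schwarz bound on $v(L)^2$ and the Gaussian tail mass $\int_L^{2L}e^{-y^2/4}\,dy\sim L^{-1}e^{-L^2/4}$. Arranging these exponents to cancel is precisely what dictates the splitting $v=(v-v(L))+v(L)$ and the weighting in the Cauchy--Schwarz step; a minor but essential point is that one must subtract exactly $v(L)$ (not a weighted mean of $v$ over $[L,2L]$) so that the boundary term in the integration by parts vanishes.
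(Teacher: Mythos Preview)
Your proof is correct, but it takes a different route from the paper's.

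The paper argues via a single weighted inequality on the full interval $[0,2L]$ (Lemma~\ref{lem-weighted-H1-L2}): starting from $0\le\bigl(v_y-\tfrac{y}{4}v\bigr)^2$ and integrating by parts, one gets
\[
\int_0^{2L} y^2 v^2\,e^{-y^2/4}\,dy \;\le\; C\int_0^{2L} v_y^2\,e^{-y^2/4}\,dy + C\int_0^{2L} v^2\,e^{-y^2/4}\,dy .
\]
Combining this with Lemma~\ref{lem-inner-outer} and the trivial bound $L^2\int_L^{2L}v^2 e^{-y^2/4}\,dy\le\int_0^{2L}y^2v^2 e^{-y^2/4}\,dy$, the term $\int_L^{2L}v^2 e^{-y^2/4}\,dy$ appears on both sides and can be absorbed for large $L$, immediately yielding the $L^{-2}$ factor.

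Your approach instead splits $v=(v-v(L))+v(L)$ on $[L,2L]$, treats the oscillatory part by a Poincar\'e inequality localized to $[L,2L]$, and handles the constant $v(L)$ by a pointwise estimate that pits the growth $\int_0^L e^{y^2/4}\,dy\sim L^{-1}e^{L^2/4}$ against the tail $\int_L^{2L}e^{-y^2/4}\,dy\lesssim L^{-1}e^{-L^2/4}$. This works, and is a perfectly acceptable alternative; it is somewhat more hands-on and requires the extra tail computations, whereas the paper's method packages everything into one clean weighted $H^1$--$L^2$ inequality and an absorption. On the other hand, your argument avoids introducing Lemma~\ref{lem-weighted-H1-L2} as a separate ingredient.
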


The proofs of Lemma \ref{lem-inner-outer} and Corollary \ref{cor-inner-outer} will occupy us during the following subsections.

\subsubsection{Surfaces with $\hu(\Gamma) \le \hu(\Sigma)$}
\label{sec-inner-outer}

If $\Gamma$ is obtained by revolving the graph of a function $r=u(y)$ around the $y$ axis, then the Huisken functional is given by
\begin{equation}
\label{eq-huisken-def-graphs}
\hu(u) = \int_{-d}^{d} u^{n-1}e^{-u^2/4}\sqrt{1+u_y^2}\;e^{-y^2/4}\,dy,
\end{equation}
where the integral is taken over the domain $(-d, d)$ of $u$.

Our hypothesis that $\Gamma$ has a lower Huisken functional than the cylinder $\Sigma$ implies that
\[
\hu_{0L}(\Gamma) + \hu_{L\infty}(\Gamma) \leq \hu_{0L}(\Sigma) + \hu_{L\infty}(\Sigma),
\]
or,
\begin{equation}
\label{eq-outer-bounds-inner}
\hu_{L\infty}(\Gamma) - \hu_{L\infty}(\Sigma)
\leq
\hu_{0L}(\Sigma) - \hu_{0L}(\Gamma).
\end{equation}

\subsubsection{The RHS of \eqref{eq-outer-bounds-inner}}
The terms on the right in \eqref{eq-outer-bounds-inner} represent how much $\Gamma$ deviates from the cylinder in the bounded region $y\leq L$.  In terms of the profile function $r=u(y)$ they are given by
\[
\hu_{0L}(\Sigma) - \hu_{0L}(\Gamma) = \int_0^L \Bigl\{ \big(2(n-1)/e\big)^{(n-1)/2} - u^{n-1} e^{-u^2/4} \sqrt{1+u_y^2} \Bigr\} e^{-y^2/4}dy.
\]
We observe that $\sqrt{1+u_y^2 } \geq 1$, and also that
\[
u^{n-1}e^{-u^2/4} \leq \big(2(n-1)/e\big)^{(n-1)/2} \text{ for all $u\in\R$},
\]
with equality at $u=\sqrt{2(n-1)}$.  It follows that there is a constant $C>0$ such that if $u(y)=\sqrt{2(n-1)}+v(y)$, then
\[
u^{n-1}e^{-u^2/4} \geq \; \left(\frac{2(n-1)}{e}\right)^{(n-1)/2}\Bigl\{1 - Cv^2 \Bigr\} \text{ when }|v|\leq \delta.
\]
In the region $0\leq y\leq 3L$ we have $|u_y|\leq \delta$, so that there is a constant $C$ such that
\[
\sqrt{1+u_y^2} \geq 1+ Cu_y^2.
\]
Combining these facts we obtain
\begin{align}
\label{eq-inner-bound}
\hu_{0L}(\Sigma) &- \hu_{0L}(\Gamma) \\
&= \int_0^L \Bigl\{ \big(2(n-1)/e\big)^{n-1} - u^{n-1} e^{-u^2/4} \sqrt{1+u_y^2}
\Bigr\} e^{-y^2/4}dy     \notag\\
&\leq\left(\frac{2(n-1)}{e}\right)^{n-1}\int_0^L \Bigl\{ 1 - \bigl(1-Cv^2\bigr) (1+Cu_y^2) \Bigr\}
e^{-y^2/4}dy  \notag\\
&\leq \left(\frac{2(n-1)}{e}\right)^{n-1}\int_0^L \Bigl\{ 1-\bigl(1-Cv^2\bigr) -C\bigl(1-Cv^2\bigr)u_y^2 \Bigr\}
e^{-y^2/4}dy.\notag\\
&\leq \left(\frac{2(n-1)}{e}\right)^{n-1}\int_0^L \Bigl\{ Cv^2 -C\bigl(1-C\delta^2\bigr)u_y^2 \Bigr\} e^{-y^2/4}dy.\notag
\end{align}
Here we have used $|v|\leq \delta$ in the last step.  Let us assume that $\delta$ is so small that $C\delta^2<\frac12$.  Then we can move the term with $u_y^2$ to the left, and we obtain
\[
\hu_{0L}(\Sigma) - \hu_{0L}(\Gamma) + c\int_0^L u_y^2 e^{-y^2/4} dy \leq C\int_0^L v^2 e^{-y^2/4}dy,
\]
where the constants $c, C$ do not depend on $L$.

Combined with \eqref{eq-outer-bounds-inner} this tells us that
\begin{equation}
\label{eq-better-outer-inner-bounds}
\hu_{L\infty}(\Gamma) - \hu_{L\infty}(\Sigma)
+ c\int_0^L v_y^2 e^{-y^2/4} dy
\leq
C\int_0^L v^2 e^{-y^2/4}dy,
\end{equation}
To complete the proof of Lemma~\ref{lem-inner-outer} we must therefore show that
\begin{equation}
\label{eq-outer-uy-bounds}
c\int_L^{2L} v_y^2 e^{-y^2/4} dy
\leq
\hu_{L\infty}(\Gamma) - \hu_{L\infty}(\Sigma)
\end{equation}
holds for some small constant $c$.

\subsubsection{Digression: the minimizing foliation inside the cylinder}
\label{sec-about-the-foliation}
Recall that $\Sigma_a$  and $\tS_b$ are rotationally symmetric self-shrinkers for MCF, where $\Sigma_a$ meets the $y$-axis at $y=a$, while $\tS_b$ is asymptotic to the cone with opening slope $b$.  We have used previously the existence and asymptotic properties of the $\Sigma_a$ to find lower bounds for $\bd(\tau)$.  Here we will use the fact that the $\Sigma_a$ and $\tS_b$ form a foliation of a region inside the cylinder to compare the Huisken functional of different sections of a convex surface.

\begin{lemma}
There exist $\delta>0$ and  $L_0>0$ such that the hypersurfaces $\Sigma_a$ and $\tS_b$ foliate the region
\[
\Omega_0 = \bigl\{ (y, \vx)\in\R\times\R^n : \|\vx\|\leq \sqrt{2(n-1)}+\delta, y\geq L_0\bigr\}.
\]
In particular, $\Omega_0$ is a disjoint union of the caps $\Omega_0\cap\Sigma_a$, and the unit normals $\nm$ to $\Sigma_a$ define a $C^1$ vectorfield on $\Omega_0$.
\end{lemma}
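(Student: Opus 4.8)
The plan is to build the foliation by gluing together the two families of self-shrinkers whose existence and asymptotics are recorded in Lemma~\ref{lem-u-A} and the two expansion lemmas. First I would set up the relevant region. Fix a large parameter $a_0$ and consider the caps $\Sigma_a$ for $a\geq a_0$ together with the outer shrinkers $\tS_b$ for small $b>0$; the claim is that, after choosing $L_0$ large and $\delta>0$ small, exactly one leaf of this combined family passes through each point of
\[
\Omega_0 = \bigl\{ (y,\vx)\in\R\times\R^n : \|\vx\|\leq \sqrt{2(n-1)}+\delta,\ y\geq L_0\bigr\}.
\]
Since each $\Sigma_a$ and each $\tS_b$ is rotationally symmetric, it suffices to work in the $(y,r)$ half-plane with the profiles $u_a$ and $\tu_b$, and to show that through every point $(y_0,r_0)$ with $y_0\geq L_0$ and $0\le r_0\le\sqrt{2(n-1)}+\delta$ there is a unique curve of the combined family.

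The key step is a monotone dependence (disjointness) statement for the two families. By Lemma~\ref{lem-uA-inner-expansion} the caps $u_a$ on a fixed interval $[0,M]$ are strictly ordered in $a$ for $a$ large: $u_a(y) < u_{a'}(y)$ whenever $a<a'$, since the expansion $u_a(y)=\sqrt{2(n-1)}\bigl(1-\frac{y^2-2}{2a^2}\bigr)+o(a^{-2})$ is strictly increasing in $a$ for each fixed $y$ in this range (and also near the cylinder radius at $y\geq L_0$ one has the same ordering by the outer expansion \eqref{eq-uA-outer-expansion}). I would promote this to global disjointness of the full caps via the maximum principle / Sturm-type argument for the self-shrinker ODE \eqref{eq-ss}: two solutions of a second-order ODE that are ordered and, say, tangent at an interior point would have to coincide, so distinct caps never cross; the same reasoning applies to the convex shrinkers $\tu_b$, which are ordered in $b$ by their asymptotic slopes $\lim_{y\to\infty}\tu_b(y)/y=b$ from Lemma~\ref{lem-u-A}(b). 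Then I would check the two families fit together: for $L_0$ large and $\delta$ small the caps $\Sigma_a$ (those with $a$ large enough that $u_a(L_0)$ is within $\delta$ of $\sqrt{2(n-1)}$) sweep out the part of $\Omega_0$ with $r$ close to but below $\sqrt{2(n-1)}$ --- using \eqref{eq-uA-outer-expansion} to see $u_a(L_0)\to\sqrt{2(n-1)}$ as $a\to\infty$ --- while the $\tS_b$ with small $b>0$ cover the thin slab $\sqrt{2(n-1)}\le r\le \sqrt{2(n-1)}+\delta$, the cylinder $\Sigma=\tS_0$ itself serving as the common boundary leaf. A continuity/degree argument (the leaves vary continuously in their parameter, the bottom leaf $\Sigma_{a_0}$ lies below $\Omega_0$ near $y=L_0$ and the top leaf $\tS_{b(\delta)}$ lies above it) then shows the union of leaves covers all of $\Omega_0$, and disjointness shows the covering is a foliation.

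Finally, for the $C^1$ regularity of the normal field $\nm$ on $\Omega_0$: the leaf through a point depends smoothly on that point because the foliation is governed by a smooth ODE with the parameter ($a$ or $b$) depending on the point through a transversality condition, and the transversality is exactly the strict ordering just established (leaves cross the vertical $r$-direction monotonically, so the implicit function theorem applies). The unit normal of each leaf is a smooth function of position along that leaf, hence $\nm$ is $C^1$ (indeed smooth away from the junction; across the cylinder leaf $\tS_0=\Sigma$ one checks that the one-sided normals match and the families join $C^1$, which follows since both families solve the same ODE \eqref{eq-ss-2} and the cylinder is a genuine solution). The main obstacle I expect is precisely the gluing/covering step --- verifying that the inner caps $\Sigma_a$ and the outer shrinkers $\tS_b$ together leave no gap and no overlap near the cylinder radius, which requires the quantitative expansions \eqref{eq-uA-outer-expansion}, \eqref{eq-uA-inner-expansion} to control where each leaf sits for the relevant ranges of the parameters; the ordering/disjointness, while needing care, is a routine consequence of the maximum principle for the shrinker equation.
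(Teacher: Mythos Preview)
Your outline has a genuine gap in the disjointness step. You argue that ``two solutions of a second-order ODE that are ordered and, say, tangent at an interior point would have to coincide, so distinct caps never cross.'' ODE uniqueness indeed rules out \emph{tangent} intersections, but it says nothing about \emph{transversal} crossings: two distinct solutions of \eqref{eq-ss} can meet at a point with different slopes. Knowing $u_a<u_{a'}$ at both ``ends'' (near the tip $y=a$ and on a bounded interval via \eqref{eq-uA-inner-expansion}) does not by itself exclude an even number of transversal crossings in between. In fact the paper explicitly notes that the extended shrinkers \emph{do} intersect one another (and even themselves) in the region $y\leq y_0$; so disjointness in $y\geq L_0$ is not a formality but the heart of the lemma.

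The paper's proof proceeds quite differently. It studies the normal variation $V=\nm\cdot\partial X_a/\partial a$, which satisfies the Jacobi equation
\[
\cL(V)=\Delta V - g^{kl}\nabla_k\phi\,\nabla_l V + \bigl(\|A\|^2+\tfrac12\bigr)V=0,
\]
and constructs an explicit supersolution $W=e^{\|X\|^2/8}$ with $\cL(W)<0$ on the intermediate region $y_0\le y\le y_{Ma}$. Near the tip one computes $V=1+\cO(a^{-2})>0$ directly from the expansion of $\Sigma_a$. A comparison of $V$ with $W$ via the maximum principle (matching the logarithmic derivatives $V_\rho/V$ and $W_\rho/W$ at the boundary $y=y_{Ma}$) then forces $V>0$ throughout $y\ge y_0$. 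Positivity of $V$ is exactly the infinitesimal version of strict ordering in $a$, and it immediately yields both disjointness and the $C^1$ (in fact smooth) dependence of the leaf on the point, hence the regularity of $\nm$. The same scheme is carried out for the outer family $\tS_b$. Your implicit-function-theorem remark for regularity is essentially this, but the transversality you need \emph{is} $V\neq0$, and establishing that is the nontrivial content you skipped.
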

We prove this in section~\ref{sec-normal-variation}.

The unit normals $\nm$ can be written as
\begin{equation}
\label{eq-normal-angle-def}
\nm = \bigl(-\sin\varphi,  \cos \varphi\, \bomega\bigr)
\end{equation}
where $\varphi$ is defined by $\tan\varphi = u_y$ (see Figure~\ref{fig-foliation-with-normal-angle-phi}).  We regard $\varphi$ as a function of $(y, r)$.

\begin{figure}[h]
\includegraphics{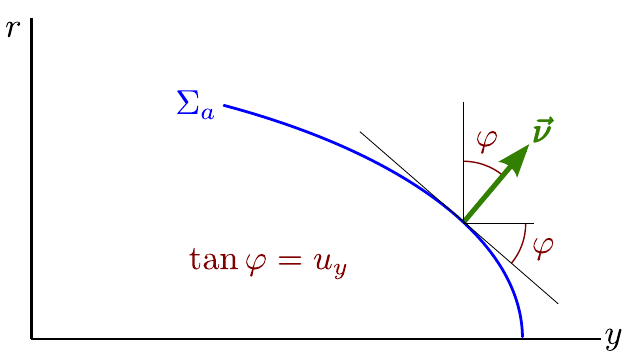}
\caption{The unit normal to the minimizing foliation.  Note that since $u_y<0$, the angle $\varphi$ always satisfies $-\frac{\pi}{2} <\varphi <0$.}
\label{fig-foliation-with-normal-angle-phi}
\end{figure}

\begin{lemma}
The vector field $e^{-\phi}\nm$ is divergence free.
\end{lemma}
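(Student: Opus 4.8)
The claim is that the vector field $e^{-\phi}\nm$ on the region $\Omega_0$ is divergence free, where $\nm$ is the field of unit normals to the foliation by self-shrinkers $\Sigma_a$ (and $\tS_b$). The plan is to use the standard fact that the divergence of the unit normal field of a hypersurface equals (minus) its mean curvature, together with the defining equation of a self-shrinker. First I would recall that along each leaf $\Sigma_a$ the normal $\nm$ is precisely the unit normal of that leaf, so $\mathrm{div}\,\nm = -H$ where $H$ is the mean curvature of $\Sigma_a$ (with the sign convention matching $\nm$). Then for any scalar function $\phi$ one computes
\[
\mathrm{div}\bigl(e^{-\phi}\nm\bigr) = e^{-\phi}\bigl(\mathrm{div}\,\nm - \langle\nabla\phi,\nm\rangle\bigr) = e^{-\phi}\bigl(-H - \langle\nabla\phi,\nm\rangle\bigr).
\]
Since $\phi(\vX) = \tfrac14\|\vX\|^2$ we have $\nabla\phi = \tfrac12\vX$, so the right side becomes $-e^{-\phi}\bigl(H + \tfrac12\langle\vX,\nm\rangle\bigr)$, and this vanishes identically on $\Omega_0$ exactly because each leaf of the foliation satisfies the self-shrinker equation \eqref{eq-self-shrinker}, $H + \tfrac12\vX\cdot\nm = 0$.

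The one technical point that needs care is that $\mathrm{div}\,\nm$ here means the Euclidean divergence of the ambient vector field $\nm$ on the open set $\Omega_0$, not just the tangential (surface) divergence along a single leaf; I would note that because $\nm$ is a unit vector field, $\langle D_{\nm}\nm,\nm\rangle = \tfrac12\,\nm(|\nm|^2) = 0$, so the full Euclidean divergence of $\nm$ equals the divergence of $\nm$ restricted to directions tangent to the leaves, which is $-H$. One also needs the foliation to be $C^1$ so that $\mathrm{div}\,\nm$ makes sense pointwise; this is exactly the content of the preceding lemma (proved in section~\ref{sec-normal-variation}), which asserts that $\Sigma_a$ and $\tS_b$ foliate $\Omega_0$ and that $\nm$ is a $C^1$ field there. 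Alternatively, one can present this coordinate-explicitly: writing $\nm = (-\sin\varphi,\cos\varphi\,\bomega)$ as in \eqref{eq-normal-angle-def} with $\tan\varphi = u_y$ and $\varphi = \varphi(y,r)$, a direct computation of $\mathrm{div}(e^{-\phi}\nm)$ in cylindrical coordinates $(y,r)$ reduces, after using the foliation equation relating $\varphi_y$, $\varphi_r$ to the geometry, to the self-shrinker ODE \eqref{eq-ss-2}, $k + \tfrac y2\sin\varphi + (\tfrac u2 - \tfrac{n-1}{u})\cos\varphi = 0$.

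The main obstacle is not any deep estimate but rather bookkeeping: making sure the sign conventions for $\nm$, $H$, and the curvature $k$ are consistent across the leaf and the ambient computation, and confirming that the ambient divergence (which sees the way neighboring leaves fan out) genuinely coincides with the leafwise mean curvature. Since $\nm$ has unit length and is $C^1$, the component of $D_{\nm}\nm$ along $\nm$ vanishes, so no contribution is lost and the identification $\mathrm{div}\,\nm = -H_{\Sigma_a}$ is clean. Given that, the conclusion is immediate from \eqref{eq-self-shrinker}. I would therefore structure the proof as: (1) invoke the foliation lemma for regularity of $\nm$; (2) record $\mathrm{div}(e^{-\phi}\nm) = -e^{-\phi}(H + \tfrac12\vX\cdot\nm)$ using $\nabla\phi = \tfrac12\vX$ and $\langle D_\nm\nm,\nm\rangle = 0$; (3) apply the self-shrinker equation satisfied by every leaf to conclude the expression is identically zero on $\Omega_0$.
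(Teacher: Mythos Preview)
Your proof is correct and follows essentially the same approach as the paper: both compute $\nabla\cdot(e^{-\phi}\nm) = -e^{-\phi}(H + \tfrac12\vX\cdot\nm)$ via $\nabla\cdot\nm = -H$ and $\nabla\phi = \tfrac12\vX$, then invoke the self-shrinker equation. Your additional remarks on why the ambient divergence of the unit normal field coincides with the leafwise mean curvature (using $\langle D_\nm\nm,\nm\rangle = 0$) are more explicit than the paper's one-line justification, but the argument is the same.
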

\begin{proof}
We have $\nabla\cdot\nm = -H$ for the unit normals to any foliation.  The leaves $\Sigma_a$ all satisfy $H+\frac12 \vX\cdot\nm = 0$ so
\[
\nabla\cdot\bigl( e^{-\phi} \nm\bigr) = -e^{-\phi}(\nabla\phi)\cdot\nm + e^{-\phi}\nabla\cdot\nm= -e^{-\phi}\bigl(H+\tfrac12 \vX\cdot\nm\bigr)=0.
\]
\end{proof}

Near the cylinder $r=\sqrt{2(n-1)}$ the leaves $\Sigma_a$ are almost parallel to the cylinder, so that $\varphi(r, y)$ is small when $r\approx \sqrt{2(n-1)}$.  In our proof of the inner-outer Lemma~\ref{lem-inner-outer} we will need a more precise estimate of $\varphi(y, r)$ near the cylinder.
\begin{lemma}
\label{lem-tan-phi-at-cylinder}
There is a neighborhood of the cylinder $r=\sqrt{2(n-1)}$ on which one has
\[
\tan \varphi = \frac{w} {2ry} (r^2 - 2(n-1)),
\]
where the quantity $w$ satisfies
\begin{equation}
\label{eq-w-squeeze}
2\le w \le 2 + \frac{K}{y^2}
\end{equation}
for some constant $K$.
\end{lemma}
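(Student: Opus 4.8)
The plan is to exploit the self-shrinker equation \eqref{eq-ss-2} for the leaves $\Sigma_a$ of the foliation directly, rewriting it so that the claimed factorization $r^2 - 2(n-1)$ appears naturally in the $\cos\theta$-coefficient. Recall that along each leaf $\tan\theta = u_y$ equals $\tan\varphi$ at the corresponding point $(y,r)$ with $r=u(y)$, so I will work with the geometric equation \eqref{eq-ss-2}:
\[
k + \frac y2 \sin\varphi + \Bigl(\frac r2 - \frac{n-1}{r}\Bigr)\cos\varphi = 0 .
\]
The key algebraic observation is that $\dfrac r2 - \dfrac{n-1}{r} = \dfrac{r^2 - 2(n-1)}{2r}$, so the last term is already proportional to $r^2 - 2(n-1)$. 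Solving for $\sin\varphi$ gives
\[
\sin\varphi = -\frac{2}{y}\Bigl(k + \frac{r^2-2(n-1)}{2r}\cos\varphi\Bigr)
= -\frac{r^2-2(n-1)}{ry}\cos\varphi - \frac{2k}{y}.
\]
To get the stated form $\tan\varphi = \dfrac{w}{2ry}(r^2-2(n-1))$ I then need to absorb the curvature term $-2k/y$ and the discrepancy between $\sin\varphi$ and $\tan\varphi$ into the quantity $w$, and show $w = 2 + O(y^{-2})$ uniformly near the cylinder.

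The key steps, in order: (1) From the lemma just proved (the foliation lemma, \S\ref{sec-normal-variation}), the leaves $\Sigma_a$ passing through the neighborhood $\|\vx\| \le \sqrt{2(n-1)}+\delta$, $y \ge L_0$ are uniformly close in $C^2$ to the cylinder, so on this neighborhood $\varphi$ is small and $|u_y|, |k|$ are small; in particular $\cos\varphi = 1 + O(\varphi^2)$ and $\tan\varphi = \sin\varphi(1+O(\varphi^2))$. (2) Use the outer expansion \eqref{eq-uA-outer-expansion}, or better a direct barrier/ODE argument for \eqref{eq-ss} on the relevant region, to show that on each leaf the curvature $k$ of the profile and the slope satisfy $k = O(1/y)$ and $r^2 - 2(n-1) = O(1/y^2)$ — quantitatively, that both $k$ and $(r^2-2(n-1))/y$ are controlled by $C/y^2$ near the cylinder. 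This is where the decay rate $K/y^2$ comes from. (3) Define $w$ by the identity
\[
w := \frac{2ry\tan\varphi}{r^2 - 2(n-1)},
\]
valid wherever $r \ne \sqrt{2(n-1)}$ (and extended by continuity on the cylinder using l'Hôpital in $r$), and substitute the solved expression for $\sin\varphi$ to obtain
\[
w = 2 + \bigl(\text{terms involving } k/(r^2-2(n-1)) \text{ and } \varphi^2\bigr).
\]
Then estimate each correction term by $C/y^2$ using step (2), yielding $2 \le w \le 2 + K/y^2$ after possibly shrinking $\delta$ and enlarging $L_0$; the lower bound $w \ge 2$ should come from the sign of $k$ (the profiles $u_a$ are concave, so $k \le 0$, pushing $w$ above $2$) together with the sign of $r^2 - 2(n-1)$ inside the cylinder.

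The main obstacle I anticipate is step (2): getting the \emph{sharp} $O(1/y^2)$ decay of $r^2 - 2(n-1)$ along the leaves and the matching $O(1/y)$ bound on $k$, uniformly over all leaves $\Sigma_a$ meeting the neighborhood, rather than just qualitative closeness. The outer expansion \eqref{eq-uA-outer-expansion} is only stated to order $o(1)$, which is not enough; I would instead linearize \eqref{eq-ss} about the cylinder, writing $r = \sqrt{2(n-1)} + \psi(y)$ and deriving that $\psi$ solves (to leading order) a linear ODE of the form $\psi'' - \frac y2 \psi' - \psi \approx 0$ whose decaying solutions behave like $y^{-2}$ as $y \to \infty$, then upgrade this to a rigorous two-sided bound via sub- and super-solutions. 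Controlling the sign needed for $w \ge 2$ will require care, and may be the place where concavity of $u_a$ (Lemma~\ref{lem-u-A}(a)) is essential. Once these decay estimates are in hand, the remaining manipulations are elementary trigonometry and the conclusion follows.
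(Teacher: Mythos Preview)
Your identification of the identity $\tan\varphi = \dfrac{w}{2ry}(r^2-2(n-1))$ as essentially a definition of $w$ is correct, and your argument for the lower bound $w\ge 2$ via concavity of the profile is fine (indeed, from \eqref{eq-ss} one computes directly $\dfrac{u_{yy}}{1+u_y^2} = \dfrac{(u^2-2(n-1))(w-2)}{4u}$, so $u_{yy}\le 0$ and $u^2-2(n-1)<0$ force $w\ge 2$).

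The gap is in your plan for the upper bound. Your step~(2) proposes to bound $k$ and $r^2-2(n-1)$ \emph{separately} and then combine. But the formula above shows $w-2$ is precisely proportional to the ratio $\dfrac{u_{yy}/(1+u_y^2)}{u^2-2(n-1)}$, and both numerator and denominator vanish on the cylinder. Separate bounds of the form $|k|\le C/y^2$ and $|r^2-2(n-1)|\le C/y$ (or any other powers) cannot control this ratio uniformly as $r\to\sqrt{2(n-1)}$: along the foliation the denominator can be made arbitrarily small at any fixed $y$ by choosing the leaf close enough to the cylinder, so you would need the numerator to vanish to at least the same order --- which is exactly the statement $w-2=O(1)$ you are trying to prove. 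Your linearization fallback is closer to viable, but as written it only gives the asymptotics of a single leaf, not the uniformity over all leaves in the neighborhood.

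The paper sidesteps this entirely by deriving a closed first-order ODE for $w$ itself along each leaf (equation \eqref{eq-w-ode}):
\[
y w_y = w - \Bigl(\tfrac12 + \tfrac{n-1}{u^2}\Bigr)w^2 + \tfrac12 y^2(1+u_y^2)(w-2),
\]
obtained by differentiating the definition of $w$ and substituting \eqref{eq-shrinker-ode-u}. The point is that this equation is autonomous in $w$ up to coefficients that are uniformly controlled near the cylinder. One then constructs an explicit supersolution $\bar w = 2 + K/y^2 + K/(a^2-y^2)$ (Proposition~\ref{prop-w-upper-barrier}), checks the barrier inequality, and matches it to the known value $w\to 2n/(n-1)$ at the tip. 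Near the cylinder the term $K/(a^2-y^2)$ is harmless since $a\gg y$ there. This ODE-barrier argument is the missing idea: it controls the ratio $k/(u^2-2(n-1))$ directly, without ever estimating numerator and denominator separately.
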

\begin{proof}[Proof of Lemma~\ref{lem-tan-phi-at-cylinder}]
This follows from \eqref{eq-angle-as-fn-of-yu} and the estimate \eqref{eq-w-upper-bound}.  Details are given in section~\ref{sec-constructing-the-foliation}.
\end{proof}

\subsubsection{Proof of \eqref{eq-outer-uy-bounds}}
To estimate the difference between $\hu_{L\infty}(\Sigma)$ and $\hu_{L\infty}(\Gamma)$, we consider the region $\Omega$ contained in the half space $y\geq L$, and bounded by the cylinder $\Sigma_{L\infty}$ and the surface $\Gamma_{L\infty}$.  The boundary of this region is
\[
\pd \Omega = \Sigma_{L\infty} \cup \Gamma_{L\infty} \cup \Delta_L,
\]
where $\Sigma_{L\infty} $ is the section of the cylinder on which $y\geq L$, and $\Delta_L$ is the annulus in the plane $y=L$ between the surface $\Gamma$ and the cylinder.

\begin{figure}[h]
\includegraphics{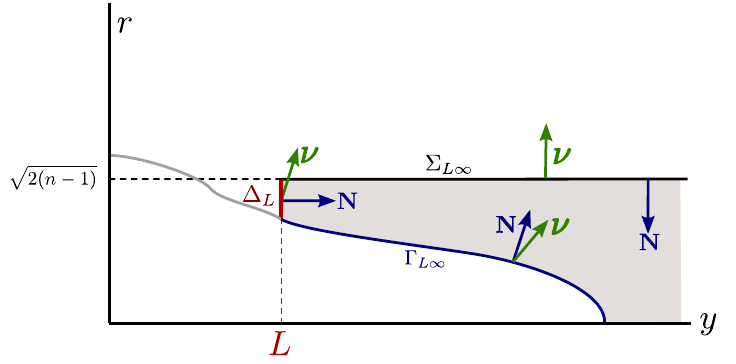}
\includegraphics{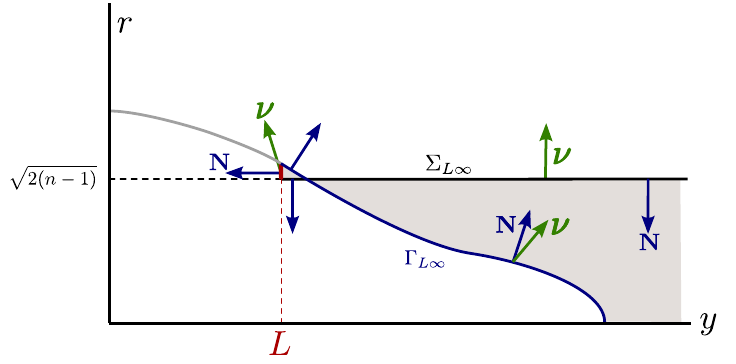}
\caption{The domain $\Omega$, the unit normals to $\pd\Omega$, and the vector field $\nm$.}
\label{fig-domain-Omega}
\end{figure}

Let $\nm$ be the unit normal vector to the minimizing foliation as above and $\vN$ the unit normal to $\pd\Omega$:  in the portion of $\Omega$ contained in the cylinder $r\le \sqrt{2(n-1)}$ we choose $\vN$ to be the inward normal, while outside of the cylinder we let $\vN$ be the outward normal (see Figure~\ref{fig-domain-Omega}).  By the Stokes' theorem we then have
\[
\int_{\Gamma_{L\infty}}(\vN\cdot\nm) e^{-\phi}d\mu + \int_{\Delta_L} (\vN\cdot\nm) e^{-\phi}d\mu +\int_{\Sigma_{L\infty}}(\vN\cdot\nm) e^{-\phi}d\mu =0.
\]
On the cylinder $\Sigma_{L\infty}$ we have $\vN=-\nm$, so $\vN\cdot\nm =-1$.  Therefore we get
\begin{align*}
\hu_{L\infty}(\Gamma)& - \hu_{L\infty}(\Sigma)\\
&= \int_{\Gamma_{L\infty}} e^{-\phi}d\mu - \int_{\Sigma_{L\infty}} e^{-\phi}d\mu \\
&= \int_{\Gamma_{L\infty}} \bigl( 1-(\vN\cdot\nm)\bigr) e^{-\phi}d\mu +\int_{\Gamma_{L\infty}} (\vN\cdot\nm) e^{-\phi}d\mu
+\int_{\Sigma_{L\infty}} (\vN\cdot\nm) e^{-\phi}d\mu\\
&= \int_{\Gamma_{L\infty}} \bigl( 1-(\vN\cdot\nm)\bigr) e^{-\phi}d\mu - \int_{\Delta_L} (\vN\cdot\nm) e^{-\phi} d\mu.
\end{align*}
Rearranging terms again, and using \eqref{eq-outer-bounds-inner} we find
\begin{align*}
\int_{\Gamma_{L\infty}} \bigl( 1-(\vN\cdot\nm)\bigr) e^{-\phi}d\mu &= \hu_{L\infty}(\Gamma) - \hu_{L\infty}(\Sigma)
+ \int_{\Delta_L} (\vN\cdot\nm) e^{-\phi} d\mu \\
&\leq \hu_{0L }(\Sigma) - \hu_{0L}(\Gamma) + \int_{\Delta_L} (\vN\cdot\nm) e^{-\phi} d\mu
\end{align*}
The integrand on the left is nonnegative, so we may restrict the integral to the smaller region $\Gamma_{L,2L} \subset \Gamma_{L\infty}$ and conclude
\begin{align}
\label{eq-basic-inner-outer-bound}
\int_{\Gamma_{L,2L}} \bigl( 1-(\vN\cdot\nm)\bigr) e^{-\phi}d\mu
&\leq \hu_{0L}(\Sigma) - \hu_{0L}(\Gamma) + \int_{\Delta_L} (\vN\cdot\nm) e^{-\phi} d\mu \\
&\leq C\int_0^L v^2 e^{-y^2/4}dy + \int_{\Delta_L} (\vN\cdot\nm) e^{-\phi} d\mu \notag
\end{align}
in view of \eqref{eq-inner-bound}.

We now write out the various quantities in terms of the function $u(y)$, keeping in mind the assumptions \eqref{eq-close-to-cylinder-hypothesis} and \eqref{eq-deriv-close-to-cylinder}, i.e.
\[
|v(y)|\leq \delta \text{ and } |u_y| \leq \frac{2\delta}{L} \text{ for } |y|\leq 3L.
\]
\subsubsection{The integral over $\Gamma_{L,2L}$}
We have
\[
\int_{\Gamma_{L,2L}} \bigl( 1-(\vN\cdot\nm)\bigr) e^{-\phi}d\mu = \int_L^{2L} \bigl( 1- \cos \theta \bigr) e^{-u^2/4}u^{n-1} \sqrt{1+u_y^2} \, e^{-y^2/4} dy.
\]
Here $\theta$ is the angle between $\nm$ and $\vN$.  See Figure~\ref{fig-N-dot-nu}.

We always have $\sqrt{1+u_y^2} \geq 1$.  By assumption we also have $|u-\sqrt{2(n-1)}|<\delta$ on the interval $L<y<2L$.  This leads to a lower bound
\begin{equation}
\int_{\Gamma_{L,2L}} \bigl( 1-(\vN\cdot\nm)\bigr) e^{-\phi}d\mu
\geq
c\int_L^{2L} \bigl( 1- \cos \theta \bigr) \, e^{-y^2/4} dy.
\end{equation}
The constant $c$ does not depend on $L$.
\begin{figure}\centering
\includegraphics{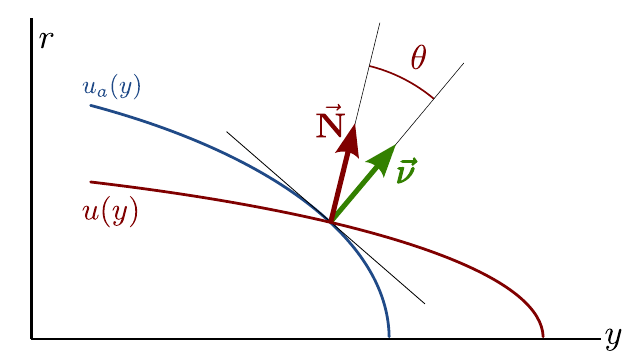}
\caption{The normals $\nm$ to a leaf of the foliation, and $\vN$ to the given hypersurface $\Gamma$.}
\label{fig-N-dot-nu}
\end{figure}
The angle $\theta$ is determined by the angle $\varphi$ defined in \eqref{eq-normal-angle-def} and the slope $u_y$ of the tangent to the graph of $u$.  Then
\[
\theta = \varphi(y, u(y)) - \arctan u_y(y).
\]
Since $|v| + |u_y|=O(\delta)$, we have
\[
|\theta| \geq \frac12 |\tan\varphi(y, u) - u_y|
\]
if we assume $\delta$ is small enough.  Combined with $1-\cos\theta \geq \frac14\theta^2$ for small $\theta$, we get
\begin{equation}
\label{eq-L2Lintegral-bound}
\int_{\Gamma_{L,2L}} \bigl( 1-(\vN\cdot\nm)\bigr) e^{-\phi}d\mu
\geq
\frac 1{16}\int_L^{2L} \bigl\{ u_y(y) - \tan\varphi(y, u(y))\bigr\}^2 \, e^{-y^2/4} dy
\end{equation}

\subsubsection{The term at $y=L$}
The term at $y=L$ in \eqref{eq-basic-inner-outer-bound} is
\begin{align*}
\int_{\Delta_L} (\vN\cdot\nm) e^{-\phi} d\mu
&=\int_{u(L)}^{\sqrt{2(n-1)}} (\vN\cdot\nm) \, e^{-L^2/4}e^{-\eta^2/4}\eta d\eta\\
&\leq \sqrt{2(n-1)} e^{-L^2/4} \int_{u(L)}^{\sqrt{2(n-1)}} (\vN\cdot\nm)\, d\eta.
\end{align*}

The unit normal $\vN$ to $\Delta_L$ is the unit vector parallel to the $y$-axis, so
\[
\bigl| (\vN\cdot\nm) \bigr| = \bigl| \sin \varphi(L, u)\bigr| \leq |\varphi(L, u)| \leq \frac{C}{L} |v|.
\]
It follows that
\begin{equation}
\label{eq-Delta-integral-bound}
\int_{\Delta_L} (\vN\cdot\nm) e^{-\phi} d\mu
\leq \frac CL e^{-L^2/4} |v(L)|^2.
\end{equation}
Combining \eqref{eq-Delta-integral-bound}, \eqref{eq-L2Lintegral-bound}, and \eqref{eq-basic-inner-outer-bound}, we arrive at
\begin{multline}
\label{eq-uy-tanphi-estimate}
\int_L^{2L} \Bigl( u_y(y) - \tan\varphi(y, u(y))\Bigr)^2 \, e^{-y^2/4} dy
\leq \\
\frac CL e^{-L^2/4} |v(L)|^2 + C \int_0^L v^2\, e^{-y^2/4} dy.
\end{multline}

\subsubsection{Removing $\tan\varphi$}
We wish to estimate the integral of $u_y^2$ directly instead of the integral of $(u_y-\tan\varphi)^2$ as in \eqref{eq-uy-tanphi-estimate}.  To do this we begin with
\begin{align*}
\int_L^{2L} u_y^2 e^{-y^2/4}dy
&= \int_L^{2L} \bigl(u_y-\tan\varphi+\tan\varphi\bigr)^2 e^{-y^2/4}dy\\
&\leq 2 \int_L^{2L} \bigl(u_y-\tan\varphi\bigr)^2 e^{-y^2/4}dy
+ 2 \int_L^{2L} \bigl(\tan\varphi\bigr)^2 e^{-y^2/4}dy\\
&\leq 2 \int_L^{2L} \bigl(u_y-\tan\varphi\bigr)^2 e^{-y^2/4}dy + C \int_L^{2L} v^2 e^{-y^2/4}dy
\end{align*}
where we have used $|\tan\varphi(y, u)| \leq \frac CL |v|$, which follows from Lemma~\ref{lem-tan-phi-at-cylinder}.

For any function $v(y)$ one has
\begin{align*}
\int_a^{b} v^2 e^{-y^2/4} dy
&=\int_a^{b} \frac{2v^2}{y} \frac y2 e^{-y^2/4} dy\\
&= \left[-\frac2y v^2 e^{-y^2/4}\right]_a^b
+  \int_a^b \Bigl(\frac4y vv_y - \frac{2} {y^2}v^2\Bigr) e^{-y^2/4}dy\\
&\leq \frac2a v(a)^2 e^{-a^2/4}
+  \int_a^b \frac4y vv_y e^{-y^2/4}dy\\
&\leq \frac2a v(a)^2 e^{-a^2/4}
+  \int_a^b \Bigl(\frac12 v^2 + \frac{8}{y^2}v_y^2\Bigr) e^{-y^2/4}dy\\
&\leq \frac4a v(a)^2 e^{-a^2/4} + \frac{16}{a^2} \int_a^b v_y^2 e^{-y^2/4}dy
\end{align*}
Apply this to $v=u-\sqrt{2(n-1)}$ on the interval $(L, 2L)$:
\begin{align*}
\int_L^{2L} u_y^2 &e^{-y^2/4}dy \\
&\leq 2 \int_L^{2L} \bigl(u_y-\tan\varphi\bigr)^2 e^{-y^2/4}dy
+ C \int_L^{2L} v^2 e^{-y^2/4}dy\\
&\leq 2 \int_L^{2L} \bigl(u_y-\tan\varphi\bigr)^2 e^{-y^2/4}dy + \frac{C} {L} v(L)^2 e^{-L^2/4} + \frac{C} {L^2} \int_L^{2L} u_y^2 e^{-y^2/4}dy
\end{align*}
Here $C$ does not depend on $L$, so if we assume that $L^2>2C$ then we can absorb the last integral in the integral on the left:
\[
\int_L^{2L} u_y^2 e^{-y^2/4}dy \leq 4 \int_L^{2L} \bigl(u_y-\tan\varphi\bigr)^2 e^{-y^2/4}dy + \frac{C} {L}v(L)^2 e^{-L^2/4}.
\]
Combine with \eqref{eq-uy-tanphi-estimate} and we find
\begin{equation}
\label{eq-uy2-estimate}
\int_L^{2L} u_y^2 e^{-y^2/4}dy
\leq  \frac{C} {L} v(L)^2 e^{-L^2/4}
+C\int_0^L v^2 e^{-y^2/4}dy.
\end{equation}

\subsubsection{The final estimate}
This inequality remains true if we replace $L$ by any $\lambda\in(\frac34 L, L)$:
\[
\int_{\lambda}^{2\lambda } u_y(y)^2 \, e^{-y^2/4} dy \leq \frac C{\lambda} e^{-\lambda^2/4} v(\lambda)^2 + C \int_0^\lambda v^2\, e^{-y^2/4} dy.
\]
For $\lambda\in(\frac34 L, L)$ we can reduce the domain of integration on the left to $L\leq y\leq \frac32 L$, and increase the domain of integration on the right to $0\leq y\leq L$.  This leads to
\[
\int_L^{\frac32 L} u_y(y)^2 \, e^{-y^2/4} dy \leq \frac C\lambda e^{-\lambda^2/4} v(\lambda)^2 + C \int_0^L v^2\, e^{-y^2/4} dy.
\]
Integrate both sides of this inequality over $\lambda\in(\frac34 L, L)$:
\[
\frac L4 \int_L^{\frac32 L} u_y^2 \, e^{-y^2/4} dy \leq \int_{\frac34L}^L \frac C \lambda e^{-\lambda^2/4} v(\lambda)^2 d\lambda + \frac{CL}{4} \int_0^L v^2\, e^{-y^2/4} dy.
\]
Clearing the constants and combining the two integrals on the right, we find
\[
\int_L^{\frac32 L} u_y^2 \, e^{-y^2/4} dy \leq C \int_0^L v^2\, e^{-y^2/4} dy.
\]
We proceed now to proving Corollary \ref{cor-inner-outer}.  First we show the following weighted estimate.
\begin{lemma}
\label{lem-weighted-H1-L2}
For any function $u \in C^1([0,\ell])$ we have
\begin{multline}
\label{eq-weighted-H1-L2}
\int_0^\ell   u_y^2 \, e^{-y^2/4} \, dy + \frac 14 \, \int_0^\ell  u^2 \,e^{-y^2/4} \, dy\\
\geq \frac14 \ell e^{-\ell^2/4}u(\ell)^2 + \frac 1{16} \int_0^\ell y^2 \, u^2 \, e^{-y^2/4} \, dy .
\end{multline}
\end{lemma}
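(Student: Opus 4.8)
The plan is to reduce everything to the obvious nonnegativity of an explicit perfect square. The key observation is that
\[
\int_0^\ell \Bigl(u_y - \tfrac14\, y\, u\Bigr)^2 e^{-y^2/4}\, dy = \int_0^\ell \Bigl(\tfrac{d}{dy}\bigl(e^{-y^2/8}\, u\bigr)\Bigr)^2 dy \ge 0 ,
\]
since $\tfrac{d}{dy}\bigl(e^{-y^2/8} u\bigr) = e^{-y^2/8}\bigl(u_y - \tfrac14 y u\bigr)$. Expanding the square on the left gives
\[
\int_0^\ell u_y^2\, e^{-y^2/4}\, dy - \tfrac12 \int_0^\ell y\, u\, u_y\, e^{-y^2/4}\, dy + \tfrac1{16}\int_0^\ell y^2 u^2\, e^{-y^2/4}\, dy \ge 0 ,
\]
so the whole matter comes down to rewriting the cross term $\int_0^\ell y\, u\, u_y\, e^{-y^2/4}\, dy$.

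For that term I would write $u\, u_y = \tfrac12 (u^2)'$ and integrate by parts. Using $\tfrac{d}{dy}\bigl(y\, e^{-y^2/4}\bigr) = \bigl(1 - \tfrac{y^2}{2}\bigr)\, e^{-y^2/4}$, and noting that the boundary contribution at $y=0$ vanishes thanks to the factor $y$ (this is where the hypothesis $u\in C^1([0,\ell])$ enters, guaranteeing $u(0)$ is finite), one obtains
\[
\int_0^\ell y\, u\, u_y\, e^{-y^2/4}\, dy = \tfrac12\, \ell\, e^{-\ell^2/4}\, u(\ell)^2 - \tfrac12 \int_0^\ell u^2\, e^{-y^2/4}\, dy + \tfrac14 \int_0^\ell y^2 u^2\, e^{-y^2/4}\, dy .
\]
Substituting this into the expanded square and collecting terms, the coefficient of $\int_0^\ell y^2 u^2 e^{-y^2/4}\, dy$ becomes $\tfrac1{16} - \tfrac18 = -\tfrac1{16}$, the term $\tfrac14 \ell\, e^{-\ell^2/4} u(\ell)^2$ appears with a minus sign, and a $+\tfrac14\int_0^\ell u^2 e^{-y^2/4}\, dy$ is produced; rearranging yields exactly \eqref{eq-weighted-H1-L2}.

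I do not expect a genuine obstacle: the argument is an exact identity plus the trivial inequality ``a square is nonnegative,'' so the only points requiring attention are the vanishing of the boundary term at $y=0$ (immediate from the $C^1$ assumption) and the bookkeeping of constants. The sole bit of \emph{insight} is choosing the right square $\bigl(u_y - \tfrac14 y u\bigr)^2$; this is natural because $e^{-y^2/8}$ is the half-weight associated with the Gaussian measure $e^{-y^2/4}\, dy$, so $u_y - \tfrac14 y u$ is the corresponding drift-adjusted derivative of $u$, and its $e^{-y^2/4}$-weighted square is literally a total derivative squared.
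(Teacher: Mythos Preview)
Your proof is correct and is essentially identical to the paper's own argument: both start from the nonnegativity of $\bigl(u_y - \tfrac{y}{4}u\bigr)^2$, integrate against $e^{-y^2/4}$, and handle the cross term $\int_0^\ell y\,u\,u_y\,e^{-y^2/4}\,dy$ by the same integration by parts. Your additional remark that $e^{-y^2/4}\bigl(u_y-\tfrac14 y u\bigr)^2 = \bigl(\tfrac{d}{dy}(e^{-y^2/8}u)\bigr)^2$ is a nice piece of motivation not spelled out in the paper, but it does not change the substance of the argument.
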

\begin{proof}
We begin with
\[
0\leq \bigl( u_y - \tfrac y4 u\bigr)^2 = u_y^2 + \tfrac{y^2}{16} u^2 - \tfrac12 yuu_y
\]
and integrate by parts:
\begin{align*}
\int_0^\ell e^{-y^2/4}\bigl( u_y^2 + \tfrac{y^2}{16}u^2\bigr)dy
&\geq \int_0^\ell e^{-y^2/4} \tfrac12 yuu_y dy \\
&= \bigl[\tfrac14 ye^{-y^2/4} u^2\bigr]_0^{\ell}
- \int_0^\ell \Bigl(\tfrac14 - \tfrac18 y^2\Bigr)e^{-y^2/4} u^2 dy \\
&= \tfrac14\ell e^{-\ell^2/4}u(\ell)^2 +\int_0^\ell \tfrac18y^2u^2 e^{-y^2/4} - \int_0^\ell \tfrac14 u^2 e^{-y^2/4} dy
\end{align*}
Rearranging terms leads to \eqref{eq-weighted-H1-L2}.
\end{proof}
\begin{proof}[Proof of Corollary \ref{cor-inner-outer}]
If we apply Lemma \ref{lem-weighted-H1-L2} to $\ell = 2L$ and to $v = u -\sqrt{2(n-1)}$ we get
\[
\int_0^{2L} y^2 v^2 e^{-y^2/4}\, dy \le \int_0^{2L} v_y^2 e^{-y^2/4}\, dy + \frac14\, \int_0^{2L} v^2 e^{-y^2/4}\, dy.
\]
Applying Lemma \ref{lem-inner-outer} to the first term on the left hand side of the previous estimate leads to
\[
\int_0^{2L} y^2 v^2 e^{-y^2/4}\, dy \le C\, \int_0^L v^2 e^{-y^2/4}\, dy + \frac 14 \int_L^{2L} v^2 e^{-y^2/4}\, dy,
\]
which, since
\[
L^2\int_L^{2L} v^2 e^{-y^2/4}\, dy \le \int_0^{2L} y^2 v^2 e^{-y^2/4}\, dy,
\]
yields the desired estimate
\[
\int_L^{2L} v^2 e^{-y^2/4}\, dy \le \frac{C}{L^2}\, \int_0^L v^2 e^{-y^2/4}\, dy,
\]
for $L$ sufficiently big.
\end{proof}
\section{Asymptotics of the parabolic region}
\label{sec-parabolic}

The goal in this section is to prove part (i) of our Main Theorem~\ref{thm-asymptotics}, as well as Corollary~\ref{cor-exact-diam}.

In the following we derive the asymptotics in the parabolic region $|y|=\cO(1)$ in a few steps.  We first analyze the spectrum of the linear operator $\cL$.  Then we project $\bv$ onto the positive, zero, and negative eigenspaces of $\cL$.  Using the \textit{a priori} estimates from section \ref{sec-apriori} we carefully estimate the error terms \eqref{eq-error-1} and \eqref{eq-error-2} and then, using the ODE arguments developed in \cite{FK} and \cite{MZ}, we are able to prove that as $\tau\to -\infty$ either the projection of $\bv $ onto the zero subspace dominates or the projection of $\bv $ onto the positive subspace of $\cL $ dominates.  We show the latter can not happen.  Once we establish the dominance of the zero eigenspace projection of $\bv$ we employ again our \textit{a priori} estimates and ODE arguments to show the precise asymptotics as stated in part (i) of Theorem~\ref{thm-asymptotics}.

\subsection{Linearization at the cylinder}
We are in the case where $\bar{M}_\tau$ converges to the cylinder $\Sigma$ with radius $\sqrt{2(n-1)}$ as $\tau\to -\infty$, uniformly in compact sets, i.e.
\[
\lim_{\tau\to-\infty}u(y, \tau) = \sqrt{2(n-1)}
\]
uniformly on bounded $y$ intervals.  As a measure for the difference between the solution and the cylinder we introduce $v(y,\tau)$ defined by
\[
u(\cdot,\tau) = \sqrt{2(n-1)} \bigl(1 + v(y, \tau)\bigr)
\]
This function satisfies
\[
\frac{\pd}{\pd\tau}v = \frac{v_{yy}}{1+2(n-1)v_y^2} - \frac{y}{2}v_y + \frac{2+v}{2+2v}v,
\]
which we can rewrite as
\begin{equation}
\label{eq-vv}
\frac{\pd}{\pd\tau} v = \cL[v] + E,
\qquad
|y| \le \bd(\tau),
\end{equation}
where, by definition,
\begin{equation}
\label{eq-linear}
\cL[\psi] = \psi_{yy} - \frac y2\, \psi_y + \psi
\end{equation}
and
\[
E = -\frac{v^2}{2(1+v)} - \frac{2(n-1)v_y^2 v_{yy}}{1+2(n-1)v_y^2}.
\]

Since $v(y,\tau)$ is not defined on all of $\R$ we truncate it smoothly outside the region $|y|\ge 2\ell(\tau)$, where, by definition,
\[
\ell(\tau) := \bd(\tau)^{1/3}.
\]
The choice of the exponent $1/3$ is to some extent arbitrary.  It will be mostly important that $\ell(\tau)\to\infty$, while $\ell(\tau)/\bd(\tau) \to 0$ as $\tau\to-\infty$.

\begin{figure}[t]\centering
\includegraphics[width=0.8\textwidth]{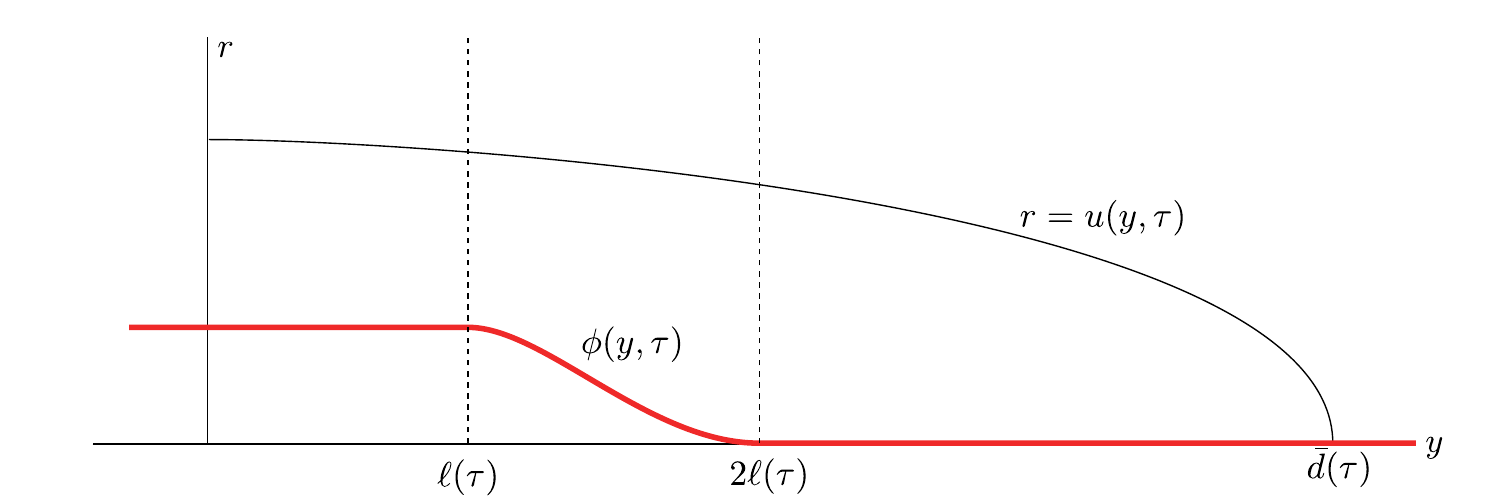}
\caption{The cut-off function $\phi(y, \tau)$ and the intermediate length $\ell(\tau) = \bd(\tau)^{1/3}$.}
\end{figure}
We now choose $\bphi\in C^{\infty}(\R)$ with $\bphi(y)=1$ for $|y| \le 1$ and $\bphi(y)=0$ for $|y| \ge 2$, and define
\[
\phi(y,\tau) := \bphi\left(\frac{y}{ \ell(\tau)}\right)
\text{ and }
\bv (y,\tau) := v(y,\tau) \phi(y,\tau).
\]
Our truncated function $\bv$ is now defined for all $y\in \R$, simply by setting $\bv(y, \tau)=0$ for $|y|\ge 2\ell$.  The equation for $\bv (y,\tau)$ is as follows:
\begin{equation}
\label{eq-bar-v}
\frac{\pd}{\pd\tau} \bv  = \cL[\bv] + \tilde{E}, \qquad y, \tau \in \R,
\end{equation}
where $\tilde{E} = \tilde{E}_1 + \tilde{E}_2 + \tilde{E}_3$,
\begin{equation}
\label{eq-error-1}
\tilde{E}_1 = 
- \frac{v\bv}{2(1+v)} 
- \phi\,\frac{2(n-1)v_y^2 v_{yy}}{1+2(n-1)v_y^2},
\end{equation}
and
\begin{equation}
\label{eq-error-23}
\tilde{E}_2 =
\Bigl(\phi_\tau - \phi_{yy} + \frac y2 \phi_y\Bigr) v,\qquad
\tE_3 = -2 \phi_y v_y.
\end{equation}
The definition $\phi(y, \tau) = \bphi(y/\ell(\tau))$ with $\ell(\tau) = \bd(\tau)^{1/3}$ implies that $\phi$ satisfies
\[
\phi_\tau = - \frac{\bd'(\tau)}{3\bd(\tau)} y\phi_y,
\]
which lets us rewrite $\tE_2$ as
\begin{equation}
\label{eq-error-2}
\tilde{E}_2 =
\left\{-\phi_{yy} + \left(\frac12 - \frac{\bd'}{3\bd}\right) y \phi_y\right\} v.
\end{equation}

\subsection{Estimating the error term $\tE$}
\label{sec-estimating-error}
It will be useful to have the following bounds for the derivatives of the cut-off function $\phi$.
\begin{prop}
\label{prop-phi-bounds}
The derivatives $\phi_y$ and $\phi_{yy}$ are supported in the region $\ell \le y\le 2\ell$, where they satisfy
\[
\ell |\phi_y| + |y\phi_y| + \ell^2 |\phi_{yy}| \leq C_0,
\]
where $C_0 = 3\sup_{\eta}|\bphi'(\eta)|+|\bphi''(\eta)|$.
\end{prop}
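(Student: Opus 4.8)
The plan is to differentiate $\phi(y,\tau) = \bphi\bigl(y/\ell(\tau)\bigr)$ directly by the chain rule and then to exploit that $\bphi'$ and $\bphi''$ are supported in the annulus $1\le|\eta|\le 2$. First I would record, for $y>0$ (the case $y<0$ being identical by evenness of $\bphi$),
\[
\phi_y(y,\tau) = \frac{1}{\ell(\tau)}\,\bphi'\!\left(\frac{y}{\ell(\tau)}\right),
\qquad
\phi_{yy}(y,\tau) = \frac{1}{\ell(\tau)^2}\,\bphi''\!\left(\frac{y}{\ell(\tau)}\right).
\]
Since $\bphi(\eta)=1$ for $|\eta|\le 1$ and $\bphi(\eta)=0$ for $|\eta|\ge 2$, both $\bphi'$ and $\bphi''$ vanish outside $1\le\eta\le 2$, so $\phi_y$ and $\phi_{yy}$ vanish unless $\ell(\tau)\le y\le 2\ell(\tau)$; this is the asserted support statement.

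Next I would estimate the three terms on the support set $\ell\le y\le 2\ell$. From the first identity, $\ell\,|\phi_y| = |\bphi'(y/\ell)|\le \sup_\eta|\bphi'(\eta)|$, and $|y\,\phi_y| = (y/\ell)\,|\bphi'(y/\ell)|\le 2\sup_\eta|\bphi'(\eta)|$ because $y/\ell\le 2$ there. From the second identity, $\ell^2\,|\phi_{yy}| = |\bphi''(y/\ell)|\le \sup_\eta|\bphi''(\eta)|$. Summing,
\[
\ell\,|\phi_y| + |y\,\phi_y| + \ell^2\,|\phi_{yy}|
\le 3\sup_\eta|\bphi'(\eta)| + \sup_\eta|\bphi''(\eta)| = C_0,
\]
which is exactly the claim.

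There is no real obstacle here: the proposition is a bookkeeping consequence of the chain rule together with the normalization of $\bphi$. The only point worth emphasizing is that the bounds are uniform in $\tau$ precisely because the right-hand sides depend only on $\bphi$ and not on $\ell(\tau)$, and that no information about $\ell(\tau)$ (beyond $\ell(\tau)>0$) is used — which is why the estimate is available at this stage, before any growth rate for $\bd(\tau)$ has been established.
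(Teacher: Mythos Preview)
Your proof is correct and is exactly what the paper has in mind: it states only that ``these inequalities are a direct consequence of the definition $\phi(y,\tau)=\bphi(y/\ell(\tau))$,'' and your chain-rule computation together with the support observation for $\bphi'$, $\bphi''$ is precisely that direct consequence spelled out.
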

These inequalities are a direct consequence of the definition $\phi(y, \tau) = \bphi(y/\ell(\tau))$.  For a function $f$ on $\R$ we introduce the norm weighted $L^2$ norm $\|f\|$ given by
$$\| f \|^2 =\int_{\R} f(y) \, e^{-y^2/4} \, dy.$$

\begin{lemma}
\label{lem-error-v}
For every $\epsilon > 0$ there exists a $\tau_0 < 0$ so that for $\tau\le \tau_0$,
\[
\|\tilde{E}\| \le \epsilon\, \|\bv \|.
\]
\end{lemma}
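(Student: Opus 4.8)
The plan is to bound each of the three pieces $\tE_1, \tE_2, \tE_3$ of the error term separately in the weighted $L^2$ norm $\|\cdot\|$, showing that each is $o(1)\,\|\bv\|$ as $\tau\to-\infty$. Throughout I will use the a priori estimates from section~\ref{sec-apriori}: the pointwise smallness \eqref{eq-v-small} which gives $|v(y,\tau)|\le C\epsilon(\tau)+\delta(\tau)$ on $|y|\le\epsilon(\tau)\bd(\tau)$ (hence in particular on the support $|y|\le 2\ell(\tau)$ of $\bv$, since $\ell/\bd\to0$), the first-derivative bound \eqref{eq-first-der1} giving $|u_y|\le C/\bd(\tau)$ on $|y|\le 2\ell(\tau)$, and the higher-derivative bound \eqref{eq-der-inter} giving $|u_{yy}|+|u_{yyy}|\le C/\bd(\tau)$ there; these translate into $|v_y|\le C/\bd$ and $|v_{yy}|\le C/\bd$ on the relevant region. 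I will also use Corollary~\ref{cor-Hmax-less-than-diam}, specifically \eqref{eq-dbar-growth-bound}, to control $\bd'/\bd$.

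First I would handle $\tE_1 = -\frac{v\bv}{2(1+v)} - \phi\,\frac{2(n-1)v_y^2 v_{yy}}{1+2(n-1)v_y^2}$. For the first summand, pointwise $\bigl|\frac{v\bv}{2(1+v)}\bigr|\le C|v|\,|\bv|\le (C\epsilon(\tau)+\delta(\tau))\,|\bv|$ once $|v|\le\frac12$, so its weighted norm is $\le (C\epsilon(\tau)+\delta(\tau))\|\bv\|\le\epsilon\|\bv\|$ for $\tau$ sufficiently negative. For the second summand, $|v_y^2 v_{yy}|\le C/\bd(\tau)^3$ pointwise on $|y|\le 2\ell(\tau)$, and since this quantity is supported on a fixed-width-in-$y$-after-truncation region but actually just globally tiny, its weighted norm is $\le C\bd(\tau)^{-3}\bigl(\int_\R e^{-y^2/4}dy\bigr)^{1/2}\to 0$; to compare it with $\|\bv\|$ I need a lower bound on $\|\bv\|$ — here I would simply note that if $\bv\equiv 0$ the statement is trivial, and otherwise use that $v(0,\tau)=\delta(\tau)$ is comparable to the deviation so $\|\bv\|$ does not go to zero faster than any polynomial in $\bd$; more carefully, $\bd(\tau)^{-3} = o(\|\bv\|)$ follows because $\bd(\tau)\ge c\sqrt{|\tau|}$ (Corollary~\ref{cor-diam-lower}) while the parabolic-region analysis only needs the a priori fact that $v$ decays at most polynomially, which combined with the cylindrical convergence prevents $\|\bv\|$ from being super-polynomially small — alternatively, and more cleanly, absorb $\bd^{-3}$-type terms using that $\|\bv\|\ge \|v\phi\|_{L^2(|y|\le 1)}$ and a backward-uniqueness / unique-continuation-free argument that $\|\bv\|$ cannot vanish.

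Next, for $\tE_2 = \bigl\{-\phi_{yy} + (\tfrac12 - \tfrac{\bd'}{3\bd})y\phi_y\bigr\}v$ and $\tE_3 = -2\phi_y v_y$: by Proposition~\ref{prop-phi-bounds} the derivatives $\phi_y,\phi_{yy}$ are supported in $\ell(\tau)\le y\le 2\ell(\tau)$ where $\ell|\phi_y|+|y\phi_y|+\ell^2|\phi_{yy}|\le C_0$, and by \eqref{eq-dbar-growth-bound} we have $|\bd'/\bd|\le\frac12$ so the coefficient $\tfrac12-\tfrac{\bd'}{3\bd}$ is bounded. Hence pointwise $|\tE_2|\le C\,\mathbf{1}_{\{\ell\le|y|\le2\ell\}}\,|v|$ and $|\tE_3|\le \frac{C}{\ell}\,\mathbf{1}_{\{\ell\le|y|\le2\ell\}}\,|v_y|\le \frac{C}{\ell\bd}$. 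The decisive gain is the Gaussian weight: $\|\tE_2\|^2\le C\int_{\ell}^{2\ell} v^2 e^{-y^2/4}dy\le C(\sup|v|)^2 \int_{\ell}^{\infty} e^{-y^2/4}dy\le C e^{-\ell(\tau)^2/4}$, which is super-exponentially small in $\bd(\tau)^{1/3}$, hence $o(\|\bv\|)$ by the same (weak) lower bound on $\|\bv\|$ used above; similarly for $\tE_3$.

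The main obstacle, and the step that requires care, is exactly the comparison with $\|\bv\|$ from below: each error contribution is shown to be small in an absolute sense, but Lemma~\ref{lem-error-v} demands smallness relative to $\|\bv\|$, so one needs to rule out that $\|\bv(\cdot,\tau)\|$ decays faster than the (exponentially or polynomially small) error bounds. I expect this to be resolved by invoking the cylindrical convergence together with a crude lower estimate — for instance, since $\bar M_\tau\to\Sigma$ but $\bar M_\tau\ne\Sigma$ (the solution is not the cylinder, as it is compact), parabolic regularity and a standard backward-uniqueness-type dichotomy force $\|\bv(\cdot,\tau)\|$ to be bounded below by a quantity that dominates $e^{-\ell(\tau)^2/4}$ and $\bd(\tau)^{-3}$; alternatively one appeals to the inner-outer Corollary~\ref{cor-inner-outer} to compare $\|\bv\|$ with the genuine inner deviation $\int_0^L v^2 e^{-y^2/4}dy$ and then to the already-established lower bound on $\bd(\tau)$, i.e.\ the terms $e^{-\ell^2/4}$, $\bd^{-3}$, etc., are $o\!\bigl(\|\bv\|\bigr)$ because $\|\bv\|^2\gtrsim \bd(\tau)^{-N}$ for some fixed $N$ while the error bounds are $o(\bd(\tau)^{-N})$ for every $N$ (the $\tE_2,\tE_3$ parts) and for the $\tE_1$ parts one uses the $|v|$-smallness directly so no lower bound on $\|\bv\|$ is needed at all for that piece. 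Assembling the three estimates and choosing $\tau_0$ negative enough that each piece is $\le\frac{\epsilon}{3}\|\bv\|$ completes the proof.
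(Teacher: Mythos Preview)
Your treatment of the first summand of $\tE_1$ is correct and matches the paper. The gap is in everything else: for the second summand of $\tE_1$ and for $\tE_2,\tE_3$ you bound the terms \emph{absolutely} (by $C\bd^{-3}$, by $Ce^{-\ell^2/4}$, etc.) and then need a lower bound on $\|\bv\|$ to convert these into $o(1)\|\bv\|$. You recognize this yourself, but none of your proposed fixes work. Backward uniqueness is neither stated nor proved in the paper; the bound $\bd(\tau)\ge c\sqrt{|\tau|}$ from Corollary~\ref{cor-diam-lower} (equivalently Corollary~\ref{cor-exact-diam}) is a \emph{consequence} of Theorem~\ref{thm-asymptotics}, whose proof in section~\ref{sec-parabolic} uses Lemma~\ref{lem-error-v} itself, so invoking it here is circular; and there is no a~priori claim of the form $\|\bv\|\gtrsim \bd^{-N}$ available at this point.

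The paper's proof avoids any lower bound on $\|\bv\|$ entirely, and this is the idea you are missing. Instead of estimating the error pieces absolutely, it uses the inner--outer estimates to compare them \emph{directly} with $\|\bv\|$. Concretely: for the second summand of $\tE_1$ one writes $|v_y^2v_{yy}|=|v_yv_{yy}|\cdot|v_y|\le C\ell^{-6}|v_y|$ and then applies Lemma~\ref{lem-inner-outer} to get $\int_0^{2\ell}v_y^2\,e^{-y^2/4}dy\le C\int_0^\ell v^2\,e^{-y^2/4}dy\le C\|\bv\|^2$, yielding $\|\phi v_y^2v_{yy}\|\le C\ell^{-6}\|\bv\|$. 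For $\tE_2$, since it is supported in $\ell\le|y|\le2\ell$, Corollary~\ref{cor-inner-outer} gives $\int_\ell^{2\ell}v^2\,e^{-y^2/4}dy\le C\ell^{-2}\int_0^\ell v^2\,e^{-y^2/4}dy\le C\ell^{-2}\|\bv\|^2$. For $\tE_3$ one similarly bounds $\int_\ell^{2\ell}v_y^2\,e^{-y^2/4}dy$ by $C\|\bv\|^2$ via Lemma~\ref{lem-inner-outer}. In every case the factor of $\|\bv\|$ appears on the right automatically, and the smallness comes from the prefactors $\ell^{-6}$, $\ell^{-1}$, which tend to zero because $\ell(\tau)=\bd(\tau)^{1/3}\to\infty$. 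You mention Corollary~\ref{cor-inner-outer} at the very end as an ``alternative,'' but only as a way to produce a lower bound on $\|\bv\|$; the point is rather to use it as an \emph{upper} bound on the outer integrals in terms of the inner one.
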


\begin{proof}
Recall that by \eqref{eq-convexity} and Lemma \ref{lem-loc-est} we have
\[
|v_y| + |v_{yy}| \le \frac{C}{\bd(\tau)}.
\]
Since $\ell(\tau) = \bd(\tau)^{1/3}$ this implies
\[
0 \le v(0,\tau) - v(y,\tau) \le {C}{\ell(\tau)^{-2}}
\text{ for } |y| \le 2 \ell(\tau),
\]
and therefore
\begin{align*}
|v(y,\tau)| &\le |v(y,\tau) - v(0,\tau)| + |v(0,\tau)|\\
&< C\ell^{-2} + |v(0,\tau)|\\
& = \delta(\tau),
\end{align*}
for $|y|\le 2 \ell $, where $\lim_{\tau\to -\infty} \delta(\tau) = 0$.

If we look at \eqref{eq-error-1}, \eqref{eq-error-23} and \eqref{eq-error-2} then we see that
\[
|\tE_1| \le C\bigl(|v\bv| + v_y^2 |v_{yy}|\bigr),
\text{ and }
|\tE_2| \le C|v|,\quad
|\tE_3|\le \frac{C}{\ell}|v_y|,
\]
while we also find that $\tE_2$ and $\tE_3$ vanish outside of the region $\ell \le y \le 2\ell$.

To estimate $\|\tE\|$ we consider the four terms appearing in our pointwise bounds for $\tE_{1}$, $\tE_2$, and $\tE_{3}$ one by one.

The first term in $\tE_1$ is the easiest.  Using $|v|\le \delta(\tau)$ we get
\[
\|v\bv\| \le \delta(\tau)\| \bv\|.
\]
For the next term we use Lemma~\ref{lem-loc-est}, which guarantees $|v_y| + |v_{yy}|\leq C\bd^{-1} = C\ell^{-3}$, to get
\[
|v_y^2v_{yy}| = |v_yv_{yy}| \cdot |v_y| \le C \ell^{-6} |v_y|,
\]
so that
\[
\int_0^{2\ell } |v_y^2v_{yy}|^2 e^{-y^2/4}dy
\le C\ell^{-12}\int_0^{2\ell } |v_y|^2 e^{-y^2/4}dy.
\]
By Lemma~\ref{lem-inner-outer} the $L^2$ norm of $v_y$ on the interval $(0, 2\ell )$ is bounded by the $L^2$ norm of $v$ on the first half of that interval:
\[
\int_0^{2\ell } |v_y^2v_{yy}|^2 e^{-y^2/4}dy
\le C\ell^{-12}\int_0^{\ell } v^2 e^{-y^2/4}dy.
\]
Since $v=\bv$ for $0\le y \le \ell $ we get
\[
\|v_y^2 v_{yy} \| \le \frac{C}{\ell^{6}} \|\bv\|.
\]
Hence $\|\tE_1\|\leq \bigl(\delta(\tau) + C\ell(\tau)^{-6}\bigr)\|\bv\|$.

Turning to $\tE_2$ we recall that $\tE_2$ is supported in $\ell \le y \le 2\ell $, so that we have to bound
\[
\int_{\ell }^{2\ell } v^2 e^{-y^2/4}dy.
\]
By Corollary~\ref{cor-inner-outer} we have
\[
\int_{\ell }^{2\ell } v^2 e^{-y^2/4}dy
\le \frac{C}{\ell^{2}} \int_0^{\ell } v^2 e^{-y^2/4}dy
\]
which implies that $\|\tE_2\| \le C\ell^{-2}\|\bv\|$.

Finally, for $\tE_3$ we use Corollary~\ref{cor-inner-outer} to conclude that
\[
\|\tE_3\|^2 \leq \frac{C}{\ell^2} \int_\ell^{2\ell} v_y^2 e^{-y^2/4}dy
\leq \frac{C}{\ell^2} \int_0^{\ell} v^2 e^{-y^2/4}dy
\leq \frac{C}{\ell^2} \|\bv\|^2,
\]
i.e.~we have $\|\tE_3\|\le C\ell^{-1}\|\bv\|$.

This completes the proof of Lemma~\ref{lem-error-v}.
\end{proof}

\subsection{The linear operator $\cL$}
The operator $\cL$ is self adjoint in the Hilbert space $\hilb := L^2(\mathbb{R}, e^{-y^2/4}dy)$.  We introduce the norm and the inner product on $\hilb$ by
\[
\|f\|^2 = \int_\R f(y)^2 e^{-y^2/4}\, dy,
\qquad \langle f, g \rangle = \int_\R f(y)g(y) e^{-y^2/4}\, dy.
\]
The quadratic form associated with $\cL$ is
\begin{equation}
\label{eq:Q-form}
\langle f, \cL f\rangle
= - \int_\R \bigl\{f(y)^2 - f'(y)^2\rangle\bigr\}
e^{-y^2/4} dy,
\end{equation}
and the quadratic form domain, i.e.~the domain of $\sqrt{2-\cL}$ has norm
\begin{equation}
\label{eq:root-L-norm}
\|\sqrt{2-L}\, f\|^2 = \langle f, (2-\cL)f\rangle
= \int_\R \bigl\{f(y)^2 + f'(y)^2\bigr\} e^{-y^2/4}dy.
\end{equation}

The Hilbert space $\hilb$ has a basis of orthogonal polynomials which are eigenfunctions of $\cL $.  More precisely,
\[
\psi_{2m}(y) = y^{2m} + c_{m-1} y^{2(m-1)} + \dots c_2 y^2 + c_0
\]
with
\[
\cL [\psi_{2m}] = (1 - m)\, \psi_{2m}
\]
and
\[
c_{k-1} = -\frac{2k(2k - 1)}{m - k + 1}\, c_k.
\]
The first few eigenfunctions for the eigenvalues $\lambda_{2m} = 1 - m$ are:
\begin{align*}
\psi_0(y) &= 1, \qquad \lambda_0 = 1, \\
\psi_2(y) &= y^2 - 2, \qquad \lambda_2 = 0, \\
\psi_4(y) &= y^4 - 12 y^2 + 12, \qquad \lambda_4 = -1,
\end{align*}
It easily follows that
\[
\psi_2(y)^2 = \psi_4(y) + 8\psi_2(y) + 8\psi_0
\]
so that $\psi_2\perp \psi_j$ for $j\ne 2$ implies
\begin{equation}
\label{eq-psi-innerprod-relation}
\langle \psi_2, (\psi_2)^2\rangle = 8 \|\psi_2\|^2.
\end{equation}
To obtain an orthonormal basis for $\hilb$ one could consider the functions
\[
\hat\psi_{2m} := \frac{\psi_{2m}}{\|\psi_{2m}\|}.
\]
However, many computations turn out to be algebraically simpler if one uses the eigenfunctions $\psi_{2m}$, which are normalized by requiring their highest order term to be $y^{2m}$.
\subsection{The ODE lemma}
We can decompose the Hilbert space $\hilb$ into positive, negative and neutral eigenspaces
\[
\hilb = \hilb_+ \oplus \hilb_0 \oplus \hilb_-,
\]
where $\hilb_+$ is spanned by $\psi_0$, $\hilb_0$ is spanned by $\psi_2$, and $\hilb_-$ is spanned by the remaining eigenfunctions $\{\psi_4, \psi_6, \dots\}$.  We let $P_\pm$, $P_0$ be the corresponding orthogonal projections, and we define
\[
\bv_\pm(\cdot, \tau) = P_\pm\bigl[\bv(\cdot, \tau)\bigr],\quad
\bv_0(\cdot, \tau) = P_0\bigl[\bv(\cdot, \tau)\bigr]
\]
so that
\begin{equation}
\label{eq-proj}
\bv (y,\tau) = \bv_+(y,\tau) + \bv_0(y,\tau)  + \bv_-(y,\tau).
\end{equation}
Our arguments involve showing that one of these terms will be much larger than the other two as $\tau\to-\infty$, so it will be convenient to abbreviate
\begin{equation}
\label{eq-proj-norms}
V_0(\tau) :=  \|\bv_0(\cdot, \tau)\|,\quad
V_-(\tau) := \|\bv_-(\cdot, \tau)\|, \text{ and }
V_+(\tau) := \|\bv_+(\cdot, \tau)\|
\end{equation}
Our goal is to show the following proposition.
\begin{prop}
As $\tau\to -\infty$, we have
\[
V_-(\tau) + V_+(\tau) = o(V_0(\tau)).
\]
\end{prop}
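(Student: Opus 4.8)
The plan is to run the standard Merle--Zaag / Filippas--Kohn type ODE argument on the three quantities $V_+$, $V_0$, $V_-$, using the fact that $\cL$ has eigenvalue $+1$ on $\hilb_+$, eigenvalue $0$ on $\hilb_0$, and eigenvalues $\le -1$ on $\hilb_-$. First I would derive the differential inequalities satisfied by $V_\pm$ and $V_0$. Taking the inner product of \eqref{eq-bar-v} with $\bv_+$, $\bv_0$, $\bv_-$ respectively and using self-adjointness of $\cL$ together with the spectral gaps, one obtains, up to the error term,
\begin{align*}
\tfrac12 \tfrac{d}{d\tau} V_+^2 &\ge V_+^2 - \|P_+\tE\|\, V_+,\\
\bigl|\tfrac12 \tfrac{d}{d\tau} V_0^2\bigr| &\le \|P_0\tE\|\, V_0,\\
\tfrac12 \tfrac{d}{d\tau} V_-^2 &\le -V_-^2 + \|P_-\tE\|\, V_-.
\end{align*}
By Lemma~\ref{lem-error-v}, for every $\epsilon>0$ there is $\tau_0$ with $\|\tE\|\le \epsilon\|\bv\|$ for $\tau\le\tau_0$, and since $\|\bv\|^2 = V_+^2+V_0^2+V_-^2$ this turns the above into a closed system of differential inequalities in $(V_+,V_0,V_-)$ with a small coupling constant $\epsilon$.

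Next I would invoke the abstract ODE lemma (as in \cite{MZ}, \cite{FK}): if three nonnegative Lipschitz functions on $(-\infty,\tau_0]$ satisfy such a system with $\epsilon$ small, then as $\tau\to-\infty$ either $V_+$ dominates (i.e.\ $V_0+V_-=o(V_+)$) or $V_0$ dominates (i.e.\ $V_++V_-=o(V_0)$). The neutral mode $V_0$ can never be dominated by $V_-$ alone because of the spectral gap below, and $V_-$ is always controlled. So it remains to rule out the case that $V_+$ dominates. This is the step I expect to be the main obstacle, and it is where the geometry (rather than soft spectral theory) must enter. If $V_+$ dominates, then $\bv$ is asymptotically a multiple of $\psi_0\equiv 1$, i.e.\ $v(y,\tau)\approx a(\tau)$ with $a'(\tau)\approx a(\tau)$, forcing exponential growth or decay of $a(\tau)$; exponential growth as $\tau\to-\infty$ is impossible since $v\to 0$, and if $a(\tau)\to 0$ exponentially one must show this contradicts the lower bounds on $\bd(\tau)$ established via the shrinker barriers. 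Concretely, the projection onto $\psi_0$ of \eqref{eq-bar-v}, together with the sign information $v_y\le 0$, $v_{yy}\le 0$ from convexity \eqref{eq-p1} and the explicit form of $E$, should show that the $\psi_0$-component cannot be positive and growing; and Corollary~\ref{cor-diam-lower} together with the inner-outer estimate forces $v(0,\tau)$ (hence $V_0$) to decay no faster than $|\tau|^{-1}$, which is incompatible with exponential decay of $V_+$ dominating everything. Thus the $V_+$-dominant case is excluded and $V_-+V_+=o(V_0)$, which is the assertion of the Proposition.

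One technical point to handle carefully along the way: the differential inequalities above are derived for the truncated function $\bv$, which satisfies the clean equation \eqref{eq-bar-v} on all of $\R$, so the boundary terms in the integration by parts defining $\langle \bv, \cL\bv\rangle$ vanish and no extra terms appear; the price is the cut-off errors $\tE_2,\tE_3$, but these have already been absorbed into $\|\tE\|\le\epsilon\|\bv\|$ in Lemma~\ref{lem-error-v}. I would also note that $V_0, V_\pm$ are Lipschitz in $\tau$ (they are norms of solutions of a parabolic equation with smooth coefficients on compact sets in $y$), so the ODE lemma applies without regularity issues. With the $V_+$-dominant alternative ruled out, the conclusion $V_-(\tau)+V_+(\tau)=o(V_0(\tau))$ follows.
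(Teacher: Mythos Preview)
Your reduction to the Merle--Zaag/Filippas--Kohn dichotomy via the differential inequalities for $V_\pm, V_0$ is exactly what the paper does (Lemma~\ref{lem-error} and Corollary~\ref{cor-xyz}), and you correctly identify Lemma~\ref{lem-error-v} as the source of the small coupling $\|\tE\|\le\epsilon\|\bv\|$.  That part is fine.

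The genuine gap is in your exclusion of the $V_+$-dominant alternative.  Your proposed mechanism---that exponential decay of the $\psi_0$-mode would contradict the diameter lower bounds from the shrinker barriers---runs in the wrong direction.  Corollary~\ref{cor-diam-lower} says that if $u(M,\tau)\ge\sqrt{2(n-1)}-KM^2/|\tau|$ then $\bd(\tau)\gtrsim\sqrt{|\tau|}$; it takes a \emph{hypothesis} on the decay of $v$ and outputs a bound on $\bd$, not the reverse.  If $V_+$ dominates with $V_+\sim Ke^\tau$, then $v$ is exponentially small, the barrier hypothesis is satisfied with room to spare, and you get $\bd(\tau)\gtrsim e^{-c\tau}$---which is perfectly consistent with the a priori upper bound $|\bd'|\le\tfrac12\bd$.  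No contradiction arises.  Likewise, your appeal to ``sign information from convexity and the explicit form of $E$'' does not pin down the sign or the rate of the $\psi_0$-coefficient; concavity of $v$ alone does not prevent its spatial average from decaying like $e^\tau$.

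The paper's argument is structurally different and uses a piece of geometry you are missing: the freedom in the choice of blow-up time $T$ in \eqref{eq-type-1-blow-up}.  Assuming $V_+$ dominates, one first sharpens the asymptotics to $V_+(\tau)=(K+o(1))e^\tau$ with $K\neq 0$ (Lemma~\ref{lem-Vplus-wins-asymptotics}), which in the original variables reads $U(x,t)=\sqrt{2(n-1)(T+K-t)}+o((T-t)^{-1/2})$.  Replacing $T$ by $T+K$ produces a new rescaling $\hat u$ for which $\hat v=o(e^\tau)$ on bounded sets.  Now reapply the dichotomy to $\hat v$: the $\hat V_+$-dominant case is impossible because it would force $\hat V_+\sim\hat K e^\tau$ with $\hat K\neq0$; hence $\hat V_0$ dominates, so $\hat V_0\ge C_\epsilon e^{\epsilon\tau}$ and $\hat v/\hat V_0\to\psi_2$ in $\hilb$.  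But $\hat v=o(e^\tau)$ on $[-1,1]$ while $\hat V_0^{-1}\le Ce^{-\epsilon\tau}$, so $\langle\chi_{[-1,1]},\hat v/\hat V_0\rangle\to 0$, contradicting $\langle\chi_{[-1,1]},\psi_2\rangle\neq 0$.  This time-translation trick is the missing idea; without it (or something equivalent) the $V_+$ alternative cannot be ruled out by the barrier/inner-outer machinery alone.
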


The key ingredient in proving the Proposition is the ODE Lemma that was proved in \cite{FK} and \cite{MZ}.

\begin{lemma}[ODE Lemma (\cite{FK}, \cite{MZ})] \label{lem-ODE}
Let $X_0(\tau)$, $X_-(\tau)$ and $X_+(\tau)$ be absolutely continuous, real-valued functions that are nonnegative and satisfy
\begin{enumerate}
\item[(i)] $(X_0, X_-, X_+)(\tau) \to 0$ as $\tau\to -\infty$, and $\forall \tau\le \tau^*$, $X_0(\tau) + X_-(\tau) + X_+(\tau) \neq 0$, and
\item[(ii)] $\forall \epsilon > 0$, $\exists \tau_\epsilon\in \mathbb{R}$ such that $\forall \tau \le \tau_\epsilon$,
\begin{equation}
\label{eq-ode1}
\left.
\begin{aligned}
\dot{X}_+ &\ge c_0 X_+ - \epsilon (X_0+X_-)\\
|\dot{X}_0| &\le \epsilon (X_0 + X_- + X_+) \\
\dot{X}_- &\le -c_0 X_- + \epsilon (X_0 + X_+)
\end{aligned}
\right\}
\end{equation}
\end{enumerate}
Then either $X_0 + X_- = o(X_+)$ or $X_- + X_+ = o(X_0)$ as $\tau\to -\infty$.
\end{lemma}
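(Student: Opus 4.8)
The plan is to run the by-now standard Merle--Zaag / Filippas--Kohn argument: first show that the ``stable mode'' $X_-$ is negligible next to $X_0+X_+$, then encode the remaining trichotomy in a single bounded scalar obeying a logistic-type differential inequality with arbitrarily small error term, and finally deduce that this scalar converges to $0$ or to $1$.

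\emph{Step 1: the stable mode is negligible.} Fix a small $\epsilon>0$, let $\tau_\epsilon$ be as in hypothesis (ii), and set $D:=X_0+X_+$. The first two inequalities of \eqref{eq-ode1} give $\dot D\ge -2\epsilon(D+X_-)$, so the comparison function $F:=X_--\lambda D$ with $\lambda:=3\epsilon/c_0$ satisfies, wherever $F\ge 0$,
\[
\dot F \le -c_0X_-+\epsilon D+2\lambda\epsilon(D+X_-)\le \bigl[-\lambda c_0+\epsilon(1+2\lambda+2\lambda^2)\bigr]D\le 0
\]
for $\epsilon$ small, and strictly so where $D>0$. Since $F(\tau)\to0$ as $\tau\to-\infty$, a value $F(\tau_0)>0$ would propagate backwards ($F$ is nondecreasing on $\{F\ge0\}$) and contradict $F\to0$; hence $F\le0$, i.e.\ $X_-\le(3\epsilon/c_0)D$ on $(-\infty,\tau_\epsilon]$. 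In particular $D>0$ there, since otherwise all three functions would vanish, contradicting hypothesis (i). Letting $\epsilon\downarrow0$ gives $X_-=o(X_0+X_+)$.

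\emph{Step 2: a logistic inequality.} Put $\Phi:=X_+/D\in[0,1]$, so that $X_0/D=1-\Phi$, and compute $\dot\Phi=(\dot X_+X_0-X_+\dot X_0)/D^2$. Feeding in $\dot X_+\ge c_0X_+-\epsilon(X_0+X_-)$, the bound $|\dot X_0|\le\epsilon(D+X_-)\le 2\epsilon D$, and the estimate $X_-\le(3\epsilon/c_0)D$ from Step 1, a direct calculation gives $\dot\Phi\ge c_0\Phi(1-\Phi)-C\epsilon$ on $(-\infty,\tau_\epsilon]$ for an absolute constant $C$; since $\epsilon$ is at our disposal we absorb $C$ and work with $\dot\Phi\ge c_0\Phi(1-\Phi)-\epsilon$.

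\emph{Step 3: $\Phi$ converges to $0$ or $1$.} Let $\mu_-<\mu_+$ be the roots of $\mu(1-\mu)=\epsilon/c_0$ (so $\mu_-\to0$, $\mu_+\to1$ as $\epsilon\to0$), and set $m=\liminf_{\tau\to-\infty}\Phi$, $M=\limsup_{\tau\to-\infty}\Phi$. If some $\mu$ lay in $(m,M)\cap(\mu_-,\mu_+)$, then near $\tau=-\infty$ the function $\Phi$ would have to drop from a value above $\mu$ to a value below it, hence decrease across a band contained in $(\mu_-,\mu_+)$ on which the inequality of Step 2 forces $\dot\Phi\ge c_1>0$ --- impossible. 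Thus $(m,M)\cap(\mu_-,\mu_+)=\emptyset$; if $m<M$ this means $M\le\mu_-$ or $m\ge\mu_+$, and since the same alternative must hold for every small $\epsilon$ (the two options being incompatible for small $\epsilon$), letting $\epsilon\downarrow0$ forces $M=0$ or $m=1$, against $0\le m<M\le1$. Hence $\Phi$ converges to some $L\in[0,1]$, and $L\in(0,1)$ is excluded because then, choosing $\epsilon<\tfrac12 c_0L(1-L)$, one would have $\dot\Phi\ge\tfrac12 c_0L(1-L)>0$ for all sufficiently negative $\tau$, forcing $\Phi(\tau)\to-\infty$. So $\Phi\to0$ or $\Phi\to1$. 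If $\Phi\to1$ then $X_0=o(D)$ and $X_+\ge\tfrac12 D$ near $-\infty$, whence $X_0=o(X_+)$ and, by Step 1, $X_-=o(X_+)$, i.e.\ $X_0+X_-=o(X_+)$; the case $\Phi\to0$ gives $X_-+X_+=o(X_0)$ symmetrically. This proves the lemma.

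The conceptual core is Step 2 --- isolating the scalar $\Phi$ whose evolution decouples into a logistic inequality whose error can be made as small as desired. After that, Steps 1 and 3 are soft; the one place needing genuine care is the bookkeeping in Step 3, where one must rule out both oscillation of $\Phi$ and convergence to an interior value, and where it is essential that the constant $\epsilon$ in hypothesis (ii) may be taken arbitrarily small at the price of pushing $\tau_\epsilon\to-\infty$.
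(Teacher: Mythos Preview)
The paper does not supply its own proof of this lemma; it simply states the result and attributes it to \cite{FK} and \cite{MZ}. Your argument is a correct implementation of the standard Merle--Zaag scheme from those references: first showing $X_-=o(X_0+X_+)$ by a barrier comparison, then reducing to a scalar logistic-type differential inequality for $\Phi=X_+/(X_0+X_+)$, and finally forcing $\Phi\to0$ or $\Phi\to1$. The only places that are slightly compressed are the bookkeeping in Step~3 (the oscillation argument and the exclusion of an interior limit $L$), but both are easily made rigorous along the lines you indicate, so there is no genuine gap.
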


In order to apply Lemma \ref{lem-ODE} to our projections $V_0(\tau)$, $V_-(\tau)$ and $V_+(\tau)$ defined as above, we need to show that $V_0(\tau)$, $V_-(\tau)$, $V_+(\tau)$ satisfy \eqref{eq-ode1}.

\begin{lemma}
\label{lem-error}
For every $\epsilon> 0$, there exists a $\tau_\epsilon$ so that for every $\tau \le \tau_\epsilon$:
\begin{align*}
\dot{V}_+ &\ge (1 - \epsilon) V_+ - \epsilon (V_0 + V_-)\\
|\dot{V}_0| &\le \epsilon (V_0 + V_- + V_+) \\
\dot{V}_- &\le -(1 - \epsilon) V_- + \epsilon (V_0 + V_+)
\end{align*}
where $V_+(\tau) = |\bv_+(\tau)|$, $V_0(\tau) = |\bv_0(\tau)|$ and $V_-(\tau) = \|\bv_-\|$.
\end{lemma}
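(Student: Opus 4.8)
The plan is to project the evolution equation \eqref{eq-bar-v} onto the three $\cL$-invariant subspaces $\hilb_+$, $\hilb_0$, $\hilb_-$, and to combine the spectral gap of $\cL$ with the error bound of Lemma~\ref{lem-error-v}. Since $\bv(\cdot,\tau)$ is smooth and vanishes for $|y|\ge 2\ell(\tau)$, the curve $\tau\mapsto \bv(\cdot,\tau)$ is $C^1$ with values in the domain of $\cL$, so the spectral projections $P_\pm$, $P_0$ commute with $\pd_\tau$ and with $\cL$. Applying them to \eqref{eq-bar-v}, and using that $\cL=1$ on $\hilb_+=\mathrm{span}\{\psi_0\}$ and $\cL=0$ on $\hilb_0=\mathrm{span}\{\psi_2\}$, we get
\[
\pd_\tau\bv_+ = \bv_+ + P_+\tE,\qquad
\pd_\tau\bv_0 = P_0\tE,\qquad
\pd_\tau\bv_- = \cL\bv_- + P_-\tE .
\]

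Pairing each equation with the corresponding component and using $\|P_\bullet\tE\|\le\|\tE\|$, Cauchy--Schwarz, and the eigenvalue bound $\langle\bv_-,\cL\bv_-\rangle\le -\|\bv_-\|^2$ (every eigenvalue of $\cL$ on $\hilb_-$ is $\le-1$), we obtain, at times where the relevant component is nonzero,
\[
\dot V_+ \ge V_+ - \|\tE\|,\qquad
|\dot V_0| \le \|\tE\|,\qquad
\dot V_- \le -V_- + \|\tE\| .
\]
Now fix $\epsilon>0$ and, by Lemma~\ref{lem-error-v}, choose $\tau_\epsilon$ so that $\|\tE\|\le\epsilon\|\bv\|$ for $\tau\le\tau_\epsilon$. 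By orthogonality $\|\bv\|^2=V_+^2+V_0^2+V_-^2$, hence $\|\bv\|\le V_++V_0+V_-$; substituting this and absorbing the resulting $\epsilon V_+$ (resp.\ $\epsilon V_-$) term into the leading term yields exactly the three inequalities of the Lemma.

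The genuine analytic work --- that the nonlinear, cut-off and commutator errors collected in $\tE$ are small compared with $\|\bv\|$ --- has already been carried out in Lemma~\ref{lem-error-v}, so what remains is bookkeeping: one must check that $\tau\mapsto\bv(\cdot,\tau)$ is differentiable as a curve in $\hilb$ so that the projections may be interchanged with $\pd_\tau$ and $\cL$, and one must deal with the set of times where one of $V_+,V_0,V_-$ vanishes. The cleanest way to handle the latter is to differentiate the squares $V_\bullet^2$, which are $C^1$, derive the inequalities for $V_\bullet^2$, and then pass to $V_\bullet$ using that each $V_\bullet$ is locally Lipschitz, hence absolutely continuous, as required by the ODE Lemma~\ref{lem-ODE}. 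I expect this bookkeeping to be the only, and a mild, obstacle; given the spectral gap (eigenvalues $1$, $0$, and $\le-1$) and Lemma~\ref{lem-error-v}, the inequalities themselves are immediate.
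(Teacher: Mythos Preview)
Your proposal is correct and follows essentially the same approach as the paper: project \eqref{eq-bar-v} onto $\hilb_+$, $\hilb_0$, $\hilb_-$, pair with the corresponding component, use the spectral bounds $\langle f,\cL f\rangle\ge\|f\|^2$ on $\hilb_+$ and $\langle f,\cL f\rangle\le-\|f\|^2$ on $\hilb_-$, invoke Lemma~\ref{lem-error-v} for $\|\tE\|\le\epsilon\|\bv\|$, and bound $\|\bv\|\le V_++V_0+V_-$. If anything you are slightly more careful than the paper about the differentiability of $\tau\mapsto V_\bullet(\tau)$ at zeros; the paper simply differentiates $\|\bv_\bullet\|^2$ and divides by $2\|\bv_\bullet\|$ without comment.
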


\begin{proof}
Equation \eqref{eq-bar-v} tells us that $\bv_\tau = \cL\bv + \tE$.  Applying the projection $P_\pm$, $P_0$ we get
\[
\frac{d\bv_+}{d\tau} = \cL\bv_++P_+\tE,\quad
\frac{d\bv_-}{d\tau} = \cL\bv_-+P_-\tE,\quad
\frac{d\bv_0}{d\tau} = \cL\bv_0+P_0\tE.
\]
This allows us to compute the rate at which $V_+ = \|\bv_+\|$ changes by differentiating $\|\bv_+\|^2 = \langle\bv_+, \bv_+\rangle$:
\begin{align*}
\frac{d\|\bv_+\|}{d\tau}
&= \frac{1}{2\|\bv_+\|}\frac{d}{d\tau}\|\bv_+\|^2 \\
&= \frac{1}{\|\bv_+\|}\bigl\langle\bv_+, \frac{d}{d\tau} \bv_+\bigr\rangle \\
&= \frac{ \langle\bv_+, \cL\bv_+\rangle + \langle\tE, \bv_+\rangle }
        {\|\bv_+\|}
\end{align*}
At this point we recall that $1$ is the lowest positive eigenvalue of $\cL|\hilb_+$, so that
\[
\langle f, \cL f\rangle\ge \|f\|^2 \text{ for all }f\in\hilb_+.
\]
Moreover, $\|\tE\|\leq \epsilon\|\bv\|$ implies $\langle\tE, \bv_+\rangle \ge -\epsilon \|\bv_+\|\, \|\bv\|$, so that
\begin{equation}
\label{eq-error-lemma-proof1}
\frac{d\|\bv_+\|}{d\tau}
\ge
\frac{\|\bv_+\|^2 - \epsilon \|\bv\| \|\bv_+\|}{\|\bv_+\|}
=\|\bv_+\| - \epsilon \|\bv\|.
\end{equation}
A similar computation exploiting the fact that $\langle f, \cL f\rangle\le-\|f\|^2$ for all $f\in\hilb_-$ shows that
\begin{equation}
\label{eq-error-lemma-proof2}
\frac{d\|\bv_-\|}{d\tau} \le \|\bv_-\| + \epsilon \|\bv\|.
\end{equation}
and also
\begin{equation}
\label{eq-error-lemma-proof3}
\left| \frac{d\|\bv_0\|}{d\tau} \right| \le \epsilon \|\bv\|.
\end{equation}
Finally we note that $\|\bv\| \le \|\bv_-\| + \|\bv_0\| + \|\bv_+\|$, so that \eqref{eq-error-lemma-proof1}, \eqref{eq-error-lemma-proof2}, and \eqref{eq-error-lemma-proof3} imply the lemma.
\end{proof}

\begin{corollary}
\label{cor-xyz}
If $V_0, V_-, V_+$ are the projections defined as above, then we have either $V_0 + V_- = o(V_+)$ or $V_- + V_+ = o(V_0)$, as $\tau\to -\infty$.
\end{corollary}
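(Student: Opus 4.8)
The plan is to apply the ODE Lemma~\ref{lem-ODE} directly to the triple $(X_0,X_-,X_+)=(V_0,V_-,V_+)$. These three functions are nonnegative by construction, and since $\bv$ evolves smoothly in the Hilbert space $\hilb$ by \eqref{eq-bar-v}, the same is true of the projections $\bv_0,\bv_-,\bv_+$; hence $V_0=\|\bv_0\|$, $V_-=\|\bv_-\|$, $V_+=\|\bv_+\|$ are locally Lipschitz in $\tau$ and in particular absolutely continuous. So the whole task reduces to verifying the two hypotheses of Lemma~\ref{lem-ODE}.

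For hypothesis (i), I would first note the decay: in the proof of Lemma~\ref{lem-error-v} it was shown that $|v(y,\tau)|\le\delta(\tau)$ on the support $|y|\le 2\ell(\tau)$ of $\bv$, with $\delta(\tau)\to 0$; therefore
\[
\|\bv(\cdot,\tau)\|^2\le \delta(\tau)^2\int_{\R}e^{-y^2/4}\,dy\longrightarrow 0,
\]
and a fortiori $V_0(\tau)+V_-(\tau)+V_+(\tau)\to 0$ as $\tau\to-\infty$. For the non-vanishing, suppose $\bv(\cdot,\tau_1)\equiv 0$ at some time; then $u(\cdot,\tau_1)\equiv\sqrt{2(n-1)}$ on the interval $|y|\le\ell(\tau_1)$. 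Since $u(\cdot,\tau_1)$ solves the parabolic equation \eqref{eq-u}, it is real-analytic in $y$ on $(-\bd(\tau_1),\bd(\tau_1))$, so analytic continuation forces $u(y,\tau_1)=\sqrt{2(n-1)}$ on that entire interval, contradicting $u(y,\tau_1)\to 0$ as $y\to\bd(\tau_1)$. Hence $V_0+V_-+V_+$ is nowhere zero, and (i) holds (with any choice of $\tau^*$).

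Hypothesis (ii) is, up to one cosmetic adjustment, exactly Lemma~\ref{lem-error}. The ODE Lemma asks for a single fixed constant $c_0$, while Lemma~\ref{lem-error} supplies the factor $1-\epsilon$. I would therefore fix $c_0=\tfrac{1}{2}$, and for a given $\epsilon>0$ apply Lemma~\ref{lem-error} with parameter $\epsilon':=\min\{\epsilon,\tfrac{1}{4}\}$, obtaining a $\tau_{\epsilon'}$ below which its three inequalities hold. Using $V_\pm\ge 0$, $1-\epsilon'\ge\tfrac{3}{4}\ge c_0$, and $\epsilon'\le\epsilon$, these inequalities imply
\[
\dot V_+\ge c_0 V_+-\epsilon(V_0+V_-),\qquad |\dot V_0|\le \epsilon(V_0+V_-+V_+),\qquad \dot V_-\le -c_0 V_-+\epsilon(V_0+V_+),
\]
which is precisely \eqref{eq-ode1}. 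Lemma~\ref{lem-ODE} then yields the dichotomy: either $V_0+V_-=o(V_+)$ or $V_-+V_+=o(V_0)$ as $\tau\to-\infty$, which is the assertion of the corollary.

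As for the main obstacle: there is essentially none internal to this deduction — the corollary is a short bookkeeping step once the supporting results are available. The substantive work lies upstream, in Lemma~\ref{lem-error-v} (the bound $\|\tE\|\le\epsilon\|\bv\|$), which itself rests on the inner–outer estimate Corollary~\ref{cor-inner-outer}, and in the spectral bounds $\langle f,\cL f\rangle\ge\|f\|^2$ on $\hilb_+$ and $\langle f,\cL f\rangle\le-\|f\|^2$ on $\hilb_-$ used to establish Lemma~\ref{lem-error}. The only genuinely new (and minor) point needed here is the non-vanishing of $\bv$, for which I would rely on analyticity in the space variable of solutions of \eqref{eq-u}, ruling out $u$ being locally cylindrical without being globally so.
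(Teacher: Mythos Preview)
Your proof is correct and follows essentially the same approach as the paper, which simply says ``combine Lemma~\ref{lem-ODE} and Lemma~\ref{lem-error}.'' You have in fact supplied more detail than the paper does at this point: the paper does not explicitly verify hypothesis~(i) here, and its non-vanishing argument (appearing slightly later) is different---it uses that $\bv(\tau_1)=0$ forces $u(0,\tau_1)=\sqrt{2(n-1)}$, whence by convexity and even symmetry $M_{\tau_1}$ lies inside the cylinder, and the strong maximum principle then prevents any later $\bv$ from vanishing---whereas you invoke spatial real-analyticity of $u$; both arguments are valid, the paper's being somewhat more elementary.
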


\begin{proof}
The proof follows immediately combining Lemma \ref{lem-ODE} and Lemma \ref{lem-error}.
\end{proof}

\begin{lemma}
\label{lem-Vplus-growth-upperbnd}
For every $\epsilon>0$ there is a $\tau_\epsilon\in\R$ such that
\[
\dot{V}_+ \le (1+\epsilon) V_+ + \epsilon (V_0+V_-).
\]
\end{lemma}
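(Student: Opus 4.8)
The plan is to repeat the computation from the proof of Lemma~\ref{lem-error}, but now extracting the \emph{upper} bound on $\dot V_+$ rather than the lower bound. Starting from \eqref{eq-bar-v}, namely $\bv_\tau = \cL\bv + \tE$, and applying the orthogonal projection $P_+$ onto $\hilb_+$, one obtains $\frac{d\bv_+}{d\tau} = \cL\bv_+ + P_+\tE$. Since $\hilb_+$ is one-dimensional, spanned by the eigenfunction $\psi_0$ with eigenvalue $\lambda_0 = 1$, we have the exact identity $\langle\bv_+,\cL\bv_+\rangle = \|\bv_+\|^2$ (no loss occurs here, unlike in the analysis of $\hilb_-$).

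Differentiating $\|\bv_+\|^2 = \langle\bv_+,\bv_+\rangle$ and using this identity gives, at any $\tau$ where $\bv_+(\cdot,\tau)\neq 0$,
\[
\frac{d\|\bv_+\|}{d\tau} = \frac{\langle\bv_+,\cL\bv_+\rangle + \langle\tE,\bv_+\rangle}{\|\bv_+\|} = \|\bv_+\| + \frac{\langle\tE,\bv_+\rangle}{\|\bv_+\|}.
\]
By Cauchy--Schwarz, $\langle\tE,\bv_+\rangle \le \|\tE\|\,\|\bv_+\|$, and by Lemma~\ref{lem-error-v}, given $\epsilon>0$ there is $\tau_\epsilon$ with $\|\tE\| \le \epsilon\|\bv\|$ for all $\tau\le\tau_\epsilon$. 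Hence $\frac{d\|\bv_+\|}{d\tau} \le \|\bv_+\| + \epsilon\|\bv\|$ for $\tau\le\tau_\epsilon$. Finally, the triangle inequality applied to the decomposition \eqref{eq-proj} gives $\|\bv\| \le V_+ + V_0 + V_-$, so that $\dot V_+ \le (1+\epsilon)V_+ + \epsilon(V_0 + V_-)$, which is the claim.

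The only point requiring a little care --- and this is the same issue that is already implicit in Lemma~\ref{lem-error} --- is the possibility that $\|\bv_+(\cdot,\tau)\|$ vanishes at some times, where $t\mapsto\|\bv_+(\cdot,t)\|$ need not be differentiable. This is handled in the standard way: one works instead with $\frac12\frac{d}{d\tau}\|\bv_+\|^2 = \langle\bv_+,\cL\bv_+\rangle + \langle\tE,\bv_+\rangle \le \|\bv_+\|^2 + \epsilon\|\bv\|\,\|\bv_+\|$, and since $V_+$ is absolutely continuous this pointwise bound on $\frac{d}{d\tau}\|\bv_+\|^2$ yields the claimed bound on the (upper Dini) derivative of $V_+$ for a.e.\ $\tau$, which is all that is needed for the subsequent application of the ODE Lemma~\ref{lem-ODE}. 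I do not expect any genuine obstacle here: the lemma is a direct companion to the three inequalities established in Lemma~\ref{lem-error}, its proof being essentially a one-line variant of \eqref{eq-error-lemma-proof1}, and its sole purpose is to supply the matching upper bound needed to preclude super-exponential growth of $V_+$ in the argument that follows.
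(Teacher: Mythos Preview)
Your proof is correct and follows essentially the same approach as the paper's: repeat the computation leading to \eqref{eq-error-lemma-proof1}, but use the upper bound $\langle f,\cL f\rangle \le \|f\|^2$ on $\hilb_+$ (which, as you note, is in fact an equality since $\hilb_+$ is one-dimensional) together with the upper estimate $\langle\tE,\bv_+\rangle \le \epsilon\|\bv\|\,\|\bv_+\|$ from Lemma~\ref{lem-error-v}. Your added remark on the differentiability of $V_+$ at zeros is a reasonable technical caveat that the paper leaves implicit.
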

\begin{proof}
One can use the same arguments that led to \eqref{eq-error-lemma-proof1}, provided one uses the fact that $\cL|\hilb_+$ is also bounded by $\langle f, \cL f\rangle \le \|f\|^2$, and provided one uses $\langle\tE, \bv_+\rangle \le +\epsilon \|\bv\|\,\|\bv_+\|$.
\end{proof}
The analogous argument for $V_-$ fails because $\cL|\hilb_-$ is unbounded, i.e.~no bound of the form $\forall f\in\hilb_- : \langle f, \cL f\rangle \geq -C\|f\|^2$ holds.

Next we would like to rule out the case $V_0 + V_- = o(V_+)$ and that is the focus of the following section.

\subsection{Dominance of the neutral mode $V_0(\tau)$}

Corollary \ref{cor-xyz} implies that either the $\psi_0$ component of $\bv$ dominates the others,
\[
V_0(\tau) + V_-(\tau) = o(V_+(\tau)) \qquad (\tau\to-\infty),
\]
or else the $\psi_2$ component dominates,
\[
V_+(\tau) + V_-(\tau) = o(V_0(\tau)) \qquad (\tau\to-\infty).
\]
We will show in a somewhat lengthy argument by contradiction that the first alternative, where $V_+$ dominates, cannot occur.  During most of the argument we assume that $V_+$ is in fact the largest of $V_-, V_0, V_+$, and we obtain more precise asymptotics for $\bv$ in this case.  Then we show that given any ancient solution $M_t$ of the unrescaled MCF, one can always choose the blow-up time $T$ in \eqref{eq-type-1-blow-up} so that the resulting parabolic blow-up $\bar{M}_\tau$ leads to a $\bv$ for which the $\psi_2$ is dominant.  Since any solution can be reduced to the case where $V_0$ dominates, we then study the asymptotic behavior of $\bv$ in this case.

Before worrying about which component of $\bv$ is the largest, we first establish that at least one of the components must be the largest.

\begin{lemma}
There is a $\tau_0$ such that $\bv(\tau)\ne 0$ for all $\tau < \tau_0$.
\end{lemma}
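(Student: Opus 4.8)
The statement to be proved is that $\bv(\cdot,\tau)$ cannot vanish identically for a sequence of times going to $-\infty$; more precisely, there is a $\tau_0$ so that $\bv(\cdot,\tau)\ne 0$ for all $\tau<\tau_0$. First I would observe that $\bv=v\phi$ and that $\phi\equiv 1$ on $|y|\le\ell(\tau)$, so $\bv(\cdot,\tau)\equiv 0$ forces $v(\cdot,\tau)\equiv 0$ on the interval $|y|\le\ell(\tau)$, i.e.\ $u(y,\tau)=\sqrt{2(n-1)}$ there. The plan is to show this cannot persist: either it cannot happen at even a single time $\tau$ (because the cylinder is not a solution of the rescaled equation \eqref{eq-u} on a bounded interval unless the whole surface is the cylinder, which is noncompact), or, if one insists on a soft argument, to rule out a sequence $\tau_j\to-\infty$ along which it happens.

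\textbf{Step 1: Reduce to the behavior at the center.} Suppose for contradiction that $\bv(\cdot,\tau_j)\equiv 0$ for some sequence $\tau_j\to-\infty$. Then $v(y,\tau_j)=0$ for $|y|\le\ell(\tau_j)$, and in particular $v(0,\tau_j)=0$, $v_y(0,\tau_j)=0$, $v_{yy}(0,\tau_j)=0$. Since $v(\cdot,\tau)$ is smooth, this means $u(0,\tau_j)=\sqrt{2(n-1)}$ exactly.

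\textbf{Step 2: Use the evolution equation at the center to derive a contradiction.} By symmetry $u_y(0,\tau)=0$ for all $\tau$, so at $y=0$ equation \eqref{eq-u} reads
\[
u_\tau(0,\tau) = u_{yy}(0,\tau) - \frac{n-1}{u(0,\tau)} + \frac{u(0,\tau)}{2}.
\]
If $u(0,\tau_j)=\sqrt{2(n-1)}$, the last two terms cancel, so $u_\tau(0,\tau_j)=u_{yy}(0,\tau_j)$. On the other hand, since $u(\cdot,\tau)$ is concave with $u_y(0,\tau)=0$, the maximum of $u(\cdot,\tau)$ over the profile is attained at $y=0$; and $u(0,\tau)=\max_y u(y,\tau)$ is exactly $d^*(\tau)$, the ``waist radius.'' The quantity $\bar d(\tau)\to\infty$ tells us the surface is long, but the waist radius $u(0,\tau)\to\sqrt{2(n-1)}$. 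The key point is monotonicity: I would show $u(0,\tau)$ is \emph{strictly} monotone in $\tau$ unless the solution is the cylinder. Concretely, from concavity $u_{yy}(0,\tau)\le 0$, and in fact $u_{yy}(0,\tau)<0$ strictly (otherwise the strong maximum principle applied to the equation for $u_{yy}$, or to $R$ via Lemma~\ref{lem-R} together with $R(0,\tau)=-u(0,\tau)u_{yy}(0,\tau)/(1+u_y(0,\tau)^2)$ and the fact that $R\equiv 0$ on an open set would force the surface to be a cylinder, contradicting compactness). Hence at $\tau_j$ we get $u_\tau(0,\tau_j)=u_{yy}(0,\tau_j)<0$, so $u(0,\tau)$ is strictly decreasing through the value $\sqrt{2(n-1)}$ at each $\tau_j$. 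But $u(0,\tau)\to\sqrt{2(n-1)}$ as $\tau\to-\infty$ and, since $u(0,\cdot)$ cannot equal $\sqrt{2(n-1)}$ on an interval (same strong maximum principle / unique continuation argument), $u(0,\tau)$ is eventually either always $>\sqrt{2(n-1)}$ or always $<\sqrt{2(n-1)}$ for $\tau$ near $-\infty$; if it crossed the value at some $\tau_j$ with negative derivative it would be $<\sqrt{2(n-1)}$ for all slightly larger times, and iterating with a later $\tau_{j'}$ one reaches a contradiction with the limit, or simply: a function converging to a limit $L$ cannot take the value $L$ at arbitrarily negative times while always moving strictly away from $L$ in forward time there. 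This is the contradiction.

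\textbf{Main obstacle.} The delicate point is Step~2: justifying that $u_{yy}(0,\tau)<0$ strictly and that $u(0,\tau)\equiv\sqrt{2(n-1)}$ on no nontrivial $\tau$-interval. Both follow from the strong maximum principle and backward uniqueness for the (uniformly parabolic, since $u_y(0,\tau)=0$) equation satisfied by $u$ near $y=0$, combined with the fact that a piece of exact cylinder cannot sit inside a compact convex ancient solution (which would violate the strong maximum principle comparing $M_t$ with the shrinking cylinder, or simply Lemma~\ref{lem-R} together with strict convexity at the tips). I would phrase the argument so that it only needs: (a) the equation for $u(0,\tau)$ above, (b) $u(0,\tau)\to\sqrt{2(n-1)}$, and (c) $u_{yy}(0,\tau)\le 0$ with equality ruled out by the strong maximum principle applied to $P$ or $R$, which were introduced in Lemmas~\ref{lem-P} and~\ref{lem-R}. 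Everything else is elementary.
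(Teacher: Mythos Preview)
Your write-up contains an internal inconsistency that you should notice. In Step~1 you correctly observe that $\bv(\cdot,\tau_j)\equiv0$ forces $v\equiv0$ on $|y|\le\ell(\tau_j)$, hence $v_{yy}(0,\tau_j)=0$ and thus $u_{yy}(0,\tau_j)=0$. But in Step~2 you then assert $u_{yy}(0,\tau_j)<0$ strictly, and proceed to use $u_\tau(0,\tau_j)<0$. These two claims contradict each other: under your hypothesis $u_\tau(0,\tau_j)=u_{yy}(0,\tau_j)=0$, so the ``crossing with negative derivative'' argument of Step~2 never gets off the ground. The salvageable content of your proposal is the parenthetical fallback: once $R(y,\tau_j)=0$ on a spatial interval, the strong minimum principle for the parabolic equation \eqref{eq-RRR} (noting $R\ge0$) forces $R\equiv0$ for all $\tau\le\tau_j$, hence the surface is a cylinder---a contradiction with compactness. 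That argument is correct once written out, but the scaffolding around it (the ODE for $u(0,\tau)$, the crossing analysis) is dead weight.

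The paper's proof is both shorter and uses less: from $\bv(\tau_1)=0$ it only extracts $u(0,\tau_1)=\sqrt{2(n-1)}$ (not the flat piece). Since $u(0,\tau)=\max_y u(y,\tau)$ by convexity and symmetry, the surface $\bar M_{\tau_1}$ lies inside the cylinder $\Sigma$. The cylinder is a stationary solution of rescaled MCF, so by the strong maximum principle $\bar M_\tau$ lies \emph{strictly} inside $\Sigma$ for every $\tau>\tau_1$; in particular $v(0,\tau)<0$ and $\bv(\tau)\ne0$ for all $\tau>\tau_1$. Thus $\bv$ vanishes at most once, and we are done. This avoids any analysis of $R$, any equation at the origin, and any sequence/crossing argument. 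You actually mention this comparison in passing (``comparing $M_t$ with the shrinking cylinder''); that parenthetical \emph{is} the whole proof.
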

\begin{proof}
Suppose that for some $\tau_1\in\R$ one has $\bv(\tau_1)=0$.  Then $u(0, \tau_1) = \sqrt{2(n-1)}$.  Since the surface $M_{\tau_1}$ is convex and symmetric with respect to reflection $y\leftrightarrow -y$, we find that $M_{\tau_1}$ lies inside the cylinder $\Gamma$ with radius $\sqrt{2(n-1)}$.  By the strong maximum principle all later surfaces $M_\tau$ with $\tau>\tau_1$ must lie strictly inside $\Gamma$, and therefore $\bv(\tau)\ne 0$ for all $\tau > \tau_1$.

This argument shows that there cannot be more than one time $\tau_1$ at which $\bv(\tau)$ vanishes.  We therefore certainly know that $\bv(\tau)\ne0$ for all $\tau < \tau_0$, for some suitably chosen $\tau_0$.
\end{proof}

\begin{lemma}
\label{lem-growth-of-dominant-term}
If $V_+$ dominates, i.e. if $V_- + V_0 = o(V_+)$ then for any $\epsilon>0$ there are $\tau_\epsilon$ and $c_\epsilon $, $C_\epsilon$ such that
\begin{equation}
\label{eq-Vplus-exp-growth-rough}
c_\epsilon e^{(1+\epsilon)\tau} \le V_+(\tau) \le C_\epsilon e^{(1-\epsilon)\tau}
\end{equation}
for all $\tau<\tau_\epsilon$.  On the other hand, if $V_0$ dominates, i.e. if $V_-+V_+ = o(V_0)$ then
\begin{equation}
\label{eq-V0-slow-growth}
V_0(\tau) \ge C_\epsilon e^{\epsilon\tau}
\end{equation}
for all $\tau<\tau_\epsilon$.
\end{lemma}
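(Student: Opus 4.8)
The plan is to derive both assertions from the scalar differential inequalities of Lemma~\ref{lem-error} and Lemma~\ref{lem-Vplus-growth-upperbnd} together with the dichotomy of Corollary~\ref{cor-xyz}, reducing in each case to a Gronwall estimate for the logarithm of the dominant mode. Two preliminary remarks are in order: by the lemma just proved $\bv(\tau)\ne 0$, hence $\|\bv\|>0$, for $\tau<\tau_0$, and since $\|\bv\|\le V_0+V_-+V_+$ the mode that dominates is strictly positive for all sufficiently negative $\tau$, so its logarithm is absolutely continuous there and may be differentiated; also, each invocation of ``$\forall\tau\le\tau_\epsilon$'' below refers to a possibly different threshold, and one takes the smallest of finitely many such thresholds.

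\emph{Case 1: $V_+$ dominates.} Given $\epsilon>0$ choose $\epsilon'>0$ with $\epsilon'+(\epsilon')^2\le\epsilon$, and then $\tau_\epsilon$ with $V_0+V_-\le\epsilon' V_+$ for $\tau\le\tau_\epsilon$. Inserting this bound into the first inequality of Lemma~\ref{lem-error} and into Lemma~\ref{lem-Vplus-growth-upperbnd} (both applied with parameter $\epsilon'$) absorbs the lower-order terms and yields
\[
(1-\epsilon)\,V_+\ \le\ \dot V_+\ \le\ (1+\epsilon)\,V_+\qquad\text{for }\tau\le\tau_\epsilon ,
\]
i.e.\ $1-\epsilon\le\frac{d}{d\tau}\log V_+\le 1+\epsilon$ on $(-\infty,\tau_\epsilon)$. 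Integrating over $[\tau,\tau_\epsilon]$ and exponentiating gives $c_\epsilon e^{(1+\epsilon)\tau}\le V_+(\tau)\le C_\epsilon e^{(1-\epsilon)\tau}$ with $c_\epsilon=V_+(\tau_\epsilon)e^{-(1+\epsilon)\tau_\epsilon}$ and $C_\epsilon=V_+(\tau_\epsilon)e^{-(1-\epsilon)\tau_\epsilon}$, which is exactly \eqref{eq-Vplus-exp-growth-rough}.

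\emph{Case 2: $V_0$ dominates.} Given $\epsilon>0$ choose $\epsilon'>0$ with $\epsilon'(1+\epsilon')\le\epsilon$, and $\tau_\epsilon$ with $V_-+V_+\le\epsilon' V_0$ for $\tau\le\tau_\epsilon$. The middle inequality of Lemma~\ref{lem-error} (with parameter $\epsilon'$) gives $|\dot V_0|\le\epsilon'(V_0+V_-+V_+)\le\epsilon'(1+\epsilon')V_0\le\epsilon V_0$, so $\frac{d}{d\tau}\log V_0\le\epsilon$ on $(-\infty,\tau_\epsilon)$. Integrating over $[\tau,\tau_\epsilon]$ yields $\log V_0(\tau_\epsilon)-\log V_0(\tau)\le\epsilon(\tau_\epsilon-\tau)$, hence $V_0(\tau)\ge C_\epsilon e^{\epsilon\tau}$ with $C_\epsilon=V_0(\tau_\epsilon)e^{-\epsilon\tau_\epsilon}>0$, which is \eqref{eq-V0-slow-growth}.

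There is no serious obstacle here: the analytic content has all been packaged into Lemma~\ref{lem-error}, Lemma~\ref{lem-Vplus-growth-upperbnd}, and Corollary~\ref{cor-xyz}, and what remains is elementary ODE comparison. The only points needing attention are the bookkeeping of the auxiliary constant $\epsilon'$, chosen so that absorbing the $o(\cdot)$ terms perturbs the growth rates $1\pm\epsilon$ only in the harmless direction, and keeping track of signs when integrating the differential inequalities backward in $\tau$ over $[\tau,\tau_\epsilon]$ with $\tau\to-\infty$.
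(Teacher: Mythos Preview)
Your proof is correct and follows essentially the same approach as the paper's: combine the differential inequalities of Lemma~\ref{lem-error} and Lemma~\ref{lem-Vplus-growth-upperbnd} with the dominance hypothesis to obtain $(1-\epsilon)V_+\le\dot V_+\le(1+\epsilon)V_+$ (respectively $|\dot V_0|\le\epsilon V_0$), then integrate. You have simply been more explicit than the paper about the bookkeeping of the auxiliary parameter $\epsilon'$ and the direction of integration, which is fine.
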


\begin{proof}
If $V_+$ dominates, then Lemmas~\ref{lem-error} and \ref{lem-Vplus-growth-upperbnd} imply that for any $\epsilon>0$ we can find a $\tau_\epsilon$ such that
\[
(1- 2\epsilon) V_+ \le \frac{dV_+}{d\tau} \le (1+2\epsilon)V_+
\]
for all $\tau<\tau_\epsilon$.  Integrating this we find \eqref{eq-Vplus-exp-growth-rough}.

Similarly, if $V_0$ dominates instead of $V_+$, then Lemma~\ref{lem-error} implies
\[
\left|\frac{dV_0} {d\tau}\right| \le 2 \epsilon V_0,
\]
which leads to \eqref{eq-V0-slow-growth}.
\end{proof}

We now improve our estimate \eqref{eq-Vplus-exp-growth-rough} of the growth of $V_+$ assuming it is the largest term.  While we will, in the end, prove that this situation does not occur, the following lemma is essential to our proof that this is so.
\begin{lemma}
\label{lem-Vplus-wins-asymptotics}
If $V_+$ dominates, then the limit
\[
\lim_{\tau\to-\infty} e^{-\tau} V_+(\tau) = K.
\]
exists.  Moreover, $K\ne 0$, and on any fixed compact interval $|y|\le L$ one has
\begin{equation}
\label{eq-Vplus-wins-asymptotics}
\lim_{\tau\to-\infty} e^{-\tau} v(y,\tau) =\lim_{\tau\to-\infty} e^{-\tau} \bv(y,\tau) = K,
\end{equation}
uniformly.

\end{lemma}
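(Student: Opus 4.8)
The plan is to exploit the fact that $\hilb_+$ is one–dimensional, spanned by the constant eigenfunction $\psi_0\equiv 1$, so that the positive projection of $\bv$ is simply a function of $\tau$: $\bv_+(y,\tau)=a(\tau)$, with $V_+(\tau)=|a(\tau)|\,\|\psi_0\|$. Since $\bv(\tau)\ne 0$ and $V_+$ dominates, $a(\tau)$ has a constant sign for $\tau<\tau_0$. Projecting \eqref{eq-bar-v} onto $\psi_0$ and using $\cL\psi_0=\psi_0$ gives the scalar equation $\dot a=a+E_+$, $E_+(\tau):=\langle\tE,\psi_0\rangle/\|\psi_0\|^2$, equivalently $\dot b=e^{-\tau}E_+$ for $b(\tau):=e^{-\tau}a(\tau)$; note $e^{-\tau}V_+=|b|\,\|\psi_0\|$. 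Thus everything reduces to showing $e^{-\tau}E_+\in L^1(-\infty,\tau_0)$ (which gives the limit) together with $E_+/a\in L^1(-\infty,\tau_0)$ (which shows the limit is nonzero).

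The first main step is a \emph{geometric} input: in the regime where $V_+$ dominates, $\bd(\tau)$ grows at least exponentially in $|\tau|$. Indeed, from the inner–outer Lemma~\ref{lem-inner-outer} one gets $\|\bv_y\|\le C\|\bv\|$, which controls the $H^1$–energy of $\bv_-$; combined with $\|\bv_0+\bv_-\|_{L^2}=(V_0^2+V_-^2)^{1/2}=o(V_+)$ and Agmon's one–dimensional inequality $\|f\|_{L^\infty_{\mathrm{loc}}}^2\le C\|f\|_{L^2}\|f\|_{H^1}$, this yields $\|\bv_0+\bv_-\|_{L^\infty(|y|\le M)}=o(V_+)$, hence $|v(M,\tau)|=|a(\tau)|(1+o(1))\le C_\epsilon e^{(1-\epsilon)\tau}$ by Lemma~\ref{lem-growth-of-dominant-term}. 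Feeding $\varepsilon(\tau):=\sqrt{2(n-1)}-\inf_{\tau'\le\tau}u(M,\tau')\le C_\epsilon e^{(1-\epsilon)\tau}$ into Lemma~\ref{lem-lower-bound-for-ancient-solutions}, together with the inner expansion \eqref{eq-uA-inner-expansion} (which shows the largest admissible shrinker parameter is comparable to $\varepsilon^{-1/2}$), gives $\bd(\tau)\ge c\,\varepsilon(\tau)^{-1/2}\ge c\,e^{(1-\epsilon)|\tau|/2}$. Consequently \emph{all} the a priori estimates of Section~\ref{sec-apriori} become exponentially small: on $|y|\le 2\ell(\tau)$ one has $|v|\le C\bd^{-2/3}$ and $|v_y|+|v_{yy}|\le C\bd^{-1}$, while $\ell(\tau)=\bd(\tau)^{1/3}$ makes the Gaussian weight $e^{-\ell^2/4}$ on the cut-off annulus $\ell\le|y|\le 2\ell$ \emph{doubly} exponentially small.

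With this the structure of $E_+$ is transparent. The dominant contribution to $\langle\tE,\psi_0\rangle$ is the quadratic self-interaction: writing $\tfrac1{1+v}=1+O(|v|)$ and using Corollary~\ref{cor-inner-outer} together with $\|v\|_{L^2}^2=\|\bv\|^2(1+o(1))=V_+^2(1+o(1))=a^2\|\psi_0\|^2(1+o(1))$, one finds $\langle-\tfrac{v\bv}{2(1+v)},\psi_0\rangle=-\tfrac12 a^2\|\psi_0\|^2(1+o(1))$, so that $E_+(\tau)=-\tfrac12 a(\tau)^2(1+o(1))+R(\tau)$, where $R$ collects the curvature term of $\tE$ and the cut-off terms and, by the previous paragraph, satisfies $|R(\tau)|\le C\bd^{-3}+C\bd^{-1}e^{-\ell^2/4}\le C e^{-\frac32(1-\epsilon)|\tau|}$. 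Hence $\dot b=-\tfrac12\|\psi_0\|^2 e^{\tau}b^2(1+o(1))+e^{-\tau}R$, and since $e^{\tau}b^2=e^{-\tau}a^2\le C_\epsilon^2 e^{(1-2\epsilon)\tau}$ and $e^{-\tau}|R|\le Ce^{-(\frac12-\frac32\epsilon)|\tau|}$ are both in $L^1(-\infty,\tau_0)$ (fix $\epsilon<\tfrac13$), $b(\tau)$ has a finite limit as $\tau\to-\infty$. For non-vanishing, write $\dot a=a(1+E_+/a)$: since $|E_+/a|\le\tfrac12|a|(1+o(1))+|R|/|a|$ and $|a|\ge c_\epsilon e^{(1+\epsilon)\tau}$ by Lemma~\ref{lem-growth-of-dominant-term}, we get $E_+/a\in L^1$, so $\ln|b(\tau)|=\ln|a(\tau)|-\tau$ converges and $\lim|b|>0$, i.e.\ $K\ne0$. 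Finally, on $|y|\le L$ we have $e^{-\tau}\bv(y,\tau)=b(\tau)+e^{-\tau}(\bv_0+\bv_-)(y,\tau)$ and $\|\bv_0+\bv_-\|_{L^\infty(|y|\le L)}=o(V_+)$, so the second term is $o(e^{-\tau}V_+)=o(1)$; as $\bv=v$ on $|y|\le\ell(\tau)\to\infty$, this gives $e^{-\tau}v(y,\tau)=e^{-\tau}\bv(y,\tau)\to\lim b=:K$ uniformly on $|y|\le L$.

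The main obstacle is exactly the tension resolved in the second paragraph. The cut-off error terms $\tE_2,\tE_3$ are supported where the solution is only \emph{polynomially} (in $|\tau|$) close to the cylinder, so their $\psi_0$–projections are only polynomially small, and multiplying by the exponentially large factor $e^{-\tau}$ would \emph{not} be integrable if one only knew $\bd(\tau)\to\infty$; likewise the curvature term $v_y^2v_{yy}$ is at best $O(\bd^{-3})$, and $e^{-\tau}\bd^{-3}$ is not integrable a priori. One is therefore forced to first extract the exponential lower bound on $\bd(\tau)$ from the very assumption that $V_+$ dominates (via the self-shrinker barriers), and only after coupling the spectral decomposition to this geometric input do all the error estimates close up. A secondary technical point is the upgrade of the $L^2$ smallness $V_0+V_-=o(V_+)$ to $L^\infty$ smallness of $\bv_0+\bv_-$ on compact sets, which uses the $H^1$ bound from the inner–outer lemma together with the Agmon (Gagliardo–Nirenberg) inequality rather than a mere Hölder interpolation.
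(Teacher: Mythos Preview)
Your proof is correct and follows essentially the same route as the paper: both derive the exponential lower bound $\bd(\tau)\ge c\,e^{(1-\epsilon)|\tau|/2}$ from the rough decay of $V_+$ via the self-shrinker barriers, then use it to make $\langle\psi_0,\tE\rangle$ integrable against $e^{-\tau}$, yielding convergence of $e^{-\tau}V_+$ together with $K\ne 0$ and the uniform limit on compacts. The only differences are cosmetic---you invoke Agmon's inequality where the paper uses Lemma~\ref{lem-weighted-H1-L2} plus $L^2$/$H^1$ interpolation, and you argue $K\ne 0$ by showing $E_+/a\in L^1$ (so $\ln|b|$ converges) whereas the paper argues by contradiction with the lower bound in \eqref{eq-Vplus-exp-growth-rough}; these are equivalent formulations of the same estimate.
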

\begin{proof}
By definition \eqref{eq-proj-norms} we have $V_+(\tau) = \bigl|\langle \psi_0, \bv(\cdot, \tau)\rangle\bigr|$.
Since we may assume that $V_+(\tau)\ne 0$ for all $\tau<\tau_0$, we have
\[
\text{either }V_+(\tau) = \langle \psi_0, \bv(\cdot, \tau) \rangle
\text{ or }V_+(\tau) = -\langle \psi_0, \bv(\cdot, \tau) \rangle
\]
for all $\tau<\tau_0$.  We will assume $V_+(\tau) = \langle \psi_0, \bv(\cdot, \tau) \rangle$ and leave the other case to the reader.
We consider the evolution of $ \langle \psi_0, \bv \rangle$.
\[
\frac{d}{d\tau} \langle \psi_0, \bv \rangle
=
\bigl\langle \psi_0, \cL\bv + \tE \bigr\rangle
=
\langle \cL\psi_0, \bv \rangle + \langle \psi_0, \tE \rangle
=
\langle \psi_0, \bv \rangle + \langle \psi_0, \tE \rangle.
\]
To prove the exponential behavior we must show that the error term $\langle \psi_0, \tE\rangle$ is small.  Using $\|\tE\|\leq \epsilon\|\bv\|$ only gives us the estimate \eqref{eq-Vplus-exp-growth-rough}, so we will have to find better bounds on $\langle\psi_0, \bv\rangle$.

We already know that $\|\bv(\tau)\| \le C_\epsilon e^{(1-\epsilon)\tau}$.  Lemma~\ref{lem-inner-outer} implies that for any fixed $M>0$ we have
\[
\int_0^M \bigl\{ v^2 + v_y^2\bigr\} e^{-y^2/4}dy
\le C e^{2(1-\epsilon)\tau}.
\]
Lemma~\ref{lem-weighted-H1-L2} then tells us that
\[
\sup_{0<y<M} |v(y, \tau)| \le C e^{(1-\epsilon)\tau},
\]
where $C$ is a generic constant which depends on $M$.  This implies
\[
u(M, \tau) \ge \sqrt{2(n-1)} \Bigl(1 - \frac{M^2}{Ce^{(1-\epsilon)\tau}}\Bigr),
\]
so that Lemma~\ref{lem-lower-bound-for-ancient-solutions} then provides us with a lower bound for $u(y, \tau)$ and more importantly, for $\bd(\tau)$.  We get
\begin{equation}
\label{eq-Vpluswins-dbar-lowerbound}
\bd(\tau) \ge c \, e^{(1-\epsilon)\tau/2}
\end{equation}
for $\tau < \tau_\epsilon$ and for some small $c>0$.

$\langle \psi_0, \tE_1 \rangle$ has two terms, the first being
\[
\left|\Bigl\langle \psi_0, \frac{\phi v^2}{2(1+v)}\Bigr\rangle\right|
\le C\int_0^M v^2 e^{-y^2/4} dy
\le C \|\bv\|^2
\le C V_+^2.
\]
The other term in $\langle \psi_0, \tE_1 \rangle$ is
\[
\left|
\Bigl\langle \psi_0, \frac{\phi v_y^2 v_{yy}}{1+2(n-1)v_y^2} \Bigr\rangle
\right|
\le \frac{C}{\bd(\tau)} \int_0^M v_y^2 e^{-y^2/4} dy
\le \frac{C}{\bd(\tau)} \|\bv\|^2
\le CV_+^2.
\]
For $\langle \psi_0, \tE_2 \rangle$ we have
\[
|\langle \psi_0, \tE_2 \rangle|
= \left|\Bigl\langle 
  \psi_0, 
  \bigl(-\phi_{yy}+(\tfrac12 - \tfrac{\bd'}{3\bd}) y\phi_y\bigr) \bv
\Bigr\rangle\right|
\le C\left\|-\phi_{yy}+(\tfrac12 - \tfrac{\bd'}{3\bd}) y\phi_y\right\|\,
\|\bv\|
\]
The function $-\phi_{yy}+(\tfrac12 - \tfrac{\bd'}{3\bd}) y\phi_y$ is uniformly bounded, and it is supported in the interval $\ell\le y\le 2\ell$, so
\[
\left\|-\phi_{yy}+(\tfrac12 - \tfrac{\bd'}{3\bd}) y\phi_y\right\|^2
\le 
C\int_\ell^{2\ell} e^{-y^2/4} dy
\le C e^{-\ell^2/4}.
\]
Thus we get 
\[
|\langle \psi_0, \tE_2 \rangle|
\le Ce^{-\ell^2/8} \|\bv\|
\le Ce^{-\ell^2/8} V_+.
\]
Finally, for $\langle \psi_0, \tE_3 \rangle$ we get
\[
\left|\langle \psi_0, \tE_3 \rangle\right|
= |\langle \psi_0, \phi_y v_y \rangle|
\le C\|\phi_y\| \, \|v_y\|_{L^2(\ell, 2\ell)}.
\]
We again use Lemma~\ref{lem-inner-outer} to estimate $\|v_y\|_{L^2(\ell, 2\ell)}\leq C \|\bv\|$, and we again note that $\phi_y$ is supported in $\ell \le y\le 2\ell$, so that $\|\phi_y\|\le Ce^{-\ell^2/8}$.  Combined, we obtain
\[
\left|\langle \psi_0, \tE_3 \rangle\right|
\le Ce^{-\ell^2/8} \|\bv\| \le Ce^{-\ell^2/8} V_+.
\]
Adding the three estimates for $\langle\psi_0, \tE_i\rangle$ ($i=1,2,3$) we find
\[
\left|\bigl\langle \psi_0, \tE \bigr\rangle\right|
\le C \bigl( V_+^2 + e^{-\ell^2/8}V_+\bigr).
\]
Recall that $\ell = \bd^{1/3}$, so that by \eqref{eq-Vpluswins-dbar-lowerbound} we get $e^{-\ell^2/8} \le C e^{-e^{(1-\epsilon)\tau/3}/8} \le C e^{\tau}$.  This, combined with our rough exponential bound $V_+\le C e^{(1-\epsilon)\tau}$ leads us to
\[
\left|\bigl\langle \psi_0, \tE \bigr\rangle\right| \le C e^{(1-\epsilon)\tau}V_+.
\]
Thus we have
\[
\left|\frac{dV_+}{d\tau} - V_+\right| \le Ce^{(1-\epsilon)\tau} V_+,
\]
and
\[
\frac{d\ln V_+}{d\tau} = 1 + \cO\bigl(e^{(1-\varepsilon)\tau}\bigr).
\]
Integration shows that $e^{-\tau} V_+(\tau)$ does indeed converge to some constant $K$, and that
\begin{equation}
V_+(\tau) = \bigl(K + \cO(e^{(1-\epsilon)\tau})\bigr) e^{\tau}. 
\label{eq-Vplus-asymptotics-with-error}
\end{equation}

We now prove \eqref{eq-Vplus-wins-asymptotics}.  Since $V_0+V_- = o(V_+)$ it follows from convergence of $e^{-\tau}\langle\psi_0, \bv\rangle$ that $e^{-\tau}\bv$ converges in $\hilb$, and therefore that $e^{-\tau}\bv|_{[0, L]}$ converges in $L^2([0,L])$.  Lemma~\ref{lem-inner-outer} tells us that $e^{-\tau}\bv|_{[0,L]}$ is bounded in $H^1([0,L])$, so interpolation between $L^2$ and $H^1$ implies that $e^{-\tau}\bv|_{[0,L]}$ converges uniformly, as claimed.

We complete the proof of Lemma~\ref{lem-Vplus-wins-asymptotics} by observing that $K$ cannot vanish, for if if it did, then \eqref{eq-Vplus-asymptotics-with-error} would imply $V_+(\tau) = \cO( e^{(2-\epsilon)\tau})$, which contradicts the lower bound in \eqref{eq-Vplus-exp-growth-rough}.
\end{proof}

\begin{lemma}
\label{lem-V0-wins}
The neutral mode $V_0$ is the largest, namely we have $V_-+V_+ = o(V_0)$ for $\tau\to-\infty$.
\end{lemma}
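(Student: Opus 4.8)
The plan is to leverage Corollary~\ref{cor-xyz}, which already narrows the possibilities to two: either $V_-+V_+ = o(V_0)$ (which is exactly the claim), or $V_0+V_- = o(V_+)$, i.e.\ $V_+$ dominates. So the whole task is to eliminate the second alternative, and the mechanism for doing so is re-centering the parabolic blow-up. The key point is that the ``blow-up time'' $T$ in \eqref{eq-type-1-blow-up} may be chosen freely, and that none of the asymptotic statements we are after depends on this choice: two parabolic blow-ups of the same flow $\{M_t\}$ centered at $T$ and $\hat T = T+b$ differ only by a dilation with factor $(1+be^\tau)^{-1/2}\to1$ and by a reparametrization $\hat\tau = \tau+O(e^\tau)$. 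Hence it is enough to exhibit \emph{one} center for which the neutral mode dominates.

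Concretely, I would argue by contradiction and assume $V_+$ dominates for the given center $T$. Lemma~\ref{lem-Vplus-wins-asymptotics} then supplies a constant $K\ne0$ with $e^{-\tau}\bv(\cdot,\tau)\to K\psi_0$ in $\hilb$ and $e^{-\tau}v(y,\tau)\to K$ uniformly on compact $y$-intervals. I then pass to the blow-up centered at $\hat T := T+2K$. Matching the two rescalings at a common point $t=T-e^{-\tau}=\hat T-e^{-\hat\tau}$ gives $e^{-\hat\tau}=e^{-\tau}+2K$ and $\bar M^{\hat T}_{\hat\tau}=(1+2Ke^\tau)^{-1/2}\bar M^{T}_{\tau}$, so the deviation of the new profile from the cylinder satisfies $\hat v(y,\hat\tau)=(c-1)+c\,v(y/c,\tau)$ with $c=(1+2Ke^\tau)^{-1/2}=1-Ke^\tau+O(e^{2\tau})$. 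Using $e^{-\tau}v\to K$, $c\to1$ and $e^{\tau-\hat\tau}\to1$ one finds $e^{-\hat\tau}(c-1)\to-K$ and $e^{-\hat\tau}c\,v(\cdot/c,\tau)\to K$; the shift $2K$ is chosen precisely so that these cancel, giving $e^{-\hat\tau}\hat v(y,\hat\tau)\to0$ uniformly on compact $y$-intervals.

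To finish, I would note that $\bar M^{\hat T}$ is again an ancient, convex, noncollapsed solution of rescaled MCF converging to the cylinder of radius $\sqrt{2(n-1)}$ as $\hat\tau\to-\infty$, so Corollary~\ref{cor-xyz} and Lemma~\ref{lem-Vplus-wins-asymptotics} apply to it. If $\hat V_+$ were dominant for this blow-up, Lemma~\ref{lem-Vplus-wins-asymptotics} would force $\lim_{\hat\tau\to-\infty}e^{-\hat\tau}\hat V_+(\hat\tau)\ne0$; but $\hat V_+(\hat\tau)$ is a fixed multiple of $|\langle\psi_0,\hat\bv(\cdot,\hat\tau)\rangle|$, and splitting this integral into the range $|y|\le L$, where $e^{-\hat\tau}\hat v\to0$ pointwise by the previous paragraph, and $|y|>L$, where $|\hat\bv|$ is uniformly small while $\int_{|y|>L}e^{-y^2/4}\,dy$ is negligible for $L$ large, forces the limit to be $0$ --- a contradiction. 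Therefore $\hat V_0$ dominates for $\bar M^{\hat T}$, i.e.\ $\hat V_-+\hat V_+=o(\hat V_0)$; renaming $\hat T$ back to $T$ gives the lemma. The point that needs care is exactly this last step, where uniform-on-compacts information must be turned into control of $\langle\psi_0,\hat\bv\rangle$: one has to handle the cut-off $\hat\phi$ (supported near $y\sim\bd^{1/3}$, where $e^{-y^2/4}$ is doubly exponentially small), the weighted change of variables $y\mapsto y/c$, and the Gaussian tail. Since we already know from Lemma~\ref{lem-Vplus-wins-asymptotics} that $e^{-\tau}\bv\to K\psi_0$ in $\hilb$, this can be carried out essentially within $\hilb$ and is routine.
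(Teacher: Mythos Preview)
Your approach is the paper's re-centering trick, and you carry it to the same intermediate conclusion: after shifting to $\hat T$ (your $T+2K$ is the correct shift; the paper writes $T+K$, having absorbed a factor of two into $K$), $\hat V_+$ cannot dominate, hence $\hat V_0$ dominates for the recentered blow-up, while simultaneously $\hat v = o(e^{\hat\tau})$ uniformly on compact $y$-intervals.

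Where you stop short is the last move. ``Renaming $\hat T$ back to $T$'' proves only that \emph{some} blow-up center has $V_0$ dominant, not that the given one does; and your opening claim that the relevant asymptotic statements are independent of the center is in fact false for the very statement ``$V_0$ dominates'' --- your own construction posits a $T$ with $V_+$ dominant and produces a $\hat T$ with $\hat V_0$ dominant. The paper closes this with one short additional contradiction: the two facts you have just established for $\hat T$ are \emph{incompatible}. Dominance of $\hat V_0$ means $\hat\bv/\hat V_0$ converges in $\hilb$ to a nonzero multiple of $\psi_2$, so $\langle\chi_{[-1,1]}, \hat\bv/\hat V_0\rangle$ has a nonzero limit; but \eqref{eq-V0-slow-growth} gives $\hat V_0 \ge C_\epsilon e^{\epsilon\hat\tau}$, and together with $\hat v = o(e^{\hat\tau})$ on $[-1,1]$ this forces that inner product to zero. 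This collapses the original hypothesis ``$V_+$ dominates for $T$'', proving the lemma for the given $T$. Add this step and your argument becomes the paper's.
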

\begin{proof}
We go back to our original definition \eqref{eq-type-1-blow-up} of the parabolic blow-up of a given ancient solution $M_t$ to MCF and consider the effect of a change in the blow-up time $T$ on the blow-up $\bar{M}_\tau$ (and thus $u(y, \tau)$ and $\bv(y, \tau)$).

Assume that $U(x,t)$ is a solution to the unrescaled MCF \eqref{eq-u-original}.  For any choice of blow-up time $T$ define $u(y, \tau)$ according to \eqref{eq-cv1}, i.e.
\[
u(y,\tau) = \frac 1{ \sqrt{T-t}} \, U(x,t), \qquad y = \frac {x}{\sqrt{T-t}}, \,\, \tau = -\log (T-t),
\]
so that $u(y,\tau)$ satisfies \eqref{eq-u}.

If we assume that the solution is one in which $V_+$ dominates, then \eqref{eq-Vplus-wins-asymptotics} implies that
\[
u(y,\tau) = \sqrt{ 2(n-1) (1+ K\, e^{\tau})} + o(e^\tau), \qquad \text{as} \,\, \tau \to -\infty
\]
for some $K >0$, uniformly on bounded intervals $|y|\le L$.

In terms of the original solution this is equivalent with
\begin{align*}
U(x,t)
&= \sqrt{T-t} \, \left \{ \sqrt{ 2(n-1) (1+ K\, (T-t)^{-1})} + o((T-t)^{-1}) \right \}\\
&= \sqrt{ 2(n-1) \, ( T+K-t) } + o\bigl((T-t)^{-1/2} \bigr) \qquad (t\to-\infty)
\end{align*}
uniformly for $|x|\le L\sqrt{T-t}$.

If we had chosen $T+K$ instead of $T$ as our blow-up time, then the rescaled profile would have been
\[
\hat u(y,\tau) = \frac 1{\sqrt{T+K-t}} \, U(x,t), \qquad y = \frac {x}{\sqrt{T+K-t}}, \,\, \tau = -\log (T+K-t),
\]
where $\hat u$ still satisfies \eqref{eq-u}.  The asymptotic behavior of $\hat{u}$ as $\tau\to-\infty$ is given by
\begin{equation}
\label{eq-proof-v0wins-1}
\hat u (y,\tau) = \sqrt{2(n-1)} + o(e^{\tau}), \qquad (\tau \to -\infty),
\end{equation}
uniformly on bounded intervals $|y|\le L$.  If we define $\hat v$ by $\hat u = \sqrt{2(n-1)} (1+\hat v)$, then we also have the three components $\hat V_0$, $\hat V_+$, and $\hat V_-$ defined as in \eqref{eq-proj-norms}.  By construction we have $\hat V_+ = o(e^\tau)$.  It then follows that $\hat V_+$ cannot be the largest, because then \eqref{eq-Vplus-wins-asymptotics} would hold with $K = 0$, in contradiction with Lemma~\ref{lem-Vplus-wins-asymptotics}.  Thus we have found that $\hat V_0$ is the largest, i.e.~$\hat V_+ + \hat V_- = o(\hat V_0)$ as $\tau\to-\infty$.

Lemma~\ref{lem-growth-of-dominant-term} and in particular \eqref{eq-V0-slow-growth} applied to $\hat u$ give us that $\hat V_0 \ge C_\epsilon e^{\epsilon\tau}$ for any small $\epsilon>0$.  Using $\hat V_+ + \hat V_-=o(\hat V_0)$ we conclude that
\[
\lim_{\tau\to-\infty} \frac{\hat v}{V_0} = -\psi_2(y) = - \frac{y^2-2}{\|y^2-2\|}
\]
in the $\hilb$ norm.  Hence
\[
\lim_{\tau\to-\infty} \Bigl\langle \chi_{[-1,1]}, \frac{\hat v}{V_0}\Bigr\rangle =\langle \chi_{[-1,1]}, -\psi_2\rangle \ne 0.
\]
On the other hand we have shown that $\hat v = o(e^{\tau})$ uniformly on bounded intervals, while $V_0^{-1} \le C e^{-\epsilon\tau}$.  This would imply
\[
\left\| \frac{\hat v}{V_0} \right\| = o\bigl(e^{(1-\epsilon)\tau}\bigr),
\]
which then leads to $\lim_{\tau\to-\infty} \langle \frac{\hat v}{V_0}, \psi_2\rangle = 0$. This final contradiction completes the proof.

\end{proof}

\subsection{Asymptotics of the dominating term $\bv_0$}
We have shown that the $\bv_0$ term in the expansion of $\bv$ is dominant for $\tau\to-\infty$.  If we write
\[
\bv_0(y,\tau) = \alpha(\tau)\, \psi_2(y)
\]
where $\psi_2(y) = y^2 - 2$ and
\begin{equation}
\label{eq-alpha-100}
\alpha(\tau) = \frac{ \langle \bv , \psi_2\rangle}{\|\psi_2\|^2}
\end{equation}
then
\begin{equation}
\label{eq-useful}
\bv (y,\tau) = \alpha(\tau) \psi_2(y) + o(\alpha(\tau)).
\end{equation}
Here $o(\alpha)$ is an $\hilb$-valued function of $\tau$ whose norm satisfies
\[
\lim_{\tau\to-\infty} \frac{\|o(\alpha(\tau))\|} {\alpha(\tau)} = 0.
\]
Note that
\[
V_0(\tau) = |\alpha(\tau)|\cdot \|\psi_2\|.
\]
Our main goal in this section is to prove that $\alpha$ asymptotically satisfies a simple differential equation, from which its asymptotic growth at $\tau\to-\infty$ follows directly, Namely:
\begin{lemma}
\label{lem-alpha-asymptotic-diffeq}
For $\tau\to-\infty$ one has
\[
\frac{d\alpha} {d\tau} = 4\alpha^2(\tau) + o\bigl(\alpha(\tau)^2\bigr)
\]
and
\[
\alpha(\tau) = -\frac{1+o(1)}{4\tau}.
\]
\end{lemma}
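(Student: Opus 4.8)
The plan is to project the evolution equation \eqref{eq-bar-v} for $\bv$ onto the null eigenfunction $\psi_2$ and isolate the leading quadratic term of the projected error. Since $\cL$ is self-adjoint on $\hilb$ and $\cL\psi_2=0$, differentiating \eqref{eq-alpha-100} gives
\[
\|\psi_2\|^2\,\frac{d\alpha}{d\tau}=\frac{d}{d\tau}\langle\bv,\psi_2\rangle=\langle\cL\bv+\tE,\psi_2\rangle=\langle\bv,\cL\psi_2\rangle+\langle\tE,\psi_2\rangle=\langle\tE,\psi_2\rangle,
\]
so the entire statement is reduced to showing $\langle\tE,\psi_2\rangle=4\|\psi_2\|^2\alpha(\tau)^2+o(\alpha(\tau)^2)$ as $\tau\to-\infty$, after which the asymptotics of $\alpha$ follow by integrating an elementary ODE.

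To evaluate $\langle\tE,\psi_2\rangle$ I would proceed exactly as in the estimate of $\langle\psi_0,\tE\rangle$ in the proof of Lemma~\ref{lem-Vplus-wins-asymptotics}: the cut-off terms $\tE_2,\tE_3$ from \eqref{eq-error-23}--\eqref{eq-error-2} are supported in $\ell\le|y|\le2\ell$, so their pairing with $\psi_2$ is $O(e^{-c\ell^2}\|\bv\|)$; inside $\tE_1$, the gradient term $\phi\,\frac{2(n-1)v_y^2 v_{yy}}{1+2(n-1)v_y^2}$ is $O(\bd^{-3})$ by Lemma~\ref{lem-loc-est}, hence $O(\alpha^3)$ after pairing, and the cubic term $\tfrac{v^2\bv}{2(1+v)}$ is $O(\|v\|_{L^\infty}\alpha^2)=o(\alpha^2)$. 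This leaves $\langle\tE,\psi_2\rangle=-\tfrac12\langle v\bv,\psi_2\rangle+o(\alpha^2)$, and since $v\bv-\bv^2=v^2\phi(1-\phi)$ is again supported in $\ell\le|y|\le2\ell$, one may replace $v\bv$ by $\bv^2$. Writing $\bv=\alpha\psi_2+w$ with $w=\bv_++\bv_-$, so $\|w\|=o(\alpha)$ because the neutral mode dominates (Lemma~\ref{lem-V0-wins}), gives
\[
\langle\bv^2,\psi_2\rangle=\alpha^2\langle(\psi_2)^2,\psi_2\rangle+2\alpha\langle w,(\psi_2)^2\rangle+\langle w^2,\psi_2\rangle,
\]
where $\langle(\psi_2)^2,\psi_2\rangle=8\|\psi_2\|^2$ by \eqref{eq-psi-innerprod-relation}. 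Using $(\psi_2)^2=\psi_4+8\psi_2+8\psi_0$ and $w\perp\psi_2$, the cross term is $2\alpha\langle w,\psi_4+8\psi_0\rangle=O\bigl(\alpha(V_-+V_+)\bigr)=o(\alpha^2)$. For $\langle w^2,\psi_2\rangle$ one exploits that $\bv$ is supported in $|y|\le2\ell$ with $\|\bv\|_{H^1(e^{-y^2/4}dy)}\le C\|\bv\|$ by Lemma~\ref{lem-inner-outer}, hence $\|w\|_{H^1(e^{-y^2/4}dy)}=O(\alpha)$; interpolating this against $\|w\|=o(\alpha)$ via a weighted Gagliardo--Nirenberg inequality yields $\|w\|_{L^4(e^{-y^2/4}dy)}^2=o(\alpha^2)$, so $|\langle w^2,\psi_2\rangle|\le\|w\|_{L^4}^2\|\psi_2\|_{L^2}=o(\alpha^2)$ (the tail $|y|>2\ell$, where $w=-\alpha\psi_2$, contributes $O(\alpha^2e^{-c\ell^2})$). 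A careful accounting of signs then gives $\frac{d\alpha}{d\tau}=4\alpha^2+o(\alpha^2)$. Finally, since $\bv(\tau)\ne0$ for $\tau<\tau_0$ we may set $\beta=1/\alpha$; then $\dot\beta=-\dot\alpha/\alpha^2=-4+o(1)$, and integrating to a fixed $\tau_0$ gives $\beta(\tau)=-4\tau+o(\tau)$, i.e.\ $\alpha(\tau)=-\dfrac{1+o(1)}{4\tau}$.

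The step I expect to be the main obstacle is twofold. First, the estimate of $\langle w^2,\psi_2\rangle$: one must upgrade the $\hilb$-closeness $\bv=\alpha\psi_2+o(\alpha)$ to closeness in a $\psi_2$-weighted norm, which is exactly why the $H^1$ control from Lemma~\ref{lem-inner-outer} and the compact support of $\bv$ are needed. Second, the cut-off errors $\tE_2,\tE_3$ are only seen to be $o(\alpha)$ by the crude estimate $\|\tE\|\le\epsilon\|\bv\|$ of Lemma~\ref{lem-error-v}, whereas here one needs them to be $o(\alpha^2)$; closing this gap forces a bootstrap — first extract a preliminary bound $|\alpha(\tau)|=O(|\tau|^{-1})$ from the differential inequality together with \eqref{eq-V0-slow-growth}, feed it into Corollary~\ref{cor-diam-lower} to obtain $\bd(\tau)\ge c\sqrt{|\tau|}$, and only then conclude that $e^{-c\ell^2}=e^{-c\bd(\tau)^{2/3}}$ is negligible against $\alpha(\tau)^2$.
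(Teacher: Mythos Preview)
Your overall strategy matches the paper's, and your orthogonality trick for the cross term---writing $2\alpha\langle w,\psi_2^2\rangle=2\alpha\langle w,\psi_4+8\psi_0\rangle=O\bigl(\alpha(V_-+V_+)\bigr)=o(\alpha^2)$---is actually slicker than what the paper does there. But three of your error estimates have real gaps. The gradient term: pointwise $v_y^2v_{yy}=O(\bd^{-3})$ together with $\bd\ge c|\alpha|^{-1/2}$ only gives $O(|\alpha|^{3/2})$, which is \emph{larger} than $\alpha^2$, not $O(\alpha^3)$; the paper instead bounds $|\psi_2\,v_{yy}|\le C\ell^2\cdot C\ell^{-3}=C\ell^{-1}$ on the support and then uses the $L^2$ bound $\int v_y^2\,e^{-y^2/4}\le C\alpha^2$ from Lemma~\ref{lem-inner-outer} to get $C\ell^{-1}\alpha^2=o(\alpha^2)$. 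The $\langle w^2,\psi_2\rangle$ term: your weighted Gagliardo--Nirenberg interpolation between $\|w\|=o(\alpha)$ and $\|w\|_{H^1}=O(\alpha)$ is not a standard inequality for the Gaussian weight, and in any case an $L^4$ bound on $w$ does not obviously control $\int w^2 y^2 e^{-y^2/4}$ coming from the $y^2$ growth of $\psi_2$. The diameter bootstrap: extracting a preliminary $|\alpha|=O(|\tau|^{-1})$ ``from the differential inequality'' is circular, since establishing that inequality with $o(\alpha^2)$ error is precisely the goal.

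The paper's remedy for the last two points is a single parabolic smoothing step you are missing (Lemma~\ref{lem-root-L-norm-of-small-modes}): writing $w(\tau)=e^{\cL}w(\tau-1)+\int_{\tau-1}^\tau e^{(\tau-\tau')\cL}P\tE(\tau')\,d\tau'$ and using $\|\sqrt{2-\cL}\,e^{\theta\cL}\|\le C\theta^{-1/2}$, one upgrades $\|w\|=o(\alpha)$ to $\|\sqrt{2-\cL}\,w\|=o(\alpha)$, i.e.\ also $\|w_y\|=o(\alpha)$; then Lemma~\ref{lem-weighted-H1-L2} gives $\|yw\|=o(\alpha)$, so $|\langle\psi_2,w^2\rangle|\le C\|(1+|y|)w\|^2=o(\alpha^2)$ directly. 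The same smoothing yields uniform convergence $v/\alpha\to\psi_2$ on compact intervals (Corollary~\ref{cor-v-over-alpha-converges}), hence $u(M,\tau)\ge\sqrt{2(n-1)}(1-C|\alpha|M^2)$; feeding this into the self-shrinker barrier comparison of \S\ref{sec-lower} gives $\bd(\tau)\ge c|\alpha(\tau)|^{-1/2}$ \emph{directly} (Lemma~\ref{lem-V0wins-dbar-lower-bound}), with no bootstrap and no prior knowledge of $\alpha$'s decay. Once $\ell\ge c|\alpha|^{-1/6}$, the cut-off errors $\tE_2,\tE_3$ are bounded by $Ce^{-c|\alpha|^{-1/3}}=o(\alpha^2)$, and the ODE follows.
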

The proof, which occupies the rest of this section, begins with differentiating \eqref{eq-alpha-100} with respect to time.  Thus to find $\alpha'(\tau)$ we must compute $\frac{d} {d\tau}\langle \psi_2, \bv \rangle$.  Using the evolution equation \eqref{eq-bar-v} for $\bv$ we find
\begin{equation}
\label{eq-V0wins-alpha-diffeq}
\frac{d\alpha} {d\tau} = 
\|\psi_2\|^{-2}\frac{d} {d\tau} \langle \psi_2, \tE \rangle.
\end{equation}
where $\tE=\tE_1+\tE_2+\tE_3$ is as in \eqref{eq-error-1}, \eqref{eq-error-23}.  We will show that of all the terms that contribute to $\langle\psi_2, \tE\rangle$ the first term from $\tE_1$ is the largest, while the other all are of order $o(\alpha^2)$.  We now begin with the estimates we need to prove this.
\begin{lemma}
\label{lem-V0wins-v-estimates}
For $\tau\le\tau_0$ we have
\begin{subequations}
\begin{gather}
\label{eq-V0wins-v-estimates-a}
\|\bv\| + \|y\bv\| + \|\bv_y\| \leq C|\alpha|,\\
\label{eq-V0wins-v-estimates-b}
\int_{0}^{2\ell} \bigl\{ v^2 + y^2 v^2 + v_y^2\bigr\} e^{-y^2/4} dy \le C\alpha^2.
\end{gather}
\end{subequations}
\end{lemma}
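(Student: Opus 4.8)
The plan is to obtain all three bounds from facts already in hand: the dominance of the neutral mode, $V_++V_-=o(V_0)$ with $V_0=|\alpha|\,\|\psi_2\|$ (Lemma~\ref{lem-V0-wins}); the inner--outer estimates (Lemma~\ref{lem-inner-outer} and Corollary~\ref{cor-inner-outer}); and the weighted $H^1$--$L^2$ inequality (Lemma~\ref{lem-weighted-H1-L2}). The first preliminary point to check is that Lemma~\ref{lem-inner-outer} and Corollary~\ref{cor-inner-outer} are legitimately applicable with $L=\ell(\tau)=\bd(\tau)^{1/3}$, for all $\tau\le\tau_0$ with $-\tau_0$ large: indeed $\ell(\tau)\to\infty$, and the pointwise bound of \S\ref{sec-apriori} gives $\sup_{|y|\le4\ell(\tau)}|u(y,\tau)-\sqrt{2(n-1)}|\le\delta(\tau)+C\,\ell(\tau)/\bd(\tau)\to0$, so the closeness hypothesis \eqref{eq-close-to-cylinder-hypothesis} holds with $\delta<\delta_0$ (the hypothesis $\hu(\bar M_\tau)\le\hu(\Sigma)$ being automatic from monotonicity). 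I will do all computations for $y\ge0$ and extend by symmetry, and use that $v=\bv$ on $[0,\ell]$; here and below I freely pass between the multiplicative deviation $v$ of \S\ref{sec-parabolic} and the additive one $u-\sqrt{2(n-1)}$ used in Lemma~\ref{lem-inner-outer}, which differ only by the constant factor $\sqrt{2(n-1)}$.

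For \eqref{eq-V0wins-v-estimates-a}, the bound $\|\bv\|\le C|\alpha|$ is immediate, since $\|\bv\|^2=V_+^2+V_0^2+V_-^2=(1+o(1))V_0^2$ and $V_0=|\alpha|\,\|\psi_2\|$ by Lemma~\ref{lem-V0-wins}. For $\|\bv_y\|$ I would write $\bv_y=\phi v_y+\phi_y v$ and estimate the two pieces separately. Lemma~\ref{lem-inner-outer} at scale $\ell$ gives $\int_0^{2\ell}v_y^2e^{-y^2/4}\,dy\le C\int_0^\ell v^2e^{-y^2/4}\,dy\le C\|\bv\|^2$, hence $\|\phi v_y\|\le C|\alpha|$; and since $\phi_y$ is supported in $\ell\le|y|\le2\ell$ with $|\phi_y|\le C/\ell$ by Proposition~\ref{prop-phi-bounds}, Corollary~\ref{cor-inner-outer} gives $\|\phi_y v\|^2\le C\ell^{-2}\int_\ell^{2\ell}v^2e^{-y^2/4}\,dy\le C\ell^{-4}\|\bv\|^2$. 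For $\|y\bv\|$ I would apply Lemma~\ref{lem-weighted-H1-L2} on $[0,2\ell]$ to the function $\bv$ itself; because $\bv$ vanishes at $y=2\ell$, the boundary term drops and one obtains $\tfrac1{16}\int_0^{2\ell}y^2\bv^2e^{-y^2/4}\,dy\le\int_0^{2\ell}\bv_y^2e^{-y^2/4}\,dy+\tfrac14\int_0^{2\ell}\bv^2e^{-y^2/4}\,dy\le C\alpha^2$, and symmetry finishes it.

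For \eqref{eq-V0wins-v-estimates-b}, split $[0,2\ell]=[0,\ell]\cup[\ell,2\ell]$. On $[0,\ell]$ the integrand is $\bv^2+y^2\bv^2+\bv_y^2$, bounded by $\|\bv\|^2+\|y\bv\|^2+\|\bv_y\|^2\le C\alpha^2$ by part~(a). On $[\ell,2\ell]$, Corollary~\ref{cor-inner-outer} controls $\int_\ell^{2\ell}v^2e^{-y^2/4}\,dy$ and, crucially, the $\ell^{-2}$ gain there absorbs the factor $y^2\le4\ell^2$, so also $\int_\ell^{2\ell}y^2v^2e^{-y^2/4}\,dy\le4\ell^2\int_\ell^{2\ell}v^2e^{-y^2/4}\,dy\le C\int_0^\ell v^2e^{-y^2/4}\,dy\le C\alpha^2$; and $\int_\ell^{2\ell}v_y^2e^{-y^2/4}\,dy\le\int_0^{2\ell}v_y^2e^{-y^2/4}\,dy\le C\int_0^\ell v^2e^{-y^2/4}\,dy\le C\alpha^2$ by Lemma~\ref{lem-inner-outer}. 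Summing the contributions proves \eqref{eq-V0wins-v-estimates-b}. The whole argument is essentially bookkeeping with weighted $L^2$ norms; the one step that needs genuine care is justifying the use of the inner--outer estimates at the intermediate scale $\ell(\tau)$, which is precisely why $\ell(\tau)=\bd(\tau)^{1/3}$ was chosen so that $\ell(\tau)\to\infty$ while $\ell(\tau)/\bd(\tau)\to0$.
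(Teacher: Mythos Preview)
Your argument is correct and follows essentially the same route as the paper: dominance of $V_0$ gives $\|\bv\|\le C|\alpha|$, the inner--outer estimates (Lemma~\ref{lem-inner-outer}, Corollary~\ref{cor-inner-outer}) transfer this to bounds on $v$ and $v_y$ over $[0,2\ell]$, and Lemma~\ref{lem-weighted-H1-L2} supplies the weighted $y^2v^2$ control. The only cosmetic difference is order: the paper first establishes \eqref{eq-V0wins-v-estimates-b} and then reads off \eqref{eq-V0wins-v-estimates-a} from $\bv=\phi v$ and the boundedness of $\phi_y$, whereas you prove \eqref{eq-V0wins-v-estimates-a} first and then \eqref{eq-V0wins-v-estimates-b}; your explicit verification that the inner--outer lemmas apply at scale $L=\ell(\tau)$ is a welcome bit of care that the paper leaves implicit.
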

\begin{proof}
Since $V_0$ is dominant we have $\|\bv\| \leq (1+o(1)) V_0(\tau) \le C|\alpha|$.

This implies
\[
\int_0^\ell v(y, \tau)^2 e^{-y^2/4} dy \le C\alpha^2
\]
and by the inner-outer Lemma~\ref{lem-inner-outer} and Corollary~\ref{cor-inner-outer} we get
\[
\int_0^{2\ell} \bigl\{ v^2+v_y^2\bigr\} e^{-y^2/4} dy \le C\alpha^2.
\]
In view of Lemma~\ref{lem-weighted-H1-L2} we also get
\[
\int_0^{2\ell} y^2v^2 e^{-y^2/4} dy\le C\alpha^2.
\]
Together these estimates imply \eqref{eq-V0wins-v-estimates-b}.  The bounds
\eqref{eq-V0wins-v-estimates-a} on $\bv$ then follow from the definition $\bv=\phi v$,
combined with the boundedness of the derivative $\phi_y$.
\end{proof}

So far we know that $\|\bv - \alpha(\tau)\psi_2\|=o(\alpha)$, but the same is true in a stronger norm.
\begin{lemma}
\label{lem-root-L-norm-of-small-modes}
\[
\left\|
\sqrt{2-\cL} \; \bigl(\bv - \alpha(\tau)\psi_2\bigr)
\right\|
=o(\alpha(\tau))\qquad (\tau\to-\infty).
\]
\end{lemma}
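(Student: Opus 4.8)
The plan is to split $\bv - \alpha(\tau)\psi_2$ into its positive and negative spectral parts. Since $P_0\bv = \alpha\psi_2$ by \eqref{eq-alpha-100}, we have $\bv - \alpha\psi_2 = \bv_+ + \bv_-$. The positive part is trivial: $\bv_+$ is a multiple of $\psi_0$ and $(2-\cL)\psi_0 = \psi_0$, so $\|\sqrt{2-\cL}\,\bv_+\| = \|\bv_+\| = V_+(\tau) = o(\alpha(\tau))$ because $V_0$ dominates (Lemma~\ref{lem-V0-wins}). Everything therefore reduces to proving $\|\sqrt{2-\cL}\,\bv_-(\tau)\| = o(\alpha(\tau))$.

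For that I would establish a parabolic $L^2_\tau\to L^\infty_\tau$ smoothing estimate for the $H^1$-type quantity $f(\tau):=\|\sqrt{2-\cL}\,\bv_-(\tau)\|^2 = \int_\R\{\bv_-^2+(\bv_-)_y^2\}e^{-y^2/4}dy$ (using \eqref{eq:root-L-norm}). Applying $P_-$ to \eqref{eq-bar-v} and noting that $P_-$ annihilates $\alpha\psi_2$ gives $\partial_\tau\bv_- = \cL\bv_- + P_-\tE$; since $\bv$ is smooth with compact $y$-support and $\psi_0,\psi_2$ are polynomials absorbed by the Gaussian weight, $\bv_-=\bv-\alpha\psi_2-\bv_+$ lies in the domain of $(2-\cL)$ and of its powers, so $f$ is differentiable and, writing $\cL\bv_- = 2\bv_- - (2-\cL)\bv_-$ and applying Young's inequality to the cross term,
\[
\frac{df}{d\tau} = 4f - 2\|(2-\cL)\bv_-\|^2 + 2\langle(2-\cL)\bv_-,P_-\tE\rangle \le 4f - \|(2-\cL)\bv_-\|^2 + \|\tE\|^2 .
\]
Multiplying by a temporal cut-off $\chi$ with $\chi(\bar\tau)=1$, $\chi(\bar\tau-1)=0$, $|\chi'|\le C$, integrating over $[\bar\tau-1,\bar\tau]$, and discarding the nonpositive term $-\int\chi\|(2-\cL)\bv_-\|^2$, yields the pointwise bound
\[
f(\bar\tau) \le C\int_{\bar\tau-1}^{\bar\tau} f(\tau)\,d\tau + \int_{\bar\tau-1}^{\bar\tau}\|\tE(\tau)\|^2\,d\tau .
\]

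It then remains to control the two integrals. For the first, integrating the identity $\tfrac{d}{d\tau}\|\bv_-\|^2 = 4\|\bv_-\|^2 - 2f + 2\langle\bv_-,\tE\rangle$ over $[\bar\tau-1,\bar\tau]$ gives $2\int_{\bar\tau-1}^{\bar\tau}f = 4\int\|\bv_-\|^2 - \bigl(\|\bv_-(\bar\tau)\|^2-\|\bv_-(\bar\tau-1)\|^2\bigr) + 2\int\langle\bv_-,\tE\rangle$, and each term on the right is $o(\alpha(\bar\tau)^2)$: indeed $\|\bv_-(\tau)\| = V_-(\tau) = o(\alpha(\tau))$, $\|\tE(\tau)\|\le\epsilon\|\bv(\tau)\|\le C\epsilon|\alpha(\tau)|$ by Lemmas~\ref{lem-error-v} and \ref{lem-V0wins-v-estimates}, and $V_0$, $|\alpha|$, $\|\bv\|$ are all comparable to their values at $\bar\tau$ throughout the unit interval, since $|\dot V_0|\le\epsilon(V_0+V_-+V_+)\le 2\epsilon V_0$ (Lemma~\ref{lem-error} plus dominance of $V_0$) forces $\bigl|\tfrac{d}{d\tau}\log V_0\bigr|\le 2\epsilon$. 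For the same reasons $\int_{\bar\tau-1}^{\bar\tau}\|\tE\|^2\le C\epsilon^2\alpha(\bar\tau)^2$. Hence $f(\bar\tau)\le o(\alpha(\bar\tau)^2) + C\epsilon^2\alpha(\bar\tau)^2$ for $\bar\tau$ sufficiently negative (depending on $\epsilon$); letting $\epsilon\to0$ gives $f(\bar\tau) = o(\alpha(\bar\tau)^2)$, and combining with the bound on $\bv_+$ completes the proof.

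I expect the main work to be bookkeeping rather than a conceptual obstacle: justifying the regularity of $\bv_-$ needed to differentiate $f$ (straightforward, with the Gaussian weight killing any contribution from the polynomial tails of $\psi_0,\psi_2$ at infinity), and keeping track of how the various $\epsilon$-dependent error factors combine — in particular that an ``$o$'' of a slowly varying quantity evaluated at $\bar\tau-1$ remains an ``$o$'' of its value at $\bar\tau$. An equivalent alternative would avoid the energy identities and instead use Duhamel's formula $\bv_-(\tau) = e^{\cL}\bv_-(\tau-1) + \int_{\tau-1}^\tau e^{(\tau-s)\cL}P_-\tE(s)\,ds$ together with the smoothing estimate $\|\sqrt{2-\cL}\,e^{t\cL}\|_{\hilb_-\to\hilb}\le C\min(1,t^{-1/2})$, which follows from $\sup_{m\ge2}\sqrt{m+1}\,e^{(1-m)t}\le C\min(1,t^{-1/2})$; this is shorter but requires the semigroup formalism for $\cL$.
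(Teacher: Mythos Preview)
Your proposal is correct. The energy computations check out: the identity $\tfrac{df}{d\tau}=4f-2\|(2-\cL)\bv_-\|^2+2\langle(2-\cL)\bv_-,P_-\tE\rangle$ and the companion identity for $\|\bv_-\|^2$ are both right, and the bookkeeping with the slowly varying $\alpha$ on unit intervals goes through exactly as you say.

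However, your route differs from the paper's. The paper does not split $\bv-\alpha\psi_2$ into $\bv_+$ and $\bv_-$, nor does it use energy identities; instead it treats $\bw:=\bv-\alpha\psi_2$ as a whole and applies precisely the Duhamel argument you sketch in your final paragraph as an ``equivalent alternative'': write $\bw(\tau)=e^{\cL}\bw(\tau-1)+\int_{\tau-1}^\tau e^{(\tau-\tau')\cL}P\tE(\tau')\,d\tau'$, hit both sides with $\sqrt{2-\cL}$, and use the smoothing bound $\|\sqrt{2-\cL}\,e^{\theta\cL}\|\le C\theta^{-1/2}$ for $0<\theta\le1$. This gives $\|\sqrt{2-\cL}\,\bw(\tau)\|\le C\|\bw(\tau-1)\|+C\sup_{[\tau-1,\tau]}\|\tE\|$ in one line, and the remaining steps (bounding $\|\bw(\tau-1)\|\le C\epsilon|\alpha(\tau-1)|$ and $\sup|\alpha(\tau')|\le C|\alpha(\tau)|$ via $|\dot V_0|\le\epsilon V_0$) are the same as yours. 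So the paper's proof is shorter and bypasses both the temporal cut-off and the nested second energy identity, at the cost of invoking the analytic-semigroup smoothing estimate; your energy approach trades that single operator-theoretic fact for two elementary integrations by parts in $\tau$, which is a reasonable exchange but does involve more moving parts.
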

\begin{proof}
We know that $\bv$ satisfies the linear inhomogeneous equation \eqref{eq-bar-v}, i.e.~$\bv_\tau = \cL\bv + \tE$, and we also know that $\|\tE\| \le \epsilon\|\bv\|$ for all $\epsilon>0$ and $\tau\le \tau_\epsilon$.  If let $P$ be the projection
\[
Pf = f - \frac{\langle\psi_2, f\rangle}{\|\psi_2\|^2} \psi_2,
\]
and abbreviate
\[
\bw = P\bv = \bv - \alpha\psi_2,
\]
then $\bw(\tau)$ satisfies
\[
\bw_\tau = \cL \bw + P\tE.
\]
At any given $\tau$ the variation of constants formula says
\[
\bw(\tau) = e^{\cL}\bw(\tau-1) + 
\int_{\tau-1}^\tau e^{(\tau-\tau')\cL} P\tE(\tau') \,d\tau'.
\]
Apply $\sqrt{2-\cL}$ to both sides and, using $\|\sqrt{2-\cL}\; e^{\theta\cL}\|\le C \theta^{-1/2}$ for $0<\theta\le1$, we compute the $\hilb$ norm to get
\begin{align*}
\|\sqrt{2-\cL} \; \bw(\tau)\|
&\le C\|\bw(\tau-1)\|
+ \int_{\tau-1}^\tau  \frac{C}{\sqrt{\tau-\tau'}} \|\tE(\tau')\| d\tau'.\\
&\le C\|\bw(\tau-1)\|
+ C \sup_{[\tau-1,\tau]} \|\tE(\tau')\| \\
&\le C\|\bw(\tau-1)\|
+ C\epsilon \sup_{[\tau-1,\tau]} |\alpha(\tau')| 
\end{align*}
for all $\tau\le \tau_\epsilon$.  Recall that $\bw=\bv - \alpha\psi_2 = o(\alpha)$, so $\|\bw(\tau-1)\| \le C\epsilon |\alpha(\tau-1)|$ for $\tau\le \tau_\epsilon$, and hence we have
\[
\|\sqrt{2-\cL} \; \bw(\tau)\| \le 
C\epsilon \sup_{[\tau-1,\tau]} |\alpha(\tau')| \text{ for }\tau\le\tau_\epsilon.
\]
Finally we observe that since $V_0(\tau)$ dominates the other two norms, it follows from Lemma~\ref{lem-error} that $|\dot V_0| \le \epsilon V_0$ for $\tau\le\tau_\epsilon$, and thus $V_0(\tau') \le e^{\epsilon}V_0(\tau) $ for $\tau'\in[\tau-1 ,\tau]$.  Since $\|V_0(\tau)\| = \|\psi_2\|\cdot|\alpha(\tau)|$, we get
\[
\sup_{[\tau-1, \tau]}|\alpha(\tau')| \leq C \alpha(\tau),
\]
and therefore also
\[
\|\sqrt{2-\cL} \; \bw(\tau)\| \le 
C\epsilon |\alpha(\tau)| \text{ for }\tau\le\tau_\epsilon,
\]
as claimed.
\end{proof}

\begin{corollary}
\label{cor-v-over-alpha-converges}
On any finite interval $|y|\le L$ we have
\[
\lim_{\tau\to-\infty} \frac{v(y, \tau)} {\alpha(\tau)} = y^2-2
\]
uniformly.
\end{corollary}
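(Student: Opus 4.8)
The plan is to read off the uniform convergence directly from Lemma~\ref{lem-root-L-norm-of-small-modes}, which upgrades the $\hilb$-statement $\bv - \alpha(\tau)\psi_2 = o(\alpha)$ to convergence in the quadratic-form norm $\|\sqrt{2-\cL}\,\cdot\|$. The key observation is that, by \eqref{eq:root-L-norm}, this norm is exactly a weighted $H^1$ norm,
\[
\|\sqrt{2-\cL}\, f\|^2 = \int_\R \bigl(f(y)^2 + f'(y)^2\bigr)\, e^{-y^2/4}\, dy,
\]
so Lemma~\ref{lem-root-L-norm-of-small-modes} says precisely that $\bv - \alpha(\tau)\psi_2 \to 0$ in weighted $H^1(\R)$ after division by $\alpha(\tau)$. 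On a bounded interval the weighted and unweighted $H^1$ norms are comparable, and in one dimension $H^1$ embeds into $C^0$, which is what produces the pointwise statement.

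Concretely, I would first fix $L>0$ and pick $\tau_0=\tau_0(L)$ so negative that $\ell(\tau) = \bd(\tau)^{1/3} > L$ for all $\tau\le\tau_0$; then $\phi(\cdot,\tau)\equiv1$ on $[-L,L]$, so $\bv(y,\tau) = v(y,\tau)$ there. Since $e^{-y^2/4}\ge e^{-L^2/4}$ on $[-L,L]$, the restriction map from the weighted Sobolev space to $H^1((-L,L))$ is bounded with constant depending only on $L$, so Lemma~\ref{lem-root-L-norm-of-small-modes} gives
\[
\bigl\| v(\cdot,\tau) - \alpha(\tau)\psi_2 \bigr\|_{H^1((-L,L))} = o(\alpha(\tau)) \qquad (\tau\to-\infty).
\]
Applying the one-dimensional Sobolev embedding $H^1((-L,L)) \hookrightarrow C([-L,L])$ (indeed into $C^{0,1/2}$), with embedding constant depending only on $L$, yields
\[
\sup_{|y|\le L} \bigl| v(y,\tau) - \alpha(\tau)\psi_2(y)\bigr| = o(\alpha(\tau)).
\]
Dividing by $\alpha(\tau)\ne0$ — legitimate for $\tau$ sufficiently negative, since $V_0$ is dominant and does not vanish — and recalling $\psi_2(y)=y^2-2$ gives the stated uniform limit on $|y|\le L$.

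The only substantive input is Lemma~\ref{lem-root-L-norm-of-small-modes}, which has already been proved; the remaining steps — the coincidence $\bv=v$ on $[-L,L]$ for $\tau$ small, the comparison of the form norm with a weighted $H^1$ norm on a bounded interval, and the one-dimensional Sobolev embedding — are entirely routine, so I do not expect any genuine obstacle in this argument.
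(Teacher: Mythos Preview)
Your proposal is correct and follows essentially the same route as the paper: the paper also invokes Lemma~\ref{lem-root-L-norm-of-small-modes}, observes that the form norm $\|\sqrt{2-\cL}\,\cdot\|$ controls the $H^1$ norm on any compact interval $|y|\le L$ and hence the sup norm there, and then uses $\bv=v$ on $[-L,L]$ for $\tau$ sufficiently negative. Your write-up simply spells out the weighted-to-unweighted comparison and the Sobolev embedding more explicitly.
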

Since $u(y, \tau) = \sqrt{2(n-1)}\bigl(1+v(y, \tau)\bigr)$ is a concave function this implies that $\alpha(\tau)<0$ for all $\tau<\tau_0$ for some $\tau_0$.
\begin{proof}
For any function $f$ the norm $\|\sqrt{2-\cL}\; f\|$ bounds the $H^1$ norm on any compact interval $|y|\le L$, and therefore one has
\[
\sup_{|y|\le L}|f(y)| \le C_L \|\sqrt{2-\cL}\; f\|.
\]
This, together with the previous Lemma~\ref{lem-root-L-norm-of-small-modes}, implies uniform convergence of $\bv/\alpha$ to $\psi_2$.  For any $L$ there is a $\tau_L$ such that $\bv$ and $v$ coincide on the interval $|y|\le L$ if $\tau\le\tau_L$, so $v/\alpha$ also converges to $\psi_2$ on $[-L, L]$.
\end{proof}
\begin{lemma}
\label{lem-V0wins-dbar-lower-bound}
There is a constant $c>0$ such that
\[
\bd(\tau) \ge c |\alpha(\tau)|^{-1/2} \quad \text{ and }
\quad \ell(\tau) \ge c |\alpha(\tau)|^{-1/6}.
\]
\end{lemma}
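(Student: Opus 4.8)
The plan is to compare $u$ from below with the self-shrinker barriers $\Sigma_a$ of Lemma~\ref{lem-u-A}, feeding in the parabolic-region expansion already established in Corollary~\ref{cor-v-over-alpha-converges}. Since $\ell(\tau)=\bd(\tau)^{1/3}$, the bound $\ell(\tau)\ge c\,|\alpha(\tau)|^{-1/6}$ is an immediate consequence of $\bd(\tau)\ge c\,|\alpha(\tau)|^{-1/2}$, so only the latter needs proof.

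Fix a number $M>\sqrt2$, say $M=2$. By Corollary~\ref{cor-v-over-alpha-converges} we have $v(M,\tau)=\alpha(\tau)(M^2-2)\bigl(1+o(1)\bigr)$ as $\tau\to-\infty$, and since $\alpha(\tau)<0$ this says
\[
\sqrt{2(n-1)}-u(M,\tau)=\sqrt{2(n-1)}\,(M^2-2)\,|\alpha(\tau)|\,\bigl(1+o(1)\bigr).
\]
For a target time $\tau_1$ (sufficiently negative), set $\varepsilon:=\sup_{\tau\le\tau_1}\bigl(\sqrt{2(n-1)}-u(M,\tau)\bigr)$, so that $u(M,\tau)\ge\sqrt{2(n-1)}-\varepsilon$ for all $\tau\le\tau_1$ and, by the display above, $\varepsilon\le C\sup_{\tau\le\tau_1}|\alpha(\tau)|$. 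By the inner expansion of Lemma~\ref{lem-uA-inner-expansion}, $\sqrt{2(n-1)}-u_a(M)=\sqrt{2(n-1)}\,\tfrac{M^2-2}{2a^2}\bigl(1+o(1)\bigr)$, so one can choose $a=a(\tau_1)$ with $u_a(M)\le\sqrt{2(n-1)}-\varepsilon$ and $a\ge c\,\varepsilon^{-1/2}$. Lemma~\ref{lem-lower-bound-for-ancient-solutions} then yields $u(y,\tau)\ge u_a(y)$ for all $\tau\le\tau_1$, hence
\[
\bd(\tau_1)\ \ge\ a(\tau_1)\ \ge\ c\,\varepsilon^{-1/2}\ \ge\ c'\Bigl(\sup_{\tau\le\tau_1}|\alpha(\tau)|\Bigr)^{-1/2}.
\]

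It then remains to replace the supremum by $|\alpha(\tau_1)|$, i.e. to show $\sup_{\tau\le\tau_1}|\alpha(\tau)|\le C\,|\alpha(\tau_1)|$; this is the heart of the argument. I would deduce it from the fact that $\tau\mapsto|\alpha(\tau)|$ is monotone nondecreasing for $\tau$ sufficiently negative, which follows from the leading-order behaviour of $\dot\alpha$. Since $\cL\psi_2=0$, differentiating $\alpha=\langle\bv,\psi_2\rangle/\|\psi_2\|^2$ and using \eqref{eq-bar-v} gives $\dot\alpha=\|\psi_2\|^{-2}\langle\psi_2,\tE\rangle$. The dominant contribution is the term $-\tfrac12 v^2$ inside $\tE_1$: inserting $v=\alpha\psi_2+o(\alpha)$ — using the strong convergence of Lemma~\ref{lem-root-L-norm-of-small-modes}, the weighted $L^2$ bounds of Lemma~\ref{lem-V0wins-v-estimates}, and the identity $\langle\psi_2,\psi_2^2\rangle=8\|\psi_2\|^2$ of \eqref{eq-psi-innerprod-relation} — yields $\dot\alpha=-4\alpha^2+o(\alpha^2)$. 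As $\alpha<0$, this gives $\frac{d}{d\tau}\ln|\alpha(\tau)|=\dot\alpha/\alpha=-4\alpha+o(\alpha)>0$ for $\tau$ sufficiently negative, so $|\alpha|$ is increasing there and $\sup_{\tau\le\tau_1}|\alpha(\tau)|=|\alpha(\tau_1)|$. This closes the chain: $\bd(\tau_1)\ge c\,|\alpha(\tau_1)|^{-1/2}$, and hence $\ell(\tau_1)\ge c\,|\alpha(\tau_1)|^{-1/6}$.

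The main obstacle is this last step. Establishing $\dot\alpha=-4\alpha^2+o(\alpha^2)$ at this stage requires showing that every remaining piece of $\langle\psi_2,\tE\rangle$ is $o(\alpha^2)$: the cut-off errors $\tE_2,\tE_3$ (supported in $\ell\le y\le2\ell$ and exponentially small in $\ell$), the second-derivative piece of $\tE_1$ (which carries a factor $\bd^{-3}$, hence is $o(\alpha^2)$ as soon as $\bd\gg|\alpha|^{-1/3}$), and — most delicately — the cross and quadratic terms in $\langle\psi_2,v^2\rangle$ coming from $w:=\bv-\alpha\psi_2$, where $\psi_2$ grows quadratically, so the $\sqrt{2-\cL}$-estimate of Lemma~\ref{lem-root-L-norm-of-small-modes} must be combined with the weighted bounds of Lemma~\ref{lem-V0wins-v-estimates} and a tail (uniform integrability) argument to conclude $\int\psi_2\,w^2\,e^{-y^2/4}\,dy=o(\alpha^2)$. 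To avoid any apparent circularity with $\bd$, one can first prove a crude lower bound $\bd(\tau)\ge c\,|\alpha(\tau)|^{-1/3}$ by the very same barrier argument — which is all the error estimates above require — and then bootstrap to the exponent $1/2$ via the monotonicity argument; Corollary~\ref{cor-Hmax-less-than-diam}, which bounds $|\bd\,'|$ and hence the variation of $\bd$ over bounded time intervals, is available to transfer estimates between nearby times should it be needed.
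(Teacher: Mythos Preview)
Your main line is the paper's: feed the pointwise control on $u(M,\tau)$ from Corollary~\ref{cor-v-over-alpha-converges} into the barrier Lemma~\ref{lem-lower-bound-for-ancient-solutions}, exactly the argument that produced \eqref{eq-Vpluswins-dbar-lowerbound}. The paper is content with the one-line ``repeat the argument that led to~\eqref{eq-Vpluswins-dbar-lowerbound}''; you are right to notice that the barrier lemma requires $u(M,\tau')\ge u_a(M)$ for \emph{all} $\tau'\le\tau_1$, which is why the supremum $\beta(\tau_1)=\sup_{\tau'\le\tau_1}|\alpha(\tau')|$ appears and must be compared to $|\alpha(\tau_1)|$.

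The genuine gap is in your resolution of this point. You close the loop by invoking $\dot\alpha=-4\alpha^2+o(\alpha^2)$ to get monotonicity of $|\alpha|$, but that identity is precisely Lemma~\ref{lem-alpha-asymptotic-diffeq}, and its proof (Lemmas~\ref{lem-E1-bound} and~\ref{lem-E2-bound}) explicitly uses the present lemma---e.g.\ the bound $e^{-\ell^2/4}\le e^{-c|\alpha|^{-1/3}}=o(\alpha^2)$ in Lemma~\ref{lem-E2-bound} needs $\ell\ge c|\alpha|^{-1/6}$, not merely $\ell\to\infty$. Your bootstrap does not break this circle: the barrier comparison \emph{always} yields $\bd(\tau_1)\ge c\,\varepsilon^{-1/2}$ with $\varepsilon$ the supremum of the boundary defect over $(-\infty,\tau_1]$, so ``the very same barrier argument'' cannot produce $\bd\ge c|\alpha|^{-1/3}$ unless you already know $\beta(\tau)\le C|\alpha(\tau)|^{2/3}$, which is again a comparison between $\beta$ and $|\alpha|$ that you have not established. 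The only information available at this stage is $|\dot V_0|\le\epsilon V_0$, i.e.\ $|\alpha(\tau')|\le e^{\epsilon(\tau-\tau')}|\alpha(\tau)|$, which does not control the supremum over an unbounded past. So the chain (crude $\bd$-bound) $\Rightarrow$ (error estimates) $\Rightarrow$ (monotonicity) $\Rightarrow$ (sharp $\bd$-bound) never gets off the ground as written.
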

\begin{proof}
In our setting we know that $\bv(\tau) = \alpha\psi_2 + o(\alpha)$, and we can repeat the argument that led to (\ref{eq-Vpluswins-dbar-lowerbound}), with $\bd\ge c/\sqrt{\alpha}$ as immediate conclusion.  The second lower bound follows from the definition $\ell = \bd^{1/3}$.
\end{proof}

We can now begin with estimating how the various terms in $\tE$ contribute to $\alpha'(\tau)$.  We begin with $\tE_1$.
\begin{lemma}
\label{lem-E1-bound}
\begin{equation}
\label{eq-E1-bound}
\frac{\langle\psi_2, \tE_1\rangle}{\|\psi_2\|^2} = 4 \alpha^2 + o(\alpha^2).
\end{equation}
\end{lemma}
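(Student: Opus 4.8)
The plan is to split $\tE_1$ as in \eqref{eq-error-1} into $\tE_1 = \tE_1' + \tE_1''$, with $\tE_1' = -\tfrac{v\bv}{2(1+v)}$ and $\tE_1'' = -\phi\,\tfrac{2(n-1)v_y^2 v_{yy}}{1+2(n-1)v_y^2}$, and to show that $\langle\psi_2,\tE_1'\rangle = 4\alpha^2\|\psi_2\|^2 + o(\alpha^2)$ while $\langle\psi_2,\tE_1''\rangle = o(\alpha^2)$; dividing by $\|\psi_2\|^2$ then gives \eqref{eq-E1-bound}. For $\tE_1'$ the point is to replace, inside the inner product, the factor $(1+v)^{-1}$ by $1$ and $v\bv = \phi v^2$ by $\alpha(\tau)^2(\psi_2)^2$, so that what survives is a fixed multiple of $\alpha^2\langle\psi_2,(\psi_2)^2\rangle$; identity \eqref{eq-psi-innerprod-relation}, $\langle\psi_2,(\psi_2)^2\rangle = 8\|\psi_2\|^2$, then produces exactly the coefficient in \eqref{eq-E1-bound}. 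Thus the whole content of the lemma is the estimation of the error integrals created by these replacements, each of which must be shown to be $o(\alpha^2)$.

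First I would discard everything outside the parabolic window. Since $\bv$, hence $\tE_1$, is supported in $|y|\le 2\ell$ with $\ell(\tau)=\bd(\tau)^{1/3}$, and since Lemma~\ref{lem-V0wins-dbar-lower-bound} gives $\ell\ge c|\alpha|^{-1/6}$, the Gaussian weight obeys $e^{-y^2/4}\le e^{-\ell^2/4}\le \exp(-c|\alpha|^{-1/3})$ on the cut-off annulus $\ell\le|y|\le 2\ell$; combined with the crude pointwise bounds $|v|\le C$ and $|v_y|,|v_{yy}|\le C/\bd$ there, this makes the contribution of that annulus to every integral below $o(\alpha^N)$ for all $N$. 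It therefore suffices to work on $|y|\le\ell$, where $\bv=v$. On that interval the error coming from $(1+v)^{-1}\to 1$ is bounded by $C\delta(\tau)\int_0^{\ell}|\psi_2|v^2e^{-y^2/4}dy = \delta(\tau)\cdot O(\alpha^2) = o(\alpha^2)$, using the weighted estimate \eqref{eq-V0wins-v-estimates-b} and $\delta(\tau)\to0$. The error from $v^2\to\alpha^2(\psi_2)^2$ I would treat by writing $v^2-\alpha^2(\psi_2)^2 = (v-\alpha\psi_2)(v+\alpha\psi_2)$ and splitting $[0,\ell]$ at a large $M$: on $[0,M]$, Corollary~\ref{cor-v-over-alpha-converges} together with Lemma~\ref{lem-root-L-norm-of-small-modes} gives $v-\alpha\psi_2 = o(\alpha)$ uniformly while $v+\alpha\psi_2 = O(\alpha)$; on $M\le y\le\ell$ the convexity bound $|v(y,\tau)|\le 2|\alpha| + 2y/\bd$ (from \eqref{eq-u-convexity-lower-bound} and monotonicity of $v$) together with $\bd\ge c|\alpha|^{-1/2}$ makes the integrand $\le(|\alpha|^2 + y^2/\bd^2)|\psi_2|e^{-y^2/4}$, whose integral over $y\ge M$ is $o(\alpha^2)$ once $M$ is large (letting $M\to\infty$ slowly if necessary, so the Gaussian decay beats the polynomial factors).

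For the term $\langle\psi_2,\tE_1''\rangle$, since $|v_y|\le C/\bd = o(1)$ the denominator is $1+o(1)$ and the leading part is $-2(n-1)\langle\psi_2,\phi v_y^2 v_{yy}\rangle$, the remainder carrying two more powers of $v_y$ and hence negligible. Again I would split at a fixed $M$. On $[0,M]$, interior parabolic estimates applied to \eqref{eq-vv} --- using that $v = O(|\alpha|)$ on a slightly larger space--time box, where $\alpha$ is comparable to $\alpha(\tau)$ by Lemma~\ref{lem-error} --- upgrade the bounds to $|v_y| + |v_{yy}| \le C|\alpha|$, so $v_y^2 v_{yy} = O(|\alpha|^3) = o(\alpha^2)$ there. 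On $M\le y\le\ell$, I would write $v_y^2v_{yy} = \tfrac13(v_y^3)_y$ and integrate by parts (the boundary terms vanish because $v_y(0,\tau) = 0$ and $\phi$ vanishes near $2\ell$), reducing matters to $(C/\bd)\int_M^{\ell}|\psi_2|v_y^2e^{-y^2/4}dy$; then $\int_0^{2\ell}v_y^2e^{-y^2/4}dy\le C\alpha^2$ (Lemma~\ref{lem-V0wins-v-estimates}) and $\bd^{-1}\le C|\alpha|^{1/2}$, with the Gaussian weight absorbing $|\psi_2|$ for $M$ large, give $o(\alpha^2)$.

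The step I expect to be the main obstacle is precisely this far-field bookkeeping. The a priori estimates degrade away from the parabolic region --- pointwise one controls derivatives only by $C/\bd \sim |\alpha|^{1/2}$, not by $|\alpha|$ --- so converting the easily obtained $O(\alpha^2)$ bounds into genuine $o(\alpha^2)$ bounds forces one to balance carefully the polynomial weight $|\psi_2|$, the choice $\ell = \bd^{1/3}$, the super-exponential decay $e^{-\ell^2/4}$, and the quantitative lower bound $\bd\ge c|\alpha|^{-1/2}$ of Lemma~\ref{lem-V0wins-dbar-lower-bound}. By contrast, once Corollary~\ref{cor-v-over-alpha-converges} and the interior estimates are available, the contribution of any fixed compact $y$-interval --- which is where the main term $4\alpha^2\|\psi_2\|^2$ actually comes from --- is routine.
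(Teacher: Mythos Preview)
Your overall strategy matches the paper's, and your treatment of the $(1+v)^{-1}\to 1$ replacement and of the cut-off annulus $\ell\le|y|\le 2\ell$ is fine. There is, however, a genuine gap in your far-field estimate for the replacement $v^2\to\alpha^2\psi_2^2$, and your argument for $\tE_1''$ is unnecessarily elaborate.

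\textbf{The gap.} On $[M,\ell]$ you bound $|v|\le C(|\alpha|+y/\bd)$ and then claim the contribution of $y^2/\bd^2$ to $\int_M^\ell |\psi_2|\,v^2\,e^{-y^2/4}\,dy$ is $o(\alpha^2)$. But $\bd\ge c|\alpha|^{-1/2}$ only gives $\bd^{-2}\le C|\alpha|$, so
\[
\bd^{-2}\int_M^\ell y^2|\psi_2|\,e^{-y^2/4}\,dy \;\le\; C|\alpha|\int_M^\infty y^4 e^{-y^2/4}\,dy \;=\; C_M\,|\alpha|,
\]
which for fixed $M$ is $O(|\alpha|)$, \emph{not} $o(\alpha^2)$. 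Letting $M=M(\tau)\to\infty$ does force $C_M\to 0$, but to get $C_M|\alpha|=o(\alpha^2)$ you would need $M$ of order at least $\sqrt{\log(1/|\alpha|)}$; at that scale the uniform convergence $v-\alpha\psi_2=o(\alpha)$ on $[0,M]$ (which Corollary~\ref{cor-v-over-alpha-converges} gives only on fixed compacta) is no longer available, so the $[0,M]$ part of your split breaks. The two halves of your dichotomy cannot be made to work simultaneously.

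\textbf{How the paper closes this.} The paper does not split at a moving $M$. It writes $\bv=\alpha\psi_2+\bw$ and expands $\langle\psi_2,\bv^2\rangle=\alpha^2\langle\psi_2,\psi_2^2\rangle+2\alpha\langle\psi_2,\psi_2\bw\rangle+\langle\psi_2,\bw^2\rangle$, then bounds the error terms directly in the weighted $L^2$ space using Lemma~\ref{lem-root-L-norm-of-small-modes}, which gives $\|\bw\|+\|y\bw\|=o(|\alpha|)$ \emph{globally}. Since $|\psi_2|\le C(1+|y|)^2$, Cauchy--Schwarz yields $|\langle\psi_2,\bw^2\rangle|\le C\|(1+|y|)\bw\|^2=o(\alpha^2)$ and $|\alpha|\,|\langle\psi_2^2,\bw\rangle|\le C|\alpha|\,\|(1+|y|)\bw\|=o(\alpha^2)$. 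No pointwise control on an expanding interval is needed; the weighted-$L^2$ smallness of $\bw$ does all the work.

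\textbf{The $\tE_1''$ term.} Your split-at-$M$ plus integration-by-parts argument here is overkill. On all of $\{|y|\le 2\ell\}$ one has $|\psi_2|\le C\ell^2$ and, by Lemma~\ref{lem-loc-est}, $|v_{yy}|\le C/\bd=C\ell^{-3}$; hence $|\psi_2 v_{yy}|\le C\ell^{-1}$, and
\[
\bigl|\langle\psi_2,\phi v_y^2 v_{yy}\rangle\bigr|
\;\le\; C\ell^{-1}\int_0^{2\ell} v_y^2\,e^{-y^2/4}\,dy
\;\le\; C\ell^{-1}\alpha^2
\;\le\; C|\alpha|^{1/6}\alpha^2
\;=\; o(\alpha^2),
\]
using \eqref{eq-V0wins-v-estimates-b} and Lemma~\ref{lem-V0wins-dbar-lower-bound}. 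No interior parabolic regularity or integration by parts is needed.
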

\begin{proof}
We can write $\tE_1 = \tE_{1a} + \tE_{1b}$, where
\[
\tE_{1a} = \left\langle\psi_2, \frac{v\bv}{2(1+v)}\right\rangle \quad 
\text{ and }
\quad \tE_{1b}
= \left\langle\psi_2, \frac{2(n-1)\phi v_y^2v_{yy}}{1+2(n-1)v_y^2}\right\rangle.
\]
To estimate $\tilde E_{1b}$ we recall that for $|y|\le 2\ell$ one has $|v_{yy}| \le C\bd^{-1} = C\ell^{-3}$ by Lemma~\ref{lem-loc-est}.  Also, since $\psi_2(y) = y^2-2$ we have $|\psi_2| \le C\ell^2$ when $|y|\le2\ell$.  Thus
\begin{align*}
|\tE_{1b}|
&\le 2(n-1) \int_{|y|\le2\ell} |\psi_2v_{yy}|\, v_y^2 e^{-y^2/4}dy \\
&\le C\ell^{-1}\int_{|y|\le2\ell} v_y^2 e^{-y^2/4} dy\\
&\le C\ell^{-1} \alpha^2.
\end{align*}
By Lemma~\ref{lem-V0wins-dbar-lower-bound} we get $\ell^{-1} \le C|\alpha|^{1/6}$, so
\[
|\tE_{1b}| \le C |\alpha|^{2+1/6} = o(\alpha^2).
\]
To estimate the other term, which has $\tE_{1a}$, we split $\tE_{1a}$ into three parts:
\begin{equation}
\label{eq-E-1a-split}
\frac{v\bv}{2(1+v)} 
= \frac12 v\bv - \frac{v^2\bv}{2(1+v)}
=\frac12 \bv^2 +\frac12 (v-\bv)\bv - \frac{v^2\bv}{2(1+v)}.
\end{equation}
Since $\bv$ is supported on $|y|\le 2\ell$ and since $|\bv|\le v$ there, the contribution of the third term can be bounded by
\[
\left|\left\langle\psi_2, \frac{v^2\bv}{2(1+v)} \right\rangle\right|
\le \sup_{[0, 2\ell]}|v|\cdot\int_0^{2\ell} |\psi_2(y)| v^2 e^{-y^2/4}dy.
\]
We estimate $v$ on the interval $[0, 2\ell]$ by noting that $v(0, \tau) =\cO(|\alpha|)$ by Corollary~\ref{cor-v-over-alpha-converges}, and $|v_y|\le C\bd^{-1} = C\ell^{-3}$ by \eqref{eq-first-der1}.
\[
\sup_{[0, 2\ell]}|v| \le C|\alpha| + C\ell^{-3}\cdot2\ell 
= C\bigl(|\alpha| + \ell^{-2}\bigr)
\le C|\alpha|^{1/3},
\]
by Lemma~\ref{lem-V0wins-dbar-lower-bound}.  Thus we find
\[
\left|\left\langle\psi_2, \frac{v^2\bv}{2(1+v)} \right\rangle\right|
\le C|\alpha|^{1/3} \int_0^{2\ell} \bigl\{ v^2 + y^2v^2\bigr\}\, dy
\le C|\alpha|^{2+1/3}
=o(\alpha^2),
\]
where we have used Lemma \ref{lem-V0wins-v-estimates}.

We go on with the middle term in \eqref{eq-E-1a-split}.  Since $\bv = \phi v$ we have $(v-\bv)\bv = (1-\phi)\phi v^2$, which is supported in the interval $[\ell, 2\ell]$.  Thus we have
\[
\left|\bigl\langle\psi_2, \tfrac12(v-\bv)\bv\bigr\rangle\right|
=\tfrac12\left|\bigl\langle\psi_2, (1-\phi)\phi v^2\bigr\rangle\right|.
\]
Because of the Gaussian weight in the inner product, this term is very small.  We crudely bound $|\psi_2| \le Cy^2$, $|v|\le C$, and find
\begin{align*}
\tfrac12\left|\bigl\langle\psi_2, (1-\phi)\phi v^2\bigr\rangle\right|
&\le C \int_\ell^{2\ell} y^2 e^{-y^2/4}dy\\
&\le C\ell e^{-\ell^2/4}\\
&\le C|\alpha|^{-1/6}e^{-|\alpha|^{-1/3}/4}\\
& = o(\alpha^2),
\end{align*}
where we have again used Lemma~\ref{lem-V0wins-dbar-lower-bound}.

We are left with the first term in \eqref{eq-E-1a-split}.  We substitute $\bv = \alpha\psi_2 + \bw$ and expand to get
\[
\bigl\langle\psi_2, \bv^2\bigr\rangle
= \alpha^2 \langle\psi_2, \psi_2^2\rangle + 2\alpha\langle\psi_2, \bv\bw\rangle
+ \langle\psi_2, \bw^2\rangle.
\]
We know that $\|\bv\| + \|\bv_y\| + \|y\bv\| = \cO(|\alpha|)$, and Lemma~\ref{lem-root-L-norm-of-small-modes} says that we have $\sqrt{2-\cL}\cdot\bw = o(|\alpha|)$, so we also have $\|\bw\| + \|\bw_y\| + \|y\bw\| = o(|\alpha|)$.  Keeping in mind that $\psi_2(y) = y^2-2$, so that $|\psi_2(y)|\le C(1+|y|)^2$, we get
\[
\left|\langle\psi_2, \bw^2\rangle\right| \le C\|(1+|y|)\bw\|^2 = o(\alpha^2),
\]
and
\[
\left| \langle \psi_2, \bv\bw \rangle \right|
\le \|(1+|y|)\bv\|\; \|(1+|y|)\bw\|
=o(\alpha^2).
\]
Finally, by \eqref{eq-psi-innerprod-relation} we have $\langle \psi_2, \psi_2^2 \rangle = 8\|\psi_2\|^2$, so that $\langle \psi_2, \bv^2 \rangle= 8\|\psi_2\|^2 \alpha^2 + o(\alpha^2)$.  Adding this and the estimates of the other terms in \eqref{eq-E-1a-split} leads to the asymptotic relation in \eqref{eq-E1-bound}.
\end{proof}

In Lemma~\ref{lem-error-v} we estimated the error terms $\tE_2$ and $\tE_3$.  At this point we have better estimates for $\ell$ which allow us to improve the old estimates.
\begin{lemma}
\label{lem-E2-bound}
\[
\|\tE_2\| + \|\tE_3\| = o(\alpha^2).
\]
\end{lemma}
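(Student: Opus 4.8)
The plan is to exploit the fact that $\tE_2$ and $\tE_3$ are both supported in the far-out annulus $\ell(\tau)\le|y|\le 2\ell(\tau)$, so that the Gaussian weight $e^{-y^2/4}$ in the norm $\|\cdot\|$ contributes a factor of order $e^{-\ell(\tau)^2/4}$, and then to use the lower bound $\ell(\tau)\ge c|\alpha(\tau)|^{-1/6}$ from Lemma~\ref{lem-V0wins-dbar-lower-bound}, which makes this factor decay faster than any power of $\alpha$. First I would bound the two error terms pointwise on their supports. By Proposition~\ref{prop-phi-bounds} we have $|\phi_{yy}|\le C_0\ell^{-2}$ and $|y\phi_y|\le C_0$, while $|\bd'/\bd|\le\frac12$ by \eqref{eq-dbar-growth-bound}; combining this with the pointwise bound $|v(y,\tau)|\le\delta(\tau)$ on $|y|\le 2\ell$ established in the proof of Lemma~\ref{lem-error-v}, one gets $|\tE_2|\le C|v|\le C$ on $\ell\le|y|\le 2\ell$ (for $-\tau$ large enough that $\ell^{-2}\le C$). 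For $\tE_3=-2\phi_y v_y$ I would combine $|\phi_y|\le C_0\ell^{-1}$ with the derivative bound $|v_y|\le C\bd^{-1}$, which follows from \eqref{eq-first-der1} since $2\ell=2\bd^{1/3}\le\tfrac12\bd$ for $-\tau$ large; this gives $|\tE_3|\le C\ell^{-4}\le C$ on the same annulus.

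Second, I would insert these pointwise bounds into the weighted $L^2$ norm. Since the supports lie in $\{\ell\le|y|\le 2\ell\}$ and $\int_\ell^{2\ell}e^{-y^2/4}\,dy\le e^{-\ell^2/8}\int_0^\infty e^{-y^2/8}\,dy\le Ce^{-\ell^2/8}$, we obtain $\|\tE_2\|^2+\|\tE_3\|^2\le Ce^{-\ell^2/8}$, hence $\|\tE_2\|+\|\tE_3\|\le Ce^{-\ell(\tau)^2/16}$.

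Finally, I would invoke Lemma~\ref{lem-V0wins-dbar-lower-bound} in the form $\ell(\tau)^2\ge c^2|\alpha(\tau)|^{-1/3}$, so that $\|\tE_2\|+\|\tE_3\|\le C\exp\bigl(-c^2|\alpha(\tau)|^{-1/3}/16\bigr)$. As $\tau\to-\infty$ we have $|\alpha(\tau)|\to0$, hence $|\alpha(\tau)|^{-1/3}\to\infty$, and the right-hand side tends to $0$ faster than $\alpha(\tau)^2$; therefore $\|\tE_2\|+\|\tE_3\|=o(\alpha^2)$, which is the claim.

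There is no real obstacle here; the only subtlety — and the reason the lemma needs to be restated at this point rather than being a consequence of Lemma~\ref{lem-error-v} — is that one must use the exponential smallness produced by the Gaussian weight on the far-out annulus, together with the improved lower bound on $\ell(\tau)$ in terms of $\alpha(\tau)$, rather than the polynomial gains $\ell^{-1}$ and $\ell^{-2}$ used previously; those cruder estimates only give $\|\tE_2\|+\|\tE_3\|=O(|\alpha|^{7/6})$, which is not $o(\alpha^2)$. I would also record, as routine, the facts that $2\ell(\tau)\le\frac12\bd(\tau)$ and $\ell(\tau)^{-2}\le C$ for $\tau$ sufficiently negative, both immediate from $\ell=\bd^{1/3}$ and $\bd(\tau)\to\infty$.
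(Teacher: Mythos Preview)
Your proof is correct and follows essentially the same approach as the paper's: both use that $\tE_2$ and $\tE_3$ are supported in $\ell\le|y|\le2\ell$, bound them pointwise by a constant there, and then use the Gaussian weight together with $\ell\ge c|\alpha|^{-1/6}$ from Lemma~\ref{lem-V0wins-dbar-lower-bound} to conclude $\|\tE_2\|+\|\tE_3\|\le C\exp(-c|\alpha|^{-1/3})=o(\alpha^2)$. Your closing remark explaining why the cruder polynomial estimates from Lemma~\ref{lem-error-v} would not suffice is a helpful addition not made explicit in the paper.
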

\begin{proof}
In the proof of Lemma~\ref{lem-error-v} we found that
\[
\|\tE_2\|^2\le \int_\ell^{2\ell} v^2 e^{-y^{2}/4} dy.
\]
Using the very rough bound $|v|\le C$ together with $\ell\ge c|\alpha|^{-1/6}$ we get
\[
\|\tE_2\|^2\le C\int_\ell^{2\ell} e^{-y^{2}/4} dy
\le C e^{-\ell^2/4} \le e^{-c|\alpha|^{-1/3}} = o(\alpha^2).
\]
For $\tE_3$ we have a similar argument.  From \eqref{eq-error-23} and the fact that both $\phi_y$ and $v_y$ are uniformly bounded we get $|\tE_3|\le C$, while $\tE_3$ also is supported in $[\ell, 2\ell]$.  The same computation as above then shows that $\|\tE_3\| = o(\alpha^2)$.
\end{proof}
\subsection*{Completion of the proof of Lemma~\ref{lem-alpha-asymptotic-diffeq}}
We began the proof of Lemma~\ref{lem-alpha-asymptotic-diffeq} by writing the derivative $\alpha'(\tau)$ as in \eqref{eq-V0wins-alpha-diffeq}.  We can now use Lemmas~\ref{lem-E2-bound} and \ref{lem-E1-bound} to expand $\tE$ in \eqref{eq-V0wins-alpha-diffeq}, which quickly leads to the claimed result, i.e.~the \textsc{ODE} ~$\alpha'= 4\alpha^2 + o(\alpha^2)$.

Integration of this differential equation directly gives $\alpha=-(1+o(1))/(4\tau)$, as claimed in Lemma~\ref{lem-alpha-asymptotic-diffeq}.

A direct consequence of Lemma~\ref{lem-alpha-asymptotic-diffeq} is the following lower bound for the extrinsic diameter $\bd(\tau)$:
\begin{equation}
\label{eq-dbar-precise-lower-bound}
\bd(\tau) \ge c\, \sqrt{-\tau}.
\end{equation}

\section{Intermediate region}
\label{sec-inter}

From section \ref{sec-parabolic}, for every finite $M > 0$ we have
\begin{equation}
\label{eq-asymp-100}
u(y,\tau) = \sqrt{2(n-1)} \Bigl\{ 1 + \frac{y^2 - 2}{4\tau}\Bigr\}
+ o(|\tau|^{-1}), \qquad |y| \le M,
\end{equation}
as $\tau\to -\infty$.

We introduce the coordinate $z = \frac{y}{\sqrt{|\tau|}}$ and consider $\bar{u}(z,\tau) = u(y,\tau)$.  It easily follows that
\[
\frac{\pd\bar{u}}{\pd\tau} = \frac{\bar{u}_{zz}}{|\tau| + \bar{u}_z^2} - \frac z2 \left(1 - \frac{1}{\tau}\right)\,\bar{u}_z + \frac{\bar{u}}{2} - \frac{n-1}{\bar{u}}.
\]

\begin{lemma}
\label{lem-inter}
With the notation as above we claim
\[
\lim_{\tau\to -\infty} \bar{u}(z,\tau) =\sqrt{n-1}\sqrt{2 - z^2},
\]
and the convergence is uniform in $z$, away from $z = \sqrt{2}$.
\end{lemma}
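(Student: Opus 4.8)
The plan is to combine a compactness argument for the rescaled profiles with a two–sided pinching: a sharp lower barrier built from the shrinker caps $\Sigma_a$ of Lemma~\ref{lem-u-A}, together with an identification of the limiting profile obtained by matching against the parabolic asymptotics of Theorem~\ref{thm-asymptotics}(i). For the compactness step, note that in the variables $(z,\tau)$ the profile $\bu$ satisfies the equation displayed just before the lemma, whose first term $\bu_{zz}/(|\tau|+\bu_z^2)$ is $O(|\tau|^{-1/2})$ on $\{0\le z\le\alpha\,\bd(\tau)/\sqrt{|\tau|}\}$ for any $\alpha<1$, by the derivative bounds of Lemma~\ref{lem-loc-est}. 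Hence $\bu_\tau=-\tfrac z2\bu_z+\tfrac{\bu}{2}-\tfrac{n-1}{\bu}+o(1)$, uniformly on compacta of $z\in(0,z_*)$, where $z_*$ denotes a subsequential limit of the tip position $\bd(\tau)/\sqrt{|\tau|}\in[c,C]$ (Corollary~\ref{cor-exact-diam}). Using this bound on $\bu_\tau$, the concavity of $z\mapsto\bu(z,\tau)$, the bounds $0\le\bu\le\sqrt{2(n-1)}+o(1)$, and $\bu(0,\tau)\to\sqrt{2(n-1)}$ (parabolic region), one gets: along any $\tau_j\to-\infty$, after passing to a subsequence, $\bd(\tau_j)/\sqrt{|\tau_j|}\to z_*$ and $\bu(\cdot,\tau_j)\to\bu_\infty$ in $C^1_{\mathrm{loc}}([0,z_*))$, with $\bu_\infty$ concave, nonincreasing, $\bu_\infty(0)=\sqrt{2(n-1)}$ and $\bu_\infty\to0$ as $z\nearrow z_*$; equicontinuity in $\tau$ also lets one pass to the limit along time translates $\bu(\cdot,\tau_j+\sigma)$ and produce a solution $w(\cdot,\sigma)$ of the degenerate limit equation $w_\sigma=-\tfrac z2 w_z+\tfrac w2-\tfrac{n-1}{w}$ with $w(\cdot,0)=\bu_\infty$.

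For the lower barrier, Theorem~\ref{thm-asymptotics}(i) gives $u(M,\tau)=\sqrt{2(n-1)}\bigl(1-\tfrac{M^2-2}{4|\tau|}\bigr)+o(|\tau|^{-1})$, so for each $\eps>0$ and each $M$ one has $u(M,\tau)\ge\sqrt{2(n-1)}-(1+\eps)\tfrac{\sqrt{2(n-1)}M^2}{4|\tau|}$ once $-\tau$ is large. Choosing the optimal shrinker parameter through the inner expansion~\eqref{eq-uA-inner-expansion} of $u_a$, Corollary~\ref{cor-diam-lower} then yields $u(y,\tau)\ge u_{a(\tau)}(y)$ on $M\le y\le\bd(\tau)$ together with $\bd(\tau)\ge a(\tau)$, where $a(\tau)=\sqrt{2|\tau|}\,(1-o_\eps(1))$. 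Rescaling by $\sqrt{|\tau|}$ and invoking the outer expansion~\eqref{eq-uA-outer-expansion}, this produces, for every $\delta>0$,
\[
\liminf_{\tau\to-\infty}\ \bu(z,\tau)\ \ge\ \sqrt{2(n-1)\bigl(1-\tfrac{z^2}{2}\bigr)}\ =\ \sqrt{n-1}\,\sqrt{2-z^2}\qquad\text{uniformly on }|z|\le\sqrt2-\delta,
\]
and $\liminf_{\tau\to-\infty}\bd(\tau)/\sqrt{|\tau|}\ge\sqrt2$. In particular $z_*\ge\sqrt2$, each $w(\cdot,\sigma)$ is bounded away from $0$ on compacta of $(0,\sqrt2)$, and $\bu_\infty\ge\sqrt{n-1}\sqrt{2-z^2}$ there.

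For the identification, one uses that on $(0,z_*)$ the limit equation is non-degenerate, so the orbit $\{w(\cdot,\sigma)\}$ is smooth; being squeezed between $\sqrt{n-1}\sqrt{2-z^2}$ and $\sqrt{2(n-1)}$ with fixed value $\sqrt{2(n-1)}$ at $z=0$, it forces (via a comparison/monotonicity argument for the limit equation) the $\omega$-limit profile $\bu_\infty$ to be stationary: $z\,\bu_\infty'=\bu_\infty-\tfrac{2(n-1)}{\bu_\infty}$. Writing $g=\bu_\infty^2$, this is the linear ODE $zg'=2(g-2(n-1))$ with $g(0)=2(n-1)$, whose solutions are $\bu_\infty=\sqrt{2(n-1)+\gamma z^2}$ with $\gamma<0$ by concavity and $z_*=\sqrt{-2(n-1)/\gamma}$; the lower barrier already gives $\gamma\ge-(n-1)$. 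To pin $\gamma$ one matches with the parabolic region on an overlap $1\ll y\ll\sqrt{|\tau|}$: the expansion $\bu_\infty(z)=\sqrt{2(n-1)}\bigl(1+\tfrac{\gamma}{4(n-1)}z^2+O(z^4)\bigr)$ as $z\to0$ must agree with the $y\to\infty$ behaviour of $u(y,\tau)=\sqrt{2(n-1)}\bigl(1-\tfrac{y^2-2}{4|\tau|}\bigr)+o(|\tau|^{-1})$, i.e. with $\sqrt{2(n-1)}\bigl(1-\tfrac14(y/\sqrt{|\tau|})^2\bigr)$, and equating $z^2$–coefficients forces $\gamma=-(n-1)$. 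Hence $\bu_\infty=\sqrt{2(n-1)-(n-1)z^2}=\sqrt{n-1}\sqrt{2-z^2}$ and $z_*=\sqrt2$; as the limit is the same along every subsequence, $\bu(\cdot,\tau)\to\sqrt{n-1}\sqrt{2-z^2}$ as $\tau\to-\infty$, uniformly on $\{|z|\le\sqrt2-\delta\}$ for every $\delta>0$. The main obstacle is precisely this last step: (a) ruling out that the $\omega$-limit set of the degenerate limit flow drifts along the one-parameter family $\{\sqrt{2(n-1)+\gamma z^2}\}$ of equilibria rather than reducing to a single stationary profile — this needs a genuine comparison/monotonicity input for the limit equation — and (b) making the matched-asymptotics identification of $\gamma$ rigorous, which requires improving Theorem~\ref{thm-asymptotics}(i) so the expansion holds with error $o(y^2/|\tau|)$ on a slowly growing interval $|y|\le M(\tau)\to\infty$ (obtainable from the $\hilb$–norm analysis of Section~\ref{sec-parabolic}, bootstrapped to pointwise estimates) and a matching control of the convergence rate of $\bu(\cdot,\tau)$ near $z=0$.
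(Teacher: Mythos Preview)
Your lower barrier is exactly the paper's: use the parabolic asymptotics at $y=L$ to place the shrinker cap $u_{b(\tau)}$ below $u(\cdot,\tau)$ via Lemma~\ref{lem-lower-bound-for-ancient-solutions}, then invoke the outer expansion~\eqref{eq-uA-outer-expansion} to obtain $\liminf_{\tau\to-\infty}\bu(z,\tau)\ge\sqrt{n-1}\sqrt{2-z^2}$ uniformly on $|z|\le\sqrt2-\delta$. This part is correct and complete.

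For the upper bound, however, the paper takes a much shorter and more robust route than your compactness-plus-matching program, and it is worth seeing because it eliminates both obstacles you flag. Set $\bv=\bu^2-2(n-1)$. Concavity of $\bu$ gives $\bu_{zz}\le0$, so from the equation for $\bu$ one gets the first-order differential \emph{inequality}
\[
\bv_\tau\ \le\ -\tfrac z2\Bigl(1-\tfrac1\tau\Bigr)\bv_z+\bv.
\]
Now integrate along the characteristics $dz/d\tau=\tfrac z2(1-\tfrac1\tau)$: the characteristic through $(z,\tau)$ traces back to the edge of the parabolic region $z_1=L/\sqrt{|\tau_1|}$ at a time $\tau_1$ with $\tau-\tau_1=\log(z^2|\tau|/L^2)$. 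At that endpoint the parabolic expansion (Theorem~\ref{thm-asymptotics}(i)) gives $\bv(z_1,\tau_1)=-(n-1)\frac{L^2-2}{|\tau_1|}(1+o(1))$, and the comparison $\bv(z,\tau)\le e^{\tau-\tau_1}\bv(z_1,\tau_1)$ along the characteristic yields
\[
\bv(z,\tau)\ \le\ -(n-1)\,\frac{L^2-2}{L^2}\,z^2\,(1+o(1)),
\]
hence $\limsup_{\tau\to-\infty}\bu(z,\tau)\le\sqrt{n-1}\sqrt{2-\tfrac{L^2-2}{L^2}z^2}$. Letting $L\to\infty$ finishes the upper bound.

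This argument uses only the parabolic expansion at a \emph{fixed} $y=L$, so your obstacle~(b)---extending the expansion to a growing window with quantitative error---never arises. And because the upper barrier is built directly from a subsolution inequality rather than from a limiting object, your obstacle~(a)---ruling out drift along the one-parameter family of equilibria of the degenerate limit flow---is simply bypassed. Your compactness/matching strategy is not wrong in principle, but the two gaps you identify are real: the stationarity of the subsequential limit does not follow from the squeeze alone (the first-order limit equation $g_\sigma=-\tfrac z2 g_z+g-2(n-1)$ for $g=\bu^2$ has eternal solutions $e^{\sigma}F(ze^{-\sigma/2})+2(n-1)$ for many $F$, and one must use more than boundedness to force $F(\zeta)=\gamma\zeta^2$), and the matching step genuinely needs the parabolic expansion on a slowly growing interval. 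The characteristics argument gets you there with none of that overhead.
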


\begin{proof}
We prove the proposition by constructing the appropriate upper and lower barriers around our solution which will force it to converge to the right limit as $\tau\to -\infty$.

\textit{Construction of lower barriers. } The construction in \S\ref{sec-lower} together with the precise asymptotics in the parabolic region \eqref{eq-asymp-100} yield the desired lower barriers.  To be more precise, take $L > 0$ big enough.  By \eqref{eq-asymp-100} we have
\begin{equation}
\label{eq-one-bound}
\frac{u(L,\tau)}{\sqrt{2(n-1)}}
= 1  + \frac{L^2-2}{4\tau} + o(|\tau|^{-1})
\ge 1  - \frac{L^2-2}{4|\tau|\, (1 - \delta(\tau))},
\end{equation}
for some nonnegative function $\delta(\tau)$ with $\lim_{\tau\to -\infty} \delta(\tau) = 0$.  We may assume that $|\tau|(1-\delta(\tau))$ is monotone in $\tau$.

Let $u_b(y)$ be one of the stationary solutions constructed in Lemma \ref{lem-u-A}.  By the expansion \eqref{eq-uA-inner-expansion} we have
\begin{equation}
\label{eq-uAL}
\frac{u_b(L)}{\sqrt{2(n-1)}} = 1 - \frac{L^2-2}{2 b^2} + o(b^{-2})
\leq 1 - \frac{L^2-2}{(2+\epsilon(b))b^2},
\end{equation}
for some nonnegative function $\epsilon(b)$ with $\lim_{b\to\infty} \epsilon(b) = 0$.

Let $\tau < \tau_0$ be arbitrary.  Choose $b(\tau)$ so that
\[
b^2 (2 + \epsilon(b)) = 4|\tau|(1 - \delta(\tau)).
\]
Such a choice can always be made, and one has
\begin{equation}
\label{eq-b-asymptotics}
b(\tau) = (1-\delta_1(\tau))\,\sqrt{2|\tau|}
\end{equation}
for some function $\delta_1(\tau)$ with $\lim_{\tau\to-\infty} \delta_1(\tau) = 0$.

By our choice of $\delta_1$ and $b(\tau_1)$ and by \eqref{eq-uAL} we have
\begin{equation}
\label{eq-other-bound}
\frac{u_{b(\tau)}(L)}{\sqrt{2(n-1)}}
\le - \frac{L^2-2}{4|\tau|\, (1 - \delta_1(\tau'))}
\text{ for all } \tau' \le \tau.
\end{equation}
By \eqref{eq-one-bound} and \eqref{eq-other-bound} we have
\[
u(L,\tau') \ge u_{b(\tau)}(L), \text{ for all } \tau'\le \tau.
\]
Therefore by Lemma~\ref{lem-lower-bound-for-ancient-solutions} we have,
\[
u(y,\tau) \ge u_{b(\tau)}(y), \text{ for all } y \ge L.
\]
Combining this with \eqref{eq-uA-outer-expansion} we get
\[
u(y,\tau) \ge \sqrt{2(n-1) \Bigl(1 - \frac{y^2-2} {b(\tau)^2} \Bigr) - o(1)}
\]
which, in view of \eqref{eq-b-asymptotics}, implies
\[
u(y,\tau)
\ge \sqrt{2(n-1) \Bigl(1 - \frac{y^2}{2|\tau|}\Bigr)
          +\delta_2(\tau)\frac{y^2}{|\tau|} - o(1)}.
\]
This estimate is meaningless unless $y^=\cO(|\tau|)$, so we may absorb the term $\delta_2(\tau)y^2/|\tau|$ in the $o(1)$ term.  In terms of the $z$ variable we then get
\begin{equation}
\label{eq-intermediate-lowerbound}
\bu(z, \tau) \ge \sqrt{n-1} \sqrt{2-z^2 + o(1)}.
\end{equation}
\textit{Construction of upper barriers. } The solution $\bu (\cdot,\tau)$ is concave and therefore $\bu _{zz} \le 0$, yielding
\[
\frac{\pd}{\pd\tau} \bu \le -\frac z2 \left(1 - \frac{1}{\tau}\right)\, \bu _z + \frac{\bu }{2} - \frac{n-1}{\bu }.
\]
Define $\bv := \bu ^2 - 2(n-1)$.  Then,
\[
\frac{\pd}{\pd\tau}\bv \le -\frac z2\, \left(1 - \frac{1}{\tau}\right)\, \bv_z + \bv .
\]
We see $\bv(z,\tau)$ is a subsolution to the first order partial differential equation
\[
\frac{\pd}{\pd\tau} w = -\frac z2 \left(1 - \frac{1}{\tau}\right)\, w_z + w,
\]
which we can write as
\begin{equation}
\label{eq-method-char}
\frac{d}{d\tau} w(z(\tau),\tau) = w(z(\tau),\tau),
\end{equation}
where
\begin{equation}
\label{eq-characteristics}
\frac{d}{d\tau} z =  \frac z2\, \left(1 - \frac{1}{\tau}\right)
\end{equation}
is the characteristic equation for \eqref{eq-method-char}.  See Figure~\ref{fig-characteristics}.

\begin{figure}\centering
\includegraphics[scale=0.8]{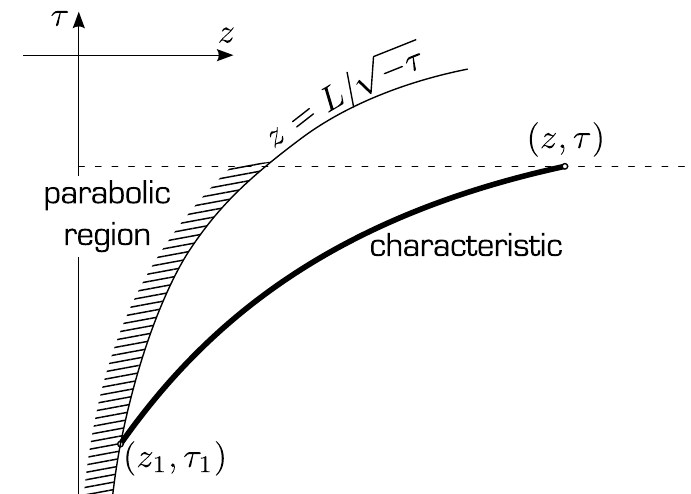}
\caption{To estimate $\bv$ at $(z, \tau)$ we follow the characteristic through $(z,\tau)$ back to the boundary of the parabolic region, where $y=L$, $z=L/\sqrt{-\tau}$.}
\label{fig-characteristics}
\end{figure}

Assume the curve $(z(\tau),\tau)$ connects $(z,\tau)$ and $(z_1,\tau_1)$ with $z_1 = L/\sqrt{-\tau_1}$, for $L > 0$ big.  Integrate \eqref{eq-characteristics} from $\tau$ to $\tau_1$ to get
\[
\tau - \tau_1 = \log\left(\frac{z^2|\tau|}{L^2}\right).
\]
At the point $z_1 = \frac{L}{\sqrt{|\tau_1|}}$ we can use \eqref{eq-asymp-100} to compute $\bv$:
\begin{align*}
\bv (z_1,\tau_1) & = \bu(z_1, \tau_1)^2 - 2(n-1)\\
&= 2(n-1)\Bigl(1 + \frac{L^2-2}{2\tau_1} + o(|\tau_1|^{-1})\Bigr) - 2(n-1)\\
&= -(n-1) \frac{L^2-2}{|\tau_1|}(1+\epsilon(\tau_1)),
\end{align*}
where $\epsilon(\tau)$ is yet another function with $\lim_{\tau\to-\infty} \epsilon(\tau)=0$.

On the other hand, if we integrate \eqref{eq-method-char} from $\tau$ to $\tau_1$ we get
\[
w(z,\tau) = e^{\tau - \tau_1}\, w(z_1,\tau_1),
\]
and we can start $w$ with the initial condition $w(z_1,\tau_1) = \bv(z_1, \tau_1)$, so that
\[
w(z,\tau) = -(n-1)\frac{z^2 |\tau|}{|\tau_1|}\,\frac{L^2-2}{L^2} \, (1 + \epsilon(\tau_1)),
\]
with $\tau_1 = \tau + \log \frac{L^2}{z^2 |\tau|}$.  Therefore,
\[
w(z,\tau)
= -\frac{(n-1)z^2|\tau|}{\left|\tau + \log\frac{L^2}{z^2|\tau|}\right|}
\frac{L^2-2}{L^2}\,
(1 + \epsilon_1(\tau)),
\]
with $\lim_{\tau\to -\infty} \epsilon_1(\tau) = 0$.
Since our point $(z, \tau)$ lies in the region $y\ge L$, we have $z\ge L/\sqrt{|\tau|}$.  We also have $z=\cO(1)$, so that we can bound the logarithm in the denominator by
\[
\left| \log\frac{L^2}{z^2|\tau|} \right| \le C\log|\tau|.
\]
Thus we get
\[
w(z, \tau)
= -(n-1)z^2 \frac{L^2-2}{L^2}
        \frac{1+\epsilon_1(\tau)}
             {1+ \cO\bigl(\frac{\log|\tau|}{|\tau|}\bigr)}
= -(n-1)z^2 \frac{L^2-2}{L^2} \bigl(1+\epsilon_2(\tau)\bigr)
\]
Since $\bv (z_1,\tau_1) = w(z_1,\tau_1)$, by the maximum principle applied to \eqref{eq-method-char}, along characteristics $(z(\tau),\tau)$ connecting $(z_1,\tau_1)$ and $(z,\tau)$ we have
\[
\bv (z,\tau) \le w(z,\tau).
\]
This implies that for all $z\ge L/\sqrt{|\tau|}$ one has
\[
\bu (z,\tau) \le \sqrt{n-1}\sqrt{2 - \frac{L^2-2}{L^2}z^2} + \epsilon_3(\tau),
\]
where again $\lim_{\tau\to -\infty} \epsilon_3(\tau) = 0$.  Hence for all $z\in (0, \sqrt{2})$
\[
\limsup_{\tau\to-\infty} \bu(z, \tau) \le \sqrt{n-1}\sqrt{2 - \frac{L^2-2}{L^2}z^2}.
\]
Since this holds for all $L>0$, we may conclude that
\begin{equation}\label{eq-intermediate-upperbound}
\limsup_{\tau\to-\infty} \bu(z, \tau) \le \sqrt{n-1}\sqrt{2 - z^2}.
\end{equation}
Finally, \eqref{eq-intermediate-lowerbound} and \eqref{eq-intermediate-upperbound} together imply Lemma~\ref{lem-inter}.
\end{proof}

\begin{corollary}
\label{cor-asymp-diam}
$\bd(\tau)= \sqrt{2|\tau|} (1 + o(1))$ for $\tau\to-\infty$.
\end{corollary}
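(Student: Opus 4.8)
The plan is to translate the statement into the $z$-coordinate of the intermediate region.  Write $z_*(\tau) := \bd(\tau)/\sqrt{|\tau|}$ for the value of $z$ at which the rescaled profile $\bu(\cdot,\tau)$ reaches the tip, i.e.~the unique $z>0$ with $\bu(z_*(\tau),\tau)=0$ (the profile is continuous up to the tip).  Then the claim $\bd(\tau)=\sqrt{2|\tau|}\,(1+o(1))$ is precisely the assertion $z_*(\tau)\to\sqrt2$ as $\tau\to-\infty$.  Since Lemma~\ref{lem-inter} gives $\bu(z,\tau)\to\sqrt{n-1}\sqrt{2-z^2}$, which is strictly positive for $z<\sqrt2$ and vanishes at $z=\sqrt2$, both inequalities should come directly from the barriers already constructed in the proof of Lemma~\ref{lem-inter}, with no new estimates.

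First I would establish $\liminf_{\tau\to-\infty} z_*(\tau)\ge\sqrt2$.  Fix any $z_0\in(0,\sqrt2)$.  By Lemma~\ref{lem-inter} (equivalently, by the lower barrier bound \eqref{eq-intermediate-lowerbound}) one has $\bu(z_0,\tau)\to\sqrt{n-1}\sqrt{2-z_0^2}>0$, so $\bu(z_0,\tau)>0$ for all sufficiently negative $\tau$.  Since $\bu(\cdot,\tau)$ is positive exactly on $[0,z_*(\tau))$, this forces $z_0<z_*(\tau)$ for all such $\tau$, hence $\liminf_{\tau\to-\infty}z_*(\tau)\ge z_0$; letting $z_0\uparrow\sqrt2$ gives the lower bound.

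Next I would establish $\limsup_{\tau\to-\infty} z_*(\tau)\le\sqrt2$ from the upper barrier.  In the proof of Lemma~\ref{lem-inter} it is shown that for each fixed $L>\sqrt2$ there is a function $\epsilon_2(\tau)\to0$ such that
\[
\bu(z,\tau)^2 \le (n-1)\Bigl(2-\tfrac{L^2-2}{L^2}\,z^2\,(1+\epsilon_2(\tau))\Bigr)
\]
for all $z$ in the domain of $\bu(\cdot,\tau)$ with $z\ge L/\sqrt{|\tau|}$, provided $-\tau$ is large enough.  By the a~priori bound \eqref{eq-dbar-precise-lower-bound} we have $z_*(\tau)\ge c>L/\sqrt{|\tau|}$ for $-\tau$ large, so this range of $z$ is nonempty; letting $z\uparrow z_*(\tau)$ and using $\bu(z_*(\tau),\tau)=0$, the displayed inequality yields $0\le 2-\tfrac{L^2-2}{L^2}z_*(\tau)^2(1+\epsilon_2(\tau))$, i.e.~$z_*(\tau)^2\le \frac{2L^2}{(L^2-2)(1+\epsilon_2(\tau))}$.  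Hence $\limsup_{\tau\to-\infty}z_*(\tau)^2\le \frac{2L^2}{L^2-2}$, and since $L>\sqrt2$ is arbitrary, letting $L\to\infty$ gives $\limsup_{\tau\to-\infty}z_*(\tau)^2\le2$.  Combined with the previous step, $z_*(\tau)\to\sqrt2$, which is the corollary.

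I do not anticipate a serious obstacle.  The only slightly delicate point is that the barrier estimates are derived on the open region $z<z_*(\tau)$, so one must push the upper bound to the free boundary $z=z_*(\tau)$; this is handled by the harmless limit $z\uparrow z_*(\tau)$ using continuity of $\bu(\cdot,\tau)$ up to the tip and $\bu(z_*(\tau),\tau)=0$, with the a~priori lower bound \eqref{eq-dbar-precise-lower-bound} guaranteeing that the relevant $z$-interval is nonempty for all sufficiently negative $\tau$.
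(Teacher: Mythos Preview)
Your proof is correct and follows the same route as the paper's one-line argument, simply spelling out how the two barrier bounds \eqref{eq-intermediate-lowerbound} and \eqref{eq-intermediate-upperbound} pin down $z_*(\tau)=\bd(\tau)/\sqrt{|\tau|}$. One very minor caveat: the upper barrier $\bu(z,\tau)^2\le(n-1)\bigl(2-\tfrac{L^2-2}{L^2}z^2(1+\epsilon_2(\tau))\bigr)$ is derived in the proof of Lemma~\ref{lem-inter} under the standing assumption $z=\cO(1)$ (this is used to absorb the $\log$ term into $\epsilon_2$), so to push it to $z=z_*(\tau)$ you tacitly need $z_*(\tau)=\cO(1)$; this is harmless, since the inequality itself already forces $z_*(\tau)^2\le \tfrac{2L^2}{L^2-2}\cdot\tfrac{|\tau_1|}{|\tau|}\cdot(1+o(1))^{-1}$ with $|\tau_1|\le |\tau|+2\log\bd(\tau)\le C|\tau|$ by \eqref{eq-dbar-growth-bound}, giving a preliminary bound $z_*=\cO(1)$ that then feeds back to make $\epsilon_2\to0$ uniform up to $z_*$.
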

\begin{proof}
The proof of the statement immediately follows from \eqref{eq-intermediate-lowerbound} and \eqref{eq-intermediate-upperbound} if we recall that $z = y/\sqrt{|\tau|}$.
\end{proof}

\section{Tip region}
\label{sec-tip}

In this section we give a more precise description of the surface in the tip region.  First we complete our proof of Theorem~\ref{thm-comparable-geometry}, which gives us a good estimate for the size of the curvature at the tip.  With that estimate in hand we then discuss the type-II blow-up at the tip.

\subsection{Proof of Theorem~\ref{thm-comparable-geometry}}
In Corollary~\ref{cor-Hmax-less-than-diam} we already showed that
\[
\bhm(\tau) \le \bd(\tau)
\]
at all times $\tau$.  We will now show that there exists a uniform constant $C$ so that
\[
\bd(\tau) \le C \bhm(\tau)
\]
for all $\tau \leq \tau_0$ and for some $\tau_0<0$.

Recall that
\[
\bd'(\tau) - \frac{\bd(\tau)}{2} = -\bhm(\tau).
\]
Integrating this from $\tau$ to $\tau_0$, using $\bhm(\tau) \le \bd(\tau) \le \bar{c}
\sqrt{|\tau|}$ and $\bd(\tau) \geq c \, \sqrt{|\tau|}$, and also choosing $A$ sufficiently big in the last step we find
\begin{align*}
\bd(\tau)
&= e^{\tau/2}\, \left(C + \int_{\tau}^{\tau_0} \bhm(\sigma) e^{-\sigma/2}\, d\sigma\right) \\
&= e^{\tau/2}\, \left(C + \int_{\tau + A}^{\tau_0} \bhm(\sigma) e^{-\sigma/2}\, d\sigma
+ \int_{\tau}^{\tau + A} \bhm(\sigma) e^{-\sigma/2}\, d\sigma\right)\\
&\le C \,e^{\tau/2}\, \left(1 + \sqrt{|\tau|}\, e^{-\tau/2} e^{-A/2}
+ \int_{\tau}^{\tau+A} \bhm(\sigma) e^{-\sigma/2}\, d\sigma\right) \\
&\le C \,e^{\tau/2} + C\sqrt{|\tau|} \, e^{-A/2}
+ Ce^{\tau/2}\, \int_{\tau}^{\tau+A} \bhm(\sigma) e^{-\sigma/2}\, d\sigma \\
&\le \frac{\bd(\tau)}{2} + C e^{\tau/2}\, \int_{\tau}^{\tau+A} \bhm(\sigma) e^{-\sigma/2}\, d\sigma,
\end{align*}
and thus
\[
\bd(\tau) \le 2 C \, e^{\tau/2}\, \int_{\tau}^{\tau+A} \bhm(\sigma) e^{-\sigma/2}\, d\sigma.
\]
One consequence of the Harnack inequality for the ancient mean curvature flow (\cite{Ha}) is that $H_t \ge 0$ (for the unrescaled flow).  Since $H = e^{\tau/2}\bhm$, this implies that
\[
\bhm(\sigma) \, e^{\sigma/2} \le \bhm(\tau+A) \, e^{(\tau+A)/2} \text{ for } \sigma \in [\tau, \tau+A].
\]
Hence,
\begin{align*}
\bd(\tau) &\le C e^{\tau/2}\, \int_{\tau}^{\tau + A} (\bhm(\sigma) \, e^{\sigma/2})\, e^{-\sigma}\, d\sigma \\
&\le C e^{\tau/2}\, \bigl(\bhm(\tau+A)\, e^{(\tau + A)/2}\bigr)
        \int_{\tau}^{\tau+A} e^{-\sigma}\, d\sigma \\
&\le C e^{A/2} \bhm(\tau+A)
\end{align*}
Using \eqref{eq-dbar-growth-bound} we have $\bd(\tau+A) \le e^{A/2} \bd(\tau)$, for all $\tau \le \tau_0$.  This implies
\[
\bd(\tau + A) \le C \,\bhm(\tau + A) \quad \text{ for all } \tau\le \tau_0 - A,
\]
or equivalently,
\[
\bd(\tau) \le C \bhm(\tau) \quad \text{ for all } \tau \le \tau_0
\]
since $A$ only depends on universal constants.  This completes the proof of the estimate
\[
c\, \bd(\tau) \le \bar{H}_{\max}(\tau) \le \bd(\tau).
\]
Furthermore,
\[
\bar{A}(\tau)
= 2\int_0^{\bd(\tau)} u^{n-1}\, \sqrt{1 + u_y^2}\, dy
\le C \int_0^{\bd(\tau)} \frac{1}{\lambda_1(y,\tau)}\, dy
\le C \bd(\tau),
\]
where in the last inequality we used Corollary \ref{cor-mon-l} and the fact that we
have a uniform convergence to the cylinder on compact sets, so that
$\lim_{\tau\to-\infty} \lambda_1(0,\tau) = \frac{1}{\sqrt{2(n-1)}}$.  To complete the proof of Theorem \ref{thm-comparable-geometry} we need to show that
\begin{equation}\label{eq:Ad2}
\bd (\tau) \le C\, \bA (\tau), \qquad \tau \le \tau_0.
\end{equation}
for some uniform constant $ C > 0$.  To this end, we have
\[
\begin{split}
\bar{A}(\tau)
&= 2\int_0^{\bd (\tau)} u^{n-1} \, \sqrt{1 + u_y^2}\, dy\\
&\ge 2 \int_{\bd (\tau)/3}^{\bd (\tau)/2} u^{n-1}  \, \sqrt{1 + u_y^2}\, dy \\
&= 2 \sqrt{|\tau|}\, \int_{\frac{\bd (\tau)}{3\sqrt{|\tau|}}}^{\frac{\bd
(\tau)}{2\sqrt{|\tau|}}} \bar{u}^{n-1} \sqrt{1 + \frac{\bar{u}_z^2}{|\tau|}}\, dz\\
&\ge \sqrt{|\tau|}\, \int_{\frac{\bd (\tau)}{3\sqrt{|\tau|}}}^{\frac{\bd (\tau)}{2\sqrt{|\tau|}}} \bar{u}^{n-1} \, dz.
\end{split}
\]
By Lemma \ref{lem-inter} and Corollary \ref{cor-asymp-diam} the last integral in the previous estimate is greater or equal than $c \, \sqrt{|\tau|}$ for a uniform constant $c > 0$, implying
\begin{equation}\label{eq:gelp-Ad}
\bar{A}(\tau) \ge c\, \sqrt{|\tau|} \ge c_1\, \bd (\tau),\qquad \tau\le \tau_0.
\end{equation}
This completes the proof of Theorem~\ref{thm-comparable-geometry}.

\subsection{Asymptotic expansion of the curvature at the tip}
We can improve the bounds we have just derived and describe the limiting behavior of the maximal curvature as $\tau\to-\infty$.

\begin{prop}
\label{prop-Hmax-limit}
The following limits hold
\begin{equation}
\label{eq-H-asymp}
\lim_{t\to-\infty}\frac{H_{\max}(t)}{\sqrt{|t|\, |\log(-t)|}} = \frac{1}{\sqrt{2}}
\qquad \text{and} \qquad \lim_{\tau\to-\infty} \frac{\bhm(\tau)}{\sqrt{|\tau|}} = \frac1{\sqrt{2}}.
\end{equation}
\end{prop}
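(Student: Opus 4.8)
The plan is to read off the sharp asymptotics of $H_{\max}$ from the already-established sharp asymptotics of the extrinsic diameter, using the monotonicity of $H_{\max}$ along the flow. Note first that the two limits in \eqref{eq-H-asymp} are tied together by the rescaling \eqref{eq-cv1}: since $H(\cdot,t)=\bH(\cdot,\tau)\,(-t)^{-1/2}$ and $\tau=-\log(-t)$, we have $\bhm(\tau)=(-t)^{1/2}H_{\max}(t)$ and $|\tau|=|\log(-t)|$, so it suffices to work with the unrescaled flow. Corollary~\ref{cor-asymp-diam} together with $d(t)=(-t)^{1/2}\,\bd(-\log(-t))$ gives
\[
d(t)=\sqrt{2\,(-t)\,|\log(-t)|}\;\bigl(1+o(1)\bigr),\qquad t\to-\infty .
\]
On the other hand, $d$ is a $C^1$ function of $t$ with $d'(t)=-H_{\max}(t)$ (the velocity of the tip, using Lemma~\ref{lem-Hmax-at-tip} and that the solution is smooth and strictly convex near the tip), and by Hamilton's Harnack inequality \cite{Ha} the function $t\mapsto H_{\max}(t)$ is non-decreasing.

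The heart of the argument is a squeeze of $H_{\max}(t)$ between difference quotients of $d$. Fix $\theta\in(0,1)$. Integrating $d'=-H_{\max}$ over $[t,\theta t]$ (note $t<\theta t<0$) and using that $H_{\max}$ is non-decreasing, we get, for all sufficiently negative $t$,
\[
(-t)\,(1-\theta)\,H_{\max}(t)\;\le\; d(t)-d(\theta t)\;\le\;(-t)\,(1-\theta)\,H_{\max}(\theta t).
\]
From the displayed asymptotics of $d$, and using $|\log(-\theta t)|=|\log(-t)|+\log\theta=|\log(-t)|(1+o(1))$, the common factor $\sqrt{2\,(-t)\,|\log(-t)|}$ pulls out, so
\[
\frac{d(t)-d(\theta t)}{\sqrt{(-t)\,|\log(-t)|}}\;\longrightarrow\;\sqrt2\,\bigl(1-\sqrt\theta\bigr)\qquad(t\to-\infty).
\]
Inserting this into the left inequality yields
\[
\limsup_{t\to-\infty}H_{\max}(t)\,\sqrt{\tfrac{-t}{|\log(-t)|}}\;\le\;\frac{\sqrt2\,(1-\sqrt\theta)}{1-\theta}\;=\;\frac{\sqrt2}{1+\sqrt\theta},
\]
while substituting $s=\theta t$ in the right inequality gives $H_{\max}(s)\ge \theta\bigl(d(s/\theta)-d(s)\bigr)\big/\bigl((-s)(1-\theta)\bigr)$, and since $d(s/\theta)\big/\sqrt{(-s)|\log(-s)|}\to\sqrt2/\sqrt\theta$ we obtain
\[
\liminf_{s\to-\infty}H_{\max}(s)\,\sqrt{\tfrac{-s}{|\log(-s)|}}\;\ge\;\frac{\theta}{1-\theta}\,\sqrt2\,\Bigl(\tfrac{1}{\sqrt\theta}-1\Bigr)\;=\;\frac{\sqrt2\,\sqrt\theta}{1+\sqrt\theta}.
\]
As $\theta\in(0,1)$ was arbitrary, letting $\theta\to1^-$ traps both the $\limsup$ and the $\liminf$ at $\sqrt2/2=1/\sqrt2$. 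This proves $H_{\max}(t)\sqrt{(-t)/|\log(-t)|}\to 1/\sqrt2$, which is the second limit in \eqref{eq-H-asymp}; the first then follows immediately from $H(p_t,t)=\bhm(\tau)\,(-t)^{-1/2}$ and $|\tau|=|\log(-t)|$.

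The only real obstacle is that Corollary~\ref{cor-asymp-diam} supplies only the leading-order term of $d(t)$, and one cannot in general differentiate an asymptotic relation; the monotonicity of $H_{\max}$ from the Harnack inequality is precisely the extra regularity that makes the difference-quotient squeeze legitimate — this is a Karamata / monotone-density type argument. The remaining points (that $d(t)$ is $C^1$ and moves at speed $H_{\max}(t)$, and the elementary limits of the difference quotients above) are routine.
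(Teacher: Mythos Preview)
Your proof is correct and follows essentially the same approach as the paper's: both arguments use the relation $d'(t)=-H_{\max}(t)$, the monotonicity of $H_{\max}$ from Hamilton's Harnack inequality, and the sharp asymptotics $\bd(\tau)=\sqrt{2|\tau|}(1+o(1))$ from Corollary~\ref{cor-asymp-diam}, combined with a difference-quotient squeeze over a short interval. The paper carries this out in the rescaled variable $\tau$, integrating $\bhm/\bd$ over $[\tau,\tau+\epsilon]$ and using monotonicity of $e^{\tau/2}\bhm$, while you work in the unrescaled variable $t$ over $[t,\theta t]$; since $[t,\theta t]$ corresponds to $[\tau,\tau-\log\theta]$ under $\tau=-\log(-t)$, these are the same intervals and the two computations are equivalent. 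Your presentation via $d'=-H_{\max}$ is arguably a touch cleaner, and your explicit identification of the argument as a Karamata/monotone-density step is apt.
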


\begin{proof}
The second limit implies the first one by simple rescaling, so we will only prove the second one here.

Recall again that from Harnack inequality for the mean curvature flow we have
\[
\frac{d}{dt} H_{\max} \ge 0
\]
or equivalently, for the rescaled flow, we have
\begin{equation}
\label{eq-harnack100}
\frac{d}{d\tau} \left(e^{\tau/2}\, \bhm(\tau)\right) \ge 0.
\end{equation}
We also have
\[
\frac{\bd'(\tau)}{\bd(\tau)} =\frac 12 - \frac{\bhm(\tau)}{\bd(\tau)},
\]
with $\bd(\tau) = \sqrt{2|\tau|}\, (1+\delta(\tau))$.  Let $\epsilon > 0$ be a small number.  Integrate previous identity from $\tau$ to $\tau + \epsilon$ to get
\begin{equation}
\label{eq-help1000}
I := \int_{\tau}^{\tau + \epsilon} \frac{\bhm(s)}{\bd(s)}\, ds = \frac{\epsilon}{2} - \log \frac{\bd(\tau+\epsilon)}{\bd(\tau)}.
\end{equation}
Using \eqref{eq-harnack100} we get
\begin{equation}
\label{eq-I-one}
\frac{1}{\epsilon}\, I
= \frac{1}{\epsilon}\,
        \int_{\tau}^{\tau+\epsilon}
                \frac{\bhm(s) e^{s/2}}{\sqrt{2|s|} (1+\delta(s))}\, e^{-s/2}\, ds
\le  \frac{\bhm(\tau+\epsilon)\, e^{\epsilon/2}}{\sqrt{2|\tau + \epsilon|}}
        (1+\delta(\tau)),
\end{equation}
and similarly,
\begin{equation}
\label{eq-I-two}
\frac{1}{\epsilon}\, I
\ge  \frac{\bhm(\tau)}{\sqrt{2|\tau|}} (1+\delta(\tau)).
\end{equation}
On the other hand, by Corollary \ref{cor-asymp-diam} and \eqref{eq-help1000} we get
\[
\frac{1}{\epsilon} I
= \frac 12
        - \frac{1}{\epsilon}
        \log\frac{\sqrt{|\tau + \epsilon|} (1 + \delta(\tau))}
                 {\sqrt{|\tau|}}.
\]
Combining this with \eqref{eq-I-two} yields
\[
\frac{\bhm(\tau)}{\sqrt{2|\tau|}}
\le e^{\epsilon/2}\,(1 + \delta(\tau))\,
\left(\frac 12 - \frac{1}{2\epsilon}\, \log\left(1 + \frac{\epsilon}{\tau}\right)
        + \frac{1}{\epsilon}\, \log \frac{1+\delta(\tau+\epsilon)}{1+\delta(\tau)}
\right),
\]
which implies
\[
\frac{\bhm(\tau)}{\sqrt{2|\tau|}} < \frac 12 + \sigma,
\]
for $\sigma > 0$ arbitrarily small and $\epsilon = \epsilon(\sigma)$ and $\tau < \tau(\epsilon,\sigma)$ chosen so that the estimate holds.  Similarly, using \eqref{eq-I-one} we get
\[
\frac{\bhm(\tau)}{\sqrt{2|\tau|}} > \frac 12 - \sigma,
\]
for $\epsilon = \epsilon(\sigma)$ and $\tau \le \tau(\epsilon,\sigma)$ sufficiently small.  Finally, \eqref{eq-H-asymp} follows as claimed.

\subsection{Proof of Theorem~\ref{thm-asymptotics}, (iii)}
Let $\lambda_s := H_{\max}(s)$ and let $\tilde{M}_t^s = \lambda_s \, (M_{s+\lambda_s^{-2} t} - p_s)$, where $p_s$ is the tip of $M_s$ as in the statement of proposition.  By Corollary \ref{cor-tip}, $H_{\max}(s) = H(p_s,s)$.  Take any sequence $s_i\to -\infty$ and denote by $\tilde{M}_t^i := \tilde{M}_t^{s_i}$ and $\lambda_i := \lambda_{s_i}$ for simplicity.  Then $\bar{H}_i (0,0) = 1$ and
\[
\tilde{H}_i(p,t) = \frac{H(p,s_i + \lambda_i^{-2} t)}{H_{\max}(s_i)} \le \frac{H_{\max}(s_i+\lambda_i^{-2} t)}{H_{\max}(s_i)}.
\]
Using \eqref{eq-H-asymp} we get
\begin{equation}
\label{eq-precise-bound}
\tilde{H}_i(p,t) \le \frac{\sqrt{|s_i + \lambda_i^{-2} t|\, \log |s_i + \lambda_i^{-2} t|}}{\sqrt{|s_i|\log|s_i|}}\, (1 + \epsilon(s_i)),
\end{equation}
where $\lim_{i\to\infty} \epsilon(s_i) = 0$.  For any finite interval in $t$ the above quantity is uniformly bounded as $i\to\infty$ so there exists a smooth limiting flow
\[
\tilde{M}_t^{\infty} = \lim_{i\to\infty} \tilde{M}_t^i,
\]
which is an eternal solution to the mean curvature flow.  It has the property that $\bar{H}_{\infty}(0,0) = 1$.  Furthermore, by \eqref{eq-precise-bound} we have $\tilde{H}_{\infty}(p,t) \le 1$ on $\tilde{M}_{\infty}^t$.  Since $\tilde{M}_{\infty}^t$ arises as a smooth limit of compact solutions, it satisfies Hamilton's Harnack inequality \cite{Ha}, and in particular, $\frac{\pd}{\pd t} \tilde{H}_{\infty}^t \ge 0$.  Together with $\tilde{H}_{\infty}(0,0) = 1$ and $\tilde{H}_{\infty}(p,t) \le 1$ on $\tilde{M}_{\infty}^t$, we have $\frac{\pd}{\pd t} \tilde{H}_{\infty} = 0$ at the origin, for all $t > 0$.  By symmetry we have $\nabla\tilde{H}_{\infty} = 0$ at the origin.  Thus, by the same proof of the equality case of Hamilton's Harnack inequality, which was in fact the observation made in \cite{HO}, $\tilde{M}_{\infty}^t$ must be a translating soliton, and so it must be the Bowl soliton.  Finally, due to the uniqueness of the Bowl with the mean curvature being one at the origin, the subsequential limit is actually a full limit.
\end{proof}

\begin{remark}
In \cite{HO} the statement of part (iii) in Theorem~\ref{thm-asymptotics}, was proved to be true for the ancient oval solution the authors were constructing.  Note that they proved a rescaled limit of ancient oval solutions along a carefully chosen sequence of times $s_i\to -\infty$ must be the Bowl.  In our case, knowing more precise asymptotics which we prove to hold for any rotationally symmetric non-collapsed solution to the mean curvature flow, allows us to show that the rescaled limit along any sequence of times $s_i\to\infty$ will be the unique Bowl solution.
\end{remark}

\section{Constructing the minimizing foliation}
\label{sec-constructing-the-foliation}

\subsection{The foliation}
In this section we construct a foliation of the truncated cone in $\R^{n+1}$ defined by $y\ge y_0$ and $u\le b_0y$ (for suitable constants $y_0$ and $b_0$) whose leaves are ``self shrinkers,'' i.e.~which satisfy \eqref{eq-self-shrinker}.
The normals to these surfaces provide a calibration that allowed us to prove the inner-outer lemma, and they also provided the barriers that we used to deduce convergence in the intermediate region from convergence in the parabolic region.

The foliation we consider consists of rotationally symmetric surfaces, with the $y$-axis as axis of rotation.  Such surfaces are obtained by revolving a curve $\gamma\subset [0,\infty)\times\R$ around the $y$-axis.  We use the same notation as in \S\ref{sec-notation-rot-symm-surfaces}.

\begin{theorem}
\label{thm-about-the-self-shrinker-foliation}
Let $b_0>0$ be given.  There is a constant $y_0>0$ for which the following holds:

{\bfseries\upshape(a)} For each $a\geq y_0$ there is a unique rotationally symmetric embedding $\Sigma_a$ of a disc with the following properties
\begin{enumerate}
\item $\Sigma_a$ is a self shrinker, i.e.~it satisfies \eqref{eq-self-shrinker}
\item $\Sigma_a$ meets the $y$-axis at $y=a$
\item $\Sigma_a$ is contained in the half cylinder $r\leq \sqrt{2(n-1)}, y\geq y_0$
\item the boundary $\pd\Sigma_a$ is contained in the disc $r\leq\sqrt{2(n-1)}, y=y_0$.
\end{enumerate}

{\bfseries\upshape(b)} For each $b\in(0, b_0)$ there exists an embedding $\tS_b$ of a cylinder $[y_0, \infty)\times S^{n-1}$, obtained by rotating the graph of a function $r=\tu_b(y)$ around the $y$ axis, that satisfies
\begin{enumerate}
\item $\tS_b$ is a self shrinker, i.e.~it satisfies \eqref{eq-self-shrinker} and $\tu_b$ is a solution of \eqref{eq-ss}; 
\item $\tS_b$ is asymptotic to the cone with opening slope $b$, in fact, 
$\tu_b(y) = by + \cO\bigl(y^{-1}\bigr)$ as $(y\to\infty)$.

\item the boundaries $\pd\Sigma_a$ and $\pd\tS_b$ are contained in the disc $r\leq b_0y_0$, $y=y_0$.
\end{enumerate}

{\bfseries\upshape(c)} The family of disks $\Sigma_a$ with $a> y_0$, the family of cylinders $\tS_b$ with $0<b<b_0$, and the cylinder $\Gamma$ together form a foliation of the region $r<b_0y, y > y_0$.  The unit normals $\nm$ to $\Sigma_a$, $\tS_b$, and $\Gamma$ define a continuous vector field on this region that is everywhere smooth except possibly on the cylinder $\Gamma$.
\end{theorem}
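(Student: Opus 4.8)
The plan is to reduce the whole statement to the single scalar ODE \eqref{eq-ss} governing rotationally symmetric self-shrinkers, and to analyze its solutions by comparison with the two explicit solutions already in play: the constant solution $u\equiv\sqrt{2(n-1)}$, which is the cylinder $\Gamma$, and (in the cap region) pieces of round shrinking spheres. Two structural facts drive everything. First, $\Gamma$ solves \eqref{eq-ss}, and linearizing \eqref{eq-ss} at $\Gamma$ by setting $u=\sqrt{2(n-1)}+w$ produces exactly $\cL w=0$ with $\cL$ as in \eqref{eq-linear}, since $\frac{d}{du}\bigl(\tfrac u2-\tfrac{n-1}u\bigr)=1$ at $u=\sqrt{2(n-1)}$; the ODE $\cL w=0$ has solution space spanned by $\psi_2(y)=y^2-2$ and by a second solution growing like $e^{y^2/4}y^{-3}$, and this split is the mechanism behind the asymptotic expansions \eqref{eq-uA-outer-expansion} and \eqref{eq-uA-inner-expansion}. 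Second, writing \eqref{eq-ss} in the curvature form \eqref{eq-ss-2}, or viewing its solutions as geodesics of the stated conformal metric on the half-plane, makes the comparison geometry transparent and underlies the non-crossing arguments.

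For part (a) I would begin with the local theory at the axis. A smooth rotationally symmetric self-shrinker through $(a,0)$ must be umbilic there, and \eqref{eq-ss-2} forces its principal curvature at the tip to equal $a/(2n)$; the usual desingularization of \eqref{eq-ss} at the axis (parametrizing the profile by arclength, or solving for $y$ in terms of $u$) then yields, for each $a$, a unique solution $u_a$ with $u_a(a)=0$, $u_a'(a)=-\infty$ on a maximal interval, which is concave while it stays in the region where the right-hand side of $u_{yy}=(1+u_y^2)\bigl(\tfrac y2u_y-\tfrac u2+\tfrac{n-1}u\bigr)$ is negative. The substantive claim is the global one: for $a\ge y_0$ with $y_0$ chosen large (depending on $b_0$), $u_a$ stays in $0<u<\sqrt{2(n-1)}$, is decreasing and concave all the way down to $y=y_0$, and tends uniformly on $[y_0,a]$ to $\sqrt{2(n-1)}$ as $a\to\infty$. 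To get this I would confine $u_a$ below the cylinder $\Gamma$ using the Sturmian principle that the difference of two solutions of \eqref{eq-ss} cannot become tangent without crossing transversally, and — once $u_a$ is known to be $o(1)$-close to $\sqrt{2(n-1)}$ on a long interval — pass to the linearization: $w_a:=u_a-\sqrt{2(n-1)}$ satisfies an ODE that is an $o(1)$-perturbation of $\cL w=0$, and since $w_a$ must match the bounded cap profile near $y=a$ the exponentially growing mode is excluded, so on each fixed interval $w_a$ is a small multiple of $\psi_2$; matching that linear expansion to the cap pins the coefficient at $-\sqrt{2(n-1)}/(2a^2)\,(1+o(1))$. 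This matched-asymptotics computation gives the inner expansion \eqref{eq-uA-inner-expansion} and, through the intermediate scaling $y\sim a$, the outer expansion \eqref{eq-uA-outer-expansion}, while the concavity, monotonicity and uniform closeness of Lemma~\ref{lem-u-A}(a) fall out along the way.

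For part (b) the surfaces $\tS_b$ far from the axis are the cone-asymptotic shrinkers of Kleene and M{\o}ller \cite{KleeneMoller}, so existence may be quoted; what I still need is the sharp far-field expansion, convexity, continuous and monotone dependence on $b$, and the fact that $\tS_b$ extends down to $y=y_0$ and lies close to $\Gamma$ there (flaring into its asymptotic cone only far out, and the more so the smaller $b$ is). Substituting $u=by+\xi$ into \eqref{eq-ss} the order-$y$ terms cancel and one is left with $\tfrac y2\xi'-\tfrac12\xi=-\tfrac{n-1}{by}+(\text{lower order})$, whose decaying solution is $\xi=\tfrac{n-1}{by}+\cO(y^{-3})$; a contraction argument for $\xi$ in a weighted space on a large half-line gives $\tu_b(y)=by+\cO(y^{-1})$, uniqueness and smooth dependence on $b$, and differentiating once gives convexity. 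Continuing leftward, the solution is drawn toward $\Gamma$ (solutions near $\Gamma$ remain near it as $y$ decreases, the only growing linear mode $e^{y^2/4}y^{-3}$ blowing up as $y\to+\infty$), so for $y_0$ large depending on $b_0$ every $\tS_b$ with $b\in(0,b_0)$ reaches $[y_0,\infty)$ with $\tu_b(y_0)$ close to $\sqrt{2(n-1)}$.

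Finally, for the foliation statement (c) I would combine the non-crossing principle with monotone dependence on the parameters. The difference of two solutions of \eqref{eq-ss} satisfies a linear second-order ODE (apply the mean value theorem to the nonlinearity), hence has no interior zero of multiplicity two, so two distinct leaves of either family, and a leaf and $\Gamma$, are either disjoint or cross transversally. Reading off the boundary traces at $y=y_0$ — the radius $u_a(y_0)$ increases continuously with $a$ from $0$ up to $\sqrt{2(n-1)}$, the radius $\tu_b(y_0)$ increases continuously with $b$ from $\sqrt{2(n-1)}$ up to (about) $b_0y_0$, and $\Gamma$ contributes the single value $\sqrt{2(n-1)}$ — we see the boundary circles are linearly ordered and fill $(0,b_0y_0)$; combined with the transversality dichotomy this upgrades to pairwise disjointness and to the assertion that every point of $\{r<b_0y,\ y>y_0\}$ lies on exactly one leaf, which is the foliation. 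Continuity of the unit normal field $\nm$ on this region follows from continuous dependence of $u_a$ and $\tu_b$ on their parameters together with the matching across $\Gamma$: the normal to $\Sigma_a$ as $a\to\infty$ and the normal to $\tS_b$ as its leaves approach $\Gamma$ both tend to the radial field $(0,\bomega)$ (in either case $u_y\to0$), while $\nm$ is smooth off $\Gamma$ because there the foliation is a single smooth one-parameter family. The step I expect to be the main obstacle is the global confinement in part (a): the barriers and the $\cL$-linearization only control $u_a$ easily once it is near $\Gamma$, so one must carry the solution through the highly curved cap region near $y=a$ and glue it to the near-cylinder analysis with estimates uniform as $a\to\infty$; the companion difficulty is arranging $y_0=y_0(b_0)$ so that the two families match precisely across $\Gamma$ and every $\tS_b$ with $b<b_0$ stays inside the wedge $r<b_0y$.
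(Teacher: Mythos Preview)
Your outline has the right architecture but departs from the paper's proof in the two places that carry the real weight, and in one of them your argument does not close.

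\medskip

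\textbf{Global control of $u_a$ on $[y_0,a]$.} The paper does \emph{not} linearize at the cylinder to get the global bounds. Instead it introduces the quantity
\[
w(y)=\frac{2yuu_y}{u^2-2(n-1)}=y\,\frac{d}{dy}\ln\bigl(2(n-1)-u^2\bigr),
\]
derives a first-order ODE for $w$, and proves $w>2$ everywhere together with an explicit upper barrier $w\le 2+K/(a^2-y^2)+K/y^2$. Integrating $w$ gives immediately both the lower bound $u^2\ge 2(n-1)(1-y^2/a^2)$ and a matching upper bound, uniformly in $a$. Your plan ``once $u_a$ is known to be $o(1)$-close to $\sqrt{2(n-1)}$, pass to the linearization'' has a bootstrap problem: the closeness to the cylinder is exactly what you need the global argument to produce, and you have no mechanism to bridge the cap region (where $u$ is small) to the near-cylinder regime. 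The paper bridges this by rescaling near the tip via $h(r,a)=a-a^{-1}\psi(ar,a)$ and observing that as $a\to\infty$ the equation for $\psi$ becomes the ODE for the translating Bowl soliton; this gives uniform control on an initial segment $[y_{Ma},a]$ and a computable value $w(y_{Ma})=2+8(n-1)/M^2+o(1)$ to launch the $w$-barrier. Your ``usual desingularization'' and ``matched asymptotics'' gesture at this but miss the Bowl-soliton identification, which is what makes the matching quantitative.

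\medskip

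\textbf{The foliation.} Here your argument has a genuine gap. The Sturmian fact that distinct solutions of \eqref{eq-ss} can only meet transversally does \emph{not} imply they are disjoint: it rules out tangencies, not crossings. Your sentence ``the radius $u_a(y_0)$ increases continuously with $a$'' is the conclusion you want, not something you have established; invoking it to order the boundary traces and then deduce disjointness is circular. The paper proves the foliation by studying the normal variation $V=\nm\cdot\partial_a X_a$, which satisfies the Jacobi equation
\[
\Delta V-\nabla\phi\cdot\nabla V+\bigl(\|A\|^2+\tfrac12\bigr)V=0,
\]
shows $V=1$ at the tip, produces an explicit supersolution $W=e^{\|X\|^2/8}$ on the intermediate region, and uses the maximum principle (comparing $V$ to $\lambda W$) to rule out interior zeros of $V$. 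This gives $V>0$ for $y\ge y_0$, hence strict monotonicity of the leaves in $a$, hence the foliation. The same scheme handles the Kleene--M{\o}ller trumpets outside the cylinder. If you want to salvage a Sturmian route you would need an additional zero-counting argument (e.g.\ tracking the number of intersections of $u_a$ and $u_{a'}$ from $y=\min(a,a')$ down to $y_0$ and showing it stays zero), which is essentially equivalent in difficulty to the normal-variation argument.

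\medskip

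Your treatment of part (b) is close to the paper's: it quotes Kleene--M{\o}ller for existence and then, for the foliation and the normal estimate, again uses the $w$-quantity (showing $2\le w\le 2+16/y^2$ outside the cylinder) and the normal-variation/supersolution argument rather than a direct Sturmian comparison.
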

The self shrinkers $\tS_b$ with conical ends are exactly the ``trumpets'' that were constructed by Kleene and M{\o}ller in Theorem 3 of \cite{KleeneMoller}.  For the surfaces $\tS_b$ we therefore only have to verify that they form a foliation, and that their normals satisfy the same estimate from Lemma~\ref{lem-tan-phi-at-cylinder}. 

It will become clear from the construction that these surfaces $\Sigma_a$ and $\tS_b$ can be extended uniquely as immersions that still satisfy \eqref{eq-self-shrinker}.  However, in the region $y\leq y_0$ they may intersect each other or even themselves.  See Figure~\ref{fig-some-shrinkers}.

It will also be important to have an asymptotic description of the surfaces $\Sigma_a$, $\tS_b$ for large values of $a$, or small values of $b$, respectively.

\begin{theorem}% Main upper and lower bounds for u(y,b)
For each $a\geq y_0$ the surface $\Sigma_a$ is obtained by rotating the graph of a function $r=u(y;a)$ about the $y$-axis. This function satisfies
\begin{equation}
u(y, a) \geq \sqrt{2(n-1)}\,\sqrt{1-y^2/a^2} \text{ for } 0\leq y\leq a
\label{eq-u-lower-bound}
\end{equation}
and
\begin{equation}
u(y, a) \leq
\sqrt{2(n-1)} \sqrt{1 - \Bigl(1-\cO\bigl(\frac{\ln a}{a^2}\bigr) \Bigr) \frac{y^2-10}{a^2}}
\end{equation}
on the interval $5\leq y \leq a-C_0/a$ for some constant $C_0$.
\end{theorem}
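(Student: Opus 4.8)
The plan is to prove the two inequalities separately: the lower bound is soft and follows from concavity together with the shrinker ODE, while the upper bound is the substantial part and is where the constants $-10$ and the order $\cO(\ln a/a^2)$ come from. Throughout write $u=u(\cdot\,;a)$ for the profile of $\Sigma_a$; by Lemma~\ref{lem-u-A} it is concave on $[0,a]$, solves \eqref{eq-ss}, and has $u(a)=0$. It is convenient to denote by $\mathcal{N}[v]:=\frac{v_{yy}}{1+v_y^2}-\frac y2 v_y+\frac v2-\frac{n-1}{v}$ the shrinker operator, so that $\mathcal{N}[u]=0$.

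\emph{Lower bound.} Rewrite \eqref{eq-ss} as $u_{yy}=(1+u_y^2)\bigl(\tfrac y2 u_y-\tfrac u2+\tfrac{n-1}{u}\bigr)$, so that concavity ($u_{yy}\le0$) is equivalent to $\tfrac y2 u_y-\tfrac u2+\tfrac{n-1}{u}\le 0$ for $y>0$. Multiplying this inequality by $2u>0$ and setting $h:=u^2-2(n-1)$ gives $h_y\le\tfrac{2h}{y}$, hence
\[
\Bigl(\frac{h}{y^2}\Bigr)_y=\frac{h_y-\tfrac{2h}{y}}{y^2}\le 0\qquad\text{on }(0,a).
\]
Since $h(y)\to -2(n-1)$ as $y\to a$, monotonicity forces $h/y^2\ge -2(n-1)/a^2$ on $(0,a)$, i.e.\ $u^2\ge 2(n-1)(1-y^2/a^2)$ on $[0,a]$, which is the claimed bound. (As a byproduct this gives $u(0;a)\ge\sqrt{2(n-1)}$.)

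\emph{Upper bound.} Here concavity alone is one-sided, so I would use \eqref{eq-ss} in an essential way through an explicit upper barrier $\bar u_a$ with $\bar u_a^2=2(n-1)\bigl(1-c_a\tfrac{y^2-10}{a^2}\bigr)$, where $c_a=1-\cO(\ln a/a^2)$ is to be chosen. The computation to keep in mind: the exact ellipsoid $w$, $w^2=2(n-1)(1-y^2/a^2)$, already satisfies $-\tfrac y2 w_y+\tfrac w2-\tfrac{n-1}{w}=0$, so $\mathcal{N}[w]=w_{yy}/(1+w_y^2)<0$ — a strict supersolution, but one whose margin decays like $(a^2-y^2)^{-1}$ as $y\to a$, and one that in fact lies \emph{below} $u$ near $y=0$ (consistent with the lower bound). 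Passing to $\bar u_a$: the shift by $-10$ creates a fixed cushion of size $\cO(a^{-2})$ above $w$, and the small decrease $c_a<1$ restores a strictly negative value of $\mathcal{N}[\bar u_a]$ throughout $[5,a-C_0/a]$; its logarithmic size is forced by the logarithmic divergence, as $y\to a$, of the linearization of the profile about $w$, evaluated at the cut-off $y=a-C_0/a$. For the boundary data one uses: at $y=5$, the inner expansion \eqref{eq-uA-inner-expansion} to get $u(5;a)<\bar u_a(5)$ for $a$ large; at $y=a-C_0/a$, the tip cap of $\Sigma_a$ — the shrinker equation at the vertex forces the tip curvature to be $a/2n$, hence $u(y;a)^2=\tfrac{4n}{a}(a-y)(1+o(1))$ there — to get $u(a-C_0/a;a)<\bar u_a(a-C_0/a)$ once $C_0$ is fixed large enough. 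One then concludes $u\le\bar u_a$ on $[5,a-C_0/a]$ by comparison, which is the stated estimate.

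\emph{Expected obstacle.} The hard part will be the last step of the upper bound, and more generally the behaviour in the transition zone just inside the tip. Because the zeroth-order term of \eqref{eq-ss} has the unfavorable sign for the maximum principle, and its coefficient $\tfrac12+\tfrac{n-1}{u\bar u_a}$ becomes strongly singular near $y=a-C_0/a$ where $u$ is tiny, a one-line comparison will not do; I would instead run the comparison as a continuity argument in the parameter $c_a$, and/or first establish the bound on a subinterval $[5,\rho a]$ with $\rho<1$ fixed (where $u$ is bounded away from $0$), and then bridge the remaining transition-and-tip portion using the cap expansion together with the monotonicity of $h/y^2$ obtained in the lower-bound argument. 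Pinning down the exact form of the barrier — in particular justifying both the value $-10$ and the precise order $\ln a/a^2$ of $1-c_a$ — is where the genuine work lies; the lower bound and the two endpoint estimates are comparatively routine given the expansions already in hand.
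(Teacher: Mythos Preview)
Your lower bound is correct and is in fact a slightly cleaner variant of what the paper does: the paper introduces the quantity
\[
w(y)=\frac{yu_y}{\tfrac u2-\tfrac{n-1}{u}}=-\frac{2yuu_y}{2(n-1)-u^2}=y\frac{d}{dy}\ln\bigl(2(n-1)-u^2\bigr),
\]
proves $w>2$ from the first-order ODE that $w$ satisfies, and then integrates. Your inequality $(h/y^2)_y\le 0$ with $h=u^2-2(n-1)$ is exactly the statement $w\ge 2$, but you obtain it directly from concavity plus the shrinker ODE rather than from the equation for $w$; this is equivalent and arguably more transparent.

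The upper bound, however, has a real gap, and the paper does not proceed by a barrier for the second-order equation \eqref{eq-ss}. Two concrete problems with your plan: first, you invoke the inner expansion \eqref{eq-uA-inner-expansion} at $y=5$ as a boundary condition, but in the paper that expansion is \emph{derived from} the very upper and lower bounds you are trying to prove (it is the content of the subsequent section), so this would be circular. Second, and more fundamentally, you yourself identify that the zeroth-order coefficient $\tfrac12+\tfrac{n-1}{u^2}$ has the wrong sign for a comparison principle on $u$, and it blows up like $a^2$ in the transition zone; your proposed fixes (continuity in $c_a$, or splitting at $\rho a$) do not address this, and in fact the transition zone is exactly where the $\ln a/a^2$ correction is generated, so one cannot simply ``bridge'' it with the cap expansion.

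The paper's key idea for the upper bound is to work not with $u$ but with $w$, which satisfies the \emph{first-order} ODE
\[
yw_y=w-\Bigl(\tfrac12+\tfrac{n-1}{u^2}\Bigr)w^2+\tfrac12 y^2(1+u_y^2)(w-2),
\]
and to construct an explicit upper barrier $\bar w=2+K\bigl(y^{-2}+(a^2-y^2)^{-1}\bigr)$ with $K=20(n-1)$, on an interval $[y_0,y_{Ma}]$ where $y_{Ma}=a-a^{-1}\psi(M,a)$ is the edge of a tip cap of width $\sim M/a$. The boundary value $w(y_{Ma})$ is computed from a careful tip expansion in which the rescaled profile converges to the translating Bowl soliton (this is where the constant $C_0$ comes from), and one checks $w(y_{Ma})<\bar w(y_{Ma})$ for $M$ large. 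Integrating $w\le\bar w$ then gives
\[
2(n-1)-u^2\ \ge\ \bigl(2(n-1)-\cO(a^{-2}\ln a)\bigr)\,\frac{y^2-K/2}{a^2},
\]
which is the stated upper bound; the logarithm arises from $\int^{y_{Ma}}\frac{d\eta}{\eta(a^2-\eta^2)}$, not from any linearized profile. So the missing idea is precisely this reduction to a first-order equation in $w$: it removes the bad zeroth-order sign, makes the barrier explicit, and lets the tip analysis supply the single boundary datum needed.
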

The constant $C_0=\Psi(M)+\cO(a^{-2})$ is determined in Proposition~\ref{prop-u-upper-bound}.

\subsection{Construction of $\Sigma_a$}
A surface obtained by rotating a curve $\gamma\subset\R\times[0,\infty)$ about the $y$-axis satisfies the self shrinker equation \eqref{eq-self-shrinker} if and only if the curve $\gamma$ satisfies
\begin{equation}
k -\frac{y} {2}\sin\theta + \Bigl( \frac{r} {2} - \frac{n-1} {r}\Bigr)\cos\theta = 0,
\label{eq-symmetric-shrinker-general}
\end{equation}
where $k$ is the curvature of $\gamma$, and $\theta$ is the angle between the tangent to $\gamma$ and the $y$-axis.

If we parametrize $\gamma$ by Euclidean arc length, then we can rewrite \eqref{eq-symmetric-shrinker-general} as a system of three ordinary differential equations
\begin{equation}
y_s = \cos \theta,  \qquad
r_s = \sin \theta,  \qquad
\theta_s =  \Bigl(\frac{n-1} {r} - \frac{r} {2}\Bigr)\cos \theta
                + \frac{y} {2} \sin \theta.
\label{eq-shrinker-ode}
\end{equation}
This system of differential equations is regular in the region $\{ (y,r,\theta) \in \R\times \R_+\times\R : r>0\}$, and thus there is a unique solution of \eqref{eq-shrinker-ode} for any given point $(y_0, r_0)$ in the upper half plane, and any given initial angle $\theta_0\in\R$.  This solution can be extended uniquely and indefinitely, unless $r\searrow0$, i.e.~unless it reaches the $y$-axis.  These are familiar facts from Riemannian geometry once one realizes that curves satisfying equation~\eqref{eq-symmetric-shrinker-general} are exactly geodesics for the Huisken metric
\[
g = r^{n-1}e^{-(y^2+r^2)/4}\bigl\{ (dy)^2 + (dr)^2\bigr\}.
\]

\subsection{Analysis of $\Sigma_a$ near the tip}
We begin our construction of $\Sigma_a$ by proving the existence of a short segment of $\gamma$ near the $y$-axis.  The surface $\Sigma_a$ can only be smooth if $\gamma$ meets the $y$-axis perpendicularly.  We may therefore represent an initial segment of $\gamma$ as the graph of a function $y=h(r)$.  For such graphs the condition \eqref{eq-symmetric-shrinker-general} on $\gamma$ is equivalent with the differential equation
\begin{equation}
\label{eq-shrinker-graph}
\frac{h_{rr}} {1+h_r^2} + \Bigl(\frac{n-1} {r} - \frac{r} {2}\Bigr)h_r + \frac{h} {2} = 0, \qquad h(0)=a, \quad h_r(0)=0.
\end{equation}
The following arguments establish the existence of a solution to this problem on some short interval $r\in[0, r_a)$ for any $a\in\R$, but we will only need the solution for large $a$.  Therefore we expand the surface near the tip and assume $h$ is of the form
\[
h(r, a) = a - \frac{1} {a} \psi(ar, a)
\]
If we let $\rho = ar$, then \eqref{eq-shrinker-graph} for $h$ is equivalent with
\begin{equation}
\frac{\psi_{\rho\rho}} {1+\psi^2_\rho}
+ \frac{n-1}\rho \psi_\rho - \frac12
=
\frac{1} {2a^2} \bigl( \rho\psi_\rho - \psi\bigr),
\qquad \psi(0) = \psi'(0) = 0.
\label{eq-psi-ode}
\end{equation}
We may regard $\epsilon = 1/2a^2$ as a small parameter.  For $\epsilon=0$ ($a=\infty$) the equation for $\psi$ reduces to
\begin{equation}
\frac{\psi _{\rho\rho}} {1+\psi^2_\rho}
+ \frac{n-1}\rho \psi_\rho = \frac12
\qquad \psi(0) = \psi'(0) = 0,
\label{eq-translator-ode}
\end{equation}
which is exactly the ODE for the rotationally symmetric translating soliton.  The arguments above imply the existence of a solution to \eqref{eq-translator-ode}.  The following expansion is proved in [AV1995].
\begin{lemma}
\label{lem-translating-bowl} % the translating bowl soliton
The differential equation \eqref{eq-translator-ode} has a unique solution $\Psi(\rho)$.  This solution is concave, decreasing, and for $\rho\to\infty$ satisfies
\begin{gather*}
\Psi(\rho) = \frac{1} {4(n-1)} \rho^2 -2\log \rho + C_0 +  \cO(\rho^{-2})\\
\Psi'(\rho) = \frac{1} {2(n-1)}\rho -\frac{2} {\rho} + \cO(\rho^{-3})\\
\Psi''(\rho) = \frac{1} {2(n-1)} + \frac{2} {\rho^2} + \cO(\rho^{-4}).
\end{gather*}
\end{lemma}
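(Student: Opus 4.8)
The plan is to analyze the singular boundary value problem \eqref{eq-translator-ode} by passing to the first order ODE for $w:=\Psi'$, namely
\[
w' = (1+w^2)\Bigl(\tfrac12 - \tfrac{n-1}{\rho}\,w\Bigr),\qquad w(0)=0 .
\]
The point $\rho=0$ is a regular singular point, and near it one expects $w(\rho)=\tfrac{1}{2n}\rho+\cO(\rho^3)$. First I would establish local existence and uniqueness of such a solution by a contraction mapping argument for the integral equation $w(\rho)=\int_0^\rho (1+w^2)(\tfrac12-\tfrac{n-1}{s}w)\,ds$ on a short interval $[0,\delta]$, using the weighted space $\{w:|w(\rho)|\le M\rho\}$ on which the singular term $\tfrac{n-1}{s}w$ is controlled; on $\rho>0$ the equation is regular, so uniqueness propagates, and $\Psi$ is recovered by one more integration.

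Next I would prove the two–sided barrier
\[
0 < w(\rho) < \frac{\rho}{2(n-1)}\qquad(\rho>0),
\]
which at once yields all the qualitative statements and global existence. Both inequalities follow from a maximum–principle argument: if the left one failed first at some $\rho_0>0$ then $w(\rho_0)=0$ forces $w'(\rho_0)=\tfrac12>0$, contradicting $w'(\rho_0)\le0$; and setting $g(\rho):=\tfrac{\rho}{2(n-1)}-w(\rho)$ (which is positive for small $\rho>0$ since $g'(0^+)=\tfrac{1}{2n(n-1)}>0$), if $g$ first vanished at $\rho_0>0$ then, because $w(\rho_0)=\tfrac{\rho_0}{2(n-1)}$ makes the factor $\tfrac12-\tfrac{n-1}{\rho_0}w(\rho_0)$ vanish, we would get $g'(\rho_0)=\tfrac{1}{2(n-1)}>0$, again a contradiction. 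From $w>0$ we conclude $\Psi$ is increasing, from $w<\tfrac{\rho}{2(n-1)}$ we get $w'=(1+w^2)(\tfrac12-\tfrac{n-1}{\rho}w)>0$ so $\Psi$ is convex, and the same bound keeps $w$ bounded on every finite interval, so the solution extends to all of $[0,\infty)$ — equivalently, the graph $y=h(r)=a-\Psi(ar)/a$ is the decreasing, concave profile of the translating cap, which is the sense in which $\Psi$ is ``concave, decreasing.''

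For the asymptotics I would work with $g=\tfrac{\rho}{2(n-1)}-w>0$, which satisfies
\[
g' = \frac{1}{2(n-1)} - (1+w^2)\,\frac{n-1}{\rho}\,g .
\]
A first step shows $w/\rho\to\tfrac{1}{2(n-1)}$: if not, the barrier and the equation force $w'$ to be bounded below by a positive multiple of $w^2$, making $w$ superlinear and contradicting $w<\tfrac{\rho}{2(n-1)}$. Then $(1+w^2)\tfrac{n-1}{\rho}=\tfrac{\rho}{4(n-1)}(1+o(1))$, so the (asymptotically) linear ODE for $g$ balances at $g\sim\tfrac{2}{\rho}$, giving $\rho g(\rho)\to2$. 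Substituting $w=\tfrac{\rho}{2(n-1)}-\tfrac{2}{\rho}+\cdots$ back into the equation and iterating (bootstrapping the remainder) yields $g=\tfrac{2}{\rho}+\cO(\rho^{-3})$, i.e. $\Psi'(\rho)=\tfrac{\rho}{2(n-1)}-\tfrac{2}{\rho}+\cO(\rho^{-3})$; integration gives $\Psi(\rho)=\tfrac{1}{4(n-1)}\rho^2-2\log\rho+C_0+\cO(\rho^{-2})$ for a suitable constant $C_0$, and either differentiating this or re-reading $\Psi''=(1+w^2)\tfrac{n-1}{\rho}g$ gives $\Psi''=\tfrac{1}{2(n-1)}+\tfrac{2}{\rho^2}+\cO(\rho^{-4})$. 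The main obstacle is precisely this last bootstrap step: pinning down the exact coefficients $-2$, $C_0$, $+2$ requires carefully propagating the error terms through the nonlinear equation, which is the content of the computation cited in the statement ([AV1995]); everything preceding it is routine ODE analysis.
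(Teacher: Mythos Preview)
The paper does not actually prove this lemma: immediately before the statement it says ``The following expansion is proved in [AV1995]'' and gives no argument. So there is nothing to compare your approach against; you have supplied a proof where the paper simply cites one.

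Your sketch is essentially correct and follows the natural route. A few remarks. First, you rightly flag that the phrase ``concave, decreasing'' in the lemma is a slip: from $\Psi'(0)=0$, $\Psi'>0$, and $\Psi''>0$ (all of which your barrier $0<w<\rho/(2(n-1))$ yields), $\Psi$ is increasing and convex; it is the associated height function $h(r)=a-\Psi(ar)/a$ that is decreasing and concave. Second, your local existence/uniqueness at the singular point $\rho=0$ is fine by contraction, and in fact the paper uses exactly this kind of analysis (an unstable-manifold argument for a desingularized first-order system) in the very next lemma for the $\epsilon>0$ problem, so your method is in the same spirit. Third, the step ``$w/\rho\to\tfrac{1}{2(n-1)}$'' can be made clean without the slightly informal blow-up argument: since $(\arctan w)'=\tfrac12-\tfrac{n-1}{\rho}w=\tfrac{n-1}{\rho}g>0$ and $\arctan w<\pi/2$, the positive integrand $\tfrac{n-1}{\rho}g$ is integrable on $[1,\infty)$; combined with the monotonicity of $w$ this forces $g=o(\rho)$ and hence $w/\rho\to\tfrac{1}{2(n-1)}$. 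After that, your linearization $g'=\tfrac{1}{2(n-1)}-(1+w^2)\tfrac{n-1}{\rho}\,g$ with $(1+w^2)\tfrac{n-1}{\rho}\sim\tfrac{\rho}{4(n-1)}$ indeed gives $g\sim 2/\rho$, and one more pass through the equation pins down the $\cO(\rho^{-3})$ remainder; integrating and differentiating recovers the stated expansions for $\Psi$ and $\Psi''$. Your own caveat that this last bootstrap is where the real bookkeeping lies is exactly right, and is precisely what the paper outsources to [AV1995].
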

\begin{lemma}
\label{lem-tip-expansion}
The ODE \eqref{eq-psi-ode} has a unique solution which can be written as
\[
\psi = \psi(\rho, a) = \tilde\psi\bigl(\rho, \frac{1} {2a^2}\bigr)
\]
where $\tilde\psi(\rho,\epsilon)$ is a real analytic function of two variables.
\end{lemma}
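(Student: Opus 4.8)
The plan is to read \eqref{eq-psi-ode} as an ordinary differential equation with a regular singular point at $\rho=0$ and to invoke the classical Briot--Bouquet theorem on analytic ODEs with such a singularity, which delivers existence, uniqueness of the solution that remains bounded at the origin, and joint analyticity in $\rho$ and in the small parameter $\epsilon:=1/(2a^2)$, all at once.

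Concretely, I would first put \eqref{eq-psi-ode} into the standard form $\rho\,\tfrac{d}{d\rho}(\psi,p)=f(\rho,\psi,p,\epsilon)$, where $p:=\psi_\rho$. Multiplying the equation $p_\rho=(1+p^2)\bigl[\tfrac12-\tfrac{n-1}{\rho}\,p+\epsilon(\rho p-\psi)\bigr]$ by $\rho$ cancels the only apparent singularity and gives
\[
\rho\,\psi_\rho=\rho\,p,\qquad
\rho\,p_\rho=-(n-1)\,p-(n-1)\,p^{3}+\rho\,(1+p^{2})\bigl[\tfrac12+\epsilon(\rho p-\psi)\bigr],
\]
so $f$ is real analytic in $(\rho,\psi,p,\epsilon)$ with $f(0,0,0,\epsilon)=0$. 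A direct computation gives $D_{(\psi,p)}f(0,0,0,\epsilon)=\Lambda:=\operatorname{diag}\bigl(0,\,-(n-1)\bigr)$, so the eigenvalues of the linearization at the singular point are $0$ and $-(n-1)$, neither of which is a positive integer. (The eigenvalue $0$ is harmless: the corresponding equation is just $\psi_\rho=p$, which is regular, so one may equally well integrate $\psi$ out and run the argument on the scalar Briot--Bouquet equation for $p$ alone.) The theorem then yields a unique solution $(\psi,p)(\rho,\epsilon)$, holomorphic near $(\rho,\epsilon)=(0,0)$ and holomorphic in the parameter, with $(\psi,p)\to(0,0)$ as $\rho\to0$; its first component is the desired function, which we call $\tilde\psi(\rho,\epsilon)$. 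The vanishing at $\rho=0$ is exactly $\psi(0)=\psi_\rho(0)=0$, and evaluating \eqref{eq-psi-ode} at $\rho=0$ forces $\psi_{\rho\rho}(0)=\tfrac1{2n}$; at $\epsilon=0$ the function $\tilde\psi(\cdot,0)$ coincides with the translator profile $\Psi$ of Lemma~\ref{lem-translating-bowl}.

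It remains to check that $\tilde\psi(\cdot,\epsilon)$ is \emph{the} solution of \eqref{eq-psi-ode} with the prescribed data. Any $C^1$ solution near $\rho=0$ with $\psi(0)=\psi_\rho(0)=0$ gives a continuous solution of the singular system tending to $0$ at the origin; since the nonzero eigenvalue of $\Lambda$ is negative, the analytic solution is the only one that stays bounded there (the other solutions of the system blow up like $\rho^{-(n-1)}$, or like $\log\rho$ when $n=2$). Finally, \eqref{eq-psi-ode} is invariant under $\rho\mapsto-\rho$, so uniqueness forces $\tilde\psi$ to be even in $\rho$; substituting $\epsilon=1/(2a^2)$ then gives $\psi(\rho,a)=\tilde\psi\bigl(\rho,1/(2a^2)\bigr)$ with $\tilde\psi$ real analytic in two variables, as claimed. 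The only step that requires genuine care is the verification of the Briot--Bouquet hypotheses — chiefly the computation of $\Lambda$ and the non-resonance condition $\lambda_i\notin\mathbb{Z}_{>0}$ — together with the majorant estimate that underpins the convergence of the resulting power series; the rest is bookkeeping.
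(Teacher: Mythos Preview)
Your proposal is correct and follows essentially the same route as the paper: both pass to a first-order system, multiply by $\rho$ to remove the singularity, and then invoke a classical result at the fixed point. The paper phrases this as the analytic, parameter-dependent fast unstable manifold of the autonomized three-variable system (adjoining $\rho$ via $\rho\rho_\rho=\rho$, so the linearization has eigenvalues $\{-(n-1),0,1\}$ and the solution is the unstable curve for the eigenvalue $+1$), whereas you keep $\rho$ as the independent variable and cite Briot--Bouquet on the two-variable system with eigenvalues $\{0,-(n-1)\}$; these are equivalent formulations of the same phenomenon, and the graph of your Briot--Bouquet solution over the $\rho$-axis is exactly the paper's unstable manifold.
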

\begin{proof}[Proof of Lemma~\ref{lem-tip-expansion}]
We can rewrite the ODE \eqref{eq-psi-ode} as a system for three variables $(\psi, \chi=\psi_\rho, \rho)$:
\[
\psi_\rho = \chi,
\qquad
\chi_\rho = (1+\chi^2)\Bigl( -\frac{n-1}\rho \chi + \frac12 + \epsilon(\rho\chi-\psi)\Bigr),
\qquad
\rho_\rho =1.
\]
This system is singular at $\rho=0$.  To remove the singularity we multiply with $\rho$ and get
\begin{equation}\left\{
\begin{aligned}
\rho\psi_\rho & = \rho \chi \\
\rho\chi_\rho &= (1+\chi^2)\Bigl( -(n-1)\chi + \tfrac12\rho + \epsilon\rho(\rho\chi-\psi)
\Bigr) \\
\rho\rho_\rho &=\rho
\end{aligned}\right.
\label{eq-psi-chi-rho-system}
\end{equation}
which is an autonomous system provided we take $\ln\rho$ as new ``time'' variable (if $\varsigma = \ln\rho$ then $\pd/\pd\varsigma = \rho\pd/\pd\rho$.)

For any $\epsilon$ the origin is a fixed point of the system \eqref{eq-psi-chi-rho-system}.  The linearization at the origin is
\[
\begin{pmatrix}
0&    0   &0\\
0& -(n-1) &\tfrac12\\
0& 0 &1
\end{pmatrix}
\]
whose eigenvalues are $\{-(n-1), 0, 1\}$.  The solution we are looking for is the fast unstable manifold of the origin corresponding to the eigenvalue $+1$.  The fast unstable manifold is an analytic curve and depends analytically on the parameter $\epsilon$.  It is tangent to the eigenvector $(0, 1, 2n)$ of the linearization, so that near the origin we can write it as a graph $(\psi,\chi) = (\psi(\rho,\epsilon), \chi(\rho, \epsilon))$ over the $\rho$ axis.  We conclude that \eqref{eq-psi-ode} with $\epsilon=1/(2a^2)$ has a unique solution $\tilde\psi(\rho,\epsilon)$, which is a real analytic function of $\rho$ and $\epsilon$.
\end{proof}
\subsection{Extending the leaf $\Sigma_a$}
We have constructed an initial segment of the curve $\gamma$ near $(a, 0)$.  The curve can then be uniquely extended by solving the system of ODE~\eqref{eq-shrinker-ode}.  While the extension is generally not a graph and may ``loop around'' many times (see Figure~\ref{fig-some-shrinkers}), the initial segment that we have constructed is a graph $y=h(r; a)$ defined for $0\leq r\leq M/a$.  Since $h_r<0$ on that interval we can also represent the initial segment as a graph
\[
r=u(y, a), \qquad y\in [y_{Ma}, a],
\]
where $y_{Ma} = a - a^{-1}\psi(M, a)$.  The condition $H+\frac12 X\cdot\nm=0$ for the surface $\Sigma_a$ is equivalent to this ODE for the function $u$
\begin{equation}
\frac{u_{yy}}{1+u_y^2} - \frac{y}{2} u_y + \frac{u}{2}-\frac{n-1}{u} = 0.
\label{eq-shrinker-ode-u}
\end{equation}
In the end it will follow that $u(y, a)$ is defined for all $y\geq 0$, but at this point we only know it is defined on some interval $y\in(y_*(a), a]$.

Consider the quantity
\begin{equation}
w \stackrel{\rm def} = \frac{yu_y}{\frac{u}{2}-\frac{n-1}{u}}
= -\frac{2yuu_y}{2(n-1) - u^2}
= y\frac{d}{dy} \ln\bigl(2(n-1)-u^2\bigr).
\label{eq-w-def}
\end{equation}
The second order equation \eqref{eq-shrinker-ode-u} implies the following first order equation for $w$
\begin{equation}
yw_y = w - \Bigl(\frac12 +\frac{n-1}{u^2}\Bigr) w^2 
+ \frac12 y^2\bigl(1 +u_y^2\bigr) (w-2).
\label{eq-w-ode}
\end{equation}

\begin{prop}
\label{prop-w-at-A}
$\lim\limits_{y\nearrow a} w(y) = \dfrac{2n}{n-1}$.
\end{prop}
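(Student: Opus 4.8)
The plan is to read off the leading-order behaviour of $u(y,a)$ and of $u_y(y,a)$ as $y\nearrow a$ from the tip analysis in the proof of Lemma~\ref{lem-tip-expansion}, and then substitute directly into the definition \eqref{eq-w-def} of $w$. Recall that near the tip $\Sigma_a$ is the graph $y=h(r,a)$ with $h(r,a)=a-\tfrac1a\psi(ar,a)$, where $\psi(\cdot,a)=\tilde\psi(\cdot,\tfrac1{2a^2})$ is real analytic and satisfies $\psi(0,a)=\psi_\rho(0,a)=0$. Inserting the Taylor expansion $\psi_\rho(\rho,a)=\psi_{\rho\rho}(0,a)\,\rho+\cO(\rho^2)$ into \eqref{eq-psi-ode} and letting $\rho\to0$ (the right-hand side vanishes there) forces $n\,\psi_{\rho\rho}(0,a)-\tfrac12=0$; equivalently, this is the statement in the proof of Lemma~\ref{lem-tip-expansion} that the fast unstable manifold of \eqref{eq-psi-chi-rho-system} is tangent at the origin to the eigenvector $(0,1,2n)$. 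Hence, for each fixed $a$,
\[
\psi(\rho,a)=\frac{\rho^2}{4n}+\cO(\rho^3),\qquad
\psi_\rho(\rho,a)=\frac{\rho}{2n}+\cO(\rho^2)\qquad(\rho\to0).
\]

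From here I would pass to the graph function $h$ and then to its inverse $u$. Since $h(r,a)=a-\tfrac1a\psi(ar,a)$ and $h_r(r,a)=-\psi_\rho(ar,a)$, the displayed expansions give
\[
a-h(r,a)=\frac{a}{4n}\,r^2\,\bigl(1+\cO(r)\bigr),\qquad
h_r(r,a)=-\frac{a}{2n}\,r\,\bigl(1+\cO(r)\bigr)\qquad(r\to0).
\]
Because $h_r<0$ for small $r>0$, the initial segment of $\Sigma_a$ is equally the graph $r=u(y,a)$ of the inverse function, with $u(y,a)\searrow0$ as $y\nearrow a$. Inverting the first relation yields $u(y,a)^2=\tfrac{4n}{a}(a-y)\,(1+o(1))$ as $y\nearrow a$; in particular $u\to0$. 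Differentiating the identity $y=h(u(y,a),a)$ gives $u_y(y,a)=1/h_r(u(y,a),a)$, so that
\[
u_y(y,a)=-\frac{2n}{a\,u(y,a)}\,(1+o(1))\qquad(y\nearrow a).
\]

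Finally I would substitute into \eqref{eq-w-def}. Using $y\to a$, $u\to0$, $u_y=-\tfrac{2n}{au}(1+o(1))$, and $\tfrac u2-\tfrac{n-1}u=-\tfrac{n-1}u(1+o(1))$, the singular factors $1/u$ cancel and
\[
w(y)=\frac{y\,u_y}{\tfrac u2-\tfrac{n-1}u}
=\frac{a\cdot\bigl(-\tfrac{2n}{au}\bigr)(1+o(1))}{-\tfrac{n-1}{u}(1+o(1))}
=\frac{2n}{n-1}\,(1+o(1))\;\longrightarrow\;\frac{2n}{n-1},
\]
which is the claim. (One may equivalently use $w=y\,g'/g$ with $g=2(n-1)-u^2\to2(n-1)$ and $g'=-2uu_y=\tfrac{4n}{a}(1+o(1))$ to reach the same limit.) The only real difficulty here is bookkeeping: one must make sure that the tip expansion of $\psi$ — which in the excerpt is recorded implicitly as the tangency of the fast unstable manifold to $(0,1,2n)$ — suffices to control the inverse function $u(y,a)$ and its first derivative near $y=a$ to leading order. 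Since $a$ is fixed throughout, no uniformity in $a$ is needed, and the analyticity of $\tilde\psi$ from Lemma~\ref{lem-tip-expansion} makes all the error terms above routine.
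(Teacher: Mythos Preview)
Your proof is correct and takes essentially the same approach as the paper: both arguments extract $\psi_{\rho\rho}(0)=\tfrac{1}{2n}$ from the tip ODE \eqref{eq-psi-ode} and use this to evaluate the limit of $w$. The only organizational difference is that the paper substitutes $y=a-\tfrac1a\psi$, $u=\rho/a$, $u_y=-1/\psi_\rho$ directly into the definition of $w$ to write it as a rational function of $\rho$, $\psi$, $\psi_\rho$ and then applies l'H\^opital to $\rho/\psi_\rho$, thereby avoiding the explicit inversion of $h$ that you carry out; the content is the same.
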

\begin{proof}
For any $y<a$ we write $w(y)$ in terms of $\rho$ and $\psi(\rho,a)$:
\[
w = -\frac{y u_y} {\frac{n-1} {u} - \frac{u} {2}}
= - \frac{\bigl(a - \frac{\psi} {a}\bigr) \frac{-1} {\psi_\rho}}
         {\frac{(n-1)a} {\rho} - \frac{\rho} {2a}}
=\frac  {1-\frac{\psi^2} {a^2}}    {n-1-\frac{\rho^2} {2a^2}}
 \frac{\rho} {\psi_\rho}.
\]
It follows that
\[
\lim_{y\nearrow a} w(y) 
= \lim_{\rho\searrow 0}
  \frac{1-\frac{\psi^2} {a^2}} {1-\frac{\rho^2} {2a^2}}
  \frac{\rho} {\psi_\rho} =\frac{1} {\psi_{\rho\rho}(0)},
\]
by l'Hopital's rule.  We can compute $\psi_{\rho\rho}(0)$ by letting $\rho\to 0$ in \eqref{eq-psi-ode}.  Keeping in mind that $\psi=\psi_\rho=0$ at $\rho=0$, and thus $\lim_{\rho\to0}\psi_\rho/\rho = \psi_{\rho\rho}(0)$, we get
\[
n\psi_{\rho\rho}(0) = \frac{1} {2}, \text{ i.e. } \psi_{\rho\rho}(0) = \frac{1} {2n}.
\]

\end{proof}

\begin{prop} On the interval $(y_*(a),a)$ where the function $u(y, a)$ is defined we have $w > 2$.
\end{prop}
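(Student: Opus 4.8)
The plan is to run a continuity (barrier) argument for the scalar quantity $w$ using its first-order equation \eqref{eq-w-ode}, anchored at the tip by Proposition~\ref{prop-w-at-A}. Since $\frac{2n}{n-1} = 2 + \frac{2}{n-1} > 2$, that proposition gives $\lim_{y\nearrow a} w(y) > 2$, so $w > 2$ on an interval $(a-\delta, a)$; the goal is to propagate this all the way down to $y_*(a)$.

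The mechanism is the sign of the right-hand side of \eqref{eq-w-ode} at the value $w = 2$. There the coefficient of $\tfrac12 y^2(1+u_y^2)$, namely $w-2$, vanishes, and what remains is $2 - \bigl(\tfrac12 + \tfrac{n-1}{u^2}\bigr)\cdot 4 = -\tfrac{4(n-1)}{u^2}$. Hence at any point $y$ of the graphical interval where $w(y)=2$ one has
\[
y\, w_y(y) = -\frac{4(n-1)}{u(y)^2} < 0 .
\]
For $y>0$ this says $w_y(y)<0$, so $w$ is strictly decreasing as it crosses the level $2$; for $y=0$ it says $0 = -\tfrac{4(n-1)}{u(0)^2}$, which is absurd, so $w$ cannot equal $2$ at $y=0$. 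More precisely, evaluating \eqref{eq-w-ode} at $y=0$ gives $0 = w(0)\bigl(1 - (\tfrac12 + \tfrac{n-1}{u(0)^2})\,w(0)\bigr)$, whose only roots are $w(0)=0$ and $w(0) = \bigl(\tfrac12 + \tfrac{n-1}{u(0)^2}\bigr)^{-1} = \tfrac{2u(0)^2}{u(0)^2 + 2(n-1)} < 2$ (using $u(0)^2 < 2(n-1)$); in particular $w(0)>2$ is impossible.

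With these facts, I would argue as follows. Let $y_1 := \inf\{\,y \in (y_*(a),a) : w>2 \text{ on } (y,a)\,\}$, which is well defined and $\ge y_*(a)$ by the first paragraph. Suppose $y_1 > y_*(a)$. Then $w$ is continuous at $y_1$, so $w(y_1)=2$, while $w>2$ on $(y_1,a)$; letting $y\searrow y_1$ gives $w_y(y_1)\ge 0$. If $y_1>0$ this contradicts $y_1 w_y(y_1) = -4(n-1)/u(y_1)^2 < 0$; if $y_1=0$ it contradicts $0 = -4(n-1)/u(0)^2$; and if $y_1<0$ then $0\in(y_1,a)$ forces $w(0)>2$, contradicting the previous paragraph. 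Hence $y_1 = y_*(a)$, i.e.\ $w>2$ on all of $(y_*(a),a)$. As a byproduct, integrating $w/y > 2/y$ from $y$ to $a$ for $y>0$ recovers the lower bound \eqref{eq-u-lower-bound}.

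The only point needing a little care is that $w$ is smooth and finite throughout $(y_*(a),a)$, so that \eqref{eq-w-ode} and the continuity argument genuinely apply: this is exactly the requirement that the graph $r=u(y,a)$ stay strictly inside the cylinder $r=\sqrt{2(n-1)}$ on its graphical part, equivalently $u(y,a)^2 < 2(n-1)$, which follows from the tip expansion of the previous subsection together with concavity of $u$. Apart from that there is no hard step — the content of the proposition is essentially the one-line sign computation for the right-hand side of \eqref{eq-w-ode} at $w=2$.
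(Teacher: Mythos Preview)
Your proof is correct and is essentially the paper's argument: both evaluate \eqref{eq-w-ode} at a hypothetical first point $y_1$ where $w=2$ to obtain $y_1 w_y(y_1)=-4(n-1)/u(y_1)^2<0$, contradicting $w_y(y_1)\ge 0$. The paper simply restricts to $y_1\in(0,a)$ and does not treat the cases $y_1\le 0$ or the finiteness of $w$, so your version is if anything more careful; one small correction is that your appeal to ``concavity of $u$'' for $u^2<2(n-1)$ is not available at this stage --- the right observation is that $w>2$ on $(y_1,a)$ together with continuity from the tip forces $u_y<0$ and $u^2<2(n-1)$ there, and if $u^2\to 2(n-1)$ then $w\to+\infty$ rather than dropping to $2$.
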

\begin{proof}
By proposition~\ref{prop-w-at-A} we know that $w(y)>2$ for $y$ close to $a$.  To reach a contradiction assume that there is a $y_1 \in (0,a)$ with $w(y_1)=2$, and choose $y_1$ to be the largest $y$ with this property.  Then the differential equation \eqref{eq-w-ode} for $w$ implies that at $y_1$ we have
\[
y_1\frac{dw} {dy}
= 2 - \left(\frac12 + \frac{n-1} {u(y_1)^2}\right) 2^2
= -\frac{4(n-1)} {u(y_1)^2} < 0.
\]
On the other hand $w>2$ on $(y_1, a)$ and $w(y_1)=2$ implies $w'(y_1) \geq 0$, and we have our contradiction.
\end{proof}
\begin{prop}
\label{prop-u-lower-bound}
For $y\in(y_*(a), a)$ we have $u(y) > \sqrt{2(n-1)}\sqrt{1-y^2/a^2}$.
\end{prop}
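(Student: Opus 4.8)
The plan is to run a one-step monotonicity argument built on the quantity $w$ of \eqref{eq-w-def} together with the bound $w>2$ just established. The first observation is that, since both sides are nonnegative for $0\le y<a$, the inequality to be proven is equivalent to $u(y,a)^2>2(n-1)(1-y^2/a^2)$, i.e.\ to
\[
g(y):=\frac{2(n-1)-u(y,a)^2}{y^2}<\frac{2(n-1)}{a^2},\qquad y\in(y_*(a),a).
\]
The right-hand side is precisely $\lim_{y\nearrow a}g(y)$, because the leaf was constructed near the tip as the graph of $h(r,a)=a-\tfrac1a\psi(ar,a)$ with $h(0,a)=a$, so that $u(y,a)\to0$ as $y\nearrow a$. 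Hence it suffices to show that $g$ is strictly increasing on $(y_*(a),a)$.

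The increase of $g$ is where $w$ enters. First I would note that, since $w$ is finite on $(y_*(a),a)$, the factor $\tfrac u2-\tfrac{n-1}{u}$ appearing in the denominator in \eqref{eq-w-def} never vanishes there, so $2(n-1)-u^2\ne0$ on the whole (connected) interval; near $y=a$ we have $u\to0$, hence $2(n-1)-u^2>0$ there, and by continuity $2(n-1)-u^2>0$ throughout, so $g>0$. Then, directly from the definition $w=y\,\tfrac{d}{dy}\log\!\bigl(2(n-1)-u^2\bigr)$,
\[
\frac{d}{dy}\log g(y)=\frac{d}{dy}\log\!\bigl(2(n-1)-u^2\bigr)-\frac2y=\frac{w(y)-2}{y}>0 ,
\]
using $w>2$ and $y>0$. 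Thus $g$ is strictly increasing, and comparing with its limit at $a$ gives $g(y)<2(n-1)/a^2$; rearranging and taking positive square roots yields $u(y,a)>\sqrt{2(n-1)}\sqrt{1-y^2/a^2}$.

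The only point needing a little care is the positivity $2(n-1)-u(y,a)^2>0$ on all of $(y_*(a),a)$, which is really where the strict bound $w>2$ (rather than mere finiteness) is used implicitly: $w$ cannot stay finite across a point where $u^2=2(n-1)$ and $u_y\ne0$, and this possibility is excluded together with the smooth extension of the leaf from the tip. If one prefers to avoid this bookkeeping, the same result follows from a barrier argument in the style of the preceding propositions: with $G(y):=u(y,a)^2-2(n-1)(1-y^2/a^2)$ one has $G(a)=0$, $G>0$ just below $a$ (either from the tip expansion $u(y,a)^2\sim 4n(a-y)/a$ or from the monotonicity of $g$ near $a$, which only needs $w\to\tfrac{2n}{n-1}>2$ from Proposition~\ref{prop-w-at-A}), and at the largest interior zero $y_1$ of $G$ one computes $G'(y_1)=\tfrac{2(n-1)y_1}{a^2}\bigl(2-w(y_1)\bigr)<0$, contradicting $G'(y_1)\ge0$. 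Either way the argument amounts to a single integration once $w>2$ is available, so I do not expect any real obstacle here.
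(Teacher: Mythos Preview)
Your proof is correct and is essentially the same as the paper's: both arguments amount to integrating the identity $w=y\,\frac{d}{dy}\ln\bigl(2(n-1)-u^2\bigr)$ together with $w>2$, which is exactly your monotonicity of $g(y)=(2(n-1)-u^2)/y^2$. The paper just writes the integration directly rather than phrasing it as monotonicity of $g$; your extra care in checking $2(n-1)-u^2>0$ on the whole interval is a nice clarification.
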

\begin{proof}
Using $u(a)=0$ and \eqref{eq-w-def} we can integrate to get
\[
\ln \frac{2(n-1)-u(y)^2} {2(n-1) - u(a)^2}
= - \int_y^a \frac{w(\bar y)} {\bar y} d\bar y
< -2 \ln \frac{a} {y}.
\]
This leads to $2(n-1)-u^2 > 2 y^2/a^2$ and thus $u(y)^2 > 2(n-1)\bigl(1-y^2/a^2\bigr)$.
\end{proof}
The analogous upper bound for $u$ is less simple.  To derive it we will first estimate $w(y_{Ma})$, and find the upper bound \eqref{eq-w-upper-bound} for $w(y)$ on an interval $[y_0, y_{Ma}]$, for an appropriate choice of the constant $y_0$ (it turns out $y_0=5\sqrt{n-1}$ will work).  Integration then leads to an upper bound for $u(y)$ on that same interval.

\begin{prop}
\label{prop-w-at-yM}
For any $M>0$ we have
\begin{equation}
w(y_{Ma}) 
= \frac{1}{n-1}  \frac{M} {\Psi'(M)} + \cO(a^{-2}), \qquad (a\to\infty).
\label{eq-w-at-yA}
\end{equation}
For $M\to\infty$ we have
\begin{equation}
\frac{M} {\Psi'(M)} = 2(n-1)+\frac{8(n-1)^2} {M^2} + \cO(M^{-4}).
\label{eq-w-in-terms-of-M}
\end{equation}
\end{prop}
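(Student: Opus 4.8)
The plan is to write $w$ at $y=y_{Ma}$ in closed form using the tip parametrization of $\Sigma_a$, and then send $a\to\infty$ with $M$ fixed. Recall from the computation in the proof of Proposition~\ref{prop-w-at-A} that on the initial graph segment of $\Sigma_a$, where $r=u(y,a)$ is described through $\rho=ar$ by $y=a-a^{-1}\psi(\rho,a)$, $u=\rho/a$ and $u_y=-1/\psi_\rho$, one has
\[
w=\frac{\rho\,(1-\psi/a^2)}{\psi_\rho\,\bigl((n-1)-\rho^2/(2a^2)\bigr)} .
\]
Since $y=y_{Ma}=a-a^{-1}\psi(M,a)$ corresponds precisely to $\rho=M$, this is the exact identity
\[
w(y_{Ma})=\frac{M}{\psi_\rho(M,a)}\cdot\frac{1-\psi(M,a)/a^2}{(n-1)-M^2/(2a^2)} .
\]

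To extract the $a\to\infty$ behavior I would invoke Lemma~\ref{lem-tip-expansion}: $\psi(\rho,a)=\tilde\psi(\rho,\tfrac1{2a^2})$ with $\tilde\psi(\rho,\epsilon)$ real analytic in $(\rho,\epsilon)$ and $\tilde\psi(\rho,0)=\Psi(\rho)$, the Bowl profile solving \eqref{eq-translator-ode}. Since $\Psi$ exists on all of $[0,\infty)$ by Lemma~\ref{lem-translating-bowl}, smooth dependence of the solution of the desingularized system \eqref{eq-psi-chi-rho-system} on the parameter $\epsilon=1/(2a^2)$ propagates the expansion from a neighborhood of the singular point $\rho=0$ to the fixed compact interval $\rho\in[0,M]$, giving
\[
\psi(M,a)=\Psi(M)+\cO(a^{-2}),\qquad \psi_\rho(M,a)=\Psi'(M)+\cO(a^{-2})
\]
as $a\to\infty$. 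By Lemma~\ref{lem-translating-bowl} we have $\Psi'(M)>0$, so there is no division problem; substituting these expansions into the exact identity above and expanding the quotient, the $a^{-2}$-sized corrections all collapse into a single error term and produce \eqref{eq-w-at-yA}, that is, $w(y_{Ma})=\tfrac1{n-1}\,\tfrac{M}{\Psi'(M)}+\cO(a^{-2})$.

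For the second identity \eqref{eq-w-in-terms-of-M} I would only substitute the large-$\rho$ expansion of $\Psi'$ from Lemma~\ref{lem-translating-bowl}, namely $\Psi'(M)=\tfrac{M}{2(n-1)}-\tfrac2M+\cO(M^{-3})$, into $M/\Psi'(M)$. Factoring $\tfrac{M}{2(n-1)}$ out of the denominator gives
\[
\frac{M}{\Psi'(M)}=2(n-1)\Bigl(1-\frac{4(n-1)}{M^2}+\cO(M^{-4})\Bigr)^{-1},
\]
and a geometric expansion of the bracket yields $2(n-1)+\tfrac{8(n-1)^2}{M^2}+\cO(M^{-4})$, which is \eqref{eq-w-in-terms-of-M}.

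The only step that requires real care is the uniformity of the parameter expansion $\psi(\rho,a)=\Psi(\rho)+\cO(a^{-2})$, together with its $\rho$-derivative, on the whole interval $[0,M]$ and not merely near $\rho=0$: the analytic fast unstable manifold description of Lemma~\ref{lem-tip-expansion} handles a neighborhood of $\rho=0$, but one then continues the solution and uses $C^1$-dependence on $\epsilon$ of solutions of \eqref{eq-psi-chi-rho-system} up to $\rho=M$. This is standard ODE theory, essentially the continuous-dependence argument already underlying the construction of $\Sigma_a$, but it is the point where the proof does its work.
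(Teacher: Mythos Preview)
Your proof is correct and follows essentially the same route as the paper: write $w(y_{Ma})$ exactly via the tip parametrization $u=\rho/a$, $u_y=-1/\psi_\rho$, $y=a-a^{-1}\psi$, then replace $\psi(M,a)$ and $\psi_\rho(M,a)$ by $\Psi(M)$, $\Psi'(M)$ up to $\cO(a^{-2})$ and expand; for \eqref{eq-w-in-terms-of-M} both you and the paper substitute the large-$\rho$ expansion of $\Psi'$ from Lemma~\ref{lem-translating-bowl}. If anything you are more explicit than the paper in flagging that the analyticity of Lemma~\ref{lem-tip-expansion} is stated near $\rho=0$ and must be propagated to $\rho=M$ by smooth dependence of the regular ODE on the parameter $\epsilon=1/(2a^2)$; the paper uses this implicitly.
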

\begin{proof}
Our initial segment ends at
\[
y=y_{Ma} = a - a^{-1} \psi(M, a) = a+\cO(a^{-1}).
\]
At $y=y_{Ma}$ we have $u=M/a$ and
\[
u_y = - \psi_\rho(M,a)^{-1}.
\]
Thus we also have
\begin{align*}
w(y_{Ma})
&= - \frac{2y_{Ma} \cdot (M/a) \cdot \bigl(-1/\psi_\rho(M, a)\bigr)}
     {2(n-1) - (M/a)^2}\\
&= 2 \left(\frac{1-\frac{\psi} {a^2}} {2(n-1)-\frac{M^2} {a^2}} \right)
     \frac{M} {\psi_\rho(M, a)} \\
&=\left( \frac{1 + \cO(a^{-2})}{n-1}\right)
   \frac{M} {\Psi'(M) +\cO(a^{-2})}
\end{align*}
To prove \eqref{eq-w-in-terms-of-M} we use the asymptotic expansion from Lemma~\ref{lem-translating-bowl} for $\Psi(\rho)$ for large $\rho$:
\begin{multline*}
\frac{M} {\Psi'(M)}
= \frac{M} {\frac{M}{2(n-1)} - \frac2M + \cO(M^{-3})} \\
=\frac{2(n-1)} {1 - 4(n-1)/M^2 + \cO(M^{-4})}
=2(n-1)+\frac{8(n-1)^2} {M^2} + \cO(M^{-4}),
\end{multline*}
as claimed.
\end{proof}
\begin{prop}
\label{prop-w-upper-barrier}
There exist constants $K$ and $y_0$ for which one can choose $M>0$ such that
\begin{equation}
\label{eq-w-upper-bound}
w(y) \leq 2 +  \frac{K} {a^2-y^2} + \frac{K} {y^2}
\end{equation}
holds for all $y\in[y_0, y_{Ma}]$ with $y>y_*(a)$, provided $a$ is sufficiently large.
\end{prop}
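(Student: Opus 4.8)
The plan is to prove \eqref{eq-w-upper-bound} by a comparison (barrier) argument for the first order ODE \eqref{eq-w-ode}, integrated in the direction of \emph{decreasing} $y$, starting from the point $y=y_{Ma}$ at the end of the tip--segment constructed above, where $w$ is already controlled by Proposition~\ref{prop-w-at-yM}. Fix $y_0:=5\sqrt{n-1}$, let $K$ be a large constant (depending only on $n$) to be chosen, and set
\[
\bar w(y):=2+\bar\epsilon(y),\qquad \bar\epsilon(y):=\frac{K}{a^2-y^2}+\frac{K}{y^2}=\frac{Ka^2}{y^2(a^2-y^2)}.
\]
Substituting $w=2+\epsilon$ into \eqref{eq-w-ode} and using $2-4\bigl(\tfrac12+\tfrac{n-1}{u^2}\bigr)=-\tfrac{4(n-1)}{u^2}$ turns it into
\[
y\,\epsilon_y=-\frac{4(n-1)}{u^2}+\epsilon\Bigl(R-1-\frac{4(n-1)}{u^2}\Bigr)-P\,\epsilon^2,\qquad R:=\tfrac12 y^2(1+u_y^2),\quad P:=\tfrac12+\frac{n-1}{u^2}.
\]
Since the previous proposition gives $w>2$ wherever $u(\cdot,a)$ is defined, it suffices to rule out $w\ge\bar w$ on $[y_0,y_{Ma}]\cap(y_*(a),a)$.

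I would first check the initial inequality at $y=y_{Ma}$. From $y_{Ma}=a-a^{-1}\psi(M,a)$ and $\psi(M,a)=\Psi(M)+\cO(a^{-2})$ (Lemma~\ref{lem-tip-expansion}) one gets $a^2-y_{Ma}^2=2\Psi(M)+\cO(a^{-2})$, hence $\bar w(y_{Ma})=2+\tfrac{K}{2\Psi(M)}+\cO(a^{-2})$, while Proposition~\ref{prop-w-at-yM} together with \eqref{eq-w-in-terms-of-M} gives $w(y_{Ma})=2+\tfrac{8(n-1)}{M^2}+\cO(M^{-4})+\cO(a^{-2})$. Since $\Psi(M)=\tfrac{M^2}{4(n-1)}-2\log M+C_0+\cO(M^{-2})<\tfrac{M^2}{4(n-1)}$ for $M$ large, we have $\tfrac{K}{2\Psi(M)}>\tfrac{2(n-1)K}{M^2}$, so for $K>4$, then $M$ large, then $a$ large, the inequality $w(y_{Ma})<\bar w(y_{Ma})$ holds.

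The heart of the proof is to show that $\bar w$ is a \emph{strict supersolution} of \eqref{eq-w-ode} on $[y_0,y_{Ma}]$, i.e.\ that at every such $y$ one has
\[
-\frac{4(n-1)}{u^2}+\bar\epsilon\Bigl(R-1-\frac{4(n-1)}{u^2}\Bigr)-P\bar\epsilon^2 \;>\; y\,\bar\epsilon_y=\frac{2Ky^2}{(a^2-y^2)^2}-\frac{2K}{y^2}.
\]
Here I would use Proposition~\ref{prop-u-lower-bound} in the form $u^2\ge 2(n-1)\tfrac{a^2-y^2}{a^2}$, which yields $\tfrac{4(n-1)}{u^2}\le\tfrac{2a^2}{a^2-y^2}$ and $P\le\tfrac12+\tfrac{a^2}{2(a^2-y^2)}$; the bound $R\ge\tfrac12 y^2$, giving $\bar\epsilon R=\tfrac{Ky^2(1+u_y^2)}{2(a^2-y^2)}+\tfrac{K(1+u_y^2)}{2}\ge\tfrac{Ky^2}{2(a^2-y^2)}+\tfrac K2$; and the fact that on $[y_0,y_{Ma}]$ we have $y^2\ge 25(n-1)$ and $a^2-y^2\ge a^2-y_{Ma}^2=2\Psi(M)(1+o(1))$, which is as large as we please once $M$ is large. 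Near $y=y_{Ma}$ the piece $\tfrac{Ky^2}{2(a^2-y^2)}$ of $\bar\epsilon R$ dominates both $\tfrac{4(n-1)}{u^2}\le\tfrac{2a^2}{a^2-y^2}$ and $\tfrac{2Ky^2}{(a^2-y^2)^2}$ once $K$ and $\Psi(M)$ are large (this is where the near--tip behaviour is controlled), while in the bulk the terms $\tfrac K2(1+u_y^2)$ and $\tfrac{2K}{y^2}$ dominate the remaining $\cO(1)$ and $\cO(\bar\epsilon)$ contributions, since there $\bar\epsilon\le\tfrac{K}{2\Psi(M)}+\tfrac{K}{25(n-1)}$ and the choice $y_0=5\sqrt{n-1}$ keeps $R-1-\tfrac{4(n-1)}{u^2}$ positive. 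Splitting into the regimes $a^2-y^2\le\tfrac{a^2}{2}$ and $a^2-y^2>\tfrac{a^2}{2}$ and tracking the constants shows the displayed inequality holds strictly for $K$ a sufficiently large constant depending on $n$, then $M$ large, then $a$ large.

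Given the strict supersolution property the comparison is standard: if $w(y)\ge\bar w(y)$ for some $y\in[y_0,y_{Ma})\cap(y_*(a),a)$, let $y_1$ be the largest such $y$ below $y_{Ma}$ with $w(y_1)=\bar w(y_1)$; then $w<\bar w$ on $(y_1,y_{Ma})$, so $(w-\bar w)'(y_1)\le 0$, i.e.\ $y_1 w_y(y_1)\le y_1\bar w_y(y_1)$, contradicting the strict supersolution inequality evaluated at $y_1$ (where $\epsilon=\bar\epsilon$). Hence $w<\bar w$ throughout, which is \eqref{eq-w-upper-bound}. I expect the main obstacle to be exactly the estimate of the third paragraph: verifying the strict supersolution inequality \emph{uniformly} on $[y_0,y_{Ma}]$ — in particular near $y=y_{Ma}$, where $\tfrac{4(n-1)}{u^2}$, $R$, and $P\bar\epsilon^2$ are all large and one must show, using only the one-sided lower bound on $u$, that the $\bar\epsilon R$ term wins — while simultaneously arranging that one fixed $K$ also works in the bulk and is compatible with the initial inequality at $y_{Ma}$.
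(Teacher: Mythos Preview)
Your approach is essentially identical to the paper's: same barrier $\bar w = 2 + K/(a^2-y^2) + K/y^2$, same direction of comparison from $y_{Ma}$ downward, same initial check via Proposition~\ref{prop-w-at-yM}, and the same use of the lower bound for $u$ from Proposition~\ref{prop-u-lower-bound} to control the nonlinearity. The paper bypasses your case-split by the clean observation that $\tfrac12+\tfrac{n-1}{u^2}\le \tfrac{a^2}{a^2-y^2}=y^2w_1$ (where $w_1=a^2/(y^2(a^2-y^2))$) and arranges $2<\bar w<3$ on the whole interval via the choices $y_0^2>2K$, $\Psi(M)>2K$, $K=20(n-1)$, so that the right side of \eqref{eq-wbar-super-solution} is $\ge -9y^2w_1+\tfrac{K}{2}y^2w_1\ge y^2w_1$ while $y\bar w_y\le \tfrac{2K}{2\Psi(M)}\,y^2w_1<y^2w_1$, giving the supersolution inequality in one stroke.
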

The proof below will show that one can choose any $K>16(n-1)$ and $y_0>\sqrt 2K$.
We will choose
\begin{equation}
\label{eq-K-y0-choice}
K= 20(n-1) \text{ and } y_0 = 5\sqrt{n-1}.
\end{equation}
We will later on show that $y_*(a)\leq 0$ for large enough $a$ so that the condition $y>y_*(a)$ is trivially fulfilled for all $y\geq y_0 $.
\begin{proof}
Define
\[
w_1(y) 
= \frac{1} {y^2} + \frac{1} {a^2-y^2}
= \frac{a^2} {y^2\bigl(a^2-y^2\bigr)}.
\]
We will show that
\[
\bw = 2+Kw_1
\]
is an upper barrier for \eqref{eq-w-ode} on the interval $(y_0,y_{Ma})$ when $K=20(n-1)$ and $y_0=8\sqrt{n-1}$, in the sense that it satisfies
\begin{subequations}%
\begin{align}
&y\bw_y < \bw - \Bigl(\frac12 +\frac{n-1}{u^2}\Bigr) \bw^2 + \frac12 y^2\bigl(1 +u_y^2\bigr) (\bw-2)
\label{eq-wbar-super-solution}\\
&w(y_{Ma}) \leq \bw (y_{Ma})
\label{eq-wbar-bigger-at-yMb}
\end{align}%
\end{subequations}
This implies that $w(y)\leq \bw(y)$ for all $y\in (y_0, y_{Ma})$.

To prove \eqref{eq-wbar-super-solution} we begin by estimating the RHS in \eqref{eq-wbar-super-solution}.  We note that by Proposition~\ref{prop-u-lower-bound} we have $u^2\geq 2(n-1) (1-y^2/a^2)$, which implies
\[
\frac{1} {2} + \frac{n-1} {u^2}
\leq \frac{1} {2} + \frac{a^2} {2(a^2-y^2)} 
\leq \frac{a^2} {a^2-y^2} 
= y^2 w_1.
\]
We also note that for $y\in[y_0, y_{Ma}]$ we have
\[
w_1(y) \leq 2\,\max \Bigl\{ \frac{1} {y_0^2}, \frac{1} {a^2-y_{Ma}^2}\Bigr\}.
\]
For large $a$ we can estimate $a^2-y_{Ma}^2$ by
\begin{align}
a^2-y_{Ma}^2 &= a^2 - \Bigl(a- \frac1a \psi(M, a)\Bigr)^2\notag \\
&=2\psi(M, a) - \frac{1} {a^2}\psi(M, a)^2\notag \\
&=2\Psi(M) + \cO(a^{-2}),
\label{eq-A-yMA-estimate}
\end{align}
so that
\[
\max _{[y_0, y_{Ma}]} w_1(y)
\le 2 \max\Bigl\{ \frac{1}{y_0^2}, \frac{1}{\Psi(M)+\cO(a^{-2})}\Bigr\}.
\]
and
\[
\max _{[y_0, y_{Ma}]} \bw(y)
\le  \max\Bigl\{ 2+\frac{2K}{y_0^2},\; 2+\frac{2K}{\Psi(M)+\cO(a^{-2})}\Bigr\}.
\]
We have chosen $y_0$ so that $y_0 > \sqrt{2K}$, and we will choose $M$ so large that $\Psi(M) > 2K$.  Then
\[
2<\bw < 3 \text{ for }y\in (y_0, y_{Ma})
\]
if $a$ is sufficiently large.

We can now estimate the RHS in \eqref{eq-wbar-super-solution} by
\begin{align*}
\text{RHS}
&= \bw - \Bigl(\frac12 +\frac{n-1}{u^2}\Bigr) \bw^2
   + \frac12 y^2\bigl(1 +u_y^2\bigr) (\bw-2) \\
&\geq -y^2 w_1\cdot \bw^2 + \frac12 y^2 Kw_1\\
&\geq -9y^2 w_1 + 10 y^2 w_1\\
&=y^2 w_1,
\end{align*}
on the interval $[y_0, y_{Ma}]$, and assuming $a$ is large enough.

Turning to the LHS in \eqref{eq-wbar-super-solution} we first compute
\begin{multline*}
y\frac{dw_1}{dy} 
=-\frac{2}{y^2} +\frac{2y^2}{\bigl(a^2-y^2\bigr)^2} 
\leq \frac{2y^2}{\bigl(a^2-y^2\bigr)^2}  \\
\leq \frac{2}{a^2-y_{Ma}^2} \frac{a^2}{a^2-y^2} 
\leq \frac{2}{a^2-y_{Ma}^2} y^2 w_1.
\end{multline*}
Using $\Psi(M) > 2K$ and \eqref{eq-A-yMA-estimate} we conclude that the LHS in \eqref{eq-wbar-super-solution} satisfies
\[
y \frac{d\bw}{dy} 
= K y\frac{dw_1} {dy} 
\leq \frac{2K}{a^2-y_{Ma}^2} y^2 w_1 
= \frac{2K} {2\Psi(M)+\cO(a^{-2})} y^2w_1 
< y^2w_1 
\]
on the interval $(y_0,y_{Ma})$, and assuming $a$ is large enough.  It follows that \eqref{eq-wbar-super-solution} holds.

To prove \eqref{eq-wbar-bigger-at-yMb} we note that, because of \eqref{eq-A-yMA-estimate},
\[
\bw(y_{Ma})
= 2+\frac{K} {y_{Ma}^2} + \frac{K} {b^2-y_{Ma}^2}
\geq  2 + \frac{K} {2\Psi(M)} + \cO(b^{-2}).
\]
On the other hand we have, by Proposition~\ref{prop-w-at-yM},
\[
w(y_{Ma}) = \frac{1}{n-1} \frac{M} {\Psi'(M)} + \cO(a^{-2}).
\]
For large $M$ we also have, by the same Proposition,
\[
\frac{1}{n-1}\frac{M}{\Psi'(M)}
= 2 + \frac{8(n-1)}{M^2} + o(M^{-2}).
\]
Thus if $K>16(n-1)$, and if $M$ is large enough we get $w(y_{Ma}) < \bw(y_{Ma})$ for all sufficiently large $a$.

We have now proved both \eqref{eq-wbar-super-solution} and \eqref{eq-wbar-bigger-at-yMb}.  Since $\bw$ is an upper barrier for $w$ on the interval $\max\{y_0, y_*(a)\} \leq y\leq y_{Ma}$ the Proposition follows.
\end{proof}

\begin{prop}
\label{prop-u-upper-bound}% integrate the w estimate
If $a$ is sufficiently large then $y_*(a)\leq 0$, and for $8\sqrt{n-1}\leq y \leq y_{Ma}$ one has
\begin{equation}
u(y)^2 \leq
2(n-1)\left[1 - \Bigl(1-\cO\bigl(\frac{\ln a}{a^2}\bigr) \Bigr) \frac{y^2-10(n-1)}{a^2}
\right].
\label{eq-u-upper-bound}
\end{equation}
In particular, on any bounded interval $8\sqrt{n-1} \le y\le L$ one has
\begin{equation}
u(y)^2 \leq
2(n-1)\left[1 - \frac{y^2-10(n-1)}{a^2} + \cO\bigl(\frac{\ln a}{a^4}\bigr)
\right].
\label{eq-u-upper-bound-y-leq-L}
\end{equation}
\end{prop}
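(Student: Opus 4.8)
The plan is to integrate the first–order identity \eqref{eq-w-def}, which reads $\frac{d}{dy}\ln\bigl(2(n-1)-u^2\bigr)=\frac{w}{y}$, from a generic $y$ up to the right endpoint $y_{Ma}$ of the near‑tip segment, where $u(y_{Ma})=M/a$ is known exactly. This gives
\[
2(n-1)-u(y)^2=\Bigl(2(n-1)-\frac{M^2}{a^2}\Bigr)\,\exp\Bigl(-\int_y^{y_{Ma}}\frac{w(\bar y)}{\bar y}\,d\bar y\Bigr),
\]
so an upper bound for $u(y)^2$, equivalently a lower bound for $2(n-1)-u(y)^2$, reduces to an \emph{upper} bound for $\int_y^{y_{Ma}} w(\bar y)/\bar y\,d\bar y$. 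For that I would feed in the barrier $w\le\bw:=2+Kw_1$ of Proposition~\ref{prop-w-upper-barrier}, with $K=20(n-1)$ and $w_1(y)=a^2/\bigl(y^2(a^2-y^2)\bigr)$, which holds on $[\max\{y_0,y_*(a)\},y_{Ma}]$ with $y_0=8\sqrt{n-1}$.

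The barrier integral is elementary: writing $\bw(\bar y)/\bar y=2/\bar y+K/\bar y^3+K/\bigl(\bar y(a^2-\bar y^2)\bigr)$ and using $1/\bigl(\bar y(a^2-\bar y^2)\bigr)=a^{-2}\bigl(1/\bar y+\bar y/(a^2-\bar y^2)\bigr)$ one gets
\[
\int_y^{y_{Ma}}\frac{\bw(\bar y)}{\bar y}\,d\bar y=2\ln\frac{y_{Ma}}{y}+\frac K2\Bigl(\frac1{y^2}-\frac1{y_{Ma}^2}\Bigr)+\frac K{a^2}\ln\frac{y_{Ma}\sqrt{a^2-y^2}}{y\sqrt{a^2-y_{Ma}^2}}.
\]
Using $a^2-y_{Ma}^2=2\Psi(M)+\cO(a^{-2})$ from \eqref{eq-A-yMA-estimate}, together with $y_{Ma}\le a$ and $y\ge y_0$, the last term is nonnegative and bounded by $\tfrac K{a^2}\bigl(2\ln a+\cO(1)\bigr)=\cO(\ln a/a^2)$. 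Exponentiating, and inserting $y_{Ma}^2=a^2-2\Psi(M)+\cO(a^{-2})$ (so $y^2/y_{Ma}^2=(y^2/a^2)(1+\cO(a^{-2}))$ and $e^{K/(2y_{Ma}^2)}=1+\cO(a^{-2})$) and $2(n-1)-M^2/a^2=2(n-1)(1+\cO(a^{-2}))$, I obtain
\[
2(n-1)-u(y)^2\ \ge\ 2(n-1)\,\frac{y^2}{a^2}\,e^{-K/(2y^2)}\bigl(1-\cO(\tfrac{\ln a}{a^2})\bigr).
\]
For $y\ge y_0=8\sqrt{n-1}$ we have $K/(2y^2)=10(n-1)/y^2<1$, hence $e^{-K/(2y^2)}\ge 1-10(n-1)/y^2$ and $(y^2/a^2)\bigl(1-10(n-1)/y^2\bigr)=(y^2-10(n-1))/a^2$, so
\[
2(n-1)-u(y)^2\ \ge\ 2(n-1)\bigl(1-\cO(\tfrac{\ln a}{a^2})\bigr)\frac{y^2-10(n-1)}{a^2},
\]
which is exactly \eqref{eq-u-upper-bound}. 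On a bounded interval $8\sqrt{n-1}\le y\le L$ the factor $(y^2-10(n-1))/a^2$ is $\cO(a^{-2})$, so the $\cO(\ln a/a^2)$ correction contributes only $\cO(\ln a/a^4)$, which gives \eqref{eq-u-upper-bound-y-leq-L}.

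For the claim $y_*(a)\le 0$: the estimate just proved shows $u(\cdot,a)^2<2(n-1)$ strictly on $[y_0,y_{Ma}]$, so, together with the graphical description near the tip, $u$ is defined on all of $[y_0,a]$ with $u$ staying in a compact subinterval of $(0,\infty)$ and $u_y$ bounded; were $y_*(a)\ge y_0$, then $u$ and $u_y$ would converge to finite limits with $u>0$ at $y_*(a)$, and the ODE \eqref{eq-shrinker-ode-u} would extend past $y_*(a)$, contradicting maximality, so $y_*(a)<y_0$. Below $y_0$ the coefficients of \eqref{eq-shrinker-ode-u} stay bounded as long as $u$ is bounded away from $0$, which holds on the bounded interval $[0,y_0]$ by Proposition~\ref{prop-u-lower-bound}; hence $u(\cdot,a)$ continues down to and past $y=0$, i.e.\ $y_*(a)\le 0$. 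The hard part is the bookkeeping in the exponentiation step: one must keep the $\cO(\ln a/a^2)$ error from the logarithmic cross term separated from, and dominant over, the milder $\cO(a^{-2})$ errors (from replacing $y_{Ma}$ by $a$ and from linearizing $e^{-K/(2y^2)}$), and see that on bounded intervals these collapse to the sharper $\cO(\ln a/a^4)$; moreover, since the barrier of Proposition~\ref{prop-w-upper-barrier} is a priori only available where $2(n-1)-u^2>0$, the whole chain must be read as an open–closed continuation argument in which the resulting estimate itself prevents $2(n-1)-u^2$ from vanishing on $[y_0,y_{Ma}]$.
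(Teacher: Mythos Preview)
Your derivation of \eqref{eq-u-upper-bound} and \eqref{eq-u-upper-bound-y-leq-L} is correct and follows the same route as the paper: integrate $\frac{d}{dy}\ln(2(n-1)-u^2)=w/y$ from $y$ to $y_{Ma}$, feed in the barrier $w\le 2+Kw_1$ from Proposition~\ref{prop-w-upper-barrier}, and track the $\cO(\ln a/a^2)$ contribution from the $1/(\bar y(a^2-\bar y^2))$ piece of $w_1$. Your bookkeeping is in fact slightly more explicit than the paper's.

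There is, however, a gap in your argument for $y_*(a)\le 0$. Below $y_0$ you invoke Proposition~\ref{prop-u-lower-bound} to keep $u$ away from $0$, then say ``the coefficients of \eqref{eq-shrinker-ode-u} stay bounded'' and conclude the solution extends. But \eqref{eq-shrinker-ode-u} is $u_{yy}=(1+u_y^2)\bigl(\tfrac y2 u_y-\tfrac u2+\tfrac{n-1}{u}\bigr)$, and boundedness of the terms involving $u$ alone does not prevent $u_y\to-\infty$ in finite ``time''; geometrically, the curve $\gamma$ could turn vertical before reaching $y=0$, which is precisely what $y_*(a)>0$ would mean. Your barrier for $w$ is only available on $[y_0,y_{Ma}]$, so it gives no control of $u_y$ below $y_0$. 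The paper closes this gap by noting that your own estimate yields $2(n-1)-u(y_0)^2=\cO(a^{-2})$, and together with $w\le 3$ this gives $u_y(y_0)=\tfrac{w}{2uy_0}\bigl(u^2-2(n-1)\bigr)=\cO(a^{-2})$; hence $(u(y_0),u_y(y_0))$ is $\cO(a^{-2})$-close to the equilibrium $(\sqrt{2(n-1)},0)$ of \eqref{eq-shrinker-ode-u}, and continuous dependence on initial data (comparing with the global solution $u\equiv\sqrt{2(n-1)}$) then carries the solution past $y=0$ for all large $a$.
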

\begin{proof}
We integrate the upper bound for $w(y)$ between $y$ and $y_{Ma}$
\begin{align*}
\ln \frac{2(n-1)-(M/a)^2}{2(n-1)-u(y)^2}
&= \int_{y}^{y_{Ma}} \frac{w(\eta)}{\eta} d\eta \\
&\leq \int_{y}^{y_{Ma}}
    \Bigl\{ \frac{2}{\eta} + \frac{K}{\eta^3} + \frac{K}{\eta(a^2-\eta^2)} \Bigr\}d\eta\\
&\leq\ln \frac{y_{Ma}^2}{y^2} + \frac{K}{2y^2}
+ \Bigl[\frac{K}{2a^2} \bigl\{\ln \eta^2 - \ln (a^2-\eta^2)\bigr\}\Bigr]_y^{y_{Ma}}\\
&\leq\ln \frac{y_{Ma}^2}{y^2} + \frac{K}{2y^2} + \frac{CK\ln a}{a^2}.
\end{align*}
Thus
\begin{align}
2(n-1)-u(y)^2
&\geq \Bigl(2(n-1)-\cO\bigl(\frac{1}{a^2}\bigr)\Bigr)
      \frac{y^2}{y_{Ma}^2} e^{-K/2y^2}
      \Bigl(1+\cO\bigl(\frac{\ln a}{a^2}\bigr)\Bigr)\notag\\
&=\Bigl(2(n-1)-\cO\bigl(\frac{\ln a}{a^2}\bigr)\Bigr) \frac{y^2-K/2}{a^2}.
\label{eq-2-minus-usq-estimate}
\end{align}
where we have used $e^{-x}\ge 1-x$ for $x\ge 0$.

This implies \eqref{eq-u-upper-bound} under the assumption that $a$ is large enough (recall that we have chosen $K=20(n-1)$ in \eqref{eq-K-y0-choice}).  We still have to show that $y_*(a)\leq 0$.

The upper bound \eqref{eq-u-upper-bound} for $u$, combined with the complementing lower bound from Proposition~\ref{prop-u-lower-bound} implies that the solution $u(y, a)$ of \eqref{eq-shrinker-ode-u} is a priori bounded and bounded away from $u=0$ on the interval $[8\sqrt{n-1}, y_{Ma}]$.  The bound for $w$ implies that the derivative $u_y = \frac{w} {y} \bigl(\frac u2 - \frac {n-1}u\bigr)$ also is a priori bounded on $[8\sqrt{n-1}, y_{Ma}]$.  It follows that the solution $u(y)$ to \eqref{eq-shrinker-ode-u} can be extended from $y=a$ all the way down to $y=8\sqrt{n-1}$.

At $y=8\sqrt{n-1}$ our estimate \eqref{eq-2-minus-usq-estimate} implies that $2(n-1)-u^2 = \cO(a^{-2})$.  Together with our bound for $w(5\sqrt{n-1})$ and the relation $u_y = \frac{w}{2uy}(u^2-2(n-1))$ we then find that $u_y(8\sqrt{n-1})=\cO(a^{-2})$.  In other words, $(u(8\sqrt{n-1}), u_y(8\sqrt{n-1}))$ is $\cO(a^{-2})$ close to $(\sqrt{2(n-1)},0)$.  Standard theorems on the dependence of solutions to ODE on parameters then imply that the solution can be continued from $y=8\sqrt{n-1}$ down to $y=0$ (and beyond).  Thus $y_*(a)\leq0$ for large enough $a$.
\end{proof}

\subsection{Proof of Lemma~\ref{lem-uA-inner-expansion}}
We now use the bounds in Proposition~\ref{prop-u-lower-bound} and equation \eqref{eq-u-upper-bound} to derive finer asymptotics of the minimizers $u(y,a)$ on bounded intervals $[0, L]$ for large $a$.

\begin{prop}
\label{prop-u-sqrt2-roughestimate} Let $L>0$ be given.  For $y\in [5, 4L]$ we have
\[
\left| u-\sqrt{2(n-1)}\Bigl(1 - \frac{y^2}{2 a^2}\Bigr)\right| \leq \frac{C_n}{a^2}
\]
where $C_n$ only depends on the dimension $n$, and where $a$ must be sufficiently large.
\end{prop}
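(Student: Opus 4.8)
The plan is to sandwich $u(y,a)$ between the two-sided bounds already at our disposal and then read off the expansion by Taylor-expanding a square root. Proposition~\ref{prop-u-lower-bound} gives
\[
u(y)^2 > 2(n-1)\Bigl(1-\frac{y^2}{a^2}\Bigr),
\qquad\text{equivalently}\qquad
2(n-1)-u(y)^2 < 2(n-1)\,\frac{y^2}{a^2},
\]
on the whole interval of definition, while Proposition~\ref{prop-u-upper-bound} — in the form \eqref{eq-u-upper-bound-y-leq-L}, or directly via \eqref{eq-2-minus-usq-estimate} — yields the complementary bound
\[
2(n-1)-u(y)^2 \ge 2(n-1)\,\frac{y^2}{a^2} - \frac{20(n-1)^2}{a^2} + \cO\bigl(\tfrac{\ln a}{a^4}\bigr)
\]
on the relevant range of $y$ (for $a$ large this interval lies in the common domain $[y_0, y_{Ma}]$ of the two propositions, and one checks it covers the range claimed in the statement). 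Combining the two displays,
\[
2(n-1)-u(y)^2 = 2(n-1)\,\frac{y^2}{a^2} + \theta(y,a),
\qquad |\theta(y,a)| \le \frac{C_n}{a^2},
\]
with $C_n$ depending only on $n$ through the choice $K=20(n-1)$ in \eqref{eq-K-y0-choice}.

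The second step restricts to $5\le y\le 4L$ with $L$ fixed and $a\to\infty$, so that $y^2/a^2 = \cO(a^{-2})$ and, by the previous display,
\[
u(y)^2 = 2(n-1)\Bigl(1-\frac{y^2}{a^2}\Bigr) + \cO(a^{-2}) = 2(n-1)\bigl(1+\cO(a^{-2})\bigr),
\]
with all constants depending only on $n$ once $a$ exceeds a threshold depending on $L$. Taking square roots and applying $\sqrt{1+t} = 1 + \tfrac12 t + \cO(t^2)$ with $t = -y^2/a^2 + \cO(a^{-2}) = \cO(a^{-2})$ gives
\[
u(y) = \sqrt{2(n-1)}\Bigl(1 - \frac{y^2}{2a^2}\Bigr) + \cO(a^{-2}),
\]
where the error term absorbs both $\theta$ and the $\cO(t^2)=\cO(a^{-4})$ remainder of the expansion. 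This is exactly the asserted inequality.

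There is no genuine obstacle here: the statement is a soft consequence of the two-sided bounds established in the preceding propositions, and this Proposition is deliberately a rougher, $\cO(a^{-2})$-accurate precursor to the sharper expansion \eqref{eq-uA-inner-expansion} proved afterwards. The only care required is bookkeeping — verifying that each error term which is not manifestly $\cO_n(a^{-2})$ (the $y^4/a^4$ tail produced when expanding the lower bound $\sqrt{1-y^2/a^2}$, the $\tfrac{\ln a}{a^4}\,y^2$ term inherited from \eqref{eq-2-minus-usq-estimate}, and the $\cO(t^2)$ remainder above) is dominated by $C_n/a^2$ once $a$ is large relative to the fixed $L$; since each of these is $o(a^{-2})$ for fixed $L$, the absorption is automatic and $C_n$ genuinely depends on $n$ alone. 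One should also make sure the left endpoint of the interval is handled consistently with the hypothesis $y\ge y_0$ (resp.\ $y\ge 8\sqrt{n-1}$) under which the upper bound of Proposition~\ref{prop-u-upper-bound} was proved.
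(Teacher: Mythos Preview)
Your proposal is correct and follows essentially the same approach as the paper: sandwich $u^2$ between the lower bound of Proposition~\ref{prop-u-lower-bound} and the upper bound of Proposition~\ref{prop-u-upper-bound}, then Taylor-expand the square root and absorb the $o(a^{-2})$ errors into $C_n/a^2$. Your closing remark about the left endpoint is apt: the paper itself quietly works on $[8\sqrt{n-1},4L]$ rather than $[5,4L]$, so the discrepancy you flag is present in the original and not a defect of your argument.
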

\begin{proof}
From Propositions ~\ref{prop-u-lower-bound} and~\ref{prop-u-upper-bound} we know that $u^2$ is bounded on the interval $[8\sqrt{n-1}, 4L]$ by
\[
2(n-1)\Bigl(1 - \frac{y^2}{a^2}\Bigr) 
\leq u^2 
\leq 2(n-1)\Bigl(1 - \frac{y^2}{a^2}\Bigr)
  + \frac{20(n-1)^2}{a^2} 
  + \cO\Bigl(\frac{\ln a}{a^4}\Bigr).
\]
We may assume that $|\cO((\ln a)/a^4)| \leq 1/a^2$ provided we choose $a$ sufficiently large.  Dividing by $2(n-1)$ we arrive at
\[
1 - \frac{y^2}{a^2}
\leq \frac{u^2}{2(n-1)} 
\leq 1 - \frac{y^2}{a^2}
  + \frac{C_n}{a^2}.
\]
with $C_n = \bigl(20(n-1)^2+1\bigr)/(2(n-1))$.
Use a Taylor expansion to take the square root:
\[
1 - \frac{y^2}{2a^2} + \cO(y^2/a^4)
\leq \frac{u}{\sqrt{2(n-1)}} 
\leq 1 - \frac{y^2-C_n}{2a^2} + \cO(y^2/a^4).  
\]
For $y\leq 4L$ we may assume that $|\cO(y^2/a^4)| \leq 1/a^2$, is $a$ is large enough.  After replacing the error term $\cO(y^2/a^4)$ by $1/a^2$ the Proposition immediately follows.
\end{proof}

To derive the more precise estimate from Lemma \eqref{lem-uA-inner-expansion} we look at the almost linear equation satisfied by the difference between $u$ and $\sqrt{2(n-1)}$.  Thus we define $v$ by
\[
u = \sqrt{2(n-1)} \bigl(1 + \frac{v}{a^2}\bigr).
\]
Then the differential equation \eqref{eq-shrinker-ode-u} for $u$ implies that $v$ satisfies
\begin{equation}
\label{eq-v} 
v_{yy} = 
\bigl(1+\eps^2 v_y^2\bigr) 
\Bigl\{ 
\frac{y}{2} v_y -\frac{2+\eps v}{2+2\eps v} v \Bigr\}
\end{equation}
where $\eps = a^{-2}$.

The inequality in Proposition~\ref{prop-u-sqrt2-roughestimate} implies that the solutions $v(y)$ we get for different values of $a$ all satisfy
\begin{equation}
- \frac12 y^2  - C_n
\leq v
\leq - \frac12 y^2 + C_n.
\label{eq-v-bound}
\end{equation}
on the interval $[8\sqrt{n-1}, 4L]$.  Since the function $u$ is concave, $v$ is also concave, and the uniform bounds on $[8\sqrt{n-1},4L]$ for $v$ imply that their derivatives are bounded on a smaller interval, say $[10\sqrt{n-1}, 3L]$.  Using the differential equation \eqref{eq-v} we also get bounds for the second derivatives.  Thus for some constant $C$ we have
\[
|v| + |v_y| + |v_{yy}| \leq C
\]
on the interval $[10\sqrt{n-1}, 3L]$ for all large enough $a$.  Ascoli's theorem tells us that any sequence $a_i\to\infty$ has a subsequence for which $v(y, a_i)$ converges in $C^1$, and by the differential equation also in $C^2$.  Any possible limit $\bv$ is a solution of
\begin{equation}
\label{eq-v-limit}
v_{yy} = \frac{y}{2} v_y - v.
\end{equation}
This linear differential equation is known as a Hermite equation.  It has one polynomial solution, namely
\[
v_0(y) = y^2-2.
\]
The general solution can be given in terms of Hermite functions, most of which are not polynomial.  To choose a specific solution of \eqref{eq-v-limit} we can find a solution which is odd.  There are many possible representations, e.g.~one can represent the function as a power series
\[
v_1(x)= 
\sum_{k=0}^\infty 
\frac{(y/2)^{2k+1}}
     {(-\tfrac12)(\tfrac12)(\tfrac32)\cdots(k-\tfrac32)} .
\]
Another representation is in terms of a contour integral (see e.g. Courant-Hilbert \cite[Ch.7, p.508]{CHv1}).  One can also substitute $v_1(y) = \psi(y)(y^2-2)$ in the differential equation and solve for $\psi(y)$.  This leads to\footnote{%
The integral is singular at $\eta=\sqrt{2}$ even though the solution $v_1(y)$ is smooth everywhere.  It turns out that one can regard the integral as a contour integral along any path from $0$ to $y$ that avoids the singularity at $\sqrt2$.  The residue of the integrand $e^{\eta^2/4}/(\eta^2-2)^2$ vanishes, so the integral is independent of the chosen path from $\eta=0$ to $\eta=y$.}
\[
v_1(y) = -(y^2-2) \int_0^y \frac{e^{\eta^2/4} d\eta}{\bigl(\eta^2-2\bigr)^2}\;.
\]
For $y\to\infty$ the solution $v_1$ is much larger and grows much faster than the polynomial solution $v_0(y) = y^2-2$.  One has
\begin{equation}
\label{eq-v1-growth} 
v_1(y) = \frac{2+o(1)}{y^3} e^{y^2/4}
= e^{y^2/4 + o(y^2)} \qquad (y\to\infty).
\end{equation}
We return to the possible limits of $v(y, \eps)$ as $\eps\to0$.  Assume that for some sequence $\eps_i\to0$ one has $v(y, \eps_i)\to \bv(y)$.  Then there are $\alpha$, $\beta$ such that
\[
\bv(y) = \alpha (y^2-2) + \beta v_1(y).
\]
Evaluating this at $y=3L$ and $y=2L$ we get this system of linear equations for $\alpha$ and $\beta$:
\[
\bv(3L) = \alpha (9L^2-2) + \beta v_1(3L), \text{ and } \bv(2L) = \alpha (4L^2-2) + \beta v_1(2L).
\]
After solving these we find
\begin{align*}
\alpha &= -\frac{v_1(2L)\bv(3L) - v_1(3L)\bv(2L)}{(4L^2-2)v_1(3L) - (9L^2-2)v_1(2L)}\\
\beta &= \frac{(4L^2-2)\bv(3L) - (9L^2-2)\bv(2L)}{(4L^2-2)v_1(3L) - (9L^2-2)v_1(2L)}
\end{align*}
In these expressions $v_1(3L) = e^{9L^2/4 + o(L^2)}$ while $v_1(2L)=e^{L^2+o(L^2)}$, so we can rewrite $\alpha$ as
\[
\alpha = \frac{\bv(2L) - e^{-5L^2/4+o(L^2)}\bv(3L) } {4L^2-2 - e^{-5L^2/4+o(L^2)}(9L^2-2) }.
\]
The limit $\bv(y)$ must satisfy \eqref{eq-v-bound}, so that we can write $\bv(y) = -y^2/2 +\theta$ with $|\theta| < C_n$.  Also, $\bv(3L) = \cO(L^2) = e^{o(L^2)}$, and $\bv(2L)=e^{o(L^2)}$.  We get
\[
\alpha 
= - \frac{2L^2 + \theta + e^{-5L^2/4+o(L^2)}} {4L^2 - 2 - e^{-5L^2/4+o(L^2)}} 
= - \frac{1} {{2}} +\cO(L^{-2}).
\]
A similar computation applied to $\beta$ leads to
\begin{align*}
\beta = \frac{1} {v_1(3L)} \frac{\bv(3L) - \frac{9L^2-2}{4L^2-2}\bv(2L)} {1 + e^{-5L^2/4+o(L^2)}} = e^{-9L^2/4+o(L^2)}.
\end{align*}
On the interval $[10\sqrt{n-1}, L]$ we therefore can estimate the two terms in $\bv(y) = \alpha (y^2-2) + \beta v_1(y)$ by
\[
\alpha (y^2-2) =- \frac{1}{2}(y^2-2) + \cO(y^2/L^2)
\]
and
\[
\beta v_1(y) = e^{-9L^2/4 + o(L^2)} e^{y^2/4+o(L^2)} = e^{-5L^2/4+o(L^2)} = \cO(e^{-L^2}).
\]
Adding these two estimates and substituting them in $u = \sqrt{2(n-1)} (1 + v/a^2)$ we get
\[
u(y) = \sqrt{2(n-1)}  \Bigl(1 - \frac{y^2-2} {2a^2} \Bigr)
+ \frac{1} {a^2} \cO\bigr(\frac{y^2} {L^2} + e^{-L^2}\bigr).
\]
On any finite interval $[10\sqrt{n-1}, M]$ we have $y^2/L^2\leq M^2/L^2$ so that
\[
u(y) = \sqrt{2(n-1)} \Bigl(1 - \frac{y^2-2} {2a^2} \Bigr)
+ \frac{1} {a^2} \cO\bigr(L^{-2} + e^{-L^2}\bigr).
\]
Since we can make $\cO\bigr(L^{-2} + e^{-L^2}\bigr)$ arbitrarily small by choosing $L$ large enough, we finally arrive at the asymptotic expansion \eqref{eq-uA-inner-expansion}, which concludes the proof of Lemma~\ref{lem-uA-inner-expansion}.

\subsection{Estimating the unit normal of $\Sigma_a$ near the cylinder}
In the derivation of the inner-outer estimates we will need an asymptotic estimate of the unit normal $\nm$ to the hypersurfaces $\Sigma_a$, at least in a neighborhood of the cylinder $r=\sqrt{2(n-1)}$.  Since the hypersurface $\Sigma_a\subset\R^{n+1}$ is rotationally symmetric we can parametrize it by
\[
(y, \omega) \in J\times S^{n-1} \mapsto \bigl(y, u(y, a) \bomega\bigr) \in \R\times\R^n.
\]
The unit normal is therefore given by
\[
\nm = \frac{\bigl(-u_y, \bomega\bigr)} {\sqrt{1+u_y^2}}
\]
Using the quantity $w = 2yuu_y/(u^2-2)$ we can write this normal as a function of $(y, u)$, namely
\begin{equation}
\label{eq-angle-as-fn-of-yu}
\nm = \bigl(-\sin\varphi,  \cos \varphi\, \bomega\bigr)
\text{ where }
\tan \varphi =  u_y = \frac{w} {2y} (u^2 - 2).
\end{equation}

\subsection{The normal variation}
\label{sec-normal-variation}
The normal variation $V$ of the family of hypersurfaces $\Sigma_a$, is defined by choosing a smooth family of parametrizations $X_a:\R^n \to \R^{n+1}$ of $\Sigma_a$ and setting
\[
V = \nm \cdot \frac{\pd X_a} {\pd a}.
\]
The normal velocity does not depend on the choice of the particular parametrizations $X_a$.

\begin{lemma}
\label{lem-V-positive-on-Sigma-b}
There is a $y_0>0$ such that $V>0$ on the part of $\Sigma_a$ on which $y\geq y_0$.

Different minimizers $\Sigma_a$ and $\Sigma_{a'}$ ($a\neq a'$) do not intersect in the region $y\geq y_0$.  The $\Sigma_a$ smoothly foliate the region within the cylinder $r<\sqrt{2(n-1)}$ with $y>y_0$.
\end{lemma}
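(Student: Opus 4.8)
The plan is to reduce the lemma to the positivity of one Jacobi field along $\Sigma_a$, establish that positivity near the tip $y=a$ from the tip expansion, propagate it down to $y=y_0$, and then read off the non-intersection and foliation statements as formal consequences.

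On the part of $\Sigma_a$ that is a graph $r=u(y,a)$, with unit normal $\nm=(-u_y,\bomega)/\sqrt{1+u_y^2}$ as in \eqref{eq-angle-as-fn-of-yu}, differentiating the parametrization $(y,\omega)\mapsto\bigl(y,u(y,a)\bomega\bigr)$ in $a$ yields
\[
V=\nm\cdot\frac{\pd X_a}{\pd a}=\frac{u_a(y,a)}{\sqrt{1+u_y^2}},
\]
so $V>0$ is equivalent to $u_a>0$. Near the tip, where the $y$--graph degenerates, I would instead work in the coordinate $\rho=ar$ with the representation $y=h(r,a)=a-a^{-1}\psi(ar,a)$ of Lemma~\ref{lem-tip-expansion}; there $V=h_a/\sqrt{1+h_r^2}$, and differentiating the analytic function $\psi(\rho,a)=\tilde\psi\bigl(\rho,\tfrac1{2a^2}\bigr)$ gives $h_a=1+\cO(a^{-2})$, uniformly for $\rho$ in a fixed interval and $a\ge y_0$, so $V>0$ on a cap near the tip provided $y_0$ is large. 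Differentiating the self--shrinker equation \eqref{eq-shrinker-ode-u} in $a$ shows that $V$ solves the linearized (Jacobi) equation $\mathcal J V=0$, where $\mathcal J=\Delta_{\Sigma_a}-\tfrac12\langle X,\nabla\,\cdot\,\rangle+|A|^2+\tfrac12$; on the graph this is the linear ODE
\[
\frac{(u_a)_{yy}}{1+u_y^2}-\Bigl(\frac{2u_yu_{yy}}{(1+u_y^2)^2}+\frac y2\Bigr)(u_a)_y+\Bigl(\frac12+\frac{n-1}{u^2}\Bigr)u_a=0 .
\]

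To propagate $V>0$ from the tip down to $y=y_0$ I would split $\Sigma_a\cap\{y\ge y_0\}$ into a ``bowl part'' near the tip and a ``cylinder part''. On the bowl part the surface $\Sigma_a$, rescaled by $a$, is close, by \eqref{eq-psi-ode}--\eqref{eq-translator-ode} and Lemma~\ref{lem-translating-bowl}, to the rotationally symmetric translating soliton, and $V$ corresponds to that soliton's translation Jacobi field, which is positive; one also uses the (known) stability of the soliton. On the cylinder part one has, by Propositions~\ref{prop-u-lower-bound} and \ref{prop-u-upper-bound}, uniform bounds $0<c\le u\le\sqrt{2(n-1)}$ and bounded $u_y,u_{yy}$, hence $|A|^2+\tfrac12\le C$; reducing the ODE above to Schrödinger form, the confining drift term $-\tfrac12\langle X,\nabla\,\cdot\,\rangle$ forces the effective potential to be positive for $y\ge y_0$ once $y_0$ is large, so the equation is disfocal there. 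The cleanest way to combine these and rule out an interior zero of $V$ is spectral: a zero at a level $y_1\ge y_0$ would make $V$ a positive Dirichlet eigenfunction of $\mathcal J$ with eigenvalue $0$ on the disc $\Sigma_a\cap\{y\ge y_1\}$, i.e.\ force
\[
\int_{\Sigma_a\cap\{y\ge y_1\}}|\nabla V|^2\,e^{-|X|^2/4}=\int_{\Sigma_a\cap\{y\ge y_1\}}\bigl(|A|^2+\tfrac12\bigr)V^2\,e^{-|X|^2/4},
\]
which is contradicted by combining the weighted Poincaré inequality on the cylinder part (valid for $y_0$ large) with the stability of the soliton on the bowl part. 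I expect this propagation step --- in particular gluing the near--tip estimate, where the effective potential degenerates as $y\nearrow a$, to the estimate near the cylinder --- to be the main obstacle.

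Granting $V>0$ on $\Sigma_a\cap\{y\ge y_0\}$ for every $a\ge y_0$, the rest is formal. Since $V>0$ is equivalent to $u_a>0$, the map $a\mapsto u(y,a)$ is strictly increasing for each fixed $y\ge y_0$, so for $a\ne a'$ the profiles $u(\cdot,a)$ and $u(\cdot,a')$ never coincide on $\{y\ge y_0\}$, whence $\Sigma_a\cap\Sigma_{a'}=\varnothing$ there. For each fixed $y_1>y_0$ the function $a\mapsto u(y_1,a)$ is continuous and strictly increasing, with $u(y_1,a)\to0$ as $a\searrow y_1$ (the tip reaches $y_1$) and $u(y_1,a)\to\sqrt{2(n-1)}$ as $a\to\infty$ by Lemma~\ref{lem-uA-inner-expansion}, hence a bijection onto $\bigl(0,\sqrt{2(n-1)}\bigr)$; thus exactly one $\Sigma_a$ passes through each point of $\{0<r<\sqrt{2(n-1)},\,y>y_0\}$, and smooth dependence of $u$ on $a$ together with $u_a\ne0$ and the implicit function theorem give smoothness of the foliation. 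An alternative to the Jacobi--field route for the non-intersection part is a replacement (cut--and--paste) argument using that each $\Sigma_a$ minimizes the Gaussian weighted area in the relevant class, with $V\ge0$ and then $V>0$ recovered a posteriori by unique continuation for $\mathcal J$.
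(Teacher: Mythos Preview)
Your setup matches the paper's: you correctly identify $V$ with $u_a/\sqrt{1+u_y^2}$ (respectively $h_a/\sqrt{1+h_r^2}$ near the tip), derive the Jacobi equation, and use the tip expansion to get $V>0$ on a cap near $y=a$. The non-intersection and foliation consequences you draw from $V>0$ are also the right ones.

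The propagation step, however, is where your proposal and the paper diverge, and your version has a real gap. The paper does \emph{not} split into a ``bowl part'' and a ``cylinder part'' and does not invoke any stability of the translating soliton. Instead it produces a single explicit supersolution $W=e^{\|X\|^2/8}$ for the Jacobi operator on the whole intermediate region $y_0\le y\le y_{Ma}$: one computes
\[
e^{-\phi/2}\cL(W)=\tfrac{n+1}{4}+\|A\|^2-\tfrac14H^2-\tfrac14\phi
\]
and then bounds $\|A\|^2\le \tfrac{1}{32}\|X\|^2+C_n'$ on $[y_0,y_{Ma}]$ using the profile ODE, the concavity of $u$, and the translator asymptotics for $\Psi$. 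The contradiction is then a Hopf-type comparison: if $V$ vanished at some $y_1\ge y_0$, the maximum of $V/W$ on $\{y_1\le y\le y_{Ma}\}$ would be forced to the boundary $y=y_{Ma}$, giving $V_\rho/V\le W_\rho/W$ there; but explicit evaluation at $\rho=M$ (using the translator expansion of $\Psi$) shows $V_\rho/V>W_\rho/W$ for $M$ large. No gluing is needed, and no stability input from the Bowl is used.

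Your spectral route is not obviously salvageable as written. The ``stability of the soliton'' you appeal to is stability of the Bowl \emph{as a translator}, whereas the Jacobi operator here is the one for $\Sigma_a$ \emph{as a shrinker}; after rescaling near the tip these are not the same operator, and the zero-order term $\|A\|^2+\tfrac12$ coming from the shrinker equation does not disappear. On the cylinder part your disfocality observation is fine for the limiting Hermite equation, but disfocality only rules out a \emph{second} zero, not a first, so it cannot by itself contradict the existence of $y_1$; you really need a quantitative matching condition at the interface, which is exactly the ``main obstacle'' you flag. The paper's supersolution $e^{\|X\|^2/8}$ is precisely the device that replaces this matching by a single clean inequality at $\rho=M$.
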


We prove the lemma by studying a linear differential equation $\cL(V)=0$ that $V$ satisfies.  On most of the surface $\Sigma_a$ we can find that the function $W=e^{\|X\|^2/8}$ is a supersolution for $\cL$ (i.e.~$\cL(W)<0$).  In the region near the tip, defined by $r\leq M/a$, we use our expansion of the surface $\Sigma_a$ in powers of $a^{-2}$ to get a good approximation of $V$ at $r=M/a$.  Comparison with the supersolution $W$ then allows us to conclude that $V\neq0$ on $\Sigma_a$ when $y$ is large enough.

\subsubsection{The Jacobi equation}

\begin{prop}% equation for the normal variation
The normal variation $V$ satisfies
\begin{equation}
\label{eq-jacobi}
\cL(V) \stackrel{\rm def}=
\Delta V - g^{kl} \nabla_k \phi \nabla_l V + \bigl(\|A\|^2+\tfrac12\bigr)V = 0
\end{equation}
where $\phi = \tfrac14\|X\|^2$, $A$ is the second fundamental form of $\Sigma_a$, and where $g_{ij} = \nabla_iX \cdot \nabla_jX$ is the induced metric on $\Sigma_a$.
\end{prop}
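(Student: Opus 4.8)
The plan is to derive \eqref{eq-jacobi} as the linearization of the self-shrinker equation \eqref{eq-self-shrinker} along the one-parameter family $\Sigma_a$. Every $\Sigma_a$ satisfies $H+\tfrac12\vX\cdot\nm=0$, so differentiating this identity in the parameter $a$ should yield a linear elliptic equation for the normal variation $V$, which we will then identify with $\cL(V)=0$.

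First I would reduce to the case of a purely normal variation. The scalar $V=\nm\cdot\pd_aX_a$ --- and likewise $H$, $\|A\|^2$, the induced metric $g$, the unit normal $\nm$, and $\vX\cdot\nm$ --- depends only on the surface $\Sigma_a$, not on the chosen parametrizations $X_a$; replacing $X_a$ by $X_a\circ\Phi_a$ for a smooth family of diffeomorphisms $\Phi_a$ changes $\pd_aX_a$ only by a tangent vector field and leaves $V$ unchanged. Hence one may assume $\pd_aX_a=V\nm$. Next I would invoke the two standard first-variation formulas for a normal deformation with speed $V$, namely $\pd_a\nm=-\nabla V$, where $\nabla$ denotes the intrinsic gradient on $\Sigma_a$, and $\pd_aH=\Delta V+\|A\|^2V$; the second is Huisken's evolution identity for the mean curvature, and the operator it produces is precisely the stability operator of the Gaussian-weighted area functional. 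From $\pd_a\vX=V\nm$ and $\pd_a\nm=-\nabla V$ one obtains
\[
\pd_a\bigl(\vX\cdot\nm\bigr)=V\,\nm\cdot\nm+\vX\cdot\pd_a\nm=V-\langle\vX^{\top},\nabla V\rangle,
\]
where $\vX^{\top}$ is the component of the position vector tangent to $\Sigma_a$ (we used that $\nabla V$ is tangential, so $\vX\cdot\nabla V=\langle\vX^{\top},\nabla V\rangle$).

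Differentiating $0=H+\tfrac12\vX\cdot\nm$ in $a$ then gives
\[
0=\Delta V+\|A\|^2V+\tfrac12\bigl(V-\langle\vX^{\top},\nabla V\rangle\bigr),
\]
and it only remains to recognize the drift term. Since $\phi=\tfrac14\|\vX\|^2$, its intrinsic gradient on $\Sigma_a$ is $\nabla\phi=\tfrac12\vX^{\top}$, so $g^{kl}\nabla_k\phi\,\nabla_lV=\langle\nabla\phi,\nabla V\rangle=\tfrac12\langle\vX^{\top},\nabla V\rangle$, and the displayed equation becomes exactly $\Delta V-g^{kl}\nabla_k\phi\,\nabla_lV+\bigl(\|A\|^2+\tfrac12\bigr)V=0$, i.e.\ $\cL(V)=0$, as claimed.

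The main point requiring care is the reduction to a normal variation, together with the correct bookkeeping of signs in the first-variation formulas $\pd_aH=\Delta V+\|A\|^2V$ and $\pd_a\nm=-\nabla V$: these must be used with the sign convention for $\nm$ and $H$ that makes $H+\tfrac12\vX\cdot\nm=0$ the shrinker equation (the same convention under which the round sphere of radius $\sqrt{2n}$ is a shrinker). Beyond that the argument is a direct computation.
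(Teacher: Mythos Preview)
Your proposal is correct and follows essentially the same route as the paper: differentiate the shrinker identity $H+\tfrac12\vX\cdot\nm=0$ in $a$, apply the standard first-variation formulas $\pd_aH=\Delta V+\|A\|^2V$ and $\pd_a\nm=-\nabla V$, and identify the drift term via $\nabla\phi=\tfrac12\vX^\top$. The only difference is that you explicitly reduce to a purely normal variation before invoking these formulas, whereas the paper applies them directly; your extra step is a legitimate clarification but not a different argument.
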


\begin{proof}
Since all surfaces $\Sigma_a$ satisfy the shrinker equation $H+\frac12 X\cdot\nm=0$, we have
\[
\frac{\pd} {\pd a} \Bigl(H+\tfrac12 X\cdot \nm\Bigr) = 0.
\]
The first variation of the mean curvature is
\[
\frac{\pd H} {\pd a} = \Delta V +|A|^2V.
\]
The first variation of the unit normal is the tangential gradient of the velocity $V$,
\[
\frac{\pd\nm} {\pd a} = -g^{kl} \nabla_kX \nabla_l V,
\]
and hence
\[
\frac{\pd X\cdot\nm} {\pd a}
= \frac{\pd X} {\pd a}\cdot \nm - X\cdot \bigl(\nabla_k V \nabla_kX\bigr)
= V - \bigl(X\cdot \nabla_kX\bigr) \nabla_k V
= V - 2g^{kl}\nabla_l\phi \nabla_k V.
\]
Combine with the equation for $\frac\pd{\pd a}H$ and we find \eqref{eq-jacobi}.
\end{proof}

\subsubsection{The normal variation near the tip}

We can parametrize $\Sigma_a$ near the tip by $X_a : (\rho, \bomega) \in \R\times S^{n-2} \to \R^n$ given by
\[
X_a(\rho,\bomega)
= \bigl(y, r\bomega\bigr)
= \Bigl(a - a^{-1} \psi(\rho, a), \frac{\rho} {a} \bomega \Bigr).
\]
The unit normal and first variation are then given by
\[
\nm = \frac{\bigl(1, \psi_\rho\,\bomega\bigr)} {\sqrt{1+\psi_\rho^2}},\qquad
\frac{\pd X_a} {\pd a} = \bigl(1+\cO(a^{-2}) , \cO(a^{-2})\,\bomega\bigr),
\]
so that the normal variation is
\[
V = \nm\cdot \frac{\pd X} {\pd a} = \frac{1+\cO(a^{-2})} {\sqrt{1+\psi_\rho^2}}.
\]
For large $a$ we get
\[
V = \frac{1} {\sqrt{1+\Psi'(\rho)^2}} + \cO(a^{-2}),
\]
uniformly for $\rho\in[0,M]$.  In order to compare $V$ with the supersolution $W$ in the intermediate region later on, we will also need $V_\rho$, or rather the ratio $V_\rho/V$.  This ratio is given by
\begin{equation}
\frac{V_\rho}{V} 
= - \frac{\Psi'(\rho)\Psi''(\rho)}{1+\Psi'(\rho)^2}
+ \cO(a^{-2}).
\label{eq-Vrho-over-V}
\end{equation}

\subsubsection{The normal variation at the tip}
We note that at $\rho=0$, i.e.,~at the tip, we have $X_a(0, \bomega) = (a, 0)$ and $\nm = (1,0)$, so that
\begin{equation}
\label{eq-V-at-the-tip-is-one}
V = \nm\cdot \frac{\pd X_a} {\pd a} = 1.
\end{equation}
In particular, $V>0$ at the tip.  To prove that $V>0$ for $y\geq y_0$ we only have to show that $V\neq 0$ in this region.

\subsubsection{Upper barrier for $V$ in the intermediate region}
\begin{prop}% W is a super solution in the intermediate region
The function $W = e^{\phi/2} = e^{\|X\|^2/8}$ satisfies $\cL(W)\leq0$ on the intermediate region $y_0 \leq y\leq y_{Ma}$ for large enough $a$ and $y_0$.
\end{prop}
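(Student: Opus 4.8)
\textit{The plan.} The strategy is to compute $\cL(W)$ explicitly for $W=e^{\phi/2}$, reducing the desired inequality $\cL(W)\le 0$ to a pointwise bound on the second fundamental form of $\Sigma_a$ that follows from the control on $u$ and on the auxiliary quantity $w$ already established in this section. Since $\nabla W=\tfrac12 W\,\nabla\phi$, the product rule gives $\Delta W=\tfrac14 W\,|\nabla\phi|^2+\tfrac12 W\,\Delta\phi$ and $g^{kl}\nabla_k\phi\,\nabla_l W=\tfrac12 W\,|\nabla\phi|^2$, so that from \eqref{eq-jacobi}
\[
\cL(W)=W\Bigl(-\tfrac14|\nabla\phi|^2+\tfrac12\Delta\phi+\|A\|^2+\tfrac12\Bigr).
\]
Next I would evaluate the two intrinsic quantities on the shrinker using $\phi=\tfrac14\|X\|^2$. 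Then $\nabla_l\phi=\tfrac12\langle X,\partial_l X\rangle$, so $|\nabla\phi|^2=\tfrac14\|X^\top\|^2=\tfrac14\bigl(\|X\|^2-\langle X,\nm\rangle^2\bigr)$, while $\Delta\|X\|^2=2n+2\langle X,\Delta X\rangle=2n-4H^2$ because $\Delta X$ is the mean curvature vector and \eqref{eq-self-shrinker} gives $\langle X,\nm\rangle=-2H$. Hence $\Delta\phi=\tfrac n2-H^2$, and after substitution the $H^2$–terms combine to leave
\[
\cL(W)=W\Bigl(\|A\|^2-\tfrac14 H^2+\tfrac n4+\tfrac12-\tfrac1{16}\|X\|^2\Bigr).
\]

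\textit{Reduction to a curvature bound.} Discarding the term $-\tfrac14 H^2\le 0$ (for $n\ge 5$ one may instead retain it and use $H^2\ge(n-1)^2\lambda_1^2$ to absorb most of $\|A\|^2$), it suffices to prove $\|A\|^2+\tfrac n4+\tfrac12\le\tfrac1{16}\|X\|^2$ on $y_0\le y\le y_{Ma}$ for $y_0$ and $a$ large. Here $\|X\|^2=y^2+u^2\ge y^2$, and in the rotationally symmetric setting $(n-1)\lambda_1^2=(n-1)/(u^2(1+u_y^2))\le(n-1)/u^2$. A short computation from the shrinker ODE \eqref{eq-shrinker-ode-u} together with the definition \eqref{eq-w-def} of $w$ rewrites $\lambda_n$ as $\lambda_n=(2(n-1)-u^2)(w-2)/\bigl(4u\sqrt{1+u_y^2}\bigr)$, so $0\le\lambda_n\le(n-1)(w-2)/(2u)$, and in particular $u^2\le 2(n-1)$ automatically since $\lambda_n\ge 0$ and $w>2$. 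Feeding in the bound $w-2\le K/(a^2-y^2)+K/y^2$ from Proposition~\ref{prop-w-upper-barrier} and the lower bound $u^2\ge 2(n-1)(1-y^2/a^2)$ from Proposition~\ref{prop-u-lower-bound} (valid on $[y_0,y_{Ma}]$ once $a$ is large enough that $y_*(a)\le 0$), one gets $\|A\|^2\le (n-1)/u^2+\tfrac12(n-1)^2K^2\bigl(\Psi(M)^{-2}+y_0^{-4}\bigr)/u^2\le 2(n-1)/u^2$ provided $y_0$ and $M$ are large. After absorbing the bounded term $\tfrac n4+\tfrac12$ into $\tfrac1{32}y^2$ (for $y_0$ large), the claim reduces to $u^2 y^2\ge 64(n-1)$.

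\textit{Closing the estimate.} Finally $u^2 y^2\ge 2(n-1)\bigl(y^2-y^4/a^2\bigr)$, and since $t\mapsto t-t^2/a^2$ is concave its minimum over $y^2\in[y_0^2,y_{Ma}^2]$ is attained at an endpoint; at $y=y_0$ this is $\ge\tfrac12 y_0^2$, while at $y=y_{Ma}$, using $a^2-y_{Ma}^2=2\Psi(M)+\cO(a^{-2})$ from \eqref{eq-A-yMA-estimate}, it is $\ge\tfrac12\Psi(M)$ for $a$ large. Choosing $y_0$ and $M$ so that $\min(y_0^2,\Psi(M))\ge 64$ then gives $u^2y^2\ge 64(n-1)$ on the whole intermediate region for large $a$, which completes the proof.

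\textit{The main obstacle.} The delicate point is the inner end $y\approx y_{Ma}$ of the intermediate region, where $\Sigma_a$ is the rescaled tip of a translator and $\|A\|^2$ is as large as $\cO(a^2)$. The mechanism that saves us is that $\|X\|^2\approx a^2$ there as well, so the ratio $\|A\|^2/\|X\|^2$ is governed by $1/\Psi(M)$ and is made small by taking $M$ large; quantifying this through the bound on $w$ and the asymptotics of $u$ near $y_{Ma}$ (Proposition~\ref{prop-w-at-yM} and \eqref{eq-A-yMA-estimate}) is where care is required. Away from $y_{Ma}$ the estimate is comparatively soft, since there $u$ is bounded below away from $0$, $\|A\|^2=\cO(1)$, and $\|X\|^2\ge y_0^2$.
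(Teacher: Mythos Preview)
Your argument is correct and follows the paper's approach for the first half: you compute $\cL(e^{\phi/2})$ using $|\nabla\phi|^2=\phi-H^2$ and $\Delta\phi=\tfrac{n}{2}-H^2$ on the shrinker, and reduce to $\|A\|^2+C_n\le\tfrac1{16}\|X\|^2$ (the paper records $\Delta\phi=\tfrac{n-1}{2}-H^2$, which appears to be a slip; either constant is absorbed into the $y_0$ choice). Where you diverge is in the curvature bound. The paper estimates the two pieces of $\|A\|^2$ separately: it bounds $|u_y|\le 1/\Psi'(M)+\cO(a^{-2})$ by concavity (maximum at $y_{Ma}$), and bounds $u^{-2}\le 2y^2/M^2+C$ by convexity of $u^{-1}$ together with the endpoint values, arriving at $\|A\|^2\le(\text{small})\,y^2+C_n'$, small once $M$ is large. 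You instead rewrite $\lambda_n$ via the shrinker ODE and the definition of $w$ as $\lambda_n=(2(n-1)-u^2)(w-2)/\bigl(4u\sqrt{1+u_y^2}\bigr)$ and feed in the barrier $w-2\le K/(a^2-y^2)+K/y^2$ from Proposition~\ref{prop-w-upper-barrier}, which cleanly gives $\|A\|^2\le 2(n-1)/u^2$; then you close with $u^2y^2\ge 2(n-1)\min\{y_0^2/2,\Psi(M)\}$ from Proposition~\ref{prop-u-lower-bound} and \eqref{eq-A-yMA-estimate}. Your route is a bit more structural (it explains \emph{why} $\lambda_n$ is small: it is proportional to $w-2$) and reuses the $w$--barrier already proved, whereas the paper's route is more hands-on but avoids invoking Proposition~\ref{prop-w-upper-barrier}. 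Both handle the delicate endpoint $y\approx y_{Ma}$ by the same mechanism you identify: $\|A\|^2$ and $\|X\|^2$ are both of order $a^2$ there, and their ratio is controlled by $1/\Psi(M)$ (equivalently $1/M^2$), made small by choosing $M$ large.
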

\begin{proof}
We compute
\begin{align*}
\cL(e^{\theta\phi})
&= \Delta e^{\theta\phi} - \nabla e^{\theta\phi}\cdot\nabla\phi + \bigl(\|A\|^2+\tfrac12\bigr)e^{\theta\phi}\\
&= \theta e^{\theta\phi}\Delta\phi +\theta^2e^{\theta\phi}\|\nabla \phi\|^2 -\theta e^{\theta\phi}\nabla\phi\cdot\nabla\phi
+\bigl(\|A\|^2+\tfrac12\bigr)e^{\theta\phi}\\
&= \theta e^{\theta\phi}\Delta\phi +\bigl(\theta^2-\theta\bigr) e^{\theta\phi}\|\nabla \phi\|^2
+\bigl(\|A\|^2+\tfrac12\bigr) e^{\theta\phi}\\
&= e^{\theta\phi} \Bigl\{ \theta\Delta\phi + (\theta^2 - \theta) \|\nabla\phi\|^2 + \|A\|^2+\tfrac12 \Bigr\}.
\end{align*}
Using $H+\frac12 X\cdot \nm=0$ we find the following derivatives for $\phi=\frac14\|X\|^2$
\begin{align*}
\nabla \phi &= g^{kl}\nabla_k\phi \nabla _lX =g^{kl}\tfrac12 \bigl( X\cdot \nabla_k X\bigr) \nabla_lX
= \tfrac12 \Bigl( X- (X\cdot\nm)\nm\Bigr)\\
\|\nabla\phi\|^2 &= \tfrac14 \Bigl(\|X\|^2 - (X\cdot\nm)^2\Bigr)
= \phi - H^2\\
\Delta\phi &= \tfrac12 g^{kl}\nabla_k\Bigl(\nabla_l X\cdot X\Bigr) = \frac{n-1} {2} + \tfrac12 X\cdot(H\nm) = \frac{n-1} {2} - H^2
\end{align*}
and therefore
\begin{align*}
e^{-\theta\phi}\cL(e^{\theta\phi}) &= \theta\frac{n-1} {2} - \theta H^2 -\theta(1-\theta)\phi
+ \theta(1-\theta)H^2 + \|A\|^2+\tfrac12 \\
&= \frac{\theta n+1-\theta} {2} +\|A\|^2 - \theta^2 H^2 -\theta(1-\theta)\phi.
\end{align*}
From here on we set $\theta=\tfrac 12$, so that
\[
e^{-\phi/2}\cL\bigl(e^{\phi/2}\bigr) = \frac{ n+1} {4} +\|A\|^2 - \tfrac14 H^2 -\tfrac14 \phi,
\]
and thus
\begin{equation}
\label{eq-W-upper-barrier}
e^{-\phi/2}\cL(W) \leq \frac{n+1}{4} + \|A\|^2 - \tfrac{1}{16}\|X\|^2.
\end{equation}
The principal curvatures of the hypersurface $\Sigma_a$ are given by \eqref{eq-principal-curvatures}, so that
\[
\|A\|^2
= \frac{u_{yy}^2}{\bigl(1+u_y^2\bigr)^3} + \frac{n-1}{u^2\bigl(1+u_y^2\bigr)^2}
\le \left(\frac{u_{yy}}{1+u_y^2}\right)^2 + \frac{n-1}{u^2}.
\]
Using the ODE \eqref{eq-shrinker-ode-u} for $u(y)$, we get
\[
0<\frac{-u_{yy}}{1+u_y^2}
= \tfrac12\bigl(-yu_y +u\bigr) - \frac{n-1}{u}
< \tfrac12\bigl(-yu_y +u\bigr).
\]
Hence, using $u<\sqrt{2(n-1)}$ on $[y_0, y_{Ma}]$,
\[
\|A\|^2
\leq \tfrac12 y^2 u_y^2 + \frac12 u^2 + \frac{n-1}{u^2}
\leq \tfrac12 y^2 u_y^2 + n-1 + \frac{n-1}{u^2}
\]
On $[y_0, y_{Ma}]$ we also have $u_{yy}<0$, so that
\[
0 > u_y(y) > u_y(y_{Ma}) 
= \frac{-1}{\psi_\rho(M, a)}
= \frac{-1}{\Psi'(M)} + \cO(a^{-2}).
\]
Moreover, $u$ is concave, so $u^{-1}$ is convex, and we can find an upper bound for $u^{-1}$ by interpolating between $u(y_0) = \sqrt{2(n-1)}+\cO(a^{-2})$ and $u(y_{Ma}) = M/a$.  Keeping in mind that $y_{Ma} = a + \cO(a^{-1})$, and also $y_{Ma}-y_0 = a \bigl(1-\cO(a^{-1})\bigr)$, we find
\begin{align*}
u(y)^{-1}
&\le u(y_0)^{-1} + \frac{y-y_0} {y_{Ma}-y_0} \bigl( u(y_{Ma})^{-1} - u(y_0)^{-1}\bigr)\\
&\le \frac{1}{\sqrt{2(n-1)}} 
   +\cO(a^{-2}) + \frac{y}{y_{Ma}} \bigl(1-y_0/y_{Ma}\bigr)^{-1} \frac{a}{M}\\
&\le \frac{1}{\sqrt{2(n-1)}} + \cO(a^{-2}) + \frac{y}{M} \bigl(1+\cO(a^{-1})\bigr)\\
&\le \frac{y}{M} + C_n,
\end{align*}
where $C_n$ is some constant that only depends on $n$.
Since $(\alpha+\beta)^2 \le 2\alpha^2 + 2\beta^2$ for all $\alpha, \beta$, we see that on the interval $[y_0, y_{Ma}]$
\[
u(y)^{-2} \leq 2 \frac{y^2}{M^2} + 2C_n^2
\]
holds if $a$ is sufficiently large.

Combining our estimates for $u_y$ and $u^{-2}$ we find that on the interval $[y_0, y_{Ma}]$ the curvature is bounded by
\begin{align*}
\|A\|^2
&\le \tfrac12 y^2u_y^2 + \frac{n-1} {u^2} + n - 1 \\
&\le \Bigl(\frac{1} {2\Psi'(M)^2} + \cO(a^{-2})\Bigr) y^2
+ \frac{2(n-1)}{M^2}y^2 + C'_n \\
&\le \Bigl(\frac{1} {2\Psi'(M)^2} + \frac{4(n-1)}{M^2} + \cO(a^{-2})\Bigr) y^2 + C'_n.
\end{align*}
Hence we can choose $M$ so that for large enough $b$ one has
\[
\|A\|^2 \leq C_n' + \frac{1} {32} y^2 \leq C_n' + \frac{1} {32}\|X\|^2,
\]
and thus
\[
e^{-\phi/2} \cL(W)
\leq C_n' - \bigl(\tfrac1{16} - \tfrac1{32}\bigr) \|X\|^2
\leq C_n' - \tfrac1{32} \|X\|^2
\leq C_n'- \tfrac1{32}y_0^2.
\]
Thus if $y_0$ is large enough, we get $\cL(W)<0$ for $y_0\leq y \leq y_{Ma}$.
\end{proof}

\begin{prop}% Wrho/W at the edge of the intermediate region
For large $b$ one has
\begin{equation}
\label{eq-Wrho-over-W-expanded}
\frac{W_\rho} {W} 
= - \tfrac 14  \Psi'(M) + \cO(b^{-2}).
\end{equation}
\end{prop}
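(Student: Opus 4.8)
\emph{Plan.} The idea is to read off $W_\rho/W$ directly from $W=e^{\phi/2}$, $\phi=\tfrac14\|X\|^2$, by differentiating along $\Sigma_a$ in the tip coordinate $\rho$, and then to substitute the expansion of $\psi(\rho,a)$ in powers of $a^{-2}$ furnished by Lemma~\ref{lem-tip-expansion}, together with the bowl asymptotics of Lemma~\ref{lem-translating-bowl}. Since $\ln W=\|X\|^2/8$, one has $W_\rho/W=\tfrac18\,\pd_\rho\|X\|^2$, so the entire computation reduces to differentiating $\|X\|^2$ restricted to $\Sigma_a$ in the coordinate $\rho$.

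\emph{Main computation.} Using the parametrization $X_a(\rho,\bomega)=\bigl(a-a^{-1}\psi(\rho,a),\,a^{-1}\rho\,\bomega\bigr)$ near the tip, and $|\bomega|=1$, we get on $\Sigma_a$
\[
\|X\|^2=\bigl(a-a^{-1}\psi(\rho,a)\bigr)^2+a^{-2}\rho^2 ,
\]
so that
\[
\pd_\rho\|X\|^2=-2a^{-1}\psi_\rho\bigl(a-a^{-1}\psi\bigr)+2a^{-2}\rho
=-2\psi_\rho+2a^{-2}\bigl(\psi\psi_\rho+\rho\bigr).
\]
Dividing by $8$,
\[
\frac{W_\rho}{W}=-\tfrac14\psi_\rho(\rho,a)+\tfrac14 a^{-2}\bigl(\psi\psi_\rho+\rho\bigr).
\]
By Lemma~\ref{lem-tip-expansion}, $\psi(\rho,a)=\tilde\psi(\rho,\tfrac1{2a^2})$ with $\tilde\psi$ real-analytic in both variables, so $\psi$ and $\psi_\rho$ are bounded uniformly in $a$ (for $a$ large) on the compact interval $\rho\in[0,M]$; hence the last term is $\cO(a^{-2})$ there, uniformly in $\rho$.

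\emph{Evaluation at $\rho=M$.} Finally I would set $\rho=M$. Since $\tilde\psi(\rho,0)$ solves \eqref{eq-translator-ode} with the same initial data as $\Psi$, uniqueness in Lemma~\ref{lem-translating-bowl} gives $\tilde\psi(\rho,0)=\Psi(\rho)$, and real-analyticity in $\eps=1/(2a^2)$ at $\eps=0$ yields $\psi_\rho(M,a)=\Psi'(M)+\cO(a^{-2})$. Combining this with the previous display gives $W_\rho/W\big|_{\rho=M}=-\tfrac14\Psi'(M)+\cO(a^{-2})$, which is \eqref{eq-Wrho-over-W-expanded}; here the parameter written $b$ in the statement is the leaf parameter $a$, so $\cO(b^{-2})=\cO(a^{-2})$.

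\emph{Expected difficulty.} There is no substantive obstacle: the only point requiring care is the uniformity of all error terms in $a$ over $\rho\in[0,M]$, which is exactly what the real-analytic dependence of $\tilde\psi$ on the parameter $\eps$ in Lemma~\ref{lem-tip-expansion} provides, together with the boundedness of $\Psi,\Psi'$ on $[0,M]$ from Lemma~\ref{lem-translating-bowl}.
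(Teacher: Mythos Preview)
Your proposal is correct and follows essentially the same route as the paper: both compute $W_\rho/W=\tfrac12\phi_\rho$ via $\phi=\tfrac14\|X\|^2$ in the tip parametrization $X_a(\rho,\bomega)=\bigl(a-a^{-1}\psi,\,a^{-1}\rho\,\bomega\bigr)$, obtain $\phi_\rho=-\tfrac12\psi_\rho+\cO(a^{-2})$, and then invoke $\psi_\rho(M,a)=\Psi'(M)+\cO(a^{-2})$. Your version is in fact a bit more careful than the paper's in spelling out the uniformity of the $\cO(a^{-2})$ terms via the analytic dependence from Lemma~\ref{lem-tip-expansion}, and you correctly identify that the ``$b$'' in the statement is the leaf parameter $a$.
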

\begin{proof}
We parametrize $\Sigma_a$ by
\[
X(\rho, \bomega) =\Bigl( a - a^{-1} \psi(\rho, a) , \frac\rho a \bomega \Bigr), \qquad (\rho>0, \bomega\in S^{n-2}).
\]
Then $X_\rho = \bigl(a^{-1}\psi_\rho, a^{-1}\bomega\bigr)$, and
\[
\phi_\rho = \tfrac12 X\cdot X_\rho = -\tfrac12 \psi_\rho (\rho, a) + \cO(a^{-2}) = -\tfrac12 a\Psi'(\rho) + \cO(a^{-2}).
\]
The Proposition now follows from $W_\rho/W = (\ln W)_\rho = \frac12 \phi_\rho$.
\end{proof}

\subsubsection{Proof of Lemma~\ref{lem-V-positive-on-Sigma-b}}
With the intention of reaching a contradiction we suppose that $V=0$ at some $y_1\geq y_0$.  Let $\Sigma^*$ be the part of $\Sigma_a$ on which $y_1\leq y \leq y_{Ma}$.

Since $W>0$ everywhere on $\Sigma^*$, and since $\Sigma^*$ is compact, there is a smallest $\lambda>0$ for which $V\leq \lambda W$ on $\Sigma^*$.  Again because $\Sigma^*$ is compact there is a $y_2\in[y_1, y_{Ma}]$ at which $V=\lambda W$.  Since $V=0$ when $y=y_1$ we clearly must have $y_2 > y_1$.

By the maximum principle it follows from $\cL(V)=0$ and $\cL(\lambda W)<0$ on $\Sigma^*$ that the point of contact $y_2$ between $V$ and $\lambda W$ cannot be an interior point.  Thus $y_2=y_{Ma}$, i.e., $V<\lambda W$ for $y<y_{Ma}$ and $V=\lambda W$ at $y=y_{Ma}$.  In terms of the coordinate $\rho$ on $\Sigma_a$, which increases as $y$ decreases, this implies $V_\rho \leq \lambda W_\rho$ at $y=y_{Ma}$ (i.e.~at $\rho=M$).  Eliminating $\lambda$ we conclude
\begin{equation}
\frac{V_\rho} {V}  \leq \frac{W_\rho} {W} \text{ at }y=y_{Ma},\text{ i.e.~when } \rho=M.
\label{eq-to-be-contradicted}
\end{equation}

On the other hand, we recall that at $y=y_{Ma}$ we had the two expansions \eqref{eq-Vrho-over-V} and \eqref{eq-Wrho-over-W-expanded}:
\[
\frac{V_\rho} {V} = - \frac{\Psi'(M)\Psi''(M)} {1+\Psi'(M)^2} + \cO(a^{-2})
\text{ and } 
\frac{W_\rho} {W} = - \tfrac 14 \Psi'(M) + \cO(a^{-2}) 
\text{ for }a\to\infty.
\]
We can approximate the leading terms in these expansions by using Lemma~\ref{lem-translating-bowl} which says that for large $M$
\[
\Psi'(M) = \frac{M} {2(n-1)} + \cO (M^{-1}),
\qquad 
\Psi''(M) = \frac{1}{2(n-1)} + \cO(M^{-2}).
\]
We find
\begin{gather*}
-\frac{\Psi'(M)\Psi''(M)} {1+\Psi'(M)^2} = -\frac{1} {M} + \cO(M^{-3}), \\
-\tfrac14 \Psi'(M) = -\frac{M} {8(n-1)} + \cO(M^{-1}).
\end{gather*}
If we choose $M$ large enough we can guarantee that
\[
-\tfrac 14 \Psi'(M) < -\frac{\Psi'(M)\Psi''(M)} {1+\Psi'(M)^2},
\]
With this choice of $M$ we will also have $\frac{W_\rho} {W} < \frac{V_\rho} {V}$ at $y=y_{Ma}$ if $a$ is sufficiently large.

Thus our assumption that $V=0$ at some $y=y_1$ implies both that $V_\rho/V \leq W_\rho/W$ and $V_\rho/V > W_\rho/W$ at $y=y_{Ma}$.  This contradiction shows that $V\neq 0$ at all $y\in[y_0, y_{Ma}]$, if $a$ is sufficiently large.

\subsection{The foliation outside the cylinder}
\label{sec-foliation-outside}
We now verify that Kleene and M{\o}ller's ``trumpets'' foliate a conical region $\sqrt{2(n-1)}\le r\le b_0 y$, $y\ge y_0$, as claimed in Theorem~\ref{thm-about-the-self-shrinker-foliation}, and that their normals satisfy the estimate from Lemma~\ref{lem-tan-phi-at-cylinder}.

Each self-shrinker $\tS_b$ is obtained by revolving the graph of a solution $\tu_b$ of \eqref{eq-ss} about the $y$-axis.  For each $b>0$ Kleene and M{\o}ller proved existence of a unique $\tu_b$ with $\tu_b(y) = by + \cO(y^{-1})$ ($y\to\infty$).  They showed that $\tu_b(y)$ is defined for all $y\ge0$, and that it is a convex function, so that $\tu_b(y)\ge by$ and $\tu_b'(y) \le b$.  They observed  that the $\tu_b$ can be obtained by a contraction mapping argument, and hence that the $\tu_b$ depend smoothly on the parameter $b>0$.  To show that the $\tS_b$ define a foliation we therefore only have to show that the normal variation does not change sign.  We can do this using the same method as for the foliation $\Sigma_a$ in the interior of the cylinder:  in fact, this case is a bit easier since the analysis of the distant region $y\to\infty$ is already contained in Kleene\&M{\o}ller's work.

For any $b>0$ we parametrize $\tS_b$ by
\[
X_b(y, \bomega) = \bigl(y, \tu_b(y)\bomega\bigr).
\]
Then the unit normal is $\nm = (1+\tu_{b,y}^2)^{-1/2}\bigl(\tu_{b,y}, \bomega\bigr)$ and the normal variation of the family of immersions $X_b$ is
\[
V = \frac{1}{\sqrt{1+\tu_{b,y}^2}} \frac{\pd \tu_b}{\pd b}.
\]
In view of the asymptotic expansion $\tu_b(y) = by + \cO(y^{-1})$ we get 
\[
V = \frac{y + \cO(y^{-1})}{\sqrt{1+\tu_{b,y}^2}}.
\]
which shows that for large $y$ the normal variation is positive, and that it grows according to $V=\cO(y)$ as $y\to\infty$.

We recall the function $W = e^{\|\vX\|^2/8} $ and in particular the equation \eqref{eq-W-upper-barrier} which it satisfies.  Since $\|A\|$ is bounded on $\tS_b$, it follows from \eqref{eq-W-upper-barrier} that $W$ is indeed an upper barrier for the Jacobi equation $\cL(V)=0$.  Since $W$ grows much faster than $V$ as $y\to\infty$ the maximum principle implies that $V$ cannot vanish at any $y\ge y_0$.  (Otherwise $V/W$ would attain a maximal value, say $m$, at some $y_1>y_0$, and then $V\le mW$ with equality at $y_1$ would contradict the maximum principle.)

Thus the $\tS_b$ do indeed foliate an open region outside the cylinder.  Using the monotonicity of $b\mapsto \tu_b(y)$ which we have just established, one can show that $\lim_{b\to0} \tu_b(y) = \sqrt{2(n-1)}$, so that the $\tS_b$ foliate the entire region with $y\ge y_0$ between any given $\tS_{b_0}$ and the cylinder $r=\sqrt{2(n-1)}$.

To complete the proof we analyze the normals $\nm$ near the cylinder.  Consider again
\[
w(y) = \frac{2y\tu\tu_y}{\tu^2-2(n-1)},
\]
where we have dropped the subscript on $\tu_b$ for brevity.

The quantity $w$ is defined on some interval $(y_1, \infty)$ where $\tu(y)>\sqrt{2(n-1)}$ for all $y>y_1$.  We will show that $y_1$ can be chosen independently from the slope $b$ of the shrinker $\tS_b$.

The asymptotic behavior of $\tu(y)$ as $y\to\infty$ implies that
\[
\lim_{y\to\infty} w(y) = 2.
\]
The quantity $w-2$ satisfies the differential equation \eqref{eq-w-ode}, which we can write as
\[
\frac{d}{dy}(w-2) = \alpha(y) (w-2) - \frac{n-1}{y\tu^2} w^2.
\]
where
\[
\alpha(y) = \frac y2 (1+\tu_y^2) - \frac{w}{2y}
\]
If $y>\sqrt{2}$ then $w<2$ implies $\alpha(y)>0$, and thus $\frac{dw}{dy}<0$.  Therefore $\lim_{y\to\infty} w = 2$ forces $w>2$ for all $y>y_1$.

Assume that $y_1$ was chosen so that $w\le 4$ on the interval $[y_1,\infty)$.  Then 
\[
\alpha(y) \ge \frac y2 -\frac 2y \text{ for } y\ge y_1.
\]
By assumption we also have $\tu\ge \sqrt{2(n-1)}$ for $y\ge y_1$, so that 
\[
\frac{d}{dy}(w-2) \ge \Bigl(\frac{y}{2} - \frac2y\Bigr) (w-2) - \frac{8}{y},
\]
Multiplying with $y^2 e^{-y^2/4}$ we get
\[
\frac{d}{dy} \bigl(y^2e^{-y^2/4}(w-2)\bigr) \ge -y^2e^{-y^2/4} \frac{8}{y} = -8ye^{-y^2/4},
\]
which upon integration leads to
\[
y^2e^{-y^2/4}(w-2) \le \int_y^\infty 8\eta e^{-\eta^2/4}\,d\eta
=16 e^{-y^2/4}.
\]
We have found
\[
w-2 \le \frac{16}{y^2}.
\]
This estimate shows that as long as $y\ge 2\sqrt2$ we will have $w-2\le 2$, i.e.~$w\le 4$.  Thus the leaves outside the cylinder satisfy Lemma~\ref{lem-tan-phi-at-cylinder} with $K=16$ for $y\ge 2\sqrt{2}$.

%: the bibliography

\end{document}